\definecolor{myred}{rgb}{0.75,0,0}
\definecolor{mygreen}{rgb}{0,0.5,0}
\definecolor{myblue}{rgb}{0,0,0.65}
\DeclareFontFamily{U}{mathx}{}
\DeclareFontShape{U}{mathx}{m}{n}{<-> mathx10}{}
\DeclareSymbolFont{mathx}{U}{mathx}{m}{n}
\DeclareMathAccent{\widehat}{0}{mathx}{"70}
\DeclareMathAccent{\widecheck}{0}{mathx}{"71}
\newcommand{\into}{\hookrightarrow}
\newcommand{\co}{\colon}
\newcommand{\ot}{\otimes}
\newcommand{\namedto}[1]{\stackrel{#1}{\longrightarrow}}
\newtheorem{thm}{Theorem}[section]
\newtheorem{lem}[thm]{Lemma}
\newtheorem{prop}[thm]{Proposition}
\newtheorem{cor}[thm]{Corollary}
\newtheorem{thmx}{Theorem}
\theoremstyle{definition}
\newtheorem{defn}[thm]{Definition}
\newtheorem{notation}[thm]{Notation}
\newtheorem{ex}[thm]{Example}
\newtheorem{convention}[thm]{Convention}
\theoremstyle{remark}
\newtheorem{rem}[thm]{Remark}
\def\ZZ{\mathbb{Z}}
\def\CC{\mathbb{C}}
\def\BB{\mathbb{B}}
\def\RR{\mathbb{R}}
\def\one{\mathbbm{1}}
\def\WW{\mathbb{W}}
\def\FC{\mathcal{C}}
\def\MC{\mathcal{M}}
\def\AC{\mathcal{A}}
\DeclareMathOperator{\Obj}{Obj}
\DeclareMathOperator{\Hom}{Hom}
\DeclareMathOperator{\End}{End}
\DeclareMathOperator{\Rep}{Rep}
\DeclareMathOperator{\Vect}{Vect}
\DeclareMathOperator{\Ker}{Ker}
\DeclareMathOperator{\Coker}{Coker}
\DeclareMathOperator{\id}{id}
\DeclareMathOperator{\Id}{Id}
\DeclareMathOperator{\Irr}{Irr}
\DeclareMathOperator{\ufld}{ufld}
\DeclareMathOperator{\opp}{opp}
\DeclareMathOperator{\incl}{incl}
\DeclareMathOperator{\Eval}{eval}
\DeclareMathOperator{\FPdim}{FPdim}
\DeclareMathOperator{\Path}{Path}
\DeclareMathOperator{\GL}{GL}
\DeclareMathOperator{\ufd}{U}
\DeclareMathOperator{\TLJ}{TLJ}
\DeclareMathOperator{\Ind}{Ind}
\DeclareMathOperator{\Res}{Res}
\DeclareMathOperator{\Forget}{Forget}
\newcommand{\bluebull}{{\color{blue} \bullet}}
\newcommand{\bluea}{{\color{blue} a}}
\newcommand{\redbull}{{\color{red} \bullet}}
\newcommand{\redb}{{\color{red} b}}
\newcommand{\udim}{\underline{\dim}}
\mathchardef\mhyphen="2D 
\newcommand{\lmod}{\mhyphen \mathsf{mod}}
\newcommand{\mattwo}[1]{{\left[ \begin{array}{cc} #1 \end{array} \right]}}
\newcommand{\leftdual}[1]{{}^*{#1}}
\newcommand{\rightdual}[1]{{#1}^*}
\newcommand{\Sign}{\mathbbm{S}}
\title[]{Classification of finite type fusion quivers}
\author[]{Ben Elias}
\address{University of Oregon.}
\email{belias@uoregon.edu}%\thanks{Blah blah}
\author[]{Edmund Heng}
\address{Institut des Hautes \'Etudes Scientifiques (IHES).}
\email{heng@ihes.fr}%\thanks{Blah blah}
\begin{document}

\begin{abstract} In recent work, the second author introduced the concept of Coxeter quivers, generalizing several previous notions of a quiver representation. Finite type Coxeter quivers were classified, and their indecomposable objects were shown to be in bijection with positive roots, generalizing a classical theorem of Gabriel. In this paper we define fusion quivers, a natural generalization of Coxeter quivers. We classify the finite type fusion quivers, and prove the analogue of Gabriel's theorem.
As a special case, this proves a generalised quantum McKay correspondence for fusion categories, an analogue of Auslander--Reiten's result for finite groups in the fusion categorical setting.
\end{abstract}

\maketitle

\tableofcontents

%%%%%%%%%%%%%%%%%%%%%%%%%%%%%%%%%%%%%%%%%%%%%%%%%%%%%%%%%
%========================================================
\section{Introduction} \label{sec:intro}
%========================================================
%%%%%%%%%%%%%%%%%%%%%%%%%%%%%%%%%%%%%%%%%%%%%%%%%%%%%%%%%

\subsection{Fusion quivers}

A well-known theorem in the theory of quiver representations is Gabriel's classification of finite type quivers (i.e.\ quivers with finitely many indecomposable representations), which
falls into an $ADE$ classification. In \cite{heng2024coxeter}, a generalization of quivers known as \emph{Coxeter quivers} were introduced. The main theorem in loc.\ cit.\ is that the
classification of finite type Coxeter quivers extends the $ADE$ classification to include all Coxeter--Dynkin types, even the non-crystallographic types $H$ and $I$. Furthermore, the
bijection between indecomposable representations and (extended) positive roots extends to all finite Coxeter groups in this setting.

A representation of a quiver assigns a vector space to each vertex; when interpreted in the language of \cite{heng2024coxeter}, one also attaches a one-dimensional vector space to each
edge of the quiver itself. We think of this theory as being valued in the monoidal category of vector spaces. A fusion category is a semisimple monoidal category with finitely many
isomorphism classes of simple objects, with some additional properties we do not discuss now. Coxeter quivers instead take values in particular fusion categories associated to quantum
groups $U_q(\mathfrak{sl}_2)$, and tensor products thereof. 
Ultimately this is an ad hoc construction, tailored for Coxeter theory; there is one Coxeter quiver for each Coxeter graph.

In this paper, we define what we feel is the most natural and appropriate generalization of a quiver representation, allowing edge labels in \emph{any} fusion category $\FC$, and allowing representations to take values in \emph{any semisimple module category} $\MC$ over $\FC$. More concretely, we introduce the notion of \emph{fusion quivers} $Q$ over $\FC$, which are quivers whose arrows  $e$ are labelled by objects $\Pi_e$ in $\FC$.
A representation $X$ of a fusion quiver $Q$ valued in $\MC$ is an assignment of objects $X_v$ in $\MC$ for each vertex $v$, and an assignment of morphism $\phi_e: \Pi_e \otimes X_{s(e)} \rightarrow X_{t(e)}$ for each arrow $e$ with source $s(e)$ and target $t(e)$. Such representations form an abelian category $\Rep_\MC(Q)$.

Many constructions for classical quiver representations have analogues for fusion quiver representations. For example, we construct reflection functors in Definitions \ref{defn:sinkreflection} and \ref{defn:sourcereflection}. Moreover, many previous generalizations of quiver representations also fall into this framework; see \S \ref{sec:previous} for a more detailed discussion.

% Classical quiver representations can be recovered from this definition when $\FC = \MC = \Vect$, though there are some subtleties involved in the choice of bases, see \S\ref{sec:fusionquivervect}. Coxeter quivers and their representations correspond to particular choices of $\FC = \MC$ and particular arrow labels (see Example \ref{ex:coxeterquiver}). For more discussion see \S\ref{sec:previous}.

% Representations of Coxeter quivers take values in fusion categories associated to quantum groups $U_q(\mathfrak{sl}_2)$ (and tensor products thereof) instead of vector spaces.
% In this paper, we generalize the result even further by working over \emph{any} fusion category $\FC$, and moreover, allowing representations to take values in \emph{any semisimple module category} $\MC$ over $\FC$.

\subsection{Classification result}

We ask anew the question: for which fusion quivers $Q$ and which $\MC$ does the category $\Rep_{\MC}(Q)$ have finitely many indecomposable objects (up to isomorphism)? 
Our definition of fusion quivers is now unlinked from Coxeter theory, so this a general question about fusion categories and combinatorics. Delightfully and perhaps expectedly, the answer comes from
Coxeter theory. While the representation-type of classical quivers depends on the underlying graph, our main theorem states that the representation-type of a fusion quiver $Q$ will
depend on its \emph{associated Coxeter graph} $\Gamma_Q$, whose construction we describe below. A consequence of our result is that whether $\Rep_{\MC}(Q)$ has finitely many indecomposable objects is
independent of the choice of $\MC$, since the graph $\Gamma_Q$ depends solely on $Q$ and not on $\MC$.

Every object $L$ in a fusion category has an associated notion of dimension called its \emph{Frobenius--Perron dimension}, which we denote by $\FPdim(L) \in \RR_{\geq 0}$ (see \S\ref{ss:FPdimsigncoh}
for precise definition). The Frobenius--Perron dimension is uniquely determined by the following properties: $\FPdim(L)$ for any object $L$ is a non-negative real number, and $\FPdim$
respects tensor products and direct sums. In particular, when $\FC = \Vect$ is the category of vector spaces (or any fusion category with a fibre functor to $\Vect$), $\FPdim$ is given
by the usual notion of dimension. However, $\FPdim(L)$ need not be an integer in general. A key property is that when $\FPdim(L) < 2$, there exists (a unique) $2 \leq m < \infty$ such that $\FPdim(L) = 2\cos(\pi/m)$.

Using this key property, we construct $\Gamma_Q$ as follows. Starting with the quiver $Q$, we forget orientation to obtain a graph, and label each edge with a number $m_e$ instead of an object $\Pi_e$, where 
\begin{equation} \begin{cases} \FPdim(\Pi_e) = 2\cos(\pi/m_e) & \text{ if }\FPdim(\Pi_e) < 2 \\ m_e = \infty & \text{ if } \FPdim(\Pi_e) \ge 2. \end{cases} \end{equation}
Our main theorem is as follows\footnote{We use the standard convention that edges with label $m_e=2$ are removed, and edges with label $m_e = 3$ are left unlabelled.}.
\begin{thmx}
Let $Q$ be a fusion quiver over a fusion category $\FC$, and let $\MC$ be a semisimple module category over $\FC$ with finitely many simple objects.
Then $Q$ has only finitely many isomorphism classes of indecomposable representations valued in $\MC$ if and only if $\Gamma_Q$ is a disjoint union of Coxeter--Dynkin diagrams:
\\
\(A_n = \dynkin [Coxeter]A{} \), \
\(B_n = C_n = \dynkin [Coxeter]B{}\), \
\(D_n = \dynkin [Coxeter]D{} \), \
\(E_6 = \dynkin [Coxeter]E6\), \ 
\newline
\(E_7 = \dynkin [Coxeter]E7\), \
\(E_8 = \dynkin [Coxeter]E8 \), \
\(F_4 = \dynkin [Coxeter]F4\), \
\(G_2 = \dynkin [Coxeter,gonality=6]G2 \), \ 
\newline
\newline
\(H_3 = \dynkin [Coxeter]H3\), \
\(H_4 = \dynkin [Coxeter]H4\), \
\(I_2(m) = \dynkin [Coxeter,gonality=m]I{}\). 
\end{thmx}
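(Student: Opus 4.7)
The plan is to prove a Gabriel-style theorem: in the finite Coxeter--Dynkin case the indecomposable representations are in bijection with positive roots of the Coxeter root system of $\Gamma_Q$, while otherwise one produces explicit infinite families. The central tool is the theory of sink/source reflection functors developed in Definitions~\ref{defn:sinkreflection} and~\ref{defn:sourcereflection}.

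First I would set up a Frobenius--Perron dimension theory on $\Rep_\MC(Q)$. To each representation $X$ assign the vector $\udim X := (\FPdim X_v)_{v \in Q_0} \in \RR^{Q_0}$, and verify that the reflection functors act on $\udim$ as the simple reflections $s_v$ in the standard geometric representation of $W(\Gamma_Q)$, using the key identity $\FPdim(\Pi_e) = 2\cos(\pi/m_e)$ to identify the Gram matrix entry on each edge. Along the way I would establish that a reflection functor sends indecomposables to indecomposables (or to zero, in the degenerate case where the representation is supported at the reflected vertex), by imitating the classical argument using rigidity of $\FC$ and semisimplicity of $\MC$ to take kernels/cokernels of the canonical structure morphism.

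Next, adapting the Bernstein--Gelfand--Ponomarev argument, I would fix an admissible ordering of vertices and define a Coxeter functor $\Phi$ as the composition of the corresponding sink reflection functors; its effect on $\udim$ is the Coxeter element $c \in W(\Gamma_Q)$. When $\Gamma_Q$ is of finite Coxeter--Dynkin type, $W(\Gamma_Q)$ is finite and $c$ has finite order, so every positive vector is eventually sent into the negative cone. Consequently any indecomposable $X$ is annihilated by some iterate of $\Phi$, hence is obtained from a simple representation at a single vertex by a finite sequence of reflection functors, and its dimension vector is a positive root of $W(\Gamma_Q)$. This yields both finiteness and the explicit bijection with positive roots.

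For the converse, if a connected component of $\Gamma_Q$ is not finite Coxeter--Dynkin, it either contains an edge with $\FPdim(\Pi_e) \geq 2$ or is of affine or indefinite Coxeter type, and in each case I construct infinitely many indecomposables: in the first case by a fusion-categorical Kronecker family parametrized by generic eigenvalues of $\Pi_e \ot X_{s(e)} \to X_{t(e)}$, and in the second by the positive imaginary roots of an affine subdiagram, extended by zero. The main obstacle will be the non-crystallographic setting ($H_3$, $H_4$, $I_2(m)$), and more generally the case when $\Pi_e$ is non-invertible in $\FC$: here $\FPdim(\Pi_e)$ is irrational, $\udim X$ lies in a non-integral $\RR$-subspace, and the reflection functors are built from kernels and cokernels in $\MC$ whose dimensions must be tracked via the Grothendieck-module action of $\FC$ on $\MC$ rather than by literal subtraction of integers. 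Proving that these functors genuinely realize simple reflections on $\udim$, and that no exotic indecomposables arise whose dimension vectors fail to be positive roots, is where the full fusion-categorical structure, especially rigidity and positivity of $\FPdim$, must be leveraged with particular care.
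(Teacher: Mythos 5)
Your finite-type direction is a genuinely different route from the paper's, and most of it can be made to work: the paper never runs Bernstein--Gelfand--Ponomarev directly in $\Rep_{\MC}(Q)$, but instead proves an unfolding equivalence $\Rep_{\MC}(Q) \cong \Rep_{\Vect}(\check{Q}_{\MC})$ (Theorem \ref{thm:unfolding}) and invokes classical Gabriel together with the injectivity of the folding homomorphism $\WW(\Gamma_Q) \hookrightarrow \WW(\Gamma_{\check{Q}_{\MC}})$ (Corollary \ref{cor:faithfulactionunfld}, which rests on Lusztig partitions and \cite{EHCoxEmb}). Your real-valued vector $(\FPdim X_v)_v$ does transform by the geometric representation of $W(\Gamma_Q)$ (granting the standard compatibility of $\FPdim$ on an indecomposable module category with the $[\FC]$-action), the no-fixed-vector argument bounds the number of Coxeter-functor iterations needed to reduce any indecomposable to some $S(v,L)$, and finitely many pairs (reduction word, $S(v,L)$) then give finiteness. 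But your claimed \emph{bijection} with positive roots of $\Gamma_Q$ is false: the map $X \mapsto (\FPdim X_v)_v$ is far from injective (e.g.\ $S(v,L)$ and $S(v,L')$ for non-isomorphic simples of equal Frobenius--Perron dimension), and indeed the number of indecomposables genuinely depends on $\MC$ (Example \ref{ex:differentM}); the correct invariant is the $[\MC]$-valued dimension vector, and the root bijection the paper proves (Theorem \ref{thm:gabrielroot}, with \emph{extended} positive roots, only for $\MC = \FC$) requires the Positivity Lemma \ref{lem:quantnumpostonon-neg} and Sign Coherence Theorem \ref{thm:signcoherence} for non-commutative two-colored quantum numbers in $[\FC]$ --- note $[\Pi]$ and $[\leftdual{\Pi}]$ need not commute (Remark \ref{rmk:theydontcommute}), and $\FPdim(x)=0$ does not imply $x=0$ in $[\FC]$, so none of this is visible at the level of your real vectors. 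Fortunately the bijection is not needed for Theorem A.

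The genuine gap is in your converse, in exactly the cases you flag. First, a single edge with $\FPdim(\Pi_e) \geq 2$ carries \emph{one} structure morphism $\phi_e \colon \Pi_e \ot X_{s(e)} \to X_{t(e)}$, not a pencil, so ``generic eigenvalues'' is not a construction; the working replacement (the paper's, in \S\ref{subsec:cruciallemma}) is the iterated reflection-functor family $X^{(\ell)}(L)$, whose non-termination and pairwise non-isomorphy must be proved --- your real dimension vectors can actually carry this rank-two case, since the ordinary quantum numbers $[k]_q$ with $q+q^{-1} \geq 2$ never vanish, but you do not propose it. Second, and fatally for the plan as written, your dichotomy ``affine or indefinite $\Rightarrow$ use positive imaginary roots of an affine subdiagram'' fails for infinite non-crystallographic types: e.g.\ a path with two edges both of Frobenius--Perron dimension $2\cos(\pi/5)$ has infinite Coxeter group, every rank-two subgroup finite dihedral, and \emph{no} affine crystallographic subdiagram, so there are no imaginary roots to extend by zero. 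A direct repair would require showing the BGP chain never terminates whenever $W(\Gamma_Q)$ is infinite --- essentially that prefixes of powers of a Coxeter element are reduced and the tracked roots stay positive --- a nontrivial Coxeter-theoretic input absent from your sketch. The paper sidesteps all of this uniformly: the unfolded quiver $\check{Q}_{\MC}$ is always an ordinary (simply-laced) quiver, so classical Gabriel applies to it in both directions, and the transfer back to $Q$ is exactly the content of the folding injectivity and the rank-two Coxeter-number control (Theorems \ref{thm:rank2classification}, \ref{thm:finitetypecoxeternumber}, \ref{thm:CoxeterDynkinifffinite}).
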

We emphasize here that for a fixed fusion quiver $Q$, its representations valued in different semisimple module categories $\MC$ may very well behave differently.
The theorem above shows that being finite type or not is independent of $\MC$; however the precise number of indecomposable representations does depend on $\MC$, see Example \ref{ex:differentM}.

Each of the Coxeter graphs above can be realized as the graph $\Gamma_Q$ associated to one of the Coxeter quivers in \cite{heng2024coxeter}. However, it can also be
realized by many different fusion quivers, with different underlying fusion categories. 
See Example \ref{ex:RepS2} (type $A_2$) and Example \ref{ex:Uqsl3at5} (type $I_2(5)$) for some unusual examples. In particular, there are many fusion quivers
not valued in vector spaces whose underlying Coxeter graph has type $A$.

\subsection{Proof strategy and unfolding}
To each fusion quiver $Q$ and a choice of semisimple module category $\MC$, we associate an ``unfolded'' classical quiver $\check{Q}_{\MC}$ such that $\Rep_\MC(Q)$ is equivalent (as an abelian category) to the category $\Rep(\check{Q}_\MC)$ of classical representations of $\check{Q}_\MC$.
As such, understanding the number of indecomposables in $\Rep_\MC(Q)$ amounts to understanding the number of indecomposables in $\Rep(\check{Q}_\MC)$, which can be done using Gabriel's theorem. Nonetheless, the classical quiver $\check{Q}_\MC$ still \emph{depends on} $\MC$.

To understand $\check{Q}_\MC$ in general, we first focus on the rank two fusion quivers, which are fusion quivers $Q$ of the form
\[
Q := \quad \begin{tikzcd}
	\bullet & \bullet
	\arrow["\Pi", from=1-1, to=1-2]
	\end{tikzcd}.
\]
We prove results relating reflection functors on the categorical level with dihedral reflection groups acting on the Grothendieck group $K_0(\Rep_\MC(Q))$. As a consequence, we deduce that $\Rep_\MC(Q)$ has finitely many indecomposables if and only if $\FPdim(\Pi) = 2\cos(\pi/m) < 2$ for $m \geq 2$. More importantly, we show that in this case $\check{Q}_\MC$ will have underlying graph a disjoint union of ADE Dynkin diagrams with the \emph{same Coxeter number} $m$. In other words, while $\check{Q}_\MC$ can vary as $\MC$ varies, the Coxeter number of this quiver depends only on $Q$.

After proving this classification result for rank two fusion quivers, we bootstrap the result to arbitrary fusion quivers using Coxeter theory.

% Using reflection functors (for fusion quivers; see Definition \ref{defn:sinkreflection} and \ref{defn:sourcereflection}) on the categorical level and their relation to the associated reflection group acting on the Grothendieck level $K_0(\Rep_\MC(Q))$, we show that $\Rep_\MC(Q)$ has finitely many indecomposables if and only if $\FPdim(\Pi) = 2\cos(\pi/m) < 2$ for $m \geq 2$.
% More importantly, we show that in this case $\check{Q}_\MC$ will have underlying graph a disjoint union of ADE Dynkin diagrams with the \emph{same Coxeter number} $m$.
% In other words, while $\check{Q}_\MC$ need not be the same for different $\MC$, they all share the property that all quivers showing up have the same Coxeter number $m$ depending only on the fusion quiver $Q$.
% This result allows us to bootstrap the classification result to arbitrary fusion quivers via some Coxeter theory.

We also note here that this result for rank two fusion quivers can also be interpreted as a generalized quantum McKay correspondence, which is a quantum version of Auslander--Reiten's generalized McKay correspondence for finite groups; see \S\ref{sec:McKaycorrespondence} for more details.

\begin{rem} 
As we were finishing this paper, we noticed the exciting recent work of Coulembier and Etingof \cite{CEnSphere} on $N$-spherical functors. The connections are significant enough to merit further discussion now.

Roots for dihedral groups can be described as linear combinations of simple roots, whose coefficients are two-colored quantum numbers (see \cite[Appendix]{EDihedral}). Our proofs in
the rank two case involve a computation of iterated reflection functors, relating the result in the Grothendieck group to two-colored quantum numbers. The two different colors of the
quantum number $[2]$ correspond to the symbols\footnote{These symbols need not agree, nor need they even commute, so we must define a
non-commutative version of two-colored quantum numbers, see \S\ref{ss:twocolored}.} $[\Pi]$ and $[\Pi^*]$ in the Grothendieck group $[\FC]$.

Meanwhile, Coulembier and Etingof construct a collection of complexes $\mathbb{E}_n(\Pi)$ built from tensor products of $\Pi$ and $\Pi^*$, based on a construction of
Dyckerhoff-Kapranov-Schechtmann \cite{DKS_Euler}. These complexes (and variants) also categorify two-colored quantum numbers.
They also have a ``classification'' result based on $\FPdim(\Pi)$: that $\mathbb{E}_n(\Pi)$ is zero in the derived category if and only if $\FPdim(\Pi)
= 2 \cos(\pi/m) < 2$ with $m$ dividing $n$. Indeed, the inductive construction of their complexes is completely analogous to the iterative application of reflection functors, and the
cohomology of their complex agrees with the object labeling one of the vertices after applying $n$ reflection functors to a particular simple object. 
As such, their results in \cite[Chapters 4 and 5]{CEnSphere} and our results in \S\ref{sec:twoone} are closely related and morally similar.
In particular, some of our auxiliary results -- such as our Positivity Lemma
\ref{lem:quantnumpostonon-neg} and Sign Coherence Theorem \ref{thm:signcoherence} -- could be proven in another way using their results (e.g.\ \cite[Remark 4.2.2(2)]{CEnSphere}). 
Our proofs are independent though, and use different techniques. Ultimately, we use similar ideas for very different purposes.
 
We also note that their classification result holds in vastly more generality than we consider (i.e.\ for all quasi-finite tensor categories, not just fusion categories).
\end{rem}

%========================================================
\subsection{Relation to previous work} \label{sec:previous}
%========================================================

We feel as though the definition of fusion quivers is a natural one, which encompasses and extends many existing generalizations of quiver representations. We briefly discuss some of these connections here, a subset of which are discussed also in the body of the paper.
\begin{itemize}
\item \emph{Classical quiver representations:} Classical quiver representations are recovered from representations of fusion quivers with $\FC = \MC = \Vect$. There are some subtleties involved in the choice of bases, see \S\ref{sec:fusionquivervect} for more details.
\item \emph{Coxeter quiver representations:} Coxeter quiver representations are recovered from representations of fusion quivers over (tensor products of) fusion categories associated to $U_q(\mathfrak{sl}_2)$ (with $\MC = \FC$), see Example \ref{ex:coxeterquiver}. 
%An example where $\MC \neq \FC$ is treated in Example \ref{ex:differentM}.
\item \emph{Lusztig's canonical bases:} Representations of (rank two) fusion quivers over $\Rep(G)$ (the category $G$-representations) for $G$ a finite subgroup  of $SU(2)$ were studied by Lusztig in his construction of canonical bases for affine type quantum groups; see Example \ref{ex:Lusztig}.
\item \emph{Skew group algebras (of quivers):} Given a classical quiver with a finite group $G$ acting on its path algebra $A$ (fixing idempotents), modules over the skew group algebra $G\# A$ are the same as representations of some fusion quiver over $\Rep(G)$; see \S \ref{sec:Gequivariant} and Remark \ref{rmk:skewgroupalg}.
\item \emph{Representations of $\Bbbk$-species:} A classical generalization of quiver representations is given by the representations of $\Bbbk$-species. Albeit similar, this is slightly different from our consideration of fusion quivers; in fact, its generalization -- called a \emph{$\FC$-species} ($\FC$ is again a fusion category) -- was recently considered by Qiu and Zhang in \cite{QZ_fusionstable}.
Namely, a $\FC$-species is a quiver with each vertex $v$ labelled by a semisimple algebras $A_v$ (i.e. $\MC_v := A_v$-mod is semisimple), and each arrow from $v$ to $w$ labelled by an $(A_w,A_v)$-bimodule ${}_wB_v$.
A representation of a $\FC$-species is then the datum of $X_v \in \MC_v$ for each vertex $v$, and a morphism of $A_w$-modules
\[
\phi_e: {}_wB_v \otimes_{A_v} X_v \rightarrow X_w
\]
for each arrow $e: v \rightarrow w$.
We remark here that the classical notion of $\Bbbk$-species representations is recovered by taking $\FC = \Vect_\Bbbk$, with $A_v$ a division algebra (which is semisimple) over $\Bbbk$.

Even if we fix $A:= A_v$ to be the same for all $v$, so that $X_v$ are all living in the same semisimple module category $\MC:= \MC_v$, the objects ${}_wB_v$ live in the category of $(A,A)$-bimodules, which is the (monoidal opposite) dual fusion category $\FC^*_\MC$ of $\FC$ over $\MC$ -- this need not be the same as $\FC$ itself, which is what we used for fusion quivers.
In fact, representations of a $\FC$-species form a (right) module category over $\FC$, whilst representations of a fusion quiver valued in $\MC$ form a (right) module category over the dual fusion category ${\FC^*_\MC}^{\otimes^{op}}$; see Remark \ref{rmk:repismodulecat}.
\end{itemize}

%probably want to mention recent work of Etingof-Coulembier and the stuff they cite by DKS?, whose computation with complexes is like a homological version of our reflection functor argument (as I discovered just recently).

{\bf Acknowledgments.} 
The authors would like to thank the organizers of the conference ``Categorification in Representation Theory'' (Sydney, 2023) where this collaboration began.
We would like to thank Victor Ostrik for helping us find the example from Remark \ref{rmk:theydontcommute}, by pointing us to a very helpful mathoverflow post by Noah Snyder. We would also like to thank Victor Ostrik for the suggestion to let our quiver representations be valued in $\MC$ rather than just $\FC$, and for many other helpful discussions. 
We thank Clement Delcamp, Geoffrey Janssens, Sondre Kvamme, Greg Kuperberg and Abel Lacabanne for many useful discussion on related topics.
B.E. was supported by NSF grant DMS-2201387, and his research group was supported by DMS-2039316.

%%%%%%%%%%%%%%%%%%%%%%%%%%%%%%%%%%%%%%%%%%%%%%%%%%%%%%%%%
%========================================================
\section{Definitions and basic results} \label{sec:basics}
%========================================================
%%%%%%%%%%%%%%%%%%%%%%%%%%%%%%%%%%%%%%%%%%%%%%%%%%%%%%%%%

%========================================================
\subsection{Fusion quivers} \label{sec:fusionquiverdef}
%========================================================

Throughout this paper, $\Bbbk$ denotes an algebraically closed field.

\begin{defn} A \emph{quiver} $Q$ is the data of a finite vertex set $V$, and a finite edge set $E$, together with functions $s, t : E \to V$ which pick out the source and target of each edge. We abbreviate this data as $Q = (V,E)$. \end{defn}
	
\begin{defn}\label{defn:tensorfusioncat}
A (strict) \emph{fusion category} $\FC$ is a $\Bbbk$-linear (hom spaces are finite-dimensional $\Bbbk$-vector spaces) abelian category with a (strict) monoidal structure $(- \otimes -, \one)$ satisfying the following properties:
\begin{enumerate}
\item the monoidal unit $\one$ is simple;
\item every object $C \in \FC$ has a left dual $\leftdual{C}$ and a right dual $\rightdual{C}$ with respect to the tensor product $\otimes$ (rigidity);
\item $\FC$ is semisimple; and
\item $\FC$ has finitely many isomorphism classes of simple objects.
\end{enumerate}
A (finite) set of representatives of isomorphism classes of simple objects in $\FC$ will always be denoted by $\Irr(\FC)$.
\end{defn}

\begin{defn} \label{defn:modcat}
A left (resp.\ right) \emph{module category} over a fusion category $\FC$ is an abelian category $\MC$ together with an additive, monoidal functor 
\[
\Psi: \FC \text{ (resp.\ }  \FC^{\otimes^\text{op}}) \rightarrow \End(\MC)
\]
into the additive, monoidal category of additive endofunctors $\End(\MC)$.
We will denote the endofunctor $\Psi(C)$ simply by $C\otimes -$.
\end{defn}
Note that the endofunctors $C \otimes -$ are automatically exact since every object $C \in \FC$ has left and right duals.

\begin{defn} Let $\FC$ be a fusion category over a base field $\Bbbk$. A \emph{fusion quiver} over $\FC$ is a quiver $Q = (V,E)$ together with an assignment $\Pi : E \to \Obj(\FC)$ of an object $\Pi_e$ of $\FC$ to each edge $e \in E$. We abbreviate this data as $Q = (\FC,V,E,\Pi)$, and instead refer to $(V,E)$ as the \emph{underlying quiver}. \end{defn}

\begin{defn} Let $Q = (\FC, V, E, \Pi)$ be a fusion quiver, and let $\MC$ be a semisimple category which is a (left) module category over $\FC$. A \emph{(fusion) representation} of $Q$ in $\MC$ is
	\begin{itemize}
		\item an assignment $X : V \to \Obj(\FC)$ of an object $X_v$ of $\MC$ to each vertex $v \in V$, 
		\item for each $e \in E$, a morphism $\phi_e \in \Hom_{\MC}(\Pi_e \ot X_{s(e)}, X_{t(e)})$.
	\end{itemize}
	The data of a representation is often abbreviated $(X,\phi)$. A \emph{morphism of (fusion) representations} $\rho : (X,\phi) \to (Y,\psi)$ is the data of
	\begin{itemize}
		\item for each $v \in V$, a morphism $\rho_v \in \Hom_{\MC}(X_v, Y_v)$,
	\end{itemize}
	for which the following square commutes for each edge $e$:
\begin{equation} \label{eq:morphismdef}
	\begin{tikzcd}
	\Pi_e \ot X_{s(e)} \arrow[r,"\phi_e"] \arrow[d,"\id_{\Pi_e} \ot \rho_{s(e)}"'] & X_{t(e)} \arrow[d,"\rho_{t(e)}"'] \\
	\Pi_e \ot Y_{s(e)} \arrow[r,"\psi_e"] & Y_{t(e)}
	\end{tikzcd}
\end{equation}
The category of fusion representations valued in $\MC$ is denoted $\Rep_{\MC}Q$.
\end{defn}

\begin{rem} Many parts of this paper generalize to the case where $\FC$ is a tensor category, and/or $\MC$ a left module category which is not necessarily semisimple. However, we leave potential generalizations to the interested reader. \end{rem}

\begin{prop} The category $\Rep_{\MC}Q$ is an abelian category under vertex-wise kernels and cokernels. \end{prop}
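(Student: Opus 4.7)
The plan is to lift the abelian structure of $\MC$ to $\Rep_{\MC}Q$ vertex-wise, using the exactness of the functors $\Pi_e \ot -$ as the crucial ingredient. Recall that $\MC$ is abelian by assumption, and for each $e \in E$ the endofunctor $\Pi_e \ot -$ on $\MC$ admits both a left adjoint $\leftdual{\Pi_e} \ot -$ and a right adjoint $\rightdual{\Pi_e} \ot -$ (by rigidity of $\FC$), hence is exact.

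First I would construct the zero object and biproducts. The pair $(0,0)$ with $X_v = 0$ and $\phi_e = 0$ is visibly both initial and terminal. For the biproduct of $(X,\phi)$ and $(Y,\psi)$, set $(X \oplus Y)_v := X_v \oplus Y_v$, and define the structure map on the edge $e$ using the canonical isomorphism $\Pi_e \ot (X_{s(e)} \oplus Y_{s(e)}) \cong (\Pi_e \ot X_{s(e)}) \oplus (\Pi_e \ot Y_{s(e)})$ (which exists since $\Pi_e \ot -$ is additive) composed with $\phi_e \oplus \psi_e$. The vertex-wise inclusions and projections are morphisms of representations, and the biproduct diagrams commute by construction.

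Next I would build kernels and cokernels vertex-wise. Given $\rho : (X,\phi) \to (Y,\psi)$, let $\iota_v : \Ker(\rho_v) \into X_v$ be the kernel in $\MC$. The commuting square \eqref{eq:morphismdef} gives $\rho_{t(e)} \circ \phi_e \circ (\id_{\Pi_e} \ot \iota_{s(e)}) = \psi_e \circ (\id_{\Pi_e} \ot (\rho_{s(e)} \circ \iota_{s(e)})) = 0$, so $\phi_e \circ (\id_{\Pi_e} \ot \iota_{s(e)})$ factors uniquely through $\iota_{t(e)}$, producing the structure map $\phi'_e : \Pi_e \ot \Ker(\rho_{s(e)}) \to \Ker(\rho_{t(e)})$. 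The universal property of $(\Ker\rho, \phi')$ in $\Rep_\MC Q$ then follows immediately from the vertex-wise universal property and the fact that a would-be factorization automatically commutes with the edge morphisms. Cokernels are dual: here one uses that $\Pi_e \ot -$ preserves cokernels (by exactness) to identify $\Coker(\id_{\Pi_e} \ot \rho_{s(e)})$ with $\Pi_e \ot \Coker(\rho_{s(e)})$, which is what allows the cokernel structure map on the target to descend.

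Finally I would verify the last abelian axiom: every monic is the kernel of its cokernel, and every epi is the cokernel of its kernel. A morphism $\rho$ is mono (resp.\ epi) in $\Rep_{\MC} Q$ iff each $\rho_v$ is mono (resp.\ epi) in $\MC$; one direction is clear, and the other follows by testing against the kernel/cokernel representation constructed above. Once this is known, the corresponding statement in $\Rep_{\MC}Q$ reduces to the same statement in $\MC$ vertex by vertex, since both the image of $\rho$ (as $\Ker(\Coker \rho)$) and the coimage (as $\Coker(\Ker \rho)$) are computed vertex-wise and agree in $\MC$. The only step requiring genuine thought is the construction of the edge morphism on $\Coker$ and $\Ker$; everything else is formal, and the main (mild) obstacle is keeping track of the exactness of $\Pi_e \ot -$ when moving between source and target of an edge.
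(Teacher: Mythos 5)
Your proof is correct and is precisely the argument the paper invokes with ``the same as for ordinary quiver representations, mutatis mutandis'': vertex-wise zero object, biproducts, kernels and cokernels inheriting edge maps (with the exactness of $\Pi_e \ot -$ from rigidity, which the paper records right after Definition~\ref{defn:modcat}, entering exactly where you use it, namely to identify $\Coker(\id_{\Pi_e}\ot\rho_{s(e)})$ with $\Pi_e\ot\Coker(\rho_{s(e)})$), followed by the routine verification of universal properties and the mono/epi axiom. You have simply written out in full the details the paper leaves to the reader; no discrepancy in approach or substance.
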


\begin{proof} 
The proof is the same as for ordinary quiver representations, mutatis mutandis. Note that $\MC$ is itself an abelian category, so that vertex-wise kernels, cokernels,
and direct sums exist. It is easy to verify that vertex-wise kernels, cokernels and direct sums naturally inherit the structure of a fusion representations, after which point it is
straightforward to verify their universal properties (etcetera) in $\Rep_{\MC}Q$. 
\end{proof}

\begin{rem} \label{rmk:repismodulecat}
In fact, $\Rep_\MC(Q)$ is naturally a left module category over the \emph{dual fusion category} $\FC^*_\MC$ of $\FC$ over $\MC$ (see \cite[\S 7.12]{EGNO} for the definition, and \cite[Corollary 9.3.3]{EGNO} for the fact that it is a fusion category). In the particular case where $\MC = \FC$, we get $\FC^*_\FC = \FC^{\otimes^{op}}$ -- this is just saying that $\Rep_\FC(Q)$ is a \emph{right} module category over $\FC$ via tensoring on the right.
We will use this structure to study the relation between (extended) positive roots of the underlying Coxeter group and indecomposable representations in \S \ref{sec:M=C}.
\end{rem}

\begin{rem} As we define functors between categories of fusion representations, we will often be defining new fusion representations from old ones in abstract ways. Remember that
there are no constraints in the definition of a fusion representation. One need only confirm that the maps $\phi_e$ have the correct source and target for a fusion representation to
be well-defined. There is a constraint to check for defining morphisms of fusion representations, namely \eqref{eq:morphismdef}. \end{rem}

Here are two obvious lemmas.

\begin{lem}\label{lem:removezero} Let $Q$ be a fusion quiver and $e$ an edge for which $\Pi_e$ is the zero object. Let $Q'$ be the fusion quiver obtained from $Q$ by removing $e$. Then, for any module category $\MC$, the obvious functor gives an equivalence of categories $\Rep_{\MC} Q \cong \Rep_{\MC} Q'$.
	%For this reason, in our classification results, we assume that no edges are labeled with the zero object.
\end{lem}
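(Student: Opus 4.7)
The plan is to show that the forgetful functor $F \co \Rep_\MC Q \to \Rep_\MC Q'$, which discards the morphism $\phi_e$ attached to the edge $e$, is an equivalence of categories by exhibiting an explicit inverse and verifying it is well-defined on both objects and morphisms.

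First I would observe that since $\MC$ is a module category over $\FC$ with $\Psi$ an additive functor, the endofunctor $\Pi_e \ot -$ is additive; in particular $\Pi_e \ot X_{s(e)}$ is a zero object of $\MC$ for every $X_{s(e)}$. Consequently the Hom-set $\Hom_\MC(\Pi_e \ot X_{s(e)}, X_{t(e)})$ is the zero group, so the morphism $\phi_e$ in the data of a representation of $Q$ is uniquely determined (it must be the zero morphism). This lets me define a functor $G \co \Rep_\MC Q' \to \Rep_\MC Q$ sending $(X,\phi)$ to itself together with the unique zero morphism on $e$, and sending morphisms to themselves.

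Next I would check that $G$ is well-defined on morphisms: the commutativity square \eqref{eq:morphismdef} for the edge $e$ has all four objects with $\Pi_e \ot -$ in the top row being zero, so both compositions are the zero morphism and the square commutes automatically. For the remaining edges the square is literally the same as in $\Rep_\MC Q'$. It is then immediate that $F \circ G = \id$ on the nose, and $G \circ F = \id$ because the only possible value of $\phi_e$ on a representation in the image of $G \circ F$ is the one it already had.

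There is really no obstacle here; the entire content is the observation that tensoring with the zero object kills the Hom-space, so the datum $\phi_e$ carries no information and imposes no constraint. If anything, the only thing to be slightly careful about is that the additivity of $\Psi$ (and hence the fact that $\Psi(0) = 0$) is used implicitly — but this is built into Definition \ref{defn:modcat}.
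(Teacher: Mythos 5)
Your proof is correct and is precisely the argument the paper has in mind: the paper states this as one of ``two obvious lemmas'' and omits the proof entirely, the whole content being your observation that additivity of the module action forces $\Pi_e \ot X_{s(e)} \cong 0$, so that $\phi_e$ is forced to be zero and the square \eqref{eq:morphismdef} for $e$ commutes vacuously. Your care in noting that $\Psi(0)=0$ is what underlies this, and that the resulting functors are mutually inverse on the nose, is exactly right and needs no change.
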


\begin{lem} \label{lem:directsum} Consider a fusion quiver $Q$ with two distinct edges $e_1, e_2$ having the same source $s$ and target $t$, and labelled by $\Pi_1$ and $\Pi_2$ respectively. Let $Q'$ be the fusion quiver obtained from $Q$ by replacing $e_1$ and $e_2$ with a single edge $e'$ having the same source $s$ and target $t$, labelled by $\Pi_1 \oplus \Pi_2$. Then for any module category $\MC$, the obvious functor gives an
equivalence of categories $\Rep_{\MC} Q \cong \Rep_{\MC} Q'$. \end{lem}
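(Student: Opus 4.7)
The plan is to construct explicit mutually inverse functors $F \co \Rep_\MC Q \to \Rep_\MC Q'$ and $G \co \Rep_\MC Q' \to \Rep_\MC Q$ that act as the identity on the vertex assignment. The essential input is that $\Pi \otimes -$ is exact (hence additive) for every object $\Pi \in \FC$, so there is a natural isomorphism $(\Pi_1 \oplus \Pi_2) \otimes X_s \cong (\Pi_1 \otimes X_s) \oplus (\Pi_2 \otimes X_s)$ in $\MC$, and morphisms out of this direct sum are in bijection with pairs of morphisms out of the summands.

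First I would define $F$ on objects: given $(X, \phi) \in \Rep_\MC Q$, set $F(X,\phi)_v := X_v$ for every $v \in V$, keep $\phi_f$ unchanged on every edge $f \neq e_1, e_2$, and define the morphism attached to $e'$ by
\[
\phi_{e'} \co (\Pi_1 \oplus \Pi_2) \otimes X_s \cong (\Pi_1 \otimes X_s) \oplus (\Pi_2 \otimes X_s) \xrightarrow{(\phi_{e_1},\, \phi_{e_2})} X_t
\]
via the universal property of the coproduct. For morphisms $\rho \co (X,\phi) \to (Y,\psi)$ in $\Rep_\MC Q$, set $F(\rho)_v := \rho_v$; the commutative square \eqref{eq:morphismdef} for $e'$ is equivalent, after precomposing with the two coproduct inclusions, to the pair of commutative squares for $e_1$ and $e_2$, so this is well-defined.

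Next I would define $G$ in the opposite direction: given $(Y,\psi) \in \Rep_\MC Q'$, keep vertex objects and morphisms on other edges unchanged, and set
\[
\psi_{e_i} \co \Pi_i \otimes Y_s \xrightarrow{\iota_i \otimes \id_{Y_s}} (\Pi_1 \oplus \Pi_2) \otimes Y_s \xrightarrow{\psi_{e'}} Y_t
\]
for $i = 1, 2$, where $\iota_i$ denotes the canonical inclusion into the coproduct. Again $G$ is the identity on morphisms at each vertex, and the commutative square for $e'$ yields (by precomposition with $\iota_i \otimes \id$) the two commutative squares for $e_1$ and $e_2$.

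Finally I would verify $FG = \id$ and $GF = \id$ on the nose, which reduces to the coproduct identities $(\phi_{e_1}, \phi_{e_2}) \circ (\iota_i \otimes \id) = \phi_{e_i}$ and, conversely, $\bigl((\psi_{e'} \circ (\iota_1 \otimes \id),\, \psi_{e'} \circ (\iota_2 \otimes \id)\bigr) = \psi_{e'}$. There is no real obstacle here; the proof is a formal application of the universal property of biproducts together with the additivity of $\otimes$ in the first variable. The only thing to keep an eye on is that the natural isomorphism $(\Pi_1 \oplus \Pi_2) \otimes X_s \cong (\Pi_1 \otimes X_s) \oplus (\Pi_2 \otimes X_s)$ is genuinely natural in $X_s$, so that $F$ and $G$ extend properly to morphisms.
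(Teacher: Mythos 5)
Your proof is correct and is essentially the argument the paper intends: the paper states this lemma without proof (as one of two ``obvious lemmas''), adding only a remark that $\Pi_1 \oplus \Pi_2$ comes with fixed inclusion and projection maps --- precisely the biproduct data your functors $F$ and $G$ exploit, and which makes $FG = \id$ and $GF = \id$ hold on the nose. One small attribution fix: the isomorphism $(\Pi_1 \oplus \Pi_2) \otimes X_s \cong (\Pi_1 \otimes X_s) \oplus (\Pi_2 \otimes X_s)$ comes from additivity of the module-structure functor $\Psi \co \FC \to \End(\MC)$ in the $\FC$-variable (built into Definition \ref{defn:modcat}), not from exactness of $\Pi \otimes -$ in the $\MC$-variable, which is a separate (true but here irrelevant) fact.
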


\begin{rem} In this lemma, $\Pi \oplus \Pi'$ is implicitly equipped with fixed projection and inclusion maps to and from its summands $\Pi$ and $\Pi'$. Some related subtleties are discussed in \S\ref{sec:fusionquivervect}. \end{rem}

\begin{defn} 
We say that a fusion quiver $Q$ has \emph{decomposable edges} if any edge is labelled by a decomposable object, or if there are two distinct edges $e, e'$ having the same source $s$ and target $t$ which are labelled by nonzero objects. 
\end{defn}

We conclude with the obvious notion of an isomorphism of fusion quivers. We tacitly assume various obvious properties, e.g. isomorphic quivers have isomorphic categories of representations.

\begin{defn} Let $Q = (\FC, V, E, \Pi)$ and $Q' = (\FC, V', E', \Pi')$ be two fusion quivers over $\FC$. An \emph{isomorphism} of fusion quivers over $\FC$ is the data of
	\begin{itemize} \item an isomorphism $f : (V,E) \to (V',E')$ of the underlying quivers,
		\item an isomorphism $\Pi_e \to \Pi'_{f(e)}$ in $\FC$ for each edge $e \in E$. \end{itemize}
\end{defn} 

% \begin{proof} This is obvious, given the isomorphism
% \begin{equation}
% \Hom_{\MC}((\Pi \oplus \Pi') \ot X_s, X_t) \cong \Hom_{\MC}(\Pi \ot X_s, X_t) \oplus \Hom_{\MC}(\Pi' \ot X_s, X_t).
% \end{equation}
% \end{proof}

%========================================================
\subsection{Fusion quivers valued in vector spaces} \label{sec:fusionquivervect}
%========================================================

In this section (and this section only) we wish to discuss the relation of fusion quiver representations to the classical notion of (ordinary) quiver representations. Reserving the shorthand $Q$ for fusion quivers, we write $(V,E)$ for an ordinary quiver, and $\Rep (V,E)$ for the usual category of quiver representations over a field $\Bbbk$. 

Let $(V,E)$ be an (ordinary) quiver. Let $\Vect = \Vect_{\Bbbk}$ and label each edge $e$ with the one-dimensional vector space $\Pi_e = \Bbbk$, so that $Q=(\Vect, V, E, \Pi)$ is a fusion
quiver over $\Vect$. Then we have an equivalence (of abelian categories)
\[
\Rep_{\Vect} Q \cong \Rep (V,E) 
\]
induced by the identification $\Bbbk \otimes W \cong W$ for all vector space $W \in \Vect$. 
However, there is a subtlety here which certainly bears mentioning. 
Writing $\Pi_e = \Bbbk$ equips this vector space with an implicit basis, but the concept of a fusion quiver does not entail a choice of basis for edge labels $\Pi_e$ (as in general $\FC$ may not even admit a forgetful functor to vector spaces).
Technically, it is not the basis of $\Bbbk$ which is as important as the choice of isomorphism $\Bbbk \otimes W \cong W$ corresponding to that choice of basis.

Let $\BB = \{b_e\}$ be the data of a basis for each vector space $\Pi_e$; this is just a choice of nonzero
vector $b_e \in \Pi_e = \Bbbk$ for each $e \in E$. This is the data we need to construct a functor $F_{\BB} \co \Rep_{\Vect} Q \to \Rep (V,E)$. If $(X,\phi)$ is a fusion representation, then $F_{\BB}(X,\phi) = (X,\phi')$, an ordinary quiver representation with the same underlying vector spaces $X_v$. We have
\begin{equation} \phi'_e(x) := \phi_e(b_e \ot x). \end{equation}
The inverse functor to $F_{\BB}$ is easy to define.

The subtlety is that $F_{\BB}$ is \emph{not} independent of the choice of bases $\BB$ up to isomorphism! For example, let $Q$ be the (2-)Kronecker quiver, and label one edge with $\Bbbk \cdot v$ and the other with $\Bbbk \cdot w$. Let $\BB = \{v,w\}$ and $\BB' = \{2v,w\}$ be two choices of bases. There is a fusion representation $(X,\phi)$ for which 
\begin{equation} F_{\BB}(X,\phi) = \begin{tikzcd} \Bbbk \arrow[r,bend left,"1"] \arrow[r,bend right,swap,"1"] & \Bbbk \end{tikzcd}, \qquad
F_{\BB'}(X,\phi) = \begin{tikzcd} \Bbbk \arrow[r,bend left,"2"] \arrow[r,bend right,swap,"1"] & \Bbbk \end{tikzcd}.  \end{equation}
These two quiver representations are not isomorphic, so the functors $F_{\BB}$ and $F_{\BB'}$ can not be isomorphic.

Instead, the functor $F_{\BB}$ is independent of the choice of bases $\BB$ up to post-composition with an autoequivalence of $\Rep (V,E)$. There is an autoequivalence $G$ of the
category $\Rep (V,E)$ which rescales the linear transformations on each edge, satisfying $G \circ F_{\BB} = F_{\BB'}$. (Here we truly mean equality of functors.)

We are now equipped to discuss the passage from fusion quivers over $\Vect$ back to ordinary quivers.

\begin{lem} \label{lem:ordinaryquivermultipleedge} Let $Q = (\Vect,V,E,\Pi)$ be a fusion quiver over $\Vect = \Vect_{\Bbbk}$. Let $m_e$ denote the dimension of the vector space $\Pi_e$
for each $e \in E$. Let $(V,E')$ be the ordinary quiver with the same vertex set, where each edge $e \in E$ is replaced with $m_e$ (numbered) edges in $E'$ with the same source and
target. A \emph{basis} of $Q$ is the data $\BB = \sqcup_{e \in E} \BB_e$, where $\BB_e$ is an (ordered) basis of $\Pi_e$. For each basis $\BB$ of $Q$ there is a functor $F_{\BB} \co
\Rep_{\Vect} Q \to \Rep (V,E')$, defined below. 
The functor $F_{\BB}$ is an equivalence of categories. For two bases $\BB$ and $\BB'$, the functors $F_{\BB}$ and $F_{\BB'}$ need not be isomorphic, but there exists an autoequivalence $G$ of
$\Rep (V,E')$ for which $G \circ F_{\BB} = F_{\BB'}$.

If $(X,\phi)$ is a fusion representation of $Q$, then $F_{\BB}(X,\phi) = (X,\phi')$ is an ordinary quiver representation with the same underlying vector spaces $X_v$. If $\{f_1, f_2, \ldots, f_{m_e}\}$ are the edges in $E'$ lying under an edge $e$ in $E$, and $\BB_e = \{b_1, \ldots, b_{m_e}\}$ is the corresponding ordered basis of $\Pi_e$, then for any $x \in X_{s(e)}$ we have
\begin{equation} \phi'_{f_i}(x) := \phi_e(b_i \ot x). \end{equation}
\end{lem}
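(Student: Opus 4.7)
The plan is to construct an explicit strict inverse to $F_{\BB}$ (establishing an isomorphism of categories, in particular an equivalence), and then to exhibit $G$ directly as an autoequivalence of $\Rep(V, E')$ built from the change-of-basis matrices between $\BB$ and $\BB'$. Both parts reduce to linear algebra inside $\Pi_e \otimes X_{s(e)} \cong \Bbbk^{m_e} \otimes X_{s(e)}$.

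First, to show $F_{\BB}$ is an equivalence, I would define a functor $H_{\BB}$ in the reverse direction as follows. Given $(X, \phi') \in \Rep(V, E')$, the ordered basis $\BB_e = \{b_1, \ldots, b_{m_e}\}$ of $\Pi_e$ induces a canonical $\Bbbk$-linear isomorphism $\Pi_e \otimes X_{s(e)} \cong \bigoplus_{i=1}^{m_e} X_{s(e)}$, where the $i$-th summand is the image of $\{b_i \otimes x : x \in X_{s(e)}\}$. Define $\phi_e : \Pi_e \otimes X_{s(e)} \to X_{t(e)}$ to be the unique linear map whose restriction to the $i$-th summand equals $\phi'_{f_i}$, i.e.\ $\phi_e(b_i \otimes x) := \phi'_{f_i}(x)$. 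On morphisms, both $F_{\BB}$ and $H_{\BB}$ are the identity on the underlying vertex-wise linear maps $\rho_v$, and commutativity of the fusion-representation square \eqref{eq:morphismdef} is equivalent, edge-by-edge and basis-vector-by-basis-vector, to the commutativity squares for the ordinary representation on the edges $f_1, \ldots, f_{m_e}$. Thus $H_{\BB} \circ F_{\BB} = \id$ and $F_{\BB} \circ H_{\BB} = \id$ strictly.

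Next, for two bases $\BB = \{b_{e,i}\}$ and $\BB' = \{b'_{e,i}\}$, let $A_e \in \GL_{m_e}(\Bbbk)$ be the change-of-basis matrix characterized by $b'_{e,i} = \sum_j (A_e)_{ji}\, b_{e,j}$. I would define the autoequivalence $G$ of $\Rep(V, E')$ to act as the identity on objects (same underlying vector spaces $X_v$) and to send the tuple $(\phi'_{f_1}, \ldots, \phi'_{f_{m_e}})$ labelling the parallel edges over $e$ to the tuple with $i$-th entry $\sum_j (A_e)_{ji}\, \phi'_{f_j}$. Invertibility of each $A_e$ yields a strict two-sided inverse for $G$ built from the matrices $A_e^{-1}$, so $G$ is an automorphism of $\Rep(V, E')$. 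The equality $G \circ F_{\BB} = F_{\BB'}$ is a direct check: applying $F_{\BB'}$ to $(X, \phi)$ produces the map $x \mapsto \phi_e(b'_{e,i} \otimes x)$, which by bilinearity and the change-of-basis relation equals $\sum_j (A_e)_{ji}\, \phi_e(b_{e,j} \otimes x) = \sum_j (A_e)_{ji}\, F_{\BB}(X,\phi)'_{f_j}(x)$, precisely the $i$-th component of $G(F_{\BB}(X, \phi))$.

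The main obstacle — to the extent there is one — is purely notational bookkeeping, namely fixing conventions so that the numbering of the parallel edges $f_1, \ldots, f_{m_e}$ over $e$ is consistent with the numbering of the basis $\BB_e$, and keeping the indices of $A_e$ straight. Once these conventions are pinned down, the conceptual content is simply that tensoring with $\Pi_e \cong \Bbbk^{m_e}$ is canonically isomorphic to taking $m_e$ copies of the identity functor, and a choice of ordered basis of $\Pi_e$ is exactly the rigidification of this isomorphism — different rigidifications differ by an element of $\GL_{m_e}(\Bbbk)$, which is exactly what $G$ implements.
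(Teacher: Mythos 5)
Your proof is correct and matches the paper's intent exactly: the paper's own proof reads, in full, ``This is easy, and left to the reader,'' and your explicit strict inverse $H_{\BB}$ together with the change-of-basis automorphism $G$ built from the matrices $A_e$ is precisely the expected verification (indeed upgrading ``equivalence'' to a strict isomorphism of categories). The only clause you do not address is that $F_{\BB}$ and $F_{\BB'}$ \emph{need not} be isomorphic, but the paper's 2-Kronecker example immediately preceding the lemma already supplies that counterexample, so nothing essential is missing.
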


\begin{proof} This is easy, and left to the reader.\end{proof}

\begin{rem} Let $G$ is an automorphism of $\Rep (V,E')$ as above, and $G(X,\phi) = (X,\psi)$. For $e \in E$, let $e'_1, \ldots, e'_{m_e}$ be the corresponding edges of $E'$. Note that $\psi_{e'_i}$ could be a linear combination of various $\phi_{e'_j}$, corresponding to a general change of basis in $\Pi_e$. \end{rem}

\begin{rem} The previous lemma also makes sense when $m_e = 0$, see Lemma \ref{lem:removezero}. We encourage the reader to compare Lemma \ref{lem:ordinaryquivermultipleedge} with Lemma
\ref{lem:directsum} to see the significance of the choice of inclusion and projection map. \end{rem}

\begin{cor} \label{cor:theotherway} Let $(V,E)$ be an ordinary quiver. Let $Q = (\Vect, V, E, \Pi)$ be the fusion quiver over $\Vect$ obtained by labeling each edge with a one-dimensional vector space $\Pi_e = \Bbbk$, equipped with a standard basis. Then $\Rep_{\Vect} Q \cong \Rep (V,E)$. \end{cor}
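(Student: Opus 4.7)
The plan is to deduce this directly from Lemma \ref{lem:ordinaryquivermultipleedge}, which I regard as having already done all the real work. In the setup of the corollary, each label $\Pi_e = \Bbbk$ is one-dimensional, so $m_e = \dim_{\Bbbk} \Pi_e = 1$ for every $e \in E$. Consequently, the quiver $(V,E')$ built in the lemma by replacing each edge $e$ with $m_e$ parallel edges is canonically identified with the original quiver $(V,E)$.

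Next, I would take $\BB$ to be the standard basis, meaning $\BB_e = \{1\}$ for each $e \in E$. Lemma \ref{lem:ordinaryquivermultipleedge} then supplies a functor $F_{\BB} \co \Rep_{\Vect} Q \to \Rep(V,E)$, and asserts it is an equivalence of categories. Unpacking the formula in that lemma, $F_{\BB}$ sends a fusion representation $(X,\phi)$ to the ordinary representation with the same vertex spaces $X_v$ and with edge map $\phi'_e \co X_{s(e)} \to X_{t(e)}$ given by $\phi'_e(x) = \phi_e(1 \ot x)$, i.e.\ exactly the linear map obtained from $\phi_e$ under the canonical isomorphism $\Bbbk \ot X_{s(e)} \cong X_{s(e)}$.

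There is essentially no obstacle here: the entire content of the corollary is the specialization $m_e = 1$ of the lemma, and the only thing one might worry about, namely that different bases yield non-isomorphic functors, does not arise because every nonzero choice of $b_e \in \Bbbk$ differs from the standard one by a scalar on a single edge, which simply post-composes $F_{\BB}$ with an obvious autoequivalence of $\Rep(V,E)$. The displayed equivalence $\Rep_{\Vect} Q \cong \Rep(V,E)$ is then $F_{\BB}$ itself.
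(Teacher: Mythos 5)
Your proposal is correct and matches the paper's proof exactly: the paper disposes of this corollary with the single line ``This is a special case of Lemma \ref{lem:ordinaryquivermultipleedge}.'' Your unpacking of the case $m_e = 1$ with the standard basis is just a more explicit rendering of that same specialization.
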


\begin{proof} 
This is a special case of Lemma \ref{lem:ordinaryquivermultipleedge}.
\end{proof}

Because of Corollary \ref{cor:theotherway} we opt to work entirely in the context of fusion quivers.

\begin{convention} \label{conv:ordinaryquiver} If we specify an ordinary quiver $(V,E)$, we may abusively call it $Q$ and write $\Rep_{\Vect} Q$. In this case, we refer to the fusion quiver valued in $\Vect$
which was given in Corollary \ref{cor:theotherway}. \end{convention}

%========================================================
\subsection{Further Conventions}
%========================================================

Inspired by Lemmas \ref{lem:removezero} and \ref{lem:ordinaryquivermultipleedge}, we recommend some simplifying conventions for general fusion quivers. Where they conflict with Convention \ref{conv:ordinaryquiver}, the former takes precedent.

\begin{convention} \label{conv:nomult} Unless otherwise stated, we assume that there is at most one edge with the same source and target, though it may be labeled with a
decomposable object. \end{convention}

\begin{convention} \label{conv:nozero} Unless otherwise stated, we assume that every edge in a fusion quiver $Q$ is labeled by a non-zero object. \end{convention}

Convention \ref{conv:nomult} becomes mathematically relevant in \S\ref{sec:bilinearformreflection}, where having only one edge makes certain definitions easier to state. Convention \ref{conv:nozero} is never actually useful within this paper, and for sake of generality, we do not assume that edge labels are nonzero. However, we still recommend this convention for your day-to-day fusion quiver use.

%========================================================
\subsection{Examples} \label{sec:fusionquiverexamples}
%========================================================

\begin{notation} 
Unless we state otherwise, we will set $\MC = \FC$ in this subsection, i.e.\ all representations of fusion quivers over $\FC$ are valued in $\FC$ itself. 
For simplicity we assume our fusion categories are $\Bbbk$-linear where $\Bbbk$ is a field of characteristic zero.
\end{notation}

\begin{ex}
As discussed in \S\ref{sec:fusionquivervect}, representations of fusion quivers over $\Vect$ are essentially the same as representations of ordinary quivers.
\end{ex}

\begin{ex} \label{ex:RepS2} 
Let $S_2 \cong \ZZ/2\ZZ$ be the symmetric group on 2 elements, and set $\FC_2 := \Rep(S_2)$. 
Let $\Sign$ denote the sign representation, and $\CC$ the trivial representation; these are all of the simple representations of $S_2$.
 
Consider $Q_2$ to be the fusion quiver over $\FC_2$ with two vertices, and one edge labeled by $\Sign$. 
\[
Q_2:=	
	\begin{tikzcd}
	\bullet & \bullet
	\arrow["\Sign", from=1-1, to=1-2]
\end{tikzcd}\]
Let us analyze $\Rep_{\FC_2} Q_2$ in detail. 
The simple representations involve placing either the trivial or the sign representation on one vertex, and zero on the other (so there are a total of 4 simple representations). The two non-simple indecomposable representations are
\[\begin{tikzcd}
	\CC & \Sign
	\arrow["\Sign", from=1-1, to=1-2]
\end{tikzcd}, \qquad \begin{tikzcd}
	\Sign & \CC
	\arrow["\Sign", from=1-1, to=1-2]
\end{tikzcd},\]
where in both representations the arrows are assigned some choice of isomorphisms in $\FC_2$.
Meanwhile, for any representation of the form
\[
\begin{tikzcd}
	\Sign & \Sign
	\arrow["\Sign", from=1-1, to=1-2]
\end{tikzcd},
\]
the morphism assigned to the arrow must be zero, since $\Sign \otimes \Sign \cong \CC$ has no non-zero map to $\Sign$ in $\FC_2$. Consequently, the representation is decomposable. The same can be said for
$\begin{tikzcd} 
	\CC & \CC
	\arrow["\Sign", from=1-1, to=1-2]
\end{tikzcd}$.
One can also check that any representation of the form
\[
\begin{tikzcd}
	A & B
	\arrow["\Sign", from=1-1, to=1-2]
\end{tikzcd}
\]
with $A$ or $B$ decomposable leads to a decomposable representation (the process is similar to the one for the ordinary $A_2$ quiver).
As such, we see that there are effectively twice as many indecomposable fusion representations of $Q_2$ as there are indecomposable representations for the ordinary quiver of type $A_2$. 
In fact, the reader can verify that $\Rep_{\FC_2} Q_2$ is equivalent to $\Rep_{\Vect}(A_2 \sqcup A_2)$. 

Similarly, one can show that the fusion quiver $Q$ over $\FC_2$ given by
\[
\begin{tikzcd}
	\bullet & \bullet & \bullet
	\arrow["\Sign", from=1-1, to=1-2]
	\arrow["\Sign", from=1-2, to=1-3]
\end{tikzcd},
\]
has twice as many indecomposable representations as there are indecomposable representations for the ordinary quiver of type $A_3$, and $\Rep_{\FC_2} Q \cong \Rep_{\Vect}(A_3 \sqcup A_3)$ (with the linear orientation on both $A_3$).
\end{ex}

\begin{ex} 
More generally, consider the symmetric group $S_n$ on $n$ elements and let $\FC_n := \Rep(S_n)$. Let $p(n)$ be the number of irreducible objects in $\FC_n$, i.e. the number of partitions of $n$. Now let $Q$ denote the fusion quiver over $\FC_n$ with two vertices, and one edge labeled by the sign representation $\Sign$. 
Just as in the previous examples, one can show that $\Rep_{\FC_n} Q \cong \Rep_{\Vect} \sqcup_{i=1}^{p(n)} A_2$. Thus there are $3 p(n)$ indecomposable representations of $Q$.
\end{ex}

\begin{ex}\label{ex:Snstdrep}
We continue with $\FC_n := \Rep(S_n)$.
Let $V_{n-1} \in \FC_n$ denote the $(n-1)$-dimensional ``standard'' representation of $S_n$, also known as the Specht module for the partition $(n-1,1)$, or the Cartan subalgebra of $\mathfrak{sl}_n$.  
Let $Q_n$ be the fusion quiver over $\FC_n$ with two vertices, and with one edge labelled by $V_{n-1}$.
% https://q.uiver.app/?q=WzAsMixbMCwwLCJcXGJ1bGxldCJdLFsxLDAsIlxcYnVsbGV0Il0sWzAsMSwiViJdXQ==
\[
\begin{tikzcd}
	\bullet & \bullet
	\arrow["V_{n-1}", from=1-1, to=1-2]
\end{tikzcd}
\]
Some examples of interesting indecomposable representations when $n=3$ are
\begin{equation} \label{ex:Snfun} \begin{tikzcd}
	\CC & V_2
	\arrow["V_2", from=1-1, to=1-2]
\end{tikzcd}, \qquad \begin{tikzcd}
	\CC \oplus V_2 & \CC \oplus V_2
	\arrow["V_2", from=1-1, to=1-2]
\end{tikzcd}.
\end{equation}
For the second example, the map associated to the edge has nonzero components $V_2 \ot \CC \to V_2$, $V_2 \ot V_2 \to V_2$, and $V_2 \ot V_2 \to \CC$.
As we will see in Example \ref{ex:Snunfold}, $Q_n$ will have infinitely many indecomposable representations for all $n \geq 3$.
\end{ex}

Up until now, we have only considered $\FC = \Rep G$ for a finite group $G$ (this includes $\FC = \Vect$). 
These fusion categories possess a forgetful functor (often called a fibre functor) to $\Vect$, and as a consequence, the Frobenius--Perron dimension $\FPdim(V)$ of any representation $V \in \FC$ is given by the dimension $\dim(V)$ of the underlying vector space of $V$. We now present some examples of fusion quivers and representations over fusion categories that are \emph{non-integral}, namely they contain objects with non-integral Frobenius--Perron dimensions. In particular, they do not possess fibre functors into $\Vect$. We continue to postpone the definition of Frobenius--Perron dimension, see \S\ref{ss:FPdimsigncoh}.

A major class of such examples are the Coxeter quivers introduced in \cite[\S 2]{heng2024coxeter}. Coxeter quivers can be viewed as fusion quivers over a certain family of fusion categories with particular allowed edge labels. Here is a prototypical example.

\begin{ex} \label{ex:coxeterquiver}
Let $\FC$ be the fusion category with two simple objects $\one$ (monoidal unit) and $\tau$, and fusion rule given by
\[
\tau \otimes \tau \cong \one \oplus \tau.
\]
This fusion category exists and is unique, see \cite{ostrik_rank2}.
It is also the fusion category denoted by $\TLJ_5^{even}$ in \cite[\S 1.2]{heng2024coxeter}. 
One has $\FPdim(\tau) = 2 \cos(\pi/5)$, also known as the golden ratio.

Consider the following fusion quiver $Q$ over $\FC$:
\[
\begin{tikzcd}
	\bullet & \bullet & \bullet & \bullet
	\arrow["\tau", from=1-1, to=1-2]
	\arrow["\one", from=1-2, to=1-3]
	\arrow["\one", from=1-3, to=1-4]
\end{tikzcd}.
\]
Then $\Rep_{\FC} Q$ is by definition the same as the category of representations for the Coxeter quiver (see \cite[Definition 2.5]{heng2024coxeter} for definition)
\[
\begin{tikzcd}
	\bullet & \bullet & \bullet & \bullet
	\arrow["5", from=1-1, to=1-2]
	\arrow[from=1-2, to=1-3]
	\arrow[from=1-3, to=1-4]
\end{tikzcd}.
\]
By \cite[Theorem 3.5]{heng2024coxeter}, it follows that $\Rep_{\FC} Q$ has finitely many indecomposables (equal to the number of positive roots of $E_8$, which is also twice the number of positive roots of $H_4$).
\end{ex}

There are also many other ``exotic'' fusion categories, and correspondingly, exotic fusion quivers.
\begin{ex}\label{ex:exoticfusion}
Let $\FC:= \mathcal{E}^{(n)}_4$ be the fusion category given in \cite[Definition 3.20]{EM_smallFPdim}; it has 12 simple objects and is generated (monoidally) by an object denoted by (boldfaced) \textbf{5}. 
One has $\FPdim(\textbf{5}) = 2 \cos(\pi/8)$.
We consider the fusion quiver
\[
Q:= \begin{tikzcd}
	\bullet & \bullet
	\arrow["\textbf{5}", from=1-1, to=1-2]
\end{tikzcd}.
\]
(Not to be confused with the Coxeter quiver $\begin{tikzcd} \bullet \ar[r,"5"] & \bullet \end{tikzcd}$, whose `5' is not boldfaced.)
In Example \ref{ex:exoticunfold}, we will show that this fusion quiver has $2\cdot 28 + 2\cdot 20 = 96$ indecomposable representations valued in $\FC$.
\end{ex}

\subsection{Fusion quivers valued in \texorpdfstring{$\Rep G$}{Rep G} and \texorpdfstring{$G$}{G}-equivariantization} \label{sec:Gequivariant}
Throughout this subsection, $G$ will be a finite group and we will suppose that the underlying field $\Bbbk$ has characteristic not dividing $|G|$.
We shall show that representations of fusion quivers over $\Rep G$ are the same as (certain) $G$-equivariant representations of quivers over $\Vect$.

Recall that the (ordinary) $\ell$-Kronecker quiver has two vertices, a source and a sink, with $\ell$ edges from source to sink.
Following Convention \ref{conv:ordinaryquiver}, we consider its associated fusion quiver over $\Vect$ instead, which we will still call the $\ell$-Kronecker quiver and denote by $K_\ell$:
\[
K_\ell =
\begin{tikzcd}
    \bullet
    \arrow[r, "\Bbbk^\ell"]
    &
    \bullet
\end{tikzcd}.
\]
In particular, a representation $(X, \phi) \in \Rep_{\Vect} K_\ell$ consists of two vector spaces $X_1, X_2$, and a linear map $\phi: \Bbbk^\ell \otimes X_1 \rightarrow X_2$.

Let $(\Bbbk^\ell, \theta) \in \Rep G$ be an $\ell$-dimensional representation of $G$.
This defines a (categorical) group action of $G$ on $\Rep_{\Vect} K_\ell$: each $g \in G$ is assigned to a functor which sends a representation $(X,\phi)$ to $(X, \phi \circ (\theta_{g^{-1}} \otimes \id_{X_1}))$, and sends a morphism $\rho: (X,\phi) \rightarrow (Y,\phi')$ to itself (component-wise). 

Now consider the category of $G$-equivariant representations $(\Rep_{\Vect} K_\ell)^G$.
Recall that an object in $(\Rep_{\Vect} K_\ell)^G$ consists of a representation $(X,\phi)$ together with isomorphisms $u_g: (X, \phi \circ (\theta_{g^{-1}} \otimes \id_{X_1})) \xrightarrow{\cong} (X,\phi)$ of representations so that the following diagram commutes
\begin{equation} \label{eq:equvariantiso}
\begin{tikzcd}
    (X, \phi \circ (\theta_{h^{-1}} \otimes \id_{X_1}) \circ (\theta_{g^{-1}} \otimes \id_{X_1})) 
    	\arrow[r, "u_g"] \arrow[d, "="]
    & (X, \phi \circ (\theta_{h^{-1}} \otimes \id_{X_1})
    	\arrow[d, "u_h"] \\
    (X, \phi \circ (\theta_{(gh)^{-1}} \otimes \id_{X_1}))
    	\arrow[r, "u_{gh}"]
    & (X, \phi)
\end{tikzcd}
\end{equation}
for each $g, h \in G$.
The morphisms in $(\Rep_{\Vect} K_\ell)^G$ are those in $\Rep_{\Vect} K_\ell$ that commute with the associated isomorphisms $u_g$.
%\begin{rem}
%We note here that $(\Rep_{\Vect} K_\ell)^G$ is also equivalent to the category of modules over the skew group algebra of the path algebra of the $\ell$-Kronecker quiver.
%\end{rem}

We claim that an object in $(\Rep_{\Vect} K_\ell)^G$ is by definition the same data as a representation of the following fusion quiver over $\Rep G$
\[
K_{(\Bbbk^\ell, \theta)} =
\begin{tikzcd}
    \bullet
    \arrow[r, "(\Bbbk^\ell{,} \theta)"]
    &
    \bullet
\end{tikzcd}.
\]
Indeed, the isomorphisms $(u_g)_i: X_i \rightarrow X_i$ define a representation of $G$ on $X_i$ (see \eqref{eq:equvariantiso}), and for each $g \in G$, the following commutative diagram 
\[
\begin{tikzcd}[column sep=large]
    \Bbbk^\ell \otimes X_1 \ar[r, "\theta_{g^{-1}} \otimes \id_{X_1}"] \ar[d, swap, "\id_{\Bbbk^\ell} \otimes (u_g)_1"] & \Bbbk^\ell \otimes X_1 \ar[r, "\phi"] \ar[d, dashed, "\theta_g \otimes (u_g)_1", swap] & X_2 \ar[d, "(u_g)_2"] \\
    \Bbbk^\ell \otimes X_1 \ar[r, "="] & \Bbbk^\ell \otimes X_1 \ar[r, "\phi"] & X_2
\end{tikzcd}
\]
can be deduced from $u_g$ being a morphism of fusion quiver representations, which implies that $\phi$ is a morphism of representations of $G$.
Moreover, each component $\rho_i$ of a morphism $\rho$ in $(\Rep_{\Vect} K_\ell)^G$ has to commute with the component $(u_g)_i$ of $u_g$ for all $g \in G$, which means each $\rho_i$ is a morphism of representations of $G$.
In other words, the category $(\Rep_{\Vect} K_\ell)^G$ is the same as $\Rep_{\Rep G} K_{(\Bbbk, \theta)}$.

More generally, let $(V,E)$ be an arbitrary ordinary quiver and consider its associated fusion quiver $Q = (\Vect, V, E, \Pi)$ as in Corollary \ref{cor:theotherway}. 
Following Convention \ref{conv:nomult}, we consider the fusion quiver $Q' = (\Vect, V, E', \Pi')$ obtained from $Q$ by combining edges with the same source and target into a single edge labeled by the direct sum of the original labeling objects (i.e.\ apply Lemma \ref{lem:directsum}).
The same analysis used for the $\ell$-Kronecker quiver can be applied to the general setting as well; we leave it to the reader as an exercise and only record the general statement.

\begin{prop}\label{prop:Gequivariantfusionquiver}
Let $G$ be a finite group. 
Suppose $G$ has an action on $\Rep_{\Vect} Q'$ defined by a choice of representation $(\Pi'_e, \theta_e) \in \Rep G$ for each $e \in E'$.
%Equivalently, an action of $G$ on $\Pi_e$ for each $e \in E'$.
Let $\widetilde{Q} := (\Rep G, V, E', \widetilde{\Pi'})$ denote the fusion quiver over $\Rep G$ with $\widetilde{\Pi'}_e := (\Pi'_e, \theta_e)$ for each $e\in E'$. Then there is a natural equivalence of categories 
\[
(\Rep_{\Vect} Q)^G = \Rep_{\Rep G} \widetilde{Q}.
\]
\end{prop}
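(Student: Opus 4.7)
The plan is to generalize the argument already carried out for the $\ell$-Kronecker quiver $K_\ell$ to an arbitrary quiver $Q'$. Since the $G$-action on $\Rep_{\Vect} Q'$ fixes each vertex (acting only via the chosen $\theta_e$ on the edge labels), the equivariant data on an object $(X,\phi)$ will split into ``vertex data'' giving $G$-representations on each $X_v$, and ``edge data'' giving the $G$-equivariance of each $\phi_e$. I will construct mutually inverse functors between $(\Rep_{\Vect} Q')^G$ and $\Rep_{\Rep G} \widetilde{Q}$ realizing this bookkeeping.

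First I would define the functor $F \colon (\Rep_{\Vect} Q')^G \to \Rep_{\Rep G} \widetilde{Q}$ as follows. Given $((X,\phi), \{u_g\}_{g \in G})$, the automorphisms $(u_g)_v \colon X_v \to X_v$ at each vertex satisfy $(u_{gh})_v = (u_g)_v \circ (u_h)_v$ by the cocycle condition \eqref{eq:equvariantiso}, so they endow $X_v$ with the structure of a $G$-representation, i.e.\ an object of $\Rep G$. For each edge $e \in E'$, the statement that $u_g$ is a morphism of fusion representations (unwinding exactly as in the diagram displayed in the excerpt, now with vertex-dependent $u_g$) amounts to saying that $\phi_e$ commutes with the action of $G$ on $\Pi'_e \otimes X_{s(e)}$ via $\theta_e \otimes (u_g)_{s(e)}$ and on $X_{t(e)}$ via $(u_g)_{t(e)}$. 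Hence $\phi_e$ is a morphism in $\Rep G$ from $\widetilde{\Pi'}_e \otimes X_{s(e)}$ to $X_{t(e)}$. The functor sends a morphism $\rho$ in $(\Rep_{\Vect} Q')^G$ to the collection $(\rho_v)$: compatibility of each $\rho_v$ with $u_g$ means precisely that $\rho_v$ is $G$-equivariant, i.e.\ a morphism in $\Rep G$.

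Next I would define the inverse functor $F^{-1} \colon \Rep_{\Rep G} \widetilde{Q} \to (\Rep_{\Vect} Q')^G$. Given a representation of $\widetilde{Q}$, the underlying vector spaces $X_v$ and the $\Bbbk$-linear maps $\phi_e$ (after forgetting the $G$-actions) form an object of $\Rep_{\Vect} Q'$. The $G$-action on each $X_v$ defines isomorphisms $(u_g)_v \colon X_v \to X_v$, and since $\phi_e$ is a morphism in $\Rep G$, these collectively define an equivariant structure $u_g$. The cocycle condition \eqref{eq:equvariantiso} follows from the fact that each $(u_g)_v$ comes from a $G$-action. Morphisms in $\Rep_{\Rep G} \widetilde{Q}$ are vertex-wise $G$-equivariant, hence give morphisms in $(\Rep_{\Vect} Q')^G$.

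Finally I would verify that $F$ and $F^{-1}$ are mutually inverse on the nose (not merely up to isomorphism), since both constructions only repackage the same underlying data: on one side, the $G$-representation structure on $X_v$ is encoded by the family $\{(u_g)_v\}$, and on the other side by the action defining $X_v \in \Rep G$; these are literally the same data. The main ``obstacle'' is really only notational: tracking how the equivariance condition on $\phi_e$ (an identity of linear maps $\Pi'_e \otimes X_{s(e)} \to X_{t(e)}$) is the same condition as $\phi_e$ being a morphism of $G$-representations with respect to $\theta_e$. Once one writes out both diagrams they are identical, so the equivalence follows.
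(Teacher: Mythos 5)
Your proposal is correct and follows essentially the same route as the paper: the paper proves the Kronecker case $(\Rep_{\Vect} K_\ell)^G = \Rep_{\Rep G} K_{(\Bbbk^\ell,\theta)}$ by exactly this repackaging (the cocycle condition \eqref{eq:equvariantiso} makes each $(u_g)_v$ into a $G$-action on $X_v$, and equivariance of $u_g$ as a morphism of quiver representations is the same diagram as $\phi_e$ being a morphism in $\Rep G$), and then explicitly leaves the general case as the identical analysis, which is what you carry out. The only quibble is the composition order in your cocycle identity --- the diagram \eqref{eq:equvariantiso} literally gives $(u_{gh})_v = (u_h)_v \circ (u_g)_v$, so one must fix conventions (e.g.\ via the $\theta_{g^{-1}}$ twist already present in the action) to read this as a left $G$-representation --- a bookkeeping point the paper itself elides.
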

\begin{rem}\label{rmk:skewgroupalg}
The $G$-action on $\Rep_{\Vect} Q$ described in the proposition can also be understood as an action on the path algebra $A$ of $(V,E)$ as follows.
The datum of $G$-actions on $\Pi_e$ for each $e \in E'$ is the same as an action of $G$ on $A$ that fixes the idempotents point-wise and stabilizes the subspace of $A$ spanned by the arrows.
Moreover, with $G\# A$ denoting the skew group algebra of $A$ (with respect to the action of $G$), we get that
\[
G\# A\lmod \cong (\Rep (V,E))^G \cong (\Rep_{\Vect} Q)^G = \Rep_{\Rep G} Q'.
\]
See e.g.\ \cite{demonet_skewgroup} for more details. 
\end{rem}

\begin{ex}
The symmetric group $S_\ell$ on $\ell$ elements acts naturally on the $\ell$-Kronecker quiver by permuting the $\ell$ arrows.
Then the $S_\ell$-equivariant representations of the $\ell$-Kronecker quiver are the same as representations of the fusion quiver $K_{(\Bbbk^\ell, \pi)}$ over $\Rep S_\ell$, where $\pi$ is associated to the natural permutation representation that assigns to each $w \in S_\ell$ its permutation matrix.
Note that $(\Bbbk^\ell,\pi)$ is a \emph{decomposable} representation when $\ell > 1$, so the fusion quiver above has decomposable edge label.
\end{ex}

\begin{ex}\label{ex:Lusztig}
Set $\Bbbk = \CC$.
Let $G$ be a finite subgroup of $SU(2)$ and equip $\Bbbk^2 = \CC^2$ with the standard representation of $SU(2)$ on $\CC^2$.
Then $(\Rep_{\Vect} K_\ell)^G = \Rep_{\Rep G} K_{(\CC^2, \text{std})}$ was studied by Lusztig in his construction of canonical bases for affine types \cite[\S 2.2]{lusztig_affinequiver} (denoted by $\mathscr{C}^\delta$ in loc.\ cit.).
Unlike in the previous example, note that the action of $G$ on $\Rep_{\Vect} K_\ell$ does not come from an automorphism of the quiver itself.
\end{ex}

%%%%%%%%%%%%%%%%%%%%%%%%%%%%%%%%%%%%%%%%%%%%%%%%%%%%%%%%%
%========================================================
\section{Unfolding} \label{sec:unfolding}
%========================================================
%%%%%%%%%%%%%%%%%%%%%%%%%%%%%%%%%%%%%%%%%%%%%%%%%%%%%%%%%

In this chapter we argue that fusion representations of a fusion quiver $Q$, valued in $\MC$, are equivalent (as an abelian category) to ordinary representations over an ordinary quiver $Q'$. 
However, making this equivalence explicit involves choosing bases for certain morphism spaces in $\MC$ (and when these spaces have
dimension greater than $1$, $Q'$ will accordingly have multiple edges). In \S\ref{sec:fusionquivervect} we discussed how representations of fusion quivers for $\Vect$ (i.e. with $\FC = \Vect$ acting on $\MC = \Vect$) are essentially equivalent to
ordinary representations of a quiver, but we think of these fusion representations as not requiring a choice of basis. Thus the more satisfying version of our theorem will
canonically identify representations of the fusion quiver $Q = (\FC,V,E,\Pi)$ in $\MC$ with representations of the fusion quiver $\check{Q} = (\Vect, \check{V}, \check{E}, \check{\Pi})$ in $\Vect$. 
We call $\check{Q}_{\MC}$ the \emph{unfolded quiver} of $Q$.

%========================================================
\subsection{Definition of unfolding} \label{sec:unfoldingdefinition}
%========================================================

\begin{defn} \label{defn:unfolding} Fix a fusion quiver $Q = (\FC,V,E,\Pi)$ and a semisimple $\FC$-module category $\MC$. Define a fusion quiver $\check{Q}_{\MC} = (\Vect, \check{V}, \check{E}, \check{\Pi})$ as follows.
\begin{itemize}
	\item Set $\check{V} = V \times \Irr(\MC)$, where $\Irr(\MC)$ denotes a set consisting of one simple object in $\MC$ from each isomorphism class. 
	\item For each edge $e \in E$ with source $s \in V$ and target $t \in V$, and each pair $L,L' \in \Irr(\MC)$, there is an edge from $(s,L)$ to $(t,L')$ in $\check{Q}$ if and only if $L'$ is a direct summand of $\Pi_e \ot L$. The set of all edges of this form 
	(as $L$ and $L'$ vary) will be denoted $\ufld(e) \subset \check{E}$.
	\item If $\check{e} \in \ufld(e)$ has source $(s,L)$ and target $(t,L')$, we label it with the vector space
	\begin{equation} \check{\Pi}_{e,L,L'} := \Hom_{\MC}(L',\Pi_e \ot L). \end{equation}
\end{itemize}
\end{defn}

It is important to note that the fusion quiver $\check{Q}_{\MC}$ depends on the choice of category $\MC$, whereas the original fusion quiver $Q$ does not. When $\MC$ is understood we may write $\check{Q}$ for brevity, but we have found that keeping the subscript sometimes helps to avoid confusion.

\begin{rem}
Note that $\check{Q}_\MC$ may be disconnected even if $Q$ was connected; see Example \ref{ex:Uqsl3at5}. The reader is also invited to compute $\check{Q}$ for $Q$ from Example \ref{ex:RepS2}.
\end{rem}

\begin{rem} Note that we could have defined $\check{Q}_{\MC}$ such that there is an edge from $(s,L)$ to $(t,L')$ for  \emph{all} $L, L' \in \Irr(\MC)$, which is labeled by $\Hom_{\MC}(L',\Pi_e \ot
L)$. This morphism space is zero precisely when $L'$ is not a direct summand of $\Pi_e \ot L$, so when one removes all edges with the zero label (see Lemma \ref{lem:removezero}), one
obtains the fusion quiver defined above. \end{rem}

\begin{rem} The difference between Definition \ref{defn:unfolding} and the definition of unfolding for Coxeter quivers given in \cite[\S 3]{heng2024coxeter} is that \begin{itemize}
	\item in the Coxeter quiver setting, $\Hom(L',\Pi_e \ot L)$ is always one-dimensional, and
	\item one has chosen a particular basis for $\Hom(L',\Pi_e \ot L)$, thus thinking of $\check{Q}$ as an ordinary quiver instead of a fusion quiver valued in $\Vect$. \end{itemize}
\end{rem}

The main result of this section is the following theorem.

\begin{thm} \label{thm:unfoldingnon-canonical}
Let $\MC$ be a semisimple left module category over $\FC$. 
There is an equivalence of abelian categories $\Rep_{\MC} Q \cong \Rep_{\Vect} \check{Q}_{\MC}$, up to the choice of a (non-canonical) right module structure on $\MC$ over $\Vect$ that commutes with the left module structure over $\FC$. 
\end{thm}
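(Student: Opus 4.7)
The plan is to use the semisimple decomposition of $\MC$ to repackage the data of a fusion representation as a representation of the unfolded quiver. The key observation is that since $\MC$ is $\Bbbk$-linear semisimple with finitely many simples, every object $X \in \MC$ admits an isomorphism
\[
X \cong \bigoplus_{L \in \Irr(\MC)} L \otimes_\Bbbk \Hom_\MC(L, X).
\]
Making such a choice systematically across all of $\MC$ amounts to specifying a right module structure on $\MC$ over $\Vect$ which commutes with the left $\FC$-action; such structures exist since $\MC$ is $\Bbbk$-linear, but are non-canonical. Fix one such choice for the remainder of the argument.

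Define $F \co \Rep_\MC Q \to \Rep_\Vect \check{Q}_\MC$ on objects by $F(X, \phi)_{(v,L)} := \Hom_\MC(L, X_v)$. For each edge $e \in E$ and each pair $L, L' \in \Irr(\MC)$, the required linear map
\[
\check{\phi}_{e, L, L'} \co \check{\Pi}_{e, L, L'} \otimes_\Bbbk \Hom_\MC(L, X_{s(e)}) \to \Hom_\MC(L', X_{t(e)})
\]
sends $f \otimes g$ to the composite $\phi_e \circ (\id_{\Pi_e} \otimes g) \circ f \co L' \to X_{t(e)}$. On morphisms, $F(\rho)_{(v,L)}$ is post-composition with $\rho_v$, and the square \eqref{eq:morphismdef} translates directly into the analogous square for $\check{Q}_\MC$. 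Note that this construction is independent of the fixed $\Vect$-module structure, but the inverse will not be.

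A quasi-inverse $G \co \Rep_\Vect \check{Q}_\MC \to \Rep_\MC Q$ is defined by reassembly: set $G(\check{X}, \check{\phi})_v := \bigoplus_{L \in \Irr(\MC)} L \otimes_\Bbbk \check{X}_{(v, L)}$ using the fixed $\Vect$-module structure, and build the morphism $\phi_e \co \Pi_e \otimes X_{s(e)} \to X_{t(e)}$ by combining, on each $(L,L')$-summand of the decomposition
\[
\Pi_e \otimes X_{s(e)} \cong \bigoplus_{L, L'} L' \otimes_\Bbbk \check{\Pi}_{e,L,L'} \otimes_\Bbbk \check{X}_{(s(e),L)},
\]
the map $\id_{L'} \otimes \check{\phi}_{e,L,L'}$ into the $L'$-summand of $X_{t(e)}$. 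That $FG \cong \Id$ and $GF \cong \Id$ follows from Schur's lemma together with the canonical evaluation isomorphism $\Hom_\MC(L', \bigoplus_{L''} L'' \otimes_\Bbbk V_{L''}) \cong V_{L'}$ for simple $L'$. Exactness of $F$ (hence compatibility with abelian structure) is immediate: each functor $\Hom_\MC(L, -)$ is exact by semisimplicity of $\MC$, and both $\Rep_\MC Q$ and $\Rep_\Vect \check{Q}_\MC$ have vertex-wise kernels and cokernels.

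The main obstacle is organizational rather than substantive: one must carefully track how the non-canonical decompositions $\Pi_e \otimes L \cong \bigoplus_{L'} L' \otimes_\Bbbk \check{\Pi}_{e, L, L'}$ interact with the $\FC$-action, and verify that a different choice of right $\Vect$-module structure composes $F$ with an autoequivalence of $\Rep_\Vect \check{Q}_\MC$ rescaling edge data --- entirely analogous to the choice-of-basis phenomenon of Lemma \ref{lem:ordinaryquivermultipleedge}. Once the bookkeeping is set up, every verification reduces to applying Schur's lemma to direct summands of simple objects.
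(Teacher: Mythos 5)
Your proposal is correct and takes essentially the same route as the paper: the same functor $F$ with $F(X,\phi)_{(v,L)} = \Hom_{\MC}(L,X_v)$ and edge map $f \ot g \mapsto \phi_e \circ (\id_{\Pi_e} \ot g) \circ f$, the same reassembly quasi-inverse $G$ built from the fixed (non-canonical) right $\Vect$-action, and the same observation that only $G$ depends on that choice. The sole cosmetic difference is that the paper produces the edge maps of $G$ via tensor-hom adjunction, while you use the explicit isotypic decomposition of $\Pi_e \ot L$ --- these are the same construction in different packaging.
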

In fact, we will prove that the equivalence is canonical once we have fixed the right module structure on $\MC$ over $\Vect$; this will be Theorem \ref{thm:unfolding}. This right module structure is a technical distraction, which we explain in \S\ref{sec:unfoldingtheorempreamble}.

%========================================================
\subsection{Examples of unfolding} \label{sec:unfoldingexamples}
%========================================================

Before we prove the main result, we provide some examples of unfolding, and use Theorem \ref{thm:unfoldingnon-canonical} to understand some categories of fusion quiver representations. 
We lean heavily on the case where $Q$ has two vertices and one edge, because it keeps examples relatively small and easy to exposit.

For readability we will omit the labeling of edges in $\check{Q}$ by vector spaces, and instead draw the number of edges equal to the dimension of that vector space (i.e.\ we will draw the corresponding ordinary quiver as in Lemma \ref{lem:ordinaryquivermultipleedge}).

\begin{ex}\label{ex:Snunfold}
We continue from Example \ref{ex:Snstdrep}, where $V_{n-1} \in \Rep S_n$ is the standard $(n-1)$-dimensional representation of the symmetric group $S_n$ and $Q_n$ is the following fusion quiver over $\Rep S_n$.
% https://q.uiver.app/?q=WzAsMixbMCwwLCJcXGJ1bGxldCJdLFsxLDAsIlxcYnVsbGV0Il0sWzAsMSwiViJdXQ==
\begin{equation} \label{eq:Qn} 
\begin{tikzcd}
	\bluebull & \redbull
	\arrow["V_{n-1}", from=1-1, to=1-2]
\end{tikzcd}
\end{equation}
Here $\check{Q}_n := (\check{Q}_n)_{\Rep S_n}$ will be a bipartite quiver with one vertex of each color for each simple representation of $S_n$.

One can verify that 
% https://q.uiver.app/?q=WzAsNixbMCwwLCJcXGJ1bGxldCJdLFsxLDEsIlxcYnVsbGV0Il0sWzAsMiwiXFxidWxsZXQiXSxbMiwxLCJcXGJ1bGxldCJdLFszLDAsIlxcYnVsbGV0Il0sWzMsMiwiXFxidWxsZXQiXSxbMCwxXSxbMiwxXSxbMywxXSxbMyw0XSxbMyw1XV0=
\[\check{Q}_3 = \quad \begin{tikzcd}
	\bluebull &&& \redbull \\
	& \redbull & \bluebull \\
	\bluebull &&& \redbull
	\arrow[from=1-1, to=2-2]
	\arrow[from=3-1, to=2-2]
	\arrow[from=2-3, to=2-2]
	\arrow[from=2-3, to=1-4]
	\arrow[from=2-3, to=3-4]
\end{tikzcd}.\]
The two indecomposable representations in Example \ref{ex:Snstdrep} correspond respectively to the following indecomposable representations of $\check{Q}_3$:
\[ \begin{tikzcd}
	\CC &&& 0 \\
	& \CC & 0 \\
	0 &&& 0
	\arrow[from=1-1, to=2-2, "\neq 0"]
	\arrow[from=3-1, to=2-2]
	\arrow[from=2-3, to=2-2]
	\arrow[from=2-3, to=1-4]
	\arrow[from=2-3, to=3-4]
\end{tikzcd}, \qquad \begin{tikzcd}
	\CC &&& \CC \\
	& \CC & \CC \\
	0 &&& 0
	\arrow[from=1-1, to=2-2, "\neq 0"]
	\arrow[from=3-1, to=2-2]
	\arrow[from=2-3, to=2-2, "\neq 0"]
	\arrow[from=2-3, to=1-4, "\neq 0"]
	\arrow[from=2-3, to=3-4]
\end{tikzcd}.\]
(Different choices of right module structures on $\MC=\Rep S_n$ over $\Vect$ will produce different non-zero maps on the arrows.)

One can also verify that
% https://q.uiver.app/?q=WzAsMTAsWzAsMiwiXFxidWxsZXQiXSxbMSwxLCJcXGJ1bGxldCJdLFsyLDEsIlxcYnVsbGV0Il0sWzEsMywiXFxidWxsZXQiXSxbMiwzLCJcXGJ1bGxldCJdLFszLDIsIlxcYnVsbGV0Il0sWzEsMCwiXFxidWxsZXQiXSxbMiwwLCJcXGJ1bGxldCJdLFsxLDQsIlxcYnVsbGV0Il0sWzIsNCwiXFxidWxsZXQiXSxbMCwxXSxbMiwxXSxbMCwzXSxbNCwzXSxbMiw1XSxbNCw1XSxbNCwxXSxbNiwxXSxbMiwzXSxbOCwzXSxbNCw5XSxbMiw3XV0=
\[ \check{Q}_4 = \quad \begin{tikzcd}
	& \bluebull & \redbull \\
	& \redbull & \bluebull \\
	\bluebull &&& \redbull \\
	& \redbull & \bluebull \\
	& \bluebull & \redbull
	\arrow[from=3-1, to=2-2]
	\arrow[from=2-3, to=2-2]
	\arrow[from=3-1, to=4-2]
	\arrow[from=4-3, to=4-2]
	\arrow[from=2-3, to=3-4]
	\arrow[from=4-3, to=3-4]
	\arrow[from=4-3, to=2-2]
	\arrow[from=1-2, to=2-2]
	\arrow[from=2-3, to=4-2]
	\arrow[from=5-2, to=4-2]
	\arrow[from=4-3, to=5-3]
	\arrow[from=2-3, to=1-3]
\end{tikzcd}.\]
Multiple edges start appearing\footnote{Thanks to Sasha Kleshchev for a nice explanation why, for a partition $\lambda$, the multiplicity of the Specht module $S_{\lambda}$ inside $S_{\lambda} \ot V_{n-1}$ is one less than the number of removable boxes in $\lambda$. We chose not to reprint the explanation here.} for $n \ge 6$.

Note that $\check{Q}_n$ is finite type when $n=2$, affine type when $n=3$, and infinite and non-affine for $n \ge 4$. This corresponds to $\dim(V_{n-1})$ being $< 2$, $=2$, and $>2$ respectively. 
In particular, Theorem \ref{thm:unfoldingnon-canonical} shows that $\Rep_{\Rep S_n} Q_n$ has infinitely many indecomposables for all $n \geq 3$.
Note that each $\check{Q}_n$ is also the \emph{separated} McKay quiver associated to $S_n$ and the representation $V_{n-1}$; see \S\ref{sec:McKaycorrespondence} for more details.
\end{ex}

\begin{rem} In fact, for $n \ge 3$ one can provide an infinite family of indecomposable representations of $Q_n$ as follows. Let $K_{n-1}$ denote the fusion quiver over $\Vect$ with two vertices, and with one edge labelled by $\Res V_{n-1}$, i.e.\ $V_{n-1}$ viewed simply as an $(n-1)$-dimensional vector space. 
A representation of $K_{n-1}$ is the data of two vector spaces $X$ and $Y$ with a map $(\Res V_{n-1}) \ot X \to Y$ of vector spaces:
\[ \begin{tikzcd}
	X && Y
	\arrow["\Res V_{n-1}", from=1-1, to=1-3]
\end{tikzcd}. \]
Applying the induction functor $\Ind \co \Vect \to \Rep(S_n)$, we obtain objects $\Ind X$ and $\Ind Y$ in $\FC_n$ with a map $\Ind( (\Res V_{n-1}) \ot X) \to \Ind Y$. Using the fact that
\[ \Ind((\Res V) \ot X) \cong V \ot \Ind(X) \]
for any representation $V$ of $S_n$, we obtain a map $V \ot \Ind(X) \to Y$, giving the data of a representation of $Q_n$:
\[ \begin{tikzcd}
	\Ind X & \Ind Y
	\arrow["V_{n-1}", from=1-1, to=1-2]
\end{tikzcd}. \]
This process is functorial, giving a functor $\Ind \co \Rep_{\Vect} K_{n-1} \to \Rep_{\FC_n} Q_n$. Recall that $\Rep_{\Vect} K_{n-1}$ has infinite representation type whenever $n \ge 3$. The functor $\Ind$ does not send indecomposable representations to indecomposable representations, but nonetheless, one can find infinitely many indecomposable representations of $Q_n$ as summands of induced representations.

Note that if we view $\Rep_{\FC_n} Q_n = (\Rep_{\Vect} K_{n-1})^{S_n}$ via Proposition \ref{prop:Gequivariantfusionquiver} (or see \S\ref{sec:Gequivariant}), the functor $\Ind$ above is also the biadjoint of the forgetful functor $\Forget: (\Rep_{\Vect} K_{n-1})^{S_n} \rightarrow \Rep_{\Vect} K_{n-1}$.
 \end{rem}

\begin{ex}\label{ex:Uqsl3at5}
Let $\FC$ be the (braided) fusion category associated to $U_q\mathfrak{sl}_3$ at $q= e^{i\pi/5}$.
It has 6 simple objects $\{\one, X, Y, L_{2,0}, L_{1,1,}, L_{0,2}\}$, with $X$ and $Y$ denoting the fundamental representations (mutually dual to one another).
Moreover, $X$ generates $\FC$, and the fusion rule for tensoring with $X$ is usually denoted by the following diagram (this is the McKay quiver of $\FC$ with respect to $X$; see Definition \ref{defn:McKayquiver}).
% https://q.uiver.app/#q=WzAsNixbMiwyLCJcXDEiXSxbMSwxLCJZIl0sWzMsMSwiWCJdLFs0LDAsIkxfezIsMH0iXSxbMiwwLCJMX3sxLDF9Il0sWzAsMCwiTF97MCwyfSJdLFsxLDBdLFswLDJdLFsyLDFdLFsyLDNdLFszLDRdLFs0LDVdLFs1LDFdLFsxLDRdLFs0LDJdXQ==
\[\begin{tikzcd}
	{L_{0,2}} && {L_{1,1}} && {L_{2,0}} \\
	& Y && X \\
	&& \one
	\arrow[from=2-2, to=3-3]
	\arrow[from=3-3, to=2-4]
	\arrow[from=2-4, to=2-2]
	\arrow[from=2-4, to=1-5]
	\arrow[from=1-5, to=1-3]
	\arrow[from=1-3, to=1-1]
	\arrow[from=1-1, to=2-2]
	\arrow[from=2-2, to=1-3]
	\arrow[from=1-3, to=2-4]
\end{tikzcd}\]
Now consider the following fusion quiver over $\FC$.
% https://q.uiver.app/#q=WzAsMixbMCwwLCJcXGJ1bGxldCJdLFsxLDAsIlxcYnVsbGV0Il0sWzAsMSwiWCJdXQ==
\[
Q:=	
	\begin{tikzcd}
	s & t
	\arrow["X", from=1-1, to=1-2]
\end{tikzcd}\]
Then the unfolded quiver $\check{Q}_{\FC}$ is the following.
% https://q.uiver.app/#q=WzAsMTIsWzAsNSwiKHMsXFxvbmUpIl0sWzEsNCwiKHQsWCkiXSxbMCw0LCIocyxYKSJdLFsxLDMsIih0LFkpIl0sWzEsMiwiKHQsTF97MiwwfSkiXSxbMCwzLCIocyxZKSJdLFsxLDUsIih0LFxcb25lKSJdLFsxLDEsIih0LExfezEsMX0pIl0sWzAsMiwiKHMsTF97MiwwfSkiXSxbMCwxLCIocyxMX3sxLDF9KSJdLFsxLDAsIih0LExfezAsMn0pIl0sWzAsMCwiKHMsTF97MCwyfSkiXSxbMCwxXSxbMiwzXSxbMiw0XSxbNSw2XSxbNSw3XSxbOCw3XSxbOSwxMF0sWzksMV0sWzExLDNdXQ==
\[\begin{tikzcd}[column sep=20ex]
	{\color{myred} (s,L_{0,2})} & {\color{myblue} (t,L_{0,2})} \\
	{\color{myblue} (s,L_{1,1})} & {\color{mygreen} (t,L_{1,1})} \\
	{\color{mygreen} (s,L_{2,0})} & {\color{myred}(t,L_{2,0})} \\
	{\color{mygreen} (s,Y)} & {\color{myred}(t,Y)} \\
	{\color{myred} (s,X)} & {\color{myblue} (t,X)} \\
	{\color{myblue} (s,\one)} & {\color{mygreen} (t,\one)}
	\arrow[from=6-1, to=5-2, crossing over, myblue]
	\arrow[from=5-1, to=4-2, crossing over, myred]
	\arrow[from=5-1, to=3-2, crossing over, myred]
	\arrow[from=4-1, to=6-2, crossing over, mygreen]
	\arrow[from=4-1, to=2-2, crossing over, mygreen]
	\arrow[from=3-1, to=2-2, crossing over, mygreen]
	\arrow[from=2-1, to=1-2, crossing over, myblue]
	\arrow[from=2-1, to=5-2, crossing over, myblue]
	\arrow[from=1-1, to=4-2, crossing over, myred]
\end{tikzcd}\]
Notice that it is a disjoint union of three copies of type $A_4$ quiver, and so $\Rep_\FC Q$ has only finitely many ($4 \cdot 10 = 40$) indecomposable representations.
\end{ex}

In the following example, we shall compare representations valued in different semisimple module categories.
\begin{ex}\label{ex:differentM}
Let $\FC_\ell$ denote the fusion category associated to $\widehat{sl}_2$ at level $\ell$: it has $\ell+1$ simple objects $V_0 , V_1, ..., V_\ell$ with $V_0$ the monoidal unit and fusion rules completely determined by
\[
V_1 \otimes V_i \cong V_i \otimes V_1 \cong 
	\begin{cases}
	V_{i-1} \oplus V_{i+1}, &\text{for } 1 \leq i \leq \ell - 1; \\
	V_{\ell-1}, &\text{for } i = \ell.
	\end{cases}
\]
The semisimple module categories of $\FC_\ell$ were completely classified in \cite{ostrik_weakhopf}, and here we consider as examples those that are associated to ``type $D$.''

Suppose that $\ell \geq 2$ is even, so there exists the semisimple module category $\MC = \MC_\ell$ associated to the algebra object $V_0 \oplus V_\ell$. We refer the reader to \cite[\S 6]{ostrik_weakhopf} for details (see also \cite[\S 7]{KO_qmckay}), but mention that $\MC$ has $\ell/2 + 2$ simple objects $L_0, L_1, ..., L_{\ell/2-1}, L^+_{\ell/2}, L^-_{\ell/2}$, where the action of $V_1$ on $\MC$ is given by
\begin{align*}
V_1 \otimes L_i &\cong
	\begin{cases}
	L_1, &\text{for } i=0; \\
	L_{i-1} \oplus L_{i+1}, &\text{for } 1 \leq i \leq \ell/2 - 2; \\
	L_{\ell/2 - 2} \oplus L^+_{\ell/2} \oplus L^-_{\ell/2},  &\text{for } i=\ell/2 -1,
	\end{cases}
\\
V_1 \otimes L^+_{\ell/2} &\cong V_1 \otimes L^-_{\ell/2} \cong L_{\ell/2-1}.
\end{align*}

Take the following fusion quiver $Q_\ell$ over $\FC_\ell$.
\[
Q_\ell :=
\begin{tikzcd}
	\bullet & \bullet
	\arrow["V_1", from=1-1, to=1-2]
\end{tikzcd}
\]
We shall consider representations of $Q_\ell$ taking values in two different module categories, namely $\Rep_{\FC_\ell} Q_\ell$ and $\Rep_{\MC_\ell} Q_\ell$.

Once $\ell \geq 4$, it is clear that they will not be equivalent: $\Rep_{\FC_\ell} Q_\ell$ has $2(\ell+1)$ simple objects whereas $\Rep_{\MC_\ell} Q_\ell$ has $2(\ell/2 + 2)$ simple objects.
In fact, we can even show that the number of indecomposable objects will be different too: 
the unfolded quiver $\check{Q}_\FC$ consists of two copies of type $A_{\ell+1}$ quiver, whereas $\check{Q}_\MC$ consists of two copies of type $D_{\ell/2+2}$ quiver (however, note that the Coxeter numbers are equal).
In particular, $\Rep_{\FC_\ell} Q_\ell$ and $\Rep_{\MC_\ell} Q_\ell$ have $(\ell+1)(\ell+2)$ and $2\ell(\ell+1)$ indecomposable objects respectively.
Nonetheless, they both have finitely many indecomposables.
Indeed, we will see in Theorem \ref{thm:finitetypecoxeternumber} that, in general, whether $\Rep_{\MC} Q$ has finitely many indecomposable objects is independent of the choice of the semisimple module category $\MC$.
\end{ex}

\begin{ex}\label{ex:exoticunfold}
Consider the fusion quiver $Q$ given in Example \ref{ex:exoticfusion}.
Using the fusion rule of $\textbf{5}$ given in \cite[Figure 2]{EM_smallFPdim}, one can show that $\check{Q}_{\FC}$ has two copies of type $A_7$ quivers and two copies of type $D_5$ quivers.
%% Example in Dropbox
\end{ex}

%========================================================
\subsection{The unfolding theorem: preamble} \label{sec:unfoldingtheorempreamble}
%========================================================
%
%\begin{thm} (Loose) There is a canonical equivalence of abelian categories 
%	\[ \Rep_{\MC} Q \cong \Rep_{\Vect} \check{Q}_{\MC}.\] \end{thm}
Before giving the proof of the equivalence
\[
\Rep_{\MC} Q \cong \Rep_{\Vect} \check{Q}_{\MC}
\]
in Theorem \ref{thm:unfoldingnon-canonical}, let us recall some motivational concepts from semisimple categories.  For any object $X \in \MC$ and simple object $L \in \Irr(\MC)$, $\Hom(L,X)$ is called the \emph{multiplicity space} of $L$
in $X$. There is a canonical isomorphism \begin{equation} \label{eq:isotypicdecomp} X \cong \bigoplus_{L \in \Irr(\MC)} L \boxtimes \Hom(L,X). \end{equation} The summand labeled by $L$
is called the \emph{isotypic component} of $L$ in $X$, and \eqref{eq:isotypicdecomp} is called the \emph{isotypic decomposition}.

Note that $\Hom(L,X)$ is just a vector space, and one can tensor an object of $\MC$ with a vector space to obtain an object of $\MC$. For example, $L \boxtimes (\Bbbk^{\oplus 2}) \cong (L \oplus L)$.
Indeed, any $\Bbbk$-linear category is a right module over $\Vect$, in a fashion commuting with the left action of any $\FC$. We write $\boxtimes$ for this action, and also
for the tensor product in the category of vector spaces. See below for further discussion.

If $A$ and $B$ are vector spaces and $L \ne L' \in \Irr(\MC)$, then Schur's lemma gives a canonical isomorphism
\begin{equation} \Hom_{\MC}(L \boxtimes A, L' \boxtimes B) = 0, \end{equation}
\begin{equation} \label{schurboxhoms} \Hom_{\MC}(L \boxtimes A, L \boxtimes B) \cong \Hom_{\Vect}(A,B). \end{equation}
After choosing a basis for $A$ and $B$, the isomorphism of \eqref{schurboxhoms} will send a matrix of numbers on the RHS to a matrix of rescaled identity maps on the LHS.

More generally, tensor-hom adjunction identifies
\begin{equation}\label{eq:tensorhom} \Hom_{\MC}(X \boxtimes A, Y \boxtimes B) \cong \Hom_{\Vect}(A, \Hom_{\MC}(X,Y)\boxtimes B).  \end{equation}
for any objects $X,Y \in \MC$ and vector spaces $A,B \in \Vect$.

We will be using these tools to construct our equivalence from $\Rep_{\Vect} \check{Q}_{\MC}$ to $\Rep_{\MC} Q$. As abstract as all of this nonsense is, we wish
to emphasize that in practice one can choose explicit bases for each morphism space $\Hom(L',\Pi_e \ot L)$, and thus get an equivalence between $\Rep_{\MC} Q$ and $\Rep_{\Vect}
\check{Q}_{\MC}$ (adapting to multiple edges as appropriate) which is much more concrete. The proofs, however, would be even more annoying - sometimes choosing bases is
inconvenient!

The rest of this section could be skipped on first reading. We have been fast and loose about one key point: how exactly does $\Vect$ act on $\MC$? We said that $L \boxtimes
(\Bbbk^{\oplus 2}) \cong (L \oplus L)$, but this raises the question: does $L \boxtimes (\Bbbk^{\oplus 2})$ have an independent existence as an object of $\MC$, or this is being
used as a definition? Also, we were trying to avoid bases, but don't we need to choose a basis of a vector space $A$ in order to interpret $L \boxtimes A$ in this way? For example,
how do we identify $L \boxtimes \Bbbk$ with $L$? 

One option is to assume from the start that $\MC$ is a right module category over $\Vect_{\Bbbk}$ (in a fashion commuting with the left $\FC$ action). This is a harmless
assumption, see the next paragraph. Then objects $X \boxtimes A$ are well-defined. The abstract tools above and the arguments in the next section will make perfect sense in this
context. In particular, we never need to pick a basis of $A$ in order to identify $X \boxtimes A$ with $X^{\oplus \dim A}$, as we are content to work with $X \boxtimes A$ directly;
we never even need to identify $X \boxtimes \Bbbk$ with $X$ in our proofs.

This option is harmless, because any additive $\Bbbk$-linear category $\MC$ can be equipped with a (right) monoidal action of $\Vect_{\Bbbk}$, though this involves some choices. The
skeletal category of $\Vect_{\Bbbk}$, whose objects are $\Bbbk^{\oplus n}$ for $n \ge 0$, is also monoidal and acts on $\MC$ via $X \boxtimes \Bbbk^{\oplus n} := X^{\oplus n}$,
using the standard basis of $\Bbbk^{\oplus n}$ to pick out these summands. Extending this action to all of $\Vect_{\Bbbk}$ requires choosing an equivalence between $\Vect_{\Bbbk}$
and its skeleton, which is a choice of basis for each finite-dimensional vector space. However, this is a choice made once and for all within $\Vect_{\Bbbk}$, which is in some
sense independent of other concerns in this paper. Again, our basis-free arguments do not care about any such choice, only that $X \boxtimes A$ is well-defined.

\begin{rem} One could also replace $\MC$ with the Deligne tensor product $\MC \boxtimes \Vect$ throughout, and only identify the equivalent categories $\MC$ and $\MC \boxtimes \Vect$ when needed. This seems like overkill. \end{rem}

%========================================================
\subsection{The unfolding theorem} \label{sec:unfoldingtheorem}
%========================================================

\begin{thm} \label{thm:unfolding}
Let $\MC$ be a semisimple category which is a $(\FC, \Vect)$ bimodule category.  There is a canonical equivalence of abelian categories $\Rep_{\MC} Q \cong \Rep_{\Vect} \check{Q}_{\MC}$, as constructed in the proof. \end{thm}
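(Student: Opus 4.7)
The plan is to construct an explicit pair of mutually inverse functors using the isotypic decomposition in $\MC$, with the right $\Vect$-module structure used to make all direct summands into well-defined objects (not just isomorphism classes).

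First I would define the forward functor $F \co \Rep_\MC Q \to \Rep_\Vect \check Q_\MC$ as follows. For a fusion representation $(X,\phi)$ and each $(v,L) \in \check V = V \times \Irr(\MC)$, set the multiplicity space
\[ \check X_{(v,L)} := \Hom_\MC(L, X_v). \]
For each edge $e \co s \to t$ in $Q$ and each $L, L' \in \Irr(\MC)$, the required map is a $\Bbbk$-linear map
\[ \check\phi_{e,L,L'} \co \check\Pi_{e,L,L'} \otimes \check X_{(s,L)} \longrightarrow \check X_{(t,L')}, \]
which I define by the composition rule $\alpha \otimes \beta \mapsto \phi_e \circ (\id_{\Pi_e} \otimes \beta) \circ \alpha$, where $\alpha \co L' \to \Pi_e \otimes L$ and $\beta \co L \to X_s$. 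On morphisms, $F(\rho)_{(v,L)}(\beta) := \rho_v \circ \beta$; the edge-square for $\check Q_\MC$ then follows by expanding both sides using the edge-square for $Q$, as the computation $\rho_t \circ \phi_e \circ (\id \otimes \beta) \circ \alpha = \psi_e \circ (\id \otimes (\rho_s \circ \beta)) \circ \alpha$ shows directly.

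Next I would construct the inverse $G \co \Rep_\Vect \check Q_\MC \to \Rep_\MC Q$. Given $(\check X, \check\phi)$, set
\[ X_v := \bigoplus_{L \in \Irr(\MC)} L \boxtimes \check X_{(v,L)}. \]
Using the fact that the left $\FC$-action and the right $\Vect$-action on $\MC$ commute, together with the isotypic decomposition $\Pi_e \otimes L \cong \bigoplus_{L'} L' \boxtimes \Hom_\MC(L', \Pi_e \otimes L)$, one computes
\[ \Pi_e \otimes X_s \;\cong\; \bigoplus_{L,L'} L' \boxtimes \bigl( \check\Pi_{e,L,L'} \otimes \check X_{(s,L)} \bigr). \]
By Schur's lemma in the form of \eqref{schurboxhoms}, a morphism from this object to $X_t = \bigoplus_{L'} L' \boxtimes \check X_{(t,L')}$ is the same data as, for each $L'$, a $\Bbbk$-linear map $\bigoplus_L \check\Pi_{e,L,L'} \otimes \check X_{(s,L)} \to \check X_{(t,L')}$; I take this to be $\bigoplus_L \check\phi_{e,L,L'}$. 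Morphisms in $\Rep_\Vect \check Q_\MC$ are transported to morphisms in $\Rep_\MC Q$ by combining them with $\id_L$ on each isotypic summand, and naturality of Schur's identification ensures the edge-squares commute.

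Finally I would verify $F \circ G \cong \id$ and $G \circ F \cong \id$. The composite $G \circ F$ applied to $(X,\phi)$ produces $\bigoplus_L L \boxtimes \Hom_\MC(L, X_v)$ at each $v$, which is canonically isomorphic to $X_v$ via the isotypic decomposition \eqref{eq:isotypicdecomp}; the compatibility with edge maps is an unpacking of the definition of $\check\phi$ and of how $\phi_e$ is reassembled from its components under \eqref{eq:tensorhom}. The composite $F \circ G$ is handled by Schur's lemma: $\Hom_\MC(L, L' \boxtimes \check X_{(v,L')})$ vanishes for $L \ne L'$ and equals $\check X_{(v,L)}$ for $L = L'$, naturally in the data. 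Both $F$ and $G$ are manifestly $\Bbbk$-linear and additive; exactness is automatic because in a semisimple category $\MC$ every Hom functor $\Hom_\MC(L,-)$ and every functor $L \boxtimes (-)$ is exact, and kernels and cokernels in both representation categories are taken vertex-wise. The main obstacle, and the only non-formal part of the argument, is keeping the bookkeeping of the Schur identifications coherent when recovering $\phi_e$ from the $\check\phi_{e,L,L'}$: one must check that the composition rule used to define $F$ is precisely the inverse of the disassembly used to define $G$, which ultimately reduces to the tautological computation that the evaluation map $\Hom_\MC(L', \Pi_e \otimes L) \otimes L' \to \Pi_e \otimes L$ recovers a general morphism out of an isotypic component from its components.
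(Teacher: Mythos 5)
Your proposal is correct and follows essentially the same route as the paper's proof: the same forward functor $F$ built from multiplicity spaces $\Hom_{\MC}(L,X_v)$ with the same composition rule $\alpha \otimes \beta \mapsto \phi_e \circ (\id_{\Pi_e} \otimes \beta) \circ \alpha$, and the same inverse $G$ assembling $\bigoplus_L L \boxtimes \check{X}_{(v,L)}$, with your Schur-plus-isotypic-decomposition step being an equivalent repackaging of the paper's tensor-hom adjunction \eqref{eq:tensorhom}. You are in fact slightly more explicit than the paper on the verifications (edge squares for morphisms and the two composite natural isomorphisms), which the paper leaves to the reader.
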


Theorem \ref{thm:unfoldingnon-canonical} is then an immediate consequence of the above, since any left module category $\MC$ can be (non-canonically) equipped with a commuting right $\Vect$ action; see the previous section. 

\begin{proof}[Proof of Theorem \ref{thm:unfolding}]
Let $(X,\phi)$ be a fusion representation in $\Rep_{\MC} Q$. We define a fusion representation in $\Rep_{\Vect} \check{Q}_{\MC}$ called $(FX,F\phi)$, as follows. \begin{itemize}
	\item Set $FX_{(v,L)} := \Hom_{\MC}(L,X_v)$.
	\item For $\check{e} \in \ufld(e)$ with source $(s,L)$ and target $(t,L')$, let $F\phi_{\check{e}}$ be the vector space map
\begin{equation} F\phi_{\check{e}} \co \Hom(L',\Pi_e \ot L) \ot \Hom(L,X_s) \to \Hom(L',X_t), \quad \alpha \ot \beta \mapsto \phi_e \circ (\id_{\Pi_e} \ot \beta) \circ \alpha, \end{equation}
or more visually, $F\phi_{\check{e}}$ sends $\alpha \ot \beta$ to the composition 
\begin{equation} \begin{tikzcd}
	L' & \Pi_e \ot L & \Pi_e \ot X_s & X_t
	\arrow["\alpha", from=1-1, to=1-2]
	\arrow["\id_{\Pi_e} \ot \beta", from=1-2, to=1-3]
	\arrow["\phi_e", from=1-3, to=1-4]	
\end{tikzcd}.
\end{equation}
\end{itemize}

Now suppose that $f \co (X,\phi) \to (Y,\psi)$ is a morphism in $\Rep_{\MC} Q$, so that $f_v \co X_v \to Y_v$ is a morphism in $\MC$ for all $v \in V$. Define a morphism $Ff \co (FX,F\phi) \to (FY,F\psi)$ as follows:
\begin{equation} Ff_{(v,L)} \co \Hom_{\MC}(L,X_v) \to \Hom_{\MC}(L,Y_v), \qquad \alpha \mapsto f_v \circ \alpha. \end{equation}
We leave to the reader the straightforward verification that $Ff$ is a morphism, i.e. the maps $Ff_{(v,L)}$ intertwine with the maps $F\phi$ and $F\psi$.

The fact that $Ff$ is defined, vertex-wise, using composition with $f_v$ implies that $F(\id) = \id$ and $F(f \circ g) = Ff \circ Fg$. Thus sending $(X,\phi) \mapsto (FX,F\phi)$ and $f
\mapsto Ff$ defines a functor $F \co \Rep_{\MC} Q \to \Rep_{\Vect} \check{Q}$.

The isotypic decomposition of \eqref{eq:isotypicdecomp} motivates the definition of the inverse functor.

Let $(\check{X}, \check{\phi})$ be a fusion representation in $\Rep_{\Vect} \check{Q}_{\MC}$. We define a fusion representation in $\Rep_{\MC} Q$ called $(G\check{X}, G\check{\phi})$ as follows. \begin{itemize}
\item Set
\begin{equation} G\check{X}_v = \bigoplus_{L \in \Irr(\MC)} L \boxtimes \check{X}_{(v,L)}. \end{equation}
We refer to the summand labeled by $L$ as the \emph{$L$-component} of $G\check{X}_v$.
\item For $e \in E$ with source $s$ and target $t$, recall that $\check{\Pi}_{e,L,L'} = \Hom(L',\Pi_e \ot L)$. So $\check{\phi}_{e,L,L'}$ is a vector space morphism 
\begin{equation} \check{\phi}_{e,L,L'} \co \Hom_{\MC}(L',\Pi_e \ot L) \boxtimes \check{X}_{(s,L)} \to \check{X}_{(t,L')}. \end{equation}
Tensor-hom adjunction identifies $\check{\phi}_{e,L,L'}$ with some $\MC$-morphism
\begin{equation} g_{e,L,L'} \co \Pi_e \ot L \boxtimes \check{X}_{(s,L)} \to L' \boxtimes \check{X}_{(t,L')}. \end{equation}
Now set $G\check{\phi}_e \co G\check{X}_s \to G\check{X}_t$ to be the $\MC$-morphism which, from $\Pi_e$ tensored with the $L$ component of $G\check{X}_s$, to the $L'$ component of $G\check{X}_t$, is equal to $g_{e,L,L'}$.
\end{itemize}

Now suppose $g \co (\check{X}, \check{\phi}) \to (\check{Y}, \check{\psi})$ is a morphism in $\Rep_{\Vect} \check{Q}_{\MC}$, so that $g_{(v,L)} \co \check{X}_{(v,L)} \to \check{Y}_{(v,L)}$ is a linear map for all $(v,L) \in \check{V}$. Define a morphism $Gg \co (G\check{X}, G\check{\phi}) \to (G\check{Y}, G\check{\psi})$ as follows. The $\MC$-morphism
\begin{equation} Gg_v \co \bigoplus_{L \in \Irr(\MC)} L \boxtimes \check{X}_{(v,L)} \to \bigoplus_{L \in \Irr(\MC)} L \boxtimes \check{Y}_{(v,L)} \end{equation}
will necessarily restrict to zero as a map from the $L$ component to the $L'$ component when $L \ne L'$, by Schur's lemma. So
\begin{equation} Gg_v = \sum (Gg_v)_L, \qquad (Gg_v)_L \co L \boxtimes \check{X}_{(v,L)} \to L \boxtimes \check{Y}_{(v,L)}. \end{equation}
Under the identification of \eqref{schurboxhoms}, $(Gg_v)_L$ is identified with $g_{(v,L)} \co \check{X}_{(v,L)} \to \check{Y}_{(v,L)}$.

The remainder of the proof involves tracking through a forest of tautological isomorphisms, but nothing is surprising. We leave to the reader the verification that:
\begin{itemize}
	\item $Gg$ is a morphism in $\Rep_{\MC} Q$.
	\item Sending $(\check{X}, \check{\phi})$ to $(G\check{X}, G\check{\phi})$ and $g$ to $Gg$ defines a functor $G \co \Rep_{\Vect} \check{Q}_{\MC} \to \Rep_{\MC} Q$.
	\item $F$ and $G$ are inverse functors.
\end{itemize}
\end{proof}

\begin{rem} The operation of unfolding is an idempotent, i.e. $\check{\check{Q}} = \check{Q}$.
More verbosely, 
	\[ (\check{\check{Q}}_{\MC} )_{\Vect} = \check{Q}_{\MC}. \] Indeed, when $Q$ is already a fusion quiver over $\FC = \Vect$, we claim
that $Q \cong \check{Q}_{\Vect}$ canonically, in the following sense. When $\MC = \Vect$, we let the one-dimensional space $L = \Bbbk$ be the unique element of $\Irr(\MC)$. We identify $\Hom_{\Vect}(\Bbbk,X)$
with $X$ for any vector space $X$ in the usual way, sending a linear map $f$ to the image of $1 \in \Bbbk$ under $f$. We also view $L$ as the monoidal identity, canonically identifying
$X \ot L$ with $X$ using the multiplication map. Now, if $Q = (\Vect, V, E, \Pi)$ and $\MC = \Vect$, then one can verify that $\check{Q}_{\Vect} = Q$ via these identifications. That is, $V$ and
$\check{V}$ are in canonical bijection, $\ufld(e)$ is always a single edge identified with $e$, and $\check{\Pi}_{e,L,L} = \Pi_e$ via these identifications. Moreover, under these
identifications, the functor $\Rep_{\Vect} Q \to \Rep_{\Vect} \check{Q}_{\Vect}$ defined in Theorem \ref{thm:unfolding} is the identity functor. \end{rem}

\begin{cor} \label{cor:artinian} The category $\Rep_{\MC} Q$ is an artinian abelian category with global homological dimension 1. \end{cor}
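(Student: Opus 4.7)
The plan is to transport the statement along the equivalence in Theorem \ref{thm:unfolding}, reducing it to a classical statement about finite-dimensional representations of an ordinary finite quiver. First I would invoke Theorem \ref{thm:unfolding} to obtain $\Rep_{\MC} Q \cong \Rep_{\Vect} \check{Q}_{\MC}$, and then Lemma \ref{lem:ordinaryquivermultipleedge} to identify the right-hand side (after choosing a basis for each edge label $\check{\Pi}_{e,L,L'}$) with the category of representations of a finite ordinary quiver $(\check V, \check E')$. Here $\check V$ and $\check E'$ are finite because $Q$ is a finite quiver and $\MC$ has finitely many isomorphism classes of simple objects. Since the abelian structure of $\Rep_{\Vect} \check{Q}_{\MC}$ is preserved under such a choice of basis (different choices produce isomorphic, not just equivalent, structures on representations up to an autoequivalence), it suffices to prove the statement for $\Rep_{\Vect}(\check V, \check E')$.

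For the artinian part, the observation is that objects of $\MC$ have finite length (they are finite direct sums of the finitely many simples of $\MC$), so each multiplicity space $\Hom_{\MC}(L, X_v)$ is a finite-dimensional vector space. Therefore the functor $F$ from the proof of Theorem \ref{thm:unfolding} lands in the full subcategory of representations of $(\check V, \check E')$ whose vector space at each vertex is finite-dimensional, and conversely $G$ sends such representations back to genuine objects of $\Rep_{\MC}Q$. Any such representation has finite total $\Bbbk$-dimension, and finite-dimensional modules over any $\Bbbk$-algebra (in particular the path algebra of $(\check V, \check E')$) are artinian.

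For the global homological dimension, I would invoke the classical fact that the path algebra of any finite quiver is hereditary, i.e.\ has global dimension at most $1$. Concretely, for any representation $M$ of $(\check V, \check E')$ there is a canonical short exact sequence
\[
0 \longrightarrow \bigoplus_{e \in \check E'} P_{t(e)} \otimes M_{s(e)} \longrightarrow \bigoplus_{v \in \check V} P_v \otimes M_v \longrightarrow M \longrightarrow 0,
\]
where $P_v$ is the indecomposable projective at $v$, yielding a projective resolution of length $1$. There is no real obstacle here: both the artinianness and the hereditary property are well-known for finite-dimensional representations of finite quivers, and the only mild subtlety is ensuring (via Lemma \ref{lem:ordinaryquivermultipleedge}) that the passage from fusion quivers valued in $\Vect$ to ordinary quivers does not affect the homological content, which it does not because basis changes induce autoequivalences of the representation category.
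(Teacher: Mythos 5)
Your proposal is correct and follows the same route as the paper: the paper's proof is exactly the reduction via Theorem \ref{thm:unfolding} and Lemma \ref{lem:ordinaryquivermultipleedge} to representations of an ordinary quiver, for which artinianness and global dimension $1$ are quoted as classical facts. You have merely filled in the standard details the paper leaves implicit (finite length of objects in $\MC$ forcing finite-dimensional multiplicity spaces, and the canonical length-$1$ projective resolution witnessing that path algebras are hereditary), all of which are accurate.
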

	
\begin{proof} This follows because $\Rep_{\MC} Q$ is equivalent to $\Rep_{\Vect} \check{Q}_{\MC}$, which in turn is equivalent by Lemma \ref{lem:ordinaryquivermultipleedge} to representations of some ordinary quiver. Representations of an ordinary quiver form an artinian abelian category with global homological dimension 1. \end{proof}
	
\begin{rem}
Associated to a fusion quiver $Q$ over $\FC$, one can associate a (path) algebra object $\Path(Q)$ in $\FC$, whose category of modules $\Path(Q)\lmod$ is equivalent to $\Rep_{\FC}(Q)$ as module categories over $\FC$. 
We feel that this concept, introduced for Coxeter quivers in \cite{heng2024coxeter}, is presented and exposited there already in a suitably abstract way, so we leave the straightforward generalization to arbitrary fusion quivers to the reader.
\end{rem}

%%%%%%%%%%%%%%%%%%%%%%%%%%%%%%%%%%%%%%%%%%%%%%%%%%%%%%%%%
%========================================================
\section{Dimension vectors and reflection functors} \label{sec:reflection}
%========================================================
%%%%%%%%%%%%%%%%%%%%%%%%%%%%%%%%%%%%%%%%%%%%%%%%%%%%%%%%%

%========================================================
\subsection{Simple representations} \label{sec:simples}
%========================================================

\begin{notation} We say $v$ is a \emph{sink} (resp. \emph{source}) if it has only incoming (resp. outgoing) arrows. We say that a fusion representation $(X,\phi)$ is \emph{supported} at
a vertex $v$ if $X_w = 0$ for all $w \ne v$. \end{notation}

\begin{defn} Let $Q = (\FC,V,E,\Pi)$ be a fusion quiver, and $\MC$ a semisimple module category for $\FC$. Let $L$ be a simple object of $\MC$ and $v \in V$. Let $S(v,L) = (X,\phi)$ be the quiver representation for which
\begin{equation} X_v = L, \qquad X_w = 0 \text{ for all } w \ne v, w \in V, \qquad \phi_e = 0 \text{ for all } e \in E. \end{equation}
\end{defn}

Of course, if $Q$ has no loops at $v$, then the condition $\phi_e = 0$ is already implied by being supported at $v$.

\begin{prop} \label{prop:classifysimple} With notation as above, $S(v,L)$ is a simple object in $\Rep_{\MC} Q$. If $Q$ has no loops or cycles, then every simple object in $\Rep_{\MC} Q$ has the form $S(v,L)$ for some $v \in V$ and $L$ simple in $\MC$. \end{prop}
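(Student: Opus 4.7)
The plan is to prove the two assertions separately, with the second one being the only part requiring any real idea.

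For the first claim, I would argue directly from the description of subobjects in $\Rep_\MC Q$. A subrepresentation $(Y,\psi)\hookrightarrow S(v,L)$ consists of subobjects $Y_w\hookrightarrow S(v,L)_w$ in $\MC$, compatible with the structure maps. Since $S(v,L)_w=0$ for $w\ne v$, we must have $Y_w=0$ for all $w\ne v$. At $v$ we have $Y_v\hookrightarrow L$; simplicity of $L$ in $\MC$ forces $Y_v=0$ or $Y_v=L$. The compatibility squares \eqref{eq:morphismdef} are automatic since every $\phi_e$ in $S(v,L)$ is zero. Hence $(Y,\psi)$ is either the zero representation or all of $S(v,L)$, so $S(v,L)$ is simple.

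For the second claim, suppose $Q$ has no loops or cycles and let $(X,\phi)$ be a nonzero simple representation. The strategy is to exhibit some $S(v,L)$ as a subrepresentation of $(X,\phi)$, after which simplicity forces equality. The key combinatorial step is to locate a vertex $v$ such that $X_v\ne 0$ while $\phi_e=0$ for every edge $e$ with $s(e)=v$. Because $Q$ has no (directed) cycles, the relation ``there exists a directed path from $u$ to $w$'' is a partial order on $V$. Let $\mathrm{supp}(X):=\{w\in V\mid X_w\ne 0\}$, which is nonempty since $(X,\phi)\ne 0$, and choose $v\in\mathrm{supp}(X)$ maximal under this partial order. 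Then for every edge $e$ with $s(e)=v$, the target $t(e)$ is strictly greater than $v$ (using also the no-loops hypothesis), so $t(e)\notin\mathrm{supp}(X)$, which forces $X_{t(e)}=0$ and thus $\phi_e=0$.

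Having located such a $v$, use semisimplicity of $\MC$ to pick a simple subobject $L\hookrightarrow X_v$; without loss of generality $L\in\Irr(\MC)$. Define $(Y,\psi)$ by $Y_v=L$, $Y_w=0$ for $w\ne v$, and $\psi_e=0$ for all $e$. To verify that $(Y,\psi)$ is a subrepresentation, I check the square \eqref{eq:morphismdef} for each edge $e$: if neither endpoint of $e$ is $v$, every object in the square is zero; if $t(e)=v$ then $Y_{s(e)}=0$ so the top row composes to zero, matching the bottom row $\psi_e=0$ followed by $L\hookrightarrow X_v$; if $s(e)=v$ then $\phi_e=0$ by the previous paragraph, and again both paths in the square vanish. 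Thus $(Y,\psi)\cong S(v,L)$ embeds as a nonzero subrepresentation of $(X,\phi)$, and simplicity yields $(X,\phi)\cong S(v,L)$.

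The main (and only) obstacle is the middle step of identifying a ``maximal support vertex,'' which is exactly where the acyclicity assumption is used; everything else is routine bookkeeping with the definitions.
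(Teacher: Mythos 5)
Your proof is correct and takes essentially the same approach as the paper: the paper likewise observes that the support of a simple representation, being a finite acyclic directed subgraph, has a sink $t$ (your maximal support vertex is exactly such a sink), picks a simple subobject $L \hookrightarrow X_t$, and notes that $S(t,L)$ embeds as a subrepresentation, forcing equality by simplicity. Your write-up merely makes explicit the verification of the squares \eqref{eq:morphismdef} that the paper leaves to the reader.
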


\begin{proof} The simplicity of $S(v,L)$ is obvious. Suppose $Q$ has no loops or cycles. If $(X,\phi)$ is any representation of $Q$, then the set of
vertices for which $X_v \ne 0$ forms a directed subgraph which must have at least one sink $t$. If $L$ is any simple submodule of $X_t$, then $S(t,L)$ is a subrepresentation of
$(X,\phi)$. Thus the only simple representations are $S(v,L)$. \end{proof}

Let $\ZZ^V$ be the free $\ZZ$-module with basis $\{\alpha_v\}$. For an abelian category $\AC$ let $[\AC]$ denote its Grothendieck group.

\begin{cor} \label{cor:grothendieckgrpidentification} If $Q$ has no loops or cycles, then $[\Rep_{\MC} Q] \cong [\MC]\ot_{\ZZ} \ZZ^V =: [\MC]^V $. \end{cor}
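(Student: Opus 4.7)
The plan is to use the standard fact that the Grothendieck group of a finite-length abelian category is freely generated by the isomorphism classes of its simple objects, and then read off the simples using the preceding proposition.

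First I would invoke Corollary \ref{cor:artinian} to conclude that $\Rep_\MC Q$ is an artinian abelian category. In particular every object has a finite composition series (Jordan--H\"older holds), so the Grothendieck group $[\Rep_\MC Q]$ is the free abelian group on the set of isomorphism classes of simple objects, with the class $[X]$ of any object equal to the sum (with multiplicity) of the classes of its composition factors.

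Next, since $Q$ has no loops or cycles, Proposition \ref{prop:classifysimple} gives a bijection between isomorphism classes of simple objects of $\Rep_\MC Q$ and pairs $(v,L)$ with $v \in V$ and $L \in \Irr(\MC)$, sending $(v,L)$ to the class of $S(v,L)$. Meanwhile, $[\MC]$ is free abelian on $\Irr(\MC)$ (again by semisimplicity of $\MC$), and $[\MC] \otimes_\ZZ \ZZ^V$ is free abelian on the set of pure tensors $[L] \otimes \alpha_v$. The assignment
\[
[S(v,L)] \longmapsto [L] \otimes \alpha_v
\]
therefore extends uniquely to a $\ZZ$-linear isomorphism $[\Rep_\MC Q] \xrightarrow{\sim} [\MC] \otimes_\ZZ \ZZ^V$.

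There is no real obstacle here; the content of the corollary is entirely packaged into Proposition \ref{prop:classifysimple} and Corollary \ref{cor:artinian}. The only thing worth flagging is that the hypothesis ``no loops or cycles'' is essential precisely so that Proposition \ref{prop:classifysimple} classifies \emph{all} the simples (and not just exhibits some of them); without it there could be additional simple representations not of the form $S(v,L)$, and the free abelian group on $V \times \Irr(\MC)$ would be too small.
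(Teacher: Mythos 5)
Your proposal is correct and matches the paper's own proof essentially step for step: the paper likewise invokes Corollary \ref{cor:artinian} to get the Jordan--H\"older property, concludes that $[\Rep_{\MC} Q]$ is free abelian on the classes $[S(v,L)]$ (whose classification as the complete list of simples is exactly Proposition \ref{prop:classifysimple}, requiring the no-loops-or-cycles hypothesis), and then identifies the span of $\{[S(v,L)]\}_{L \in \Irr(\MC)}$ for fixed $v$ with $[\MC]$. Your remark about why the hypothesis is essential is a correct gloss on the same point.
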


\begin{proof} By Corollary \ref{cor:artinian}, $\Rep_{\MC} Q$ is Artinian, so the Jordan-Holder theorem holds. Hence $[\Rep_{\MC} Q]$ is a free abelian group with a basis  $\{[S(v,L)]\}_{v \in V, L \in \Irr(\MC)}$ in bijection with the isomorphism classes of simple objects. For any fixed $v$ we can identify the span of $\{[S(v,L)]\}_{L \in \Irr(\MC)}$ with $[\MC]$, which is spanned by $\{[L]\}$. \end{proof}

\begin{notation} \label{notn:rootspace} Henceforth, when $Q$ has no loops or cycles, we identify $[\Rep_{\MC} Q]$ and $[\MC]^V$ along the isomorphism specified in the proof above.
If $M$ is an object of $\MC$, write $[M]\alpha_v$ for $[M] \ot \alpha_v$. Thus e.g. $[L] \alpha_v$ is the symbol of $S(v,L)$. Given a fusion representation $(X,\phi)$ in $\Rep_{\MC} Q$, its \emph{dimension vector} is
	\begin{equation} \udim (X,\phi) = \sum_{v \in V} [X_v] \alpha_v \qquad \in [\MC]^{V}. \end{equation}
In the special case $\MC = \FC$, we write $[\one] \alpha_v$ and $\alpha_v$ interchangeably.
\end{notation}

\begin{rem} For $\Rep_{\MC}(Q)$, unlike ordinary quiver representations, $\alpha_v$ does not make sense as a dimension vector on its own, without being ``multiplied'' by an element of $[\MC]$. We reiterate that $[\MC]$ is not a ring and does not have a unit element, which prevents us from saying that $[\Rep_{\MC}(Q)]$ is free as a $[\MC]$-module. \end{rem}

%========================================================
\subsection{Reflection functors} \label{sec:reflections}
%========================================================

\begin{notation} For an object $\Pi$ in a fusion category, let $\leftdual{\Pi}$ denote its left dual, and $\rightdual{\Pi}$ denote its right dual. \end{notation}

\begin{defn} \label{defn:reflectquivers}
Let $Q = (\FC,V,E,\Pi)$ be a fusion quiver, and let $v \in V$ be a sink (resp. source). Let $\sigma_v Q$ be the fusion quiver over $\FC$ defined as follows. Its
underlying quiver is obtained from the underlying quiver $(V,E)$ by reversing the orientation on all arrows that abut (i.e. start or end at) $v$. If $e$ was an arrow abutting
$v$, we write $\opp(e)$ for the corresponding arrow (with reversed orientation) in $\sigma_v Q$; for all other arrows, we abusively write $e$ for the corresponding arrow (with same
orientation) in $\sigma_v Q$. The labels on edges not abutting $v$ are unchanged. For each edge $e \in E$ which abuts $v$, set $\Pi_{\opp(e)} = \leftdual{\Pi_e}$ (resp. $\Pi_{\opp(e)} = \rightdual{\Pi_e}$).  
\end{defn}

\begin{prop} \label{prop:ifnotSvL} Let $(X,\phi)$ be an indecomposable object in $\Rep_{\MC} Q$. \begin{enumerate}
	\item Let $v \in V$ be a sink. Then either $(X,\phi) \cong S(v,L)$ for some $L \in \Irr(M)$, or the following map $\xi$ is surjective (i.e. an epimorphism):
	\begin{equation} \label{eq:xidef} \xi \colon \bigoplus_{e \in E \text{ s.t. } t(e) = v} \Pi_e \ot X_{s(e)} \namedto{\oplus \phi_e} X_v. \end{equation}
	\item Let $v \in V$ be a source. Then either $(X,\phi) \cong S(v,L)$ for some $L \in \Irr(M)$, or the following map $\theta$ is injective (i.e. a monomorphism):
	\begin{equation} \label{eq:thetadef} \theta \colon  X_v \namedto{\oplus \phi'_e} \bigoplus_{e \in E \text{ s.t. } s(e) = v} \rightdual{\Pi_e} \ot X_{t(e)}. \end{equation}
	Here, $\phi'_e$ matches $\phi_e$ under the adjunction isomorphism $\Hom(\Pi_e \ot X_v, Y) \cong \Hom(X_v, \rightdual{\Pi_e} \ot Y)$.
\end{enumerate}
\end{prop}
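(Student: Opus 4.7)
The plan is to prove both parts by the standard direct-sum argument for quivers, now in the fusion setting, leveraging the semisimplicity of $\MC$ at the vertex $v$.

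For part (1), suppose $v$ is a sink and $\xi$ is not an epimorphism. Set $Y := \mathrm{im}(\xi) \subsetneq X_v$; since $\MC$ is semisimple, I choose a splitting $X_v = Y \oplus Z$ with $Z \ne 0$. Because $v$ is a sink (and in particular has no loop), the only edge-maps involving $v$ terminate at $v$, and these land in $Y$ by the definition of the image. I then define two fusion representations: $(X',\phi')$ agreeing with $(X,\phi)$ outside $v$ with $X'_v = Y$, and $(X'',\phi'')$ with $X''_v = Z$ and $X''_w = 0$ for $w \ne v$, all of whose structural maps are forced to be zero. A direct check produces an isomorphism $(X,\phi) \cong (X',\phi') \oplus (X'',\phi'')$. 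Indecomposability then forces one summand to vanish: either $(X',\phi') = 0$, in which case $(X,\phi)$ is supported at $v$ with $X_v = Z$ and all $\phi_e$ zero, so indecomposability forces $Z$ simple and $(X,\phi) \cong S(v,Z)$; or $(X'',\phi'') = 0$, forcing $Z = 0$ and hence $\xi$ surjective.

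Part (2) is dual. Suppose $v$ is a source and $\theta$ is not a monomorphism. Set $K := \ker(\theta)$ and split $X_v = K \oplus M$ by semisimplicity. The condition $K \subseteq \bigcap_e \ker(\phi'_e)$ translates, under the tensor--hom adjunction used in \eqref{eq:thetadef}, to the vanishing of each $\phi_e$ on $\Pi_e \otimes K$ (this uses exactness of $\Pi_e \otimes -$, which follows from the rigidity of $\FC$). Consequently the analogous construction gives $(X,\phi) \cong (X',\phi') \oplus (X'',\phi'')$ with $X'_v = M$ and $X''_v = K$. Indecomposability then yields either $(X,\phi) \cong S(v,K)$ with $K$ simple, or $\theta$ injective.

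The only genuine obstacle is verifying that the candidate decomposition at $v$ actually respects the edge-maps, i.e.\ that $\phi_e$ restricted to $\Pi_e$ tensored with one summand lands in the corresponding summand of the target. In part (1) this is automatic from the definition of $Y$ together with the sink condition; in part (2) it is precisely the adjunction computation above. Every remaining step is abelian-category bookkeeping essentially identical to the classical quiver case.
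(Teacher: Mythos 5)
Your proposal is correct and takes essentially the same approach as the paper's own proof: at a sink you split $X_v$ into the image of $\xi$ and a complement, at a source into $\ker\theta$ and a complement (both splittings via semisimplicity of $\MC$), produce a direct summand supported at $v$, and conclude by indecomposability, with the tensor--hom adjunction translating membership in $\ker\theta$ into the vanishing of each $\phi_e$ on $\Pi_e \ot K$ exactly as the paper does. The only difference is cosmetic: you spell out the verification that the edge maps respect the splitting, which the paper leaves to the reader.
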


If $(X,\phi)$ is supported at $v$, then all maps $\phi_e$ are zero. We may write $(X,\phi) = (X_v,0)$ to emphasize this. This notation is used in the proofs below.

\begin{proof} The proof in the classical case can largely be copied here, although one should be careful to use categorical arguments rather than arguments involving elements. We crucially use that $\MC$ is assumed to be semisimple. For sake
of completeness, we sketch the proof here. In both cases, the argument has the form: if $\xi$ is not surjective (resp. $\theta$ is not injective) then $(X,\phi)$ has a nonzero direct
summand which is supported at the vertex $v$. By indecomposability, $(X,\phi) = (X_v,0)$ is supported at $v$. If $(X_v,0)$ is indecomposable then $X_v$ is indecomposable
in $\MC$, and hence is isomorphic to $L$ for some $L \in \Irr(M)$.

Let $v \in V$ be a sink, and suppose $\xi$ is not surjective. Let $I$ be the image of $\xi$ inside $X_v$, and $C$ be a complementary summand. Then $(X,\phi) \cong (X',\phi') \oplus (C,0)$, where $(C,0)$ is supported at vertex $v$, and where $(X',\phi')$ is the same as $(X,\phi)$ except with $X_v$ replaced with $I$.

Let $v \in V$ be a source, and suppose $\theta$ is not injective. For an edge $e$ starting at $v$, let $K_e$ be the kernel of $\phi'_e$ inside $X_v$. By nature of the adjunction
isomorphism, $K_e$ is the maximal submodule of $X_v$ for which $\phi_e$ restricted to $\Pi_e \ot K_e$ is zero. Let $K$ be the kernel of $\theta$, so that $K = \bigcap K_e$. Thus
$\phi_e$ restricted to $\Pi_e \ot K$ is zero for all $e$. Let $D$ be a complementary summand of $X_v$. Then $(X,\phi) \cong (X',\phi') \oplus (K,0)$, where $(K,0)$ is supported at
vertex $v$, and where $(X',\phi')$ is the same as $(X,\phi)$ except with $X_v$ replaced with $D$.

We leave it to the reader to verify these direct sum decompositions of $(X,\phi)$. \end{proof}

\begin{lem} \label{lem:kerxifunctor} Let $v \in V$ be a sink.  For an object $(X,\phi)$, let the morphism $\xi = \xi_{(X,\phi)}$ be defined as in \eqref{eq:xidef}. Then $\Ker \xi$ is functorial, in the sense that a morphism $f \co (X,\phi) \to (Y,\psi)$ induces a map $\Ker \xi_{(X,\phi)} \to \Ker \xi_{(Y,\psi)}$.

Similarly, let $v \in V$ be a source. Then $\Coker \theta$ is functorial. \end{lem}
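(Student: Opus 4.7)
The plan is to deduce both statements directly from the functoriality of kernels and cokernels in the abelian category $\MC$, once we verify that a morphism $f \co (X,\phi) \to (Y,\psi)$ in $\Rep_{\MC} Q$ fits into an appropriate commutative square relating $\xi_{(X,\phi)}$ to $\xi_{(Y,\psi)}$ (respectively $\theta_{(X,\phi)}$ to $\theta_{(Y,\psi)}$).

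For the sink case, fix a morphism $f \co (X,\phi) \to (Y,\psi)$. For each edge $e$ with $t(e) = v$, the morphism axiom \eqref{eq:morphismdef} gives the commuting relation $f_v \circ \phi_e = \psi_e \circ (\id_{\Pi_e} \ot f_{s(e)})$. Assembling these over all $e$ ending at $v$ yields the commutative square
\begin{equation*}
\begin{tikzcd}
\bigoplus_{e : t(e) = v} \Pi_e \ot X_{s(e)} \arrow[r,"\xi_{(X,\phi)}"] \arrow[d,"\oplus\, \id_{\Pi_e} \ot f_{s(e)}"'] & X_v \arrow[d,"f_v"] \\
\bigoplus_{e : t(e) = v} \Pi_e \ot Y_{s(e)} \arrow[r,"\xi_{(Y,\psi)}"] & Y_v
\end{tikzcd}
\end{equation*}
in $\MC$. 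Since $\MC$ is abelian, the kernel is functorial in morphisms of arrows, so this square induces a canonical morphism $\Ker \xi_{(X,\phi)} \to \Ker \xi_{(Y,\psi)}$, and this assignment is visibly compatible with composition.

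The source case is entirely analogous, the only subtlety being that the relevant square must be produced in terms of $\phi'_e$ rather than $\phi_e$. For each edge $e$ with $s(e) = v$, the tensor-hom adjunction $\Hom(\Pi_e \ot A, B) \cong \Hom(A, \rightdual{\Pi_e} \ot B)$ is natural in both $A$ and $B$; applying naturality to the commutative square \eqref{eq:morphismdef} transfers it to the commutative square
\begin{equation*}
\begin{tikzcd}
X_v \arrow[r,"\phi'_e"] \arrow[d,"f_v"'] & \rightdual{\Pi_e} \ot X_{t(e)} \arrow[d,"\id_{\rightdual{\Pi_e}} \ot f_{t(e)}"] \\
Y_v \arrow[r,"\psi'_e"] & \rightdual{\Pi_e} \ot Y_{t(e)}.
\end{tikzcd}
\end{equation*}
Taking the direct sum over all $e$ with $s(e) = v$ gives a commutative square relating $\theta_{(X,\phi)}$ and $\theta_{(Y,\psi)}$, and functoriality of cokernels in $\MC$ then supplies the required morphism $\Coker \theta_{(X,\phi)} \to \Coker \theta_{(Y,\psi)}$. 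The only (minor) obstacle is the adjunction step in the source case, which amounts to invoking naturality of the unit/counit of the adjunction in the correct arguments; once that is done, everything else is formal abelian-category nonsense.
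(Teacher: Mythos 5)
Your proposal is correct and takes essentially the same approach as the paper, which declares the lemma straightforward and defers to the discussion after Definition 4.6 of \cite{heng2024coxeter}: the morphism axiom \eqref{eq:morphismdef} assembles into a commutative square intertwining $\xi_{(X,\phi)}$ with $\xi_{(Y,\psi)}$ (and, after transporting \eqref{eq:morphismdef} across the natural tensor-hom adjunction, $\theta_{(X,\phi)}$ with $\theta_{(Y,\psi)}$), after which the universal properties of kernels and cokernels in the abelian category $\MC$ produce the induced maps. Your explicit treatment of the adjunction step in the source case is exactly the point the paper leaves implicit, and it is handled correctly.
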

	
\begin{proof} This is straightforward. See \cite[discussion after Definition 4.6]{heng2024coxeter}. \end{proof}

\begin{notation} Let $\one$ denote the monoidal identity of $\FC$. For an object $\Pi_e$ labeling an edge $e$, we let $\Eval_e \co \leftdual{\Pi_e} \ot \Pi_e \to \one$ be the canonical \emph{evaluation} map. \end{notation}

\begin{defn} \label{defn:sinkreflection} Let $v \in V$ be a sink. Let 
	\[ R_v^+ \co \Rep_{\MC} Q \to \Rep_{\MC} \sigma_v Q, \qquad R_v^+ \co (X,\phi) \mapsto (R_v^+ X, R_v^+ \phi), \qquad R_v^+ \co f \mapsto R_v^+ f \] be the functor defined as follows.
\begin{itemize}
	\item Set $(R_v^+ X)_w = X_w$ for all $w \ne v$.
	\item Set $(R_v^+ \phi)_e = \phi_e$ for all $e$ which do not end at $v$.
	\item Set $(R_v^+ X)_v = \Ker \xi_{(X,\phi)}$. 
	\item Set $(R_v^+ \phi)_{\opp(e)} = \iota_e$ (see below) for all $e$ which end at $v$.
	\item If $f \co (X,\phi) \to (Y,\psi)$, then $(R_v^+ f)_w = f_w$ for all $w \ne v$.
	\item If $f \co (X,\phi) \to (Y,\psi)$, then then $(R_v^+ f)_v$ is the induced map from $\Ker \xi_{(X,\phi)}$ to $\Ker \xi_{(Y,\psi)}$, see Lemma \ref{lem:kerxifunctor}.
\end{itemize}
Let $\incl_e$ be the map $\Ker \xi \to \Pi_e \ot X_{s(e)}$ which includes $\Ker \xi$ into the direct sum on the LHS of \eqref{eq:xidef}, and projects to the $e$ factor. Then
\begin{equation} \iota_e = (\Eval_e \ot \id_{X_{s(e)}}) \circ (\id_{\leftdual{\Pi_e}} \ot \incl_e). \end{equation}
\end{defn}

\begin{lem} The functor $R_v^+$ is well-defined and additive. \end{lem}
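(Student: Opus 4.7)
The plan is to unpack Definition \ref{defn:sinkreflection} and verify the required properties one piece at a time, in the order: well-definedness on objects, well-definedness on morphisms, functoriality, and additivity.

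First, for well-definedness on objects, I would check that each $(R_v^+\phi)_{\opp(e)} := \iota_e$ has the correct source and target as an arrow of $\sigma_v Q$. Since in $\sigma_v Q$ the arrow $\opp(e)$ goes from $v$ to $s(e)$ with label $\leftdual{\Pi_e}$, one needs a morphism $\leftdual{\Pi_e}\otimes (R_v^+X)_v \to (R_v^+X)_{s(e)}$. The composition displayed in the definition visibly has domain $\leftdual{\Pi_e}\otimes\Ker\xi$ and codomain $X_{s(e)}$, which are exactly $(R_v^+X)_v$ and $(R_v^+X)_{s(e)}$, so this is automatic.

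Second, given $f\co (X,\phi)\to(Y,\psi)$, one needs to verify the square \eqref{eq:morphismdef} for $R_v^+f$ on every arrow of $\sigma_v Q$. For arrows not abutting $v$, both $R_v^+f$ and $R_v^+\phi$ coincide with the original data, so commutativity is inherited from $f$ being a morphism in $\Rep_{\MC}Q$. The only substantive case is an arrow $\opp(e)$ with $t(e)=v$: one must check
\[ f_{s(e)}\circ\iota_e^{(X,\phi)} = \iota_e^{(Y,\psi)}\circ(\id_{\leftdual{\Pi_e}}\otimes(R_v^+f)_v). \]
Substituting $\iota_e = (\Eval_e\otimes\id_{X_{s(e)}})\circ(\id_{\leftdual{\Pi_e}}\otimes\incl_e)$ and using the bifunctoriality of $\otimes$, this reduces to showing $(\id_{\Pi_e}\otimes f_{s(e)})\circ\incl_e^{(X,\phi)} = \incl_e^{(Y,\psi)}\circ(R_v^+f)_v$ on each summand indexed by an edge ending at $v$. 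But this is precisely the defining characterisation (from Lemma \ref{lem:kerxifunctor}) of the induced map $(R_v^+f)_v$ on kernels, coming from the universal property applied to the commuting diagram $\xi_{(Y,\psi)}\circ(\bigoplus\id\otimes f_{s(e)}) = f_v\circ\xi_{(X,\phi)}$.

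Finally, functoriality and additivity are immediate formal consequences of the universal property of the kernel in the abelian category $\MC$. Away from $v$ all components of $R_v^+$ are given by identity/composition/sum in $\MC$, so there is nothing to check. At $v$, the induced-kernel construction sends $\id$ to $\id$, composition to composition, and is $\ZZ$-linear in its data, since in each case the relevant map out of $\Ker\xi_{(X,\phi)}$ is uniquely characterised by its compatibility with $\incl_e$ and kernels are additive functors. The main (and only) obstacle is the commutative-square verification in step two; the remaining items are routine bookkeeping, and no new input beyond Lemma \ref{lem:kerxifunctor} and the semisimplicity already in force is required.
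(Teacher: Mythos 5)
Your proposal is correct and follows exactly the route the paper intends: the paper's own proof simply observes that all conditions are vertex-wise or edge-wise, immediate away from $v$ and easy near $v$, and leaves the verification to the reader. You have carried out precisely that verification, with the key reduction $(\id_{\Pi_e}\otimes f_{s(e)})\circ\incl_e^{(X,\phi)} = \incl_e^{(Y,\psi)}\circ(R_v^+f)_v$ following correctly from the universal property of the kernel as in Lemma \ref{lem:kerxifunctor} (though note semisimplicity of $\MC$ is not actually needed here, only that $\MC$ is abelian).
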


\begin{proof} One need only verify that $R_v^+ f$ is a well-defined morphism, and that $R_v^+$ preserves identity morphisms, morphism composition, and morphism addition. These are all vertex-wise or edge-wise conditions, which are immediate to verify for all vertices $w \ne v$ and edges not abutting $v$, and still easy to verify near $v$. We leave this to the reader. \end{proof}
	
\begin{ex} \label{ex:reflectS3}
We return to the second example from \eqref{ex:Snfun}, where $\FC = \Rep(S_3)$, and we reflect at the unique sink $v$. We identify $V_2$ with its (left or right) dual. The kernel of the map
\[ \xi \colon V_2 \ot (\CC \oplus V_2) \to \CC \oplus V_2 \]
is isomorphic to $\Sign \oplus V_2$. Thus we have
\begin{equation} R_v^+ \left( \; \begin{tikzcd}
	\CC \oplus V_2 & \CC \oplus V_2
	\arrow["V_2", from=1-1, to=1-2]
\end{tikzcd} \; \right) \quad = \quad \left( \; \begin{tikzcd}
	\CC \oplus V_2 & \Sign \oplus V_2
	\arrow["V_2", from=1-2, to=1-1]
\end{tikzcd} \; \right). \end{equation}
The map
\[ \iota_e \colon V_2 \ot (\Sign \oplus V_2) \to \CC \oplus V_2 \]
(which is associated to the edge after applying $R_v^+$) has nonzero components $V_2 \ot \Sign \to V_2$, $V_2 \ot V_2 \to \CC$, and $V_2 \ot V_2 \to V_2$. \end{ex}

\begin{defn} \label{defn:sourcereflection} Let $v \in V$ be a source. Let 
	\[ R_v^- \co \Rep_{\MC} Q \to \Rep_{\MC} \sigma_v Q, \qquad R_v^- \co (X,\phi) \mapsto (R_v^- X, R_v^- \phi), \qquad R_v^- \co f \mapsto R_v^- f \] be the functor defined as follows.
\begin{itemize}
	\item Set $(R_v^- X)_w = X_w$ for all $w \ne v$.
	\item Set $(R_v^- \phi)_e = \phi_e$ for all $e$ which do not start at $v$.
	\item Set $(R_v^- X)_v = \Coker \theta_{(X,\phi)}$. 
	\item Set $(R_v^+ \phi)_{\opp(e)} = \rho_e$ (see below) for all $e$ which end at $v$.
	\item If $f \co (X,\phi) \to (Y,\psi)$, then $(R_v^+ f)_w = f_w$ for all $w \ne v$.
	\item If $f \co (X,\phi) \to (Y,\psi)$, then then $(R_v^- f)_v$ is the induced map from $\Coker \theta_{(X,\phi)}$ to $\Coker \theta_{(Y,\psi)}$, see Lemma \ref{lem:kerxifunctor}.
\end{itemize}
Recalling that $\Pi_{\opp(e)} = \rightdual{\Pi_e}$, the morphism $\rho_e$ should be a map
\begin{equation} \rho_e \co \rightdual{\Pi_e} \ot X_{t(e)} \to \Coker \theta, \end{equation}
and indeed we just include $\rightdual{\Pi_e} \ot X_{t(e)}$ as one term of the direct sum on the RHS of \eqref{eq:thetadef}, followed by the quotient map to the cokernel.
\end{defn}

\begin{lem} The functor $R_v^-$ is well-defined and additive. \end{lem}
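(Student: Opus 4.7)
The plan is to mirror the proof of the analogous lemma for $R_v^+$, with kernels replaced by cokernels and the evaluation coupling replaced by the adjunction $\Hom_{\MC}(\Pi_e \otimes -,-) \cong \Hom_{\MC}(-, \rightdual{\Pi_e} \otimes -)$ that was already used to define $\theta$ and $\phi'_e$. The essential categorical input is the functoriality of $\Coker \theta$ provided by Lemma \ref{lem:kerxifunctor}, together with the universal property of the cokernel.

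First, I would check that $(R_v^- X, R_v^- \phi)$ actually defines a fusion representation of $\sigma_v Q$. The only sources/targets to verify are on the edges abutting $v$, all of which have had their orientation reversed. For such an edge $e$ (originally starting at $v$), the reversed edge $\opp(e)$ has source $t(e)$ and target $v$ in $\sigma_v Q$, and is labeled by $\Pi_{\opp(e)} = \rightdual{\Pi_e}$, so one needs a morphism $\rho_e \colon \rightdual{\Pi_e} \otimes X_{t(e)} \to \Coker \theta_{(X,\phi)}$. This exists by construction: $\rightdual{\Pi_e} \otimes X_{t(e)}$ is one of the summands of the codomain of $\theta$, and $\rho_e$ is the inclusion of this summand followed by the canonical quotient to the cokernel.

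Second, I would check that if $f \colon (X,\phi) \to (Y,\psi)$ is a morphism in $\Rep_{\MC} Q$, then $R_v^- f$ is a morphism in $\Rep_{\MC} \sigma_v Q$. For edges not abutting $v$, the commutative square required in \eqref{eq:morphismdef} is literally the same square that held for $f$, so there is nothing to check. For an edge $\opp(e)$ abutting $v$ in $\sigma_v Q$, one needs the square
\[
\begin{tikzcd}
\rightdual{\Pi_e} \otimes X_{t(e)} \arrow[r,"\rho_e"] \arrow[d,"\id \otimes f_{t(e)}"'] & \Coker \theta_{(X,\phi)} \arrow[d,"(R_v^- f)_v"'] \\
\rightdual{\Pi_e} \otimes Y_{t(e)} \arrow[r,"\rho'_e"] & \Coker \theta_{(Y,\psi)}
\end{tikzcd}
\]
to commute. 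By definition of $(R_v^- f)_v$ as the morphism induced on cokernels from the vertical map $\theta_{(X,\phi)} \Rightarrow \theta_{(Y,\psi)}$ (Lemma \ref{lem:kerxifunctor}), this is immediate from the universal property of the cokernel, since $\rho_e$ is the composition of a summand inclusion into the codomain of $\theta_{(X,\phi)}$ with the quotient map.

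Finally, the assertions that $R_v^-$ preserves identity morphisms, respects composition, and is additive on morphisms are all vertex-wise or edge-wise. Away from $v$ they hold on the nose because $(R_v^- f)_w = f_w$ and $(R_v^- \phi)_e = \phi_e$. At the vertex $v$, the induced maps between cokernels are characterized uniquely by compatibility with the quotient maps, and this uniqueness clause of the cokernel's universal property forces $(R_v^-)(\id) = \id$, $(R_v^-)(f \circ g) = R_v^- f \circ R_v^- g$, and $R_v^-(f+g) = R_v^- f + R_v^- g$. No step is really a serious obstacle; the only point requiring a sentence of care is unwinding that $\rho_e$ and $\rho'_e$ are compatible with the vertical morphism $\id \otimes f_{t(e)}$ summed over all source-edges at $v$, which is precisely the content already needed to invoke Lemma \ref{lem:kerxifunctor}.
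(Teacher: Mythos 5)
Your proposal is correct and follows essentially the same route as the paper, whose proof of this lemma is simply ``Also straightforward,'' deferring to the analogous argument for $R_v^+$: vertex-wise and edge-wise checks away from $v$, plus the universal property of the cokernel and the functoriality from Lemma \ref{lem:kerxifunctor} at $v$. You have merely written out the details the authors leave to the reader, including the one genuinely load-bearing point (that the squares of $\theta$'s commute, which is what licenses invoking Lemma \ref{lem:kerxifunctor}).
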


\begin{proof} Also straightforward. \end{proof}

\begin{prop} \label{prop:doublereflect} Let $v \in V$ be a sink. For any fusion representation $(X,\phi)$, the map $\theta_{R_v^+(X,\phi)}$ is injective. There is a natural transformation $R_v^- R_v^+ \to \Id_Q$, which is an isomorphism on those fusion representations $(X,\phi)$ for which $\xi_{(X,\phi)}$ is surjective. Here $\Id_Q$ represents the identity functor of $\Rep_{\MC} Q$.

Let $v \in V$ be a source. For any fusion representation $(X,\phi)$, the map $\xi_{R_v^-(X,\phi)}$ is surjective. There is a natural transformation $\Id_Q \to R_v^+ R_v^-$, which is an isomorphism on those fusion representations $(X,\phi)$ for which $\theta_{(X,\phi)}$ is injective. \end{prop}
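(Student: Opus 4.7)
The plan is to unwind the adjunction that defines $\theta$ for $R_v^+(X,\phi)$ and recognize it as a familiar inclusion, then use the first isomorphism theorem to identify $\Coker\theta$ with $\Image\xi \subseteq X_v$. Throughout, let $v$ be a sink of $Q$, set $K := \Ker\xi_{(X,\phi)}$, and write $\incl_e : K \to \Pi_e \ot X_{s(e)}$ for the component of the kernel inclusion, as in Definition \ref{defn:sinkreflection}.

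First, I would compute $\theta_{R_v^+(X,\phi)}$ explicitly. In $\sigma_v Q$, the vertex $v$ is a source, and for each edge $e$ ending at $v$ in $Q$ the label of $\opp(e)$ is $\leftdual{\Pi_e}$; the structure map is $\iota_e = (\Eval_e \ot \id_{X_{s(e)}}) \circ (\id_{\leftdual{\Pi_e}} \ot \incl_e)$. The adjunction $\Hom(\leftdual{\Pi_e} \ot K, X_{s(e)}) \cong \Hom(K, \rightdual{(\leftdual{\Pi_e})} \ot X_{s(e)}) = \Hom(K, \Pi_e \ot X_{s(e)})$ sends a morphism $g : K \to \Pi_e \ot X_{s(e)}$ to $(\Eval_e \ot \id) \circ (\id \ot g)$. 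Comparing formulas, the adjoint of $\iota_e$ is exactly $\incl_e$. Therefore $\theta_{R_v^+(X,\phi)}$ is the sum of the inclusions $\incl_e$, which is the kernel inclusion $K \hookrightarrow \bigoplus_{e\,:\,t(e) = v} \Pi_e \ot X_{s(e)}$. This is manifestly injective, proving the first claim.

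Next, since $\theta$ is the inclusion of $\Ker\xi$, the first isomorphism theorem yields a canonical map $\Coker\theta \to X_v$ induced by $\xi$, which factors as $\Coker\theta \xrightarrow{\cong} \Image\xi \into X_v$. In particular it is always injective, and it is an isomorphism precisely when $\xi$ is surjective. I would define the natural transformation $\eta \co R_v^- R_v^+ \to \Id_Q$ to be the identity at every vertex $w \ne v$, and at $v$ to be this map $\Coker\theta \to X_v$. Naturality at vertex $v$ with respect to morphisms $f \co (X,\phi) \to (Y,\psi)$ follows from the fact that $f$ intertwines $\xi_{(X,\phi)}$ with $\xi_{(Y,\psi)}$, so the induced maps on images (equivalently, on cokernels of the kernel inclusions) agree with $f_v$ restricted to $\Image\xi$. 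Compatibility with structure maps for edges abutting $v$ amounts to the commutativity of the triangle in which $\rho_{\opp(e)} \co \Pi_e \ot X_{s(e)} \to \Coker\theta$ (inclusion into the direct sum followed by the quotient) composed with $\Coker\theta \to X_v$ equals $\phi_e$; this holds essentially by definition of $\xi$ as the sum of the $\phi_e$. For edges not abutting $v$, both the source and target structure maps are identical and the natural transformation is the identity on those components.

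The source case is exactly dual: reflecting at a source $v$ and then bookkeeping with the adjunction $\Hom(\Pi_e \ot X_v, Y) \cong \Hom(X_v, \rightdual{\Pi_e} \ot Y)$ shows that the $\xi$ map for $R_v^-(X,\phi)$ is the canonical projection $\bigoplus_{e\,:\,s(e) = v} \rightdual{\Pi_e} \ot X_{t(e)} \onto \Coker\theta_{(X,\phi)}$, which is surjective; then $\Ker\xi \cong \Image\theta$ receives the natural map $X_v \to \Ker\xi$, giving a natural transformation $\Id_Q \to R_v^+ R_v^-$ that is an isomorphism exactly when $\theta_{(X,\phi)}$ is injective. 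I expect no serious obstacle: the only real subtlety is the adjunction bookkeeping that identifies $\iota_e$ (resp.\ $\rho_e$) with $\incl_e$ (resp.\ the corresponding projection), after which both claims reduce to the standard observation that a morphism factors uniquely through its image, or dually through its coimage.
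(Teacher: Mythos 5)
Your proposal is correct and follows essentially the same route as the paper: you identify $\theta_{R_v^+(X,\phi)}$ with the kernel inclusion of $\xi_{(X,\phi)}$ via the adjunction (your observation that $\iota_e$ is by construction the adjoint partner of $\incl_e$ is just a repackaging of the paper's unit--counit cancellation), then realize $(R_v^- R_v^+ X)_v = \Coker\theta$ and obtain the comparison map to $X_v$, an isomorphism exactly when $\xi$ is surjective, with the dual argument at a source. Your explicit check of edge-compatibility (that $\rho_{\opp(e)}$ followed by $\Coker\theta \to X_v$ recovers $\phi_e$) is a detail the paper leaves implicit, and your direction $\Coker\theta \to X_v$ for the counit is the correct one.
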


\begin{proof}
We prove the result when $v$ is a sink. For sanity, let us write $(Y,\psi) = R_v^+ (X,\phi)$. Consider the following diagram:
\begin{equation} \label{sesforsink}
\begin{tikzcd}
0 \arrow[r] & Y_v \arrow[d,"="] \arrow[r,hookrightarrow] & \bigoplus_{e} \Pi_e \ot X_{s(e)} \arrow[d,"="] \arrow[r,"\xi_{(X,\phi)}"] & X_v \arrow[d,dashed] & \\
0 \arrow[r] & Y_v \arrow[r,"\theta_{(Y,\psi)}"] & \bigoplus_{e} \Pi_e \ot X_{s(e)} \arrow[r,twoheadrightarrow] & (R_v^- Y)_v \arrow[r] & 0 \\
\end{tikzcd}.
\end{equation}
Let us elaborate. The first row identifies $Y_v$ as the kernel of $\xi = \xi_{(X,\phi)}$, so it is exact by definition. The first map is the natural inclusion map. However, we claim that this inclusion map agrees with $\theta = \theta_{(Y,\psi)}$, making the first square commute, and proving that $\theta$ is injective. Given this, the second row identifies $(R_v^- R_v^+ X)_v = (R_v^- Y)_v$ as the cokernel of $\theta$, so it is also exact. The dashed map $X_v \to (R_v^- R_v^+ X)_v$ is then induced by the universal property of cokernels, and is an isomorphism if and only if $\xi$ is surjective. The functoriality of this diagram in $(X,\phi)$ is a simple consequence of Lemma \ref{lem:kerxifunctor}.

It remains to prove that $\theta$ agrees with the natural inclusion. Below, let 
\[ \rho_e \co Y_v \to \Pi_e \ot X_{s(e)} \]
denote the composition of the inclusion map $Y_v \into \bigoplus \Pi_e \ot X_{s(e)}$ with the projection map to the factor indexed by $e$. Clearly, the inclusion map itself is the direct sum $\oplus \rho_e$ over all edges $e$ with target $v$. Meanwhile, by construction, $\theta$ is the direct sum $\oplus \psi_{\opp(e)}'$ of maps $\psi_{\opp(e)}' : Y_v \to \rightdual{(\leftdual{\Pi_e})} \ot X_{s(e)}$, and we identify $\rightdual{(\leftdual{\Pi_e})}$ with $\Pi_e$. We win if we show that $\psi_{\opp(e)}' = \rho_e$.

These maps $\psi_{\opp(e)}'$ were constructed from $\psi_{\opp(e)}$ using adjunction, so can be realized by precomposing $\psi_{\opp(e)}$ (tensored with an identity map) with the unit of adjunction. Unraveling the definition of $\psi_{\opp(e)}$ (c.f. $\iota_e$ from Definition \ref{defn:sinkreflection}), we see that $\psi_{\opp(e)}'$ is the composition
\begin{equation} Y_v \longrightarrow \Pi_e \ot \leftdual{\Pi_e} \ot Y_v \longrightarrow \Pi_e \ot \leftdual{\Pi_e} \ot \Pi_e \ot X_{s(e)} \longrightarrow \Pi_e \ot X_{s(e)}. \end{equation}
The first map is the unit of adjunction tensored with $\id_{Y_v}$. The second map is the identity of $\Pi_e \ot \leftdual{\Pi_e}$ tensored with $\rho_e$. The third map is $\Eval_e$ sandwiched between $\id_{\Pi_e}$ and $\id_{X_{s(e)}}$. 
Because $\Eval_e$ is the counit of adjunction, it can be cancelled with the unit of adjunction (after applying the interchange law), from which one concludes
\begin{equation} \psi_{\opp(e)}' = \rho_e \end{equation}
as desired.

The argument when $v$ is a source is significantly easier! This time, set $(Z,\beta) = R_v^- (X,\phi)$. One has the following diagram:
\begin{equation} 
\begin{tikzcd}
& X_v \arrow[r,"\theta_{(X,\phi)}"] \arrow[d, dashed] & \bigoplus_{e} \rightdual{\Pi_e} \ot X_{t(e)} \arrow[d,"="] \arrow[r,twoheadrightarrow] & Z_v \arrow[d,"="] \arrow[r] & 0 \\
0 \arrow[r] & (R_v^+ Z)_v \arrow[r,hookrightarrow] & \bigoplus_{e} \rightdual{\Pi_e} \ot X_{t(e)} \arrow[r,"\xi_{(Z,\beta)}"] & Z_v \arrow[r] & 0 \\
\end{tikzcd}.
\end{equation}
Again, both rows are exact by definition, and the interesting feature is the commutativity of the right square. However, this time $\xi_{(Z,\beta)}$ is the canonical quotient map by definition, without the need for any adjunction.
\end{proof}

\begin{rem}
Note that the natural transformations $R_v^- R_v^+ \to \Id_Q$ and $\Id_Q \to R_v^+ R_v^-$ are the counit and unit respectively of the adjunction pair $(R_v^-, R_v^+)$.
We leave this verification to the reader.
\end{rem}

Recall that only nonzero representations are called indecomposable.

\begin{cor} \label{cor:preservesindecomposable} Let $v\in V$ be a sink, and let $(X,\phi)$ be an indecomposable fusion representation. Either $(X,\phi) = S(v,L)$ for some $L$, or $R_v^+(X,\phi)$ is indecomposable.

Let $v \in V$ be a source, and let $(X,\phi)$ be an indecomposable fusion representation. Either $(X,\phi) = S(v,L)$ for some $L$, or $R_v^-(X,\phi)$ is indecomposable.\end{cor}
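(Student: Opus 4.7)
The plan is to exploit the double-reflection isomorphism of Proposition~\ref{prop:doublereflect} to transport indecomposability across $R_v^+$. I treat only the sink case; the source case is completely dual, with the roles of $R_v^+, R_v^-$ and of $\xi, \theta$ swapped.

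Assume $(X,\phi)$ is indecomposable and not isomorphic to any $S(v,L)$. By Proposition~\ref{prop:ifnotSvL} the map $\xi_{(X,\phi)}$ is surjective, and hence Proposition~\ref{prop:doublereflect} yields an isomorphism $R_v^- R_v^+(X,\phi) \cong (X,\phi)$. In particular $R_v^+(X,\phi) \ne 0$, so it is meaningful to ask whether it is indecomposable.

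Suppose for contradiction that $R_v^+(X,\phi) \cong A \oplus B$ with both $A$ and $B$ nonzero. Regarding $R_v^+(X,\phi)$ as a representation of $\sigma_v Q$ (in which $v$ is now a source), Proposition~\ref{prop:doublereflect} tells us that $\theta_{R_v^+(X,\phi)}$ is injective. This map decomposes as $\theta_A \oplus \theta_B$ along the direct sum, since $\theta$ is assembled componentwise from edge maps that respect direct sums; hence $\theta_A$ and $\theta_B$ are both injective. Applying the source half of Proposition~\ref{prop:doublereflect} to each summand individually yields $R_v^+ R_v^- A \cong A$ and $R_v^+ R_v^- B \cong B$.

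Now apply the additive functor $R_v^-$ to the decomposition to obtain $(X,\phi) \cong R_v^- A \oplus R_v^- B$. Indecomposability of $(X,\phi)$ forces one of these summands to vanish, say $R_v^- B = 0$. Then $B \cong R_v^+ R_v^- B = 0$, contradicting the assumed nontrivial splitting. Hence $R_v^+(X,\phi)$ is indecomposable. The only real content is the observation that the injectivity of $\theta$ on $R_v^+(X,\phi)$ is inherited by its direct summands, which then makes the invertibility of $R_v^+ R_v^-$ on each summand automatic via the second half of Proposition~\ref{prop:doublereflect}; I anticipate no genuine obstacles beyond bookkeeping.
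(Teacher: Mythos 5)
Your proof is correct and takes essentially the same route as the paper's: both apply $R_v^-$ to a putative splitting $R_v^+(X,\phi) \cong A \oplus B$, use $R_v^- R_v^+(X,\phi) \cong (X,\phi)$ together with indecomposability to force, say, $R_v^- B = 0$, and then derive a contradiction from Proposition \ref{prop:doublereflect}. The only cosmetic difference is the final step: the paper notes that $R_v^- B = 0$ forces $B$ to be supported at $v$ with $\theta_B$ not injective, contradicting injectivity of $\theta_{R_v^+(X,\phi)}$, whereas you first observe that $\theta_B$ inherits injectivity as a direct summand and then conclude $B \cong R_v^+ R_v^- B = 0$ via the unit isomorphism --- both are valid instances of the same argument.
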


\begin{proof} Let $v$ be a sink. By Proposition \ref{prop:ifnotSvL}, either $(X,\phi) = S(v,L)$ or $\xi_{(X,\phi)}$ is surjective, which by the previous theorem implies that $R_v^- R_v^+ (X,\phi) \cong (X,\phi)$. Suppose that $R_v^+ (X,\phi) \cong A \oplus B$ is decomposable. Then 
	\[ (X,\phi) \cong R_v^- R_v^+ (X,\phi) \cong R_v^-(A) \oplus R_v^-(B). \]
Since $(X,\phi)$ is indecomposable, one of the summands must be zero, so let us assume that $R_v^-(B) = 0$. In particular, $B$ must be supported on the vertex $v$ inside $\sigma_v Q$, and the map $\theta_B$ is not injective. But then $\theta_{R_v^+ (X,\phi)}$ is not injective either, being zero on the summand $B_v$. This contradicts Proposition \ref{prop:doublereflect}.

The argument when $v$ is a source is completely parallel. \end{proof}

%========================================================
\subsection{Bilinear forms and reflection groups associated to fusion quivers} \label{sec:bilinearformreflection}
%========================================================
We define a $[\FC]$-bilinear form associated to $Q$.
\begin{defn}\label{defn:Qbilinearform}
Let $Q$ be a fusion quiver over $\FC$ and let $V$ be its set of vertices.
Identify $[\FC]^V \cong [\FC]\otimes \ZZ^V$ with standard basis $\{\alpha_v\}$ for $\ZZ^V$.
Consider a \emph{right} $[\FC]$-bilinear form $\langle - , - \rangle_Q$ on $[\FC]^V$ with values in $[\FC]$, defined on the basis elements $\alpha_v$ as follows:
\begin{equation}
\langle \alpha_v, \alpha_w \rangle_Q := 
	\begin{cases} 
		2 = 2 \cdot [\one], & \text{ if } v = w, \\
		- [\rightdual{\Pi_e}], & \text{ if  $v\neq w$ and $v \xrightarrow{\Pi_e} w$}, \\
		- [\Pi_e], & \text{ if $v \neq w$ and $w \xrightarrow{\Pi_e} v$}, \\
		0 & \text{ if $v \ne w$ and no edge abuts both $v$ and $w$}. 
	\end{cases}
\end{equation}
We shall use $L_Q := ([\FC]^V, \langle -, -\rangle_Q)$ to denote the free, right $[\FC]$-module $[\FC]^V$ equipped with the bilinear form $\langle -, - \rangle_Q$.
\end{defn}

\begin{rem}
We have implicitly used Convention \ref{conv:nomult} in Definition \ref{defn:Qbilinearform}, phrasing our definition as though there were at most one edge between two vertices. If there are multiple edges $e_1, \ldots, e_d$ from $v$ to $w$, one should instead set $\langle \alpha_v, \alpha_w \rangle_Q$ to be $- \sum [\rightdual{\Pi_{e_i}}]$, etcetera. This is equivalent to first applying Lemma \ref{lem:directsum} to merge the edges into one, and then using the definition above. Note that Convention \ref{conv:nozero} has no practical effect on the definition of this bilinear form. 
\end{rem}

When $Q$ is a fusion quiver over $\FC = \Vect$, we have $[\Vect] \cong \ZZ$ and $L_Q$ is the usual lattice on $[\ZZ]^V$ equipped with the usual bilinear form associated to $Q$ viewed as an ordinary quiver.
However, unlike in the case of ordinary quivers, we warn the reader that in general the bilinear form $\langle -,- \rangle_Q$ is \emph{not necessarily symmetric} and it depends on the orientation (and of course, the arrow labels) of $Q$. It is only symmetric when all the objects which label edges are self dual.

Now let $v$ be a sink or a source of $Q$.
Recall from Definition \ref{defn:reflectquivers} that we can define the reflected fusion quiver $\sigma_v Q$ with the same set of vertices $V$.
A priori the $[\FC]$-bilinear form $\langle -, - \rangle_{\sigma_v Q}$ on $[\FC]^V$ could be different, as the orientation and labels of $\sigma_v Q$ are different from $Q$.
Nonetheless, an easy comparison shows that the two bilinear forms agree, using the fact that in a fusion category we have the following isomorphisms:
\begin{itemize}
\item $\leftdual{\Pi_e} \cong \rightdual{\Pi_e}$;
\item $\leftdual{\leftdual{\Pi_e}} \cong \Pi_e \cong \tensor{\Pi}{_e^{**}}$; and
\item $\rightdual{(\leftdual{\Pi_e})} \cong \Pi_e \cong \leftdual{(\rightdual{\Pi_e})}$.
\end{itemize}
These isomorphisms exist (see \cite[Proposition 4.8.1]{EGNO}), even though there may be no functorial isomorphism from the identity functor to the double dual.

\begin{lem}
Let $Q$ be a fusion quiver over $\FC$ and let $V$ be its set of vertices.
Suppose $v$ is a sink or a source of $Q$ and $\sigma_v Q$ is the reflected fusion quiver at the vertex $v$. Then we have
\[
\langle -, - \rangle_Q = \langle -, - \rangle_{\sigma_v Q},
\]
an equality of two $[\FC]$-valued right $[\FC]$-bilinear forms on $[\FC]^V$.
\end{lem}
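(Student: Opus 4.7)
The proof will proceed by a direct case-by-case comparison of the values of the two bilinear forms on the basis elements $\{\alpha_w\}_{w \in V}$. Since both forms are right $[\FC]$-bilinear, it suffices to check equality on pairs of basis vectors.

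First observe that the only entries that could possibly differ are those involving the vertex $v$. Indeed, diagonal entries $\langle \alpha_w, \alpha_w \rangle = 2$ are identical by definition, and for $w, w' \in V \setminus \{v\}$ with $w \neq w'$, the edges abutting both $w$ and $w'$ (if any) are untouched by $\sigma_v$, so their labels in $Q$ and $\sigma_v Q$ agree, making $\langle \alpha_w, \alpha_{w'}\rangle$ equal in both forms. Thus the plan reduces to checking $\langle \alpha_v, \alpha_w \rangle_Q = \langle \alpha_v, \alpha_w \rangle_{\sigma_v Q}$ and $\langle \alpha_w, \alpha_v \rangle_Q = \langle \alpha_w, \alpha_v \rangle_{\sigma_v Q}$ for each $w \neq v$ that is joined to $v$ by an edge.

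Next I would split into the two symmetric cases. Suppose $v$ is a sink and let $e : w \to v$ be an edge in $Q$ with label $\Pi_e$. By definition $\langle \alpha_v, \alpha_w \rangle_Q = -[\Pi_e]$ and $\langle \alpha_w, \alpha_v \rangle_Q = -[\rightdual{\Pi_e}]$. In $\sigma_v Q$ the arrow becomes $\opp(e) : v \to w$ with label $\leftdual{\Pi_e}$, so by definition $\langle \alpha_v, \alpha_w\rangle_{\sigma_v Q} = -[\rightdual{(\leftdual{\Pi_e})}]$ and $\langle \alpha_w, \alpha_v\rangle_{\sigma_v Q} = -[\leftdual{\Pi_e}]$. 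Applying the isomorphisms $\rightdual{(\leftdual{\Pi_e})} \cong \Pi_e$ and $\leftdual{\Pi_e} \cong \rightdual{\Pi_e}$ from the stated list shows both equalities. The case where $v$ is a source is completely analogous: an edge $e : v \to w$ with label $\Pi_e$ becomes $\opp(e) : w \to v$ with label $\rightdual{\Pi_e}$, and the verification uses the isomorphisms $\leftdual{(\rightdual{\Pi_e})} \cong \Pi_e$ and $\rightdual{\Pi_e} \cong \leftdual{\Pi_e}$, both available in any fusion category.

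There is no serious obstacle; the work consists entirely of passing between $\Pi_e$, $\leftdual{\Pi_e}$, and $\rightdual{\Pi_e}$ at the level of classes in the Grothendieck ring $[\FC]$, where only isomorphism classes matter and no coherence of these identifications with the identity functor is required. The invocation of the standard isomorphisms from \cite[Proposition 4.8.1]{EGNO} at the level of $[\FC]$ is what makes the check trivial, and the choice to encode the form using $[\rightdual{\Pi_e}]$ versus $[\Pi_e]$ in the off-diagonal entries is exactly what is needed so that the reflection of an arrow at a sink (adding a left dual) or at a source (adding a right dual) produces matching entries after one more dualization.
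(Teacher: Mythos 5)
Your proposal is correct and matches the paper's own argument: the paper likewise proves the lemma by an ``easy comparison'' of the off-diagonal entries involving $v$, invoking the same standard isomorphisms $\leftdual{\Pi_e} \cong \rightdual{\Pi_e}$ and $\rightdual{(\leftdual{\Pi_e})} \cong \Pi_e \cong \leftdual{(\rightdual{\Pi_e})}$ from \cite[Proposition 4.8.1]{EGNO}, which hold at the level of $[\FC]$ without any functoriality. The only (harmless) cosmetic difference is that in the source case the needed identity is $[\rightdual{(\rightdual{\Pi_e})}] = [\Pi_e]$, which follows immediately from the isomorphisms you cite.
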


We are now ready to define the reflection group associated to $Q$.
\begin{defn}\label{defn:reflectiongroupQ}
Let $Q$ be a fusion quiver over $\FC$ with set of vertices $V$ and let $\langle -, - \rangle_Q$ be the bilinear form as above.
To each $v\in V$, we associate a right $[\FC]$-linear reflection $\sigma^Q_v$ on $[\FC]^V$ defined by
\begin{equation} \label{eqn:fusionreflection}
\sigma^Q_v(m) := m - \langle \alpha_v, m \rangle_Q \alpha_v.
\end{equation}
After choosing some ordering on $V$ so that the vertices are relabelled by $\{1,2,..., |V|\}$, we view the elements in $\WW(Q)$ as $|V|\times |V|$ matrices with entries in $[\FC]$.
In particular each of the generators is given by \textbf{left multiplication} with the matrix
\begin{equation} \label{eqn:matrixfusionreflection}
\sigma^Q_v  = \id_{n} - 
	\begin{bmatrix}
	0 & \cdots & 0 \\
	\vdots & \ddots & \vdots \\
	0 & \cdots & 0 \\
	\langle\alpha_v, \alpha_1\rangle_Q & \cdots & \langle \alpha_v, \alpha_n \rangle_Q \\
	0 & \cdots & 0 \\
	\vdots & \ddots & \vdots \\
	0 & \cdots & 0 \\
	\end{bmatrix} \in \GL_{|V|}([\FC]).
\end{equation}
The group of such matrices generated by all the $\sigma^Q_v$ is denoted by $\WW(Q) \subseteq \GL_{|V|}([\FC])$.
\end{defn}

%========================================================
\subsection{Reflection functors and the Grothendieck group} \label{sec:reflectionsandreflections}
%========================================================

For the rest of this section let $\MC$ be an indecomposable semisimple left module category over $\FC$, so that $[\MC]$ is a $[\FC]$-module.
For $x \in [\FC]$, we use $x|_{\MC}$ to denote the $\ZZ$-linear endomorphism of $[\MC]$ given by left multiplication by $x$.

\begin{notation} When $Q$ has no loops or cycles, neither does $\sigma_v Q$. Henceforth we identify $[\Rep_{\MC} Q]$ and $[\Rep_{\MC} \sigma_v Q]$, via their common identification with $[\MC]^V = [\MC]\otimes \ZZ^V$ (see Notation \ref{notn:rootspace}). 
\end{notation}

By definition, the group $\WW(Q)$ acts faithfully on $[\FC]^V$.
By evaluating each entry $(\sigma^Q_v)_{i,j} \in [\FC]$ of the matrix in \eqref{eqn:matrixfusionreflection} to $(\sigma^Q_v)_{i,j}|_{\MC} \in \End_{\ZZ}([\MC])$, we see that $\WW(Q)$ acts on $[\MC]^V$.

\begin{rem} 
Note that we are only viewing the action of $\WW(Q)$ on $[\MC]^V$ as a $\ZZ$-linear action. 
Following Remark \ref{rmk:repismodulecat}, the action is in fact (left) linear over the fusion ring $[\FC^*_\MC]$ of the dual fusion category.
However, unlike $[\FC]^V$, we warn the reader that $[\MC]^V$ is not generally associated with a right action of $[\FC]$.
\end{rem}

\begin{prop} \label{prop:reflectiondim} Let $v \in V$ be a sink, and $(X,\phi)$ be such that $\xi_{(X,\phi)}$ is surjective. After identifying $[\Rep_{\MC} Q]$ with $[\Rep_{\MC} \sigma_v Q]$, we have
\begin{equation} \udim(R_v^+ (X,\phi)) = \sigma^Q_v(\udim(X,\phi)). \end{equation}
Let $v \in V$ be a source, and $(X,\phi)$ be such that $\theta_{(X,\phi)}$ is surjective. After identifying $[\Rep_{\MC} Q]$ with $[\Rep_{\MC} \sigma_v Q]$, we have
\begin{equation} \udim(R_v^- (X,\phi)) = \sigma^Q_v(\udim(X,\phi)). \end{equation} \end{prop}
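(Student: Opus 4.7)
The plan is to reduce each identity to a short exact sequence in $[\MC]$ at the reflected vertex $v$. By construction the reflection functor $R_v^\pm$ fixes $X_w$ at every vertex $w \neq v$, so $\udim(R_v^\pm(X,\phi))$ and $\udim(X,\phi)$ will already agree in every coordinate other than $v$, matching $\sigma_v^Q$, which by Definition \ref{defn:reflectiongroupQ} alters only the $\alpha_v$-coefficient. Hence everything should come down to comparing $[(R_v^\pm X)_v]$ with the $v$-th coordinate of $\sigma_v^Q(\udim(X,\phi))$ in $[\MC]$.

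For $v$ a sink, I would use $(R_v^+ X)_v = \Ker \xi_{(X,\phi)}$ together with the surjectivity hypothesis to produce the short exact sequence
$$0 \longrightarrow \Ker \xi \longrightarrow \bigoplus_{e :\, t(e) = v} \Pi_e \otimes X_{s(e)} \longrightarrow X_v \longrightarrow 0$$
in $\MC$. Passing to classes in $[\MC]$ and using $[\Pi_e \otimes X_{s(e)}] = [\Pi_e] \cdot [X_{s(e)}]$ for the $[\FC]$-action on $[\MC]$, this will give
$$[(R_v^+ X)_v] = \sum_{e :\, t(e) = v} [\Pi_e] \cdot [X_{s(e)}] - [X_v].$$
On the reflection side, unwinding Definition \ref{defn:reflectiongroupQ} shows that the $v$-th coordinate of $\sigma_v^Q(\udim(X,\phi))$ is $[X_v] - \sum_w \langle \alpha_v, \alpha_w \rangle_Q \cdot [X_w]$. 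Since $v$ is a sink, the only nonzero brackets here are $\langle \alpha_v, \alpha_v \rangle_Q = 2$ and $\langle \alpha_v, \alpha_{s(e)} \rangle_Q = -[\Pi_e]$ for edges $e$ ending at $v$, and substituting in will reproduce exactly the displayed expression.

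The source case I would handle by the dual argument. Taking the hypothesis to be that $\theta_{(X,\phi)}$ is injective (which is what produces a short exact sequence, consistent with Proposition \ref{prop:doublereflect}), one obtains
$$0 \longrightarrow X_v \longrightarrow \bigoplus_{e :\, s(e) = v} \rightdual{\Pi_e} \otimes X_{t(e)} \longrightarrow \Coker \theta \longrightarrow 0$$
with $(R_v^- X)_v = \Coker \theta$. When $v$ is a source, the only nonzero brackets $\langle \alpha_v, \alpha_w \rangle_Q$ with $w \neq v$ are $-[\rightdual{\Pi_e}]$ for edges $e$ starting at $v$, and the parallel calculation will identify the two sides.

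The main potential obstacle is bookkeeping: $\WW(Q)$ is defined as a subgroup of $\GL_{|V|}([\FC])$, yet it acts here on the $[\FC]$-module $[\MC]^V$. I will need to pin down the convention that the formula $\sigma_v^Q(m) = m - \langle \alpha_v, m \rangle_Q \alpha_v$, with the bracket interpreted by letting the $[\FC]$-entries of the matrix from Definition \ref{defn:reflectiongroupQ} act on the $[\MC]$-valued coefficients of $m$, really agrees with the intended left matrix action. Once this is settled, the remainder is a routine linear-algebra check, and the substantive content of the proof is the short exact sequence computation above.
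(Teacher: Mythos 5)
Your proposal is correct and follows essentially the same route as the paper: the dimension vectors agree away from $v$, and the identity at $v$ follows by taking classes in $[\MC]$ along the short exact sequence $0 \to \Ker \xi \to \bigoplus_e \Pi_e \otimes X_{s(e)} \to X_v \to 0$ (this is exactly \eqref{sesforsink} from Proposition \ref{prop:doublereflect}), with the dual sequence for a source. You were also right to read the hypothesis in the source case as $\theta_{(X,\phi)}$ being \emph{injective} --- the word ``surjective'' in the statement is a slip, and injectivity is what both Proposition \ref{prop:doublereflect} and the exactness of your sequence require.
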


\begin{proof} Let $v$ be a sink. The dimension vectors of $R_v^+(X,\phi)$ and $(X,\phi)$ disagree only at vertex $v$, so $\udim((X,\phi)) - \udim(R_v^+(X,\phi))$ is a multiple of
$\alpha_v$. We need to show that this multiple is $B(\alpha_v,\udim(X,\phi)) \alpha_v$. But this follows from the short exact sequence of \eqref{sesforsink}. Similar arguments suffice
when $v$ is a source. \end{proof}

We must emphasize that the reflection functors $R_v^+$ or $R_v^-$ do \emph{not} act on the Grothendieck group by the reflection $\sigma^Q_v$. The above proposition states that they act on the symbols of certain objects in a fashion agreeing with $\sigma^Q_v$. Note that $\sigma^Q_v([M] \alpha_v) = -[M] \alpha_v$, which is not the dimension vector of any representation.

\begin{rem} 
Readers familiar with this story will know that the standard approach is to consider the derived functors of $R_v^+$ and $R_v^-$, which will then act on the triangulated Grothendieck group of the derived category of $\Rep_{\MC} Q$ via $\sigma^Q_v$. We have chosen to ignore derived functors, because we wish to emphasize positivity properties later in the paper.
 \end{rem}

%========================================================
\subsection{Relations to unfolding} \label{sec:reflectiosunfolding}
%========================================================
Recall that we have the unfolded quiver $\check{Q}_{\MC}$ of $Q$ associated to $\MC$ (cf. \ref{defn:unfolding}), where $\check{Q}_{\MC}$ is viewed as a fusion quiver over $\Vect$.
Theorem \ref{thm:unfolding} gives us the identification
\[
\Rep_{\Vect}(\check{Q}_{\MC}) \cong \Rep_{\MC}(Q),
\]
so we have $[\Rep_{\Vect}(\check{Q}_{\MC})] \cong [\Rep_{\MC} Q]$.
However, $[\Rep_{\Vect}(\check{Q}_{\MC})]$ is more naturally viewed as a lattice $[\Vect]\otimes \ZZ^{\check{V}} \cong \ZZ^{\check{V}}$ over $\ZZ$.
This is the ``usual lattice'' that is associated to an ordinary quiver, which is equipped with the usual symmetric bilinear form $\langle -, - \rangle_{\check{Q}_{\MC}}: \ZZ^{\check{V}} \times \ZZ^{\check{V}} \rightarrow \ZZ$ defined by
\[
\langle \alpha_{(v, L)}, \alpha_{(w, L')} \rangle_{\check{Q}_{\MC}} = 
	\begin{cases}
		2, & \text{ if } v=w, L=L'; \\
		-\dim \Hom_{\MC}(L', \Pi_e \otimes L), &\text{ if } v \xrightarrow{\Pi_e} w \\
		-\dim \Hom_{\MC}(L, \Pi_e \otimes L'), &\text{ if } w \xrightarrow{\Pi_e} v \\
		0, & \text{ otherwise}.
	\end{cases}
\]
Moreover, we have a $\ZZ$-linear reflection group $\WW(\check{Q}_{\MC})$ associated to $\check{Q}_{\MC}$ acting on $\ZZ^{\check{V}}$, with generators $\sigma^{\check{Q}_{\MC}}_{(v,L)}$ acting by
\[
\sigma^{\check{Q}_{\MC}}_{(v,L)}(x) = x - \langle \alpha_{(v, L)}, x \rangle_{\check{Q}_{\MC}} \alpha_{(v,L)}.
\]

The two $\ZZ$-linear actions by the groups $\WW(Q)$ and $\WW(\check{Q}_{\MC})$ on the isomorphic abelian groups $[\MC]^V \cong \ZZ^{\check{V}}$ can be related as follows.
\begin{prop}\label{prop:WQaction=unfoldaction}
Through the natural identification $[\MC]^V \cong \ZZ^{\check{V}}$, the action of $\sigma^Q_v \in \WW(Q)$ on $[\MC]^V$ agrees with the action of $\prod_{L \in \Irr(\MC)} \sigma^{\check{Q}_{\MC}}_{v, L} \in \WW(\check{Q}_{\MC})$ on $\ZZ^{\check{V}}$.
\end{prop}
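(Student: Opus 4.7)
The plan is to verify the equality by evaluating both sides on the standard basis. Under the identification $[L']\alpha_w \leftrightarrow \alpha_{(w,L')}$ (for $w \in V$, $L' \in \Irr(\MC)$) of $[\MC]^V$ with $\ZZ^{\check{V}}$, I want to compare the action of $\sigma^Q_v$ on $[L']\alpha_w$ with that of $\prod_{L} \sigma^{\check{Q}_\MC}_{(v,L)}$ on $\alpha_{(w,L')}$.

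Before doing this, one must check that the product on the right is well-defined, i.e.\ independent of the ordering of factors. Two reflections $\sigma^{\check{Q}_\MC}_{(v,L)}$ and $\sigma^{\check{Q}_\MC}_{(v,L'')}$ (with $L \ne L''$) commute unless there is an edge of $\check{Q}_\MC$ between $(v,L)$ and $(v,L'')$. By Definition \ref{defn:unfolding}, every edge of $\check{Q}_\MC$ lies over an edge of $Q$, so such an edge would require a loop at $v$ in $Q$, which is excluded by our standing hypothesis that $Q$ has no loops or cycles.

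For the comparison itself I would split into cases on $w$. If $w = v$, only the factor with $L = L'$ acts nontrivially on $\alpha_{(v,L')}$, sending it to $-\alpha_{(v,L')}$; on the folded side, $\sigma^Q_v$ sends $[L']\alpha_v$ to $-[L']\alpha_v$, since $\langle \alpha_v, \alpha_v\rangle_Q = 2$. If $w \ne v$ and no edge of $Q$ joins $v$ and $w$, both sides fix the basis vector. The substantive case is when an edge $e$ of $Q$ connects $v$ and $w$. For concreteness take $e : v \to w$. On the unfolded side, Definition \ref{defn:unfolding} gives
\[
\prod_{L} \sigma^{\check{Q}_\MC}_{(v,L)}(\alpha_{(w,L')}) = \alpha_{(w,L')} + \sum_{L \in \Irr(\MC)} \dim \Hom_\MC(L', \Pi_e \otimes L)\, \alpha_{(v,L)}.
\]
On the folded side, using $\langle \alpha_v, \alpha_w \rangle_Q = -[\rightdual{\Pi_e}]$ together with the right $[\FC]$-linearity that defines the action on $[\MC]^V$,
\[
\sigma^Q_v([L']\alpha_w) = [L']\alpha_w + [\rightdual{\Pi_e}]\cdot [L']\, \alpha_v,
\]
and expanding in the basis $\Irr(\MC)$ of $[\MC]$ gives $[\rightdual{\Pi_e}]\cdot[L'] = \sum_L \dim \Hom_\MC(L, \rightdual{\Pi_e}\otimes L')\, [L]$.

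The only non-bookkeeping step is then the identity
\[
\dim \Hom_\MC(L, \rightdual{\Pi_e}\otimes L') = \dim \Hom_\MC(L', \Pi_e \otimes L),
\]
which follows from the tensor-hom adjunction $\Hom_\MC(\Pi_e \otimes L, L') \cong \Hom_\MC(L, \rightdual{\Pi_e}\otimes L')$ for the right dual, combined with semisimplicity of $\MC$, which ensures that both sides equal the multiplicity of $L'$ as a summand of $\Pi_e \otimes L$. The remaining case $e : w \to v$ is entirely analogous: the relevant coefficient on the folded side is $[\Pi_e]\cdot[L'] = \sum_L \dim \Hom_\MC(L, \Pi_e \otimes L')\,[L]$, which matches the unfolded coefficients directly, without even needing the duality identity. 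I do not anticipate any serious obstacle; the argument is essentially an unwinding of Definitions \ref{defn:unfolding} and \ref{defn:reflectiongroupQ}, with the adjunction-plus-semisimplicity identity being the one non-trivial ingredient.
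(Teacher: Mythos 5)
Your proposal is correct and takes essentially the same approach as the paper: both proofs note that the factors $\sigma^{\check{Q}_{\MC}}_{(v,L)}$ pairwise commute, evaluate both actions on the basis elements $[L']\alpha_w \leftrightarrow \alpha_{(w,L')}$ case by case, and reduce the one substantive case to the identity $\dim \Hom_\MC(L, \rightdual{\Pi_e}\otimes L') = \dim \Hom_\MC(L', \Pi_e \otimes L)$, which the paper packages as the isotypic decompositions of $\rightdual{\Pi_e}\otimes L'$ and $\Pi_e \otimes L'$. Your explicit justification of the commutativity (no edge of $\check{Q}_\MC$ joins $(v,L)$ and $(v,L'')$ since $Q$ has no loops) is a detail the paper leaves implicit, but the argument is otherwise the same.
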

\begin{proof}
For a fix $v \in V$, note that the elements $\sigma^{\check{Q}_{\MC}}_{v, L}$ for different $L \in \Irr(\MC)$ pair-wise commute, so the product is well-defined.
Moreover, for each $\alpha_{(w,L')}$ we have
\[
\prod_{L \in \Irr(\MC)} \sigma^{\check{Q}_{\MC}}_{v, L}(\alpha_{(w,L')}) = \alpha_{(w,L')} - \sum_{L \in \Irr(\MC)} \langle \alpha_{(v, L)}, \alpha_{(w,L')} \rangle_{\check{Q}_{\MC}} \alpha_{(v,L)}
\]
The sum is easily computed as follows:
\begin{align*}
& \sum_{L \in \Irr(\MC)} \langle \alpha_{(v, L)}, \alpha_{(w,L')} \rangle_{\check{Q}_{\MC}}\alpha_{(v,L)} \\
& = 
	\begin{cases}
	2\alpha_{(w,L')}, &\text{ if } v=w; \\
	- \sum_{L \in \Irr{\MC}} \dim \Hom_{\MC}(L', \Pi_e \otimes L)\alpha_{(v,L)}, &\text{ if } v \xrightarrow{\Pi_e} w \\
	- \sum_{L \in \Irr{\MC}} \dim \Hom_{\MC}(L, \Pi_e \otimes L')\alpha_{(v,L)}, &\text{ if } w \xrightarrow{\Pi_e} v \\
	0, & \text{ otherwise}.
	\end{cases}
\end{align*}
On the other hand, the action of $\sigma^Q_v$ for each $[L']\alpha_w$ is given by
\[
\sigma^Q_v([L']\alpha_w) = [L']\alpha_w - \langle \alpha_v, \alpha_w \rangle_Q [L']\alpha_v,
\]
where
\[
\langle \alpha_v, \alpha_w \rangle_Q [L']\alpha_v
=
	\begin{cases}
	2[L']\alpha_w, &\text{ if } v=w; \\
	- \rightdual{[\Pi_e]} \cdot [L']\alpha_v, &\text{ if } v \xrightarrow{\Pi_e} w \\
	- [\Pi_e] \cdot [L']\alpha_v, &\text{ if } w \xrightarrow{\Pi_e} v \\
	0, &\text{ otherwise}.
	\end{cases}
\]
The fact they agree follows from the identities
\begin{align*}
\rightdual{\Pi_e} \otimes L' &\cong \bigoplus_{L \in \Irr{\MC}} L \boxtimes \Hom_{\MC}(L, \rightdual{\Pi_e} \otimes L') \cong \bigoplus_{L \in \Irr{\MC}} L \boxtimes \Hom_{\MC}(L', \Pi_e \otimes L); \\
\Pi_e \otimes L' &\cong \bigoplus_{L \in \Irr{\MC}} L \boxtimes \Hom_{\MC}(L, \Pi_e \otimes L'). \qedhere
\end{align*}
\end{proof}

This relation between reflections lifts to the categorical level as follows.

\begin{prop} \label{prop:unfoldedreflectionfunctors}
Let $F: \Rep_{\MC}Q \rightarrow \Rep_{\Vect}\check{Q}$ denote the equivalence obtained in Theorem \ref{thm:unfolding}. 
Then we have that
$
F\circ R^{\pm}_v \cong \left( \prod_{L \in \Irr(\MC)} R^{\pm}_{v,L} \right) \circ F.
$
\end{prop}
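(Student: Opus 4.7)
The plan is to verify the intertwining $F \circ R^{\pm}_v \cong (\prod_L R^{\pm}_{v,L}) \circ F$ by checking the claim locally: at each vertex of $\check{Q}$ and on each edge, compare the two resulting objects and morphisms. I focus on the sink case $R^{+}$; the source case is parallel via the standard left/right duality swap. Since $v$ is a sink in $Q$ and $Q$ has no loops at $v$, every vertex $(v,L)$ of $\check{Q}_{\MC}$ is a sink in $\check{Q}_{\MC}$, and no edge of $\check{Q}_{\MC}$ connects two such vertices. Hence the reflection functors $R^{+}_{v,L}$ commute pairwise and their product is well defined. Away from $\{(v,L)\}_L$ and the edges abutting them, both composites agree tautologically with $F$, so only the ``local'' comparison at $(v,L)$ and its incident edges is needed.

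At the vertex $(v,L)$, by Definition \ref{defn:sinkreflection} and the definition of $F$,
\[
F(R^{+}_v(X,\phi))_{(v,L)} = \Hom_{\MC}(L, \Ker \xi_{(X,\phi)}) \cong \Ker \Hom_{\MC}(L,\xi_{(X,\phi)}),
\]
using left exactness of $\Hom_{\MC}(L,-)$. On the other side, $\xi^{\check{Q}}_{(v,L)}$ for the representation $F(X,\phi)$ is
\[
\bigoplus_{e:\, t(e)=v,\; L'\in \Irr(\MC)} \Hom_{\MC}(L,\Pi_e \otimes L') \otimes \Hom_{\MC}(L', X_{s(e)}) \;\longrightarrow\; \Hom_{\MC}(L, X_v),
\]
and, via the isotypic decomposition from \eqref{eq:isotypicdecomp} applied to $X_{s(e)}$, this map is canonically identified with $\Hom_{\MC}(L, \xi_{(X,\phi)})$. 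Its kernel, which computes $(\prod_L R^{+}_{v,L} \circ F)(X,\phi)_{(v,L)}$, therefore matches the previous expression.

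For the edges $\check{e}$ out of $(v,L)$ in $\sigma_{v,L}\check{Q}$, the new edge morphism prescribed by $R^{+}_{v,L}$ is the composite of the inclusion $\Ker \xi^{\check{Q}}_{(v,L)} \hookrightarrow \bigoplus \check{\Pi}_{\check{e}} \otimes FX_{s(\check{e})}$, projection onto the $\check{e}$ summand, and the canonical vector space evaluation $\check{\Pi}_{\check{e}}^{*} \otimes \check{\Pi}_{\check{e}} \to \Bbbk$. I must check that applying $F$ to $\iota_e = (\Eval_e \otimes \id_{X_{s(e)}}) \circ (\id_{\leftdual{\Pi_e}} \otimes \incl_e)$ produces exactly this composite after taking isotypic pieces. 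This reduces to the statement that, under the canonical isomorphism $\Hom_{\MC}(L', \leftdual{\Pi_e}\otimes L) \cong \Hom_{\MC}(L,\Pi_e\otimes L')^{*}$ coming from $(\leftdual{\Pi_e},\Pi_e)$-adjunction, the map induced on $\Hom$'s by $\Eval_e$ agrees with the vector space evaluation pairing; this is exactly the naturality of the counit of the adjunction. Naturality of the whole construction in $(X,\phi)$ is then automatic from Lemma \ref{lem:kerxifunctor} and the functoriality of $F$. The main obstacle is this last step: carefully tracking how the categorical evaluation $\Eval_e$ in $\FC$ induces the ordinary linear evaluation pairing after passing to multiplicity spaces, which, as in the proof of Proposition \ref{prop:WQaction=unfoldaction}, is the technical heart of the argument.
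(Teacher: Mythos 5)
Your proposal is correct and follows essentially the same route as the paper, which only sketches this proof, asserting that the isomorphism ``follows directly from the semi-simplicity of $\MC$ and the kernel/cokernel definition of the reflection functors'' and leaving details to the reader; your verification via $\Hom_{\MC}(L,\Ker\xi_{(X,\phi)}) \cong \Ker\Hom_{\MC}(L,\xi_{(X,\phi)})$ and the isotypic decomposition is exactly the intended filling-in of those details. Your edge-level step is also sound, though to be fully precise the identification $\Hom_{\MC}(L',\leftdual{\Pi_e}\otimes L) \cong \Hom_{\MC}(L,\Pi_e\otimes L')^{*}$ combines the adjunction with the nondegenerate Schur composition pairing into $\End(L') \cong \Bbbk$, and this identification is what makes the quivers $\widecheck{\sigma_v Q}$ and $\sigma_{(v,L)}$-reflected $\check{Q}_{\MC}$ agree so that the stated intertwining typechecks.
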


\begin{proof} (Sketch)
Note that the product $\prod_{L \in \Irr(\MC)} R^{\pm}_{v,L}$ is well-defined since $R^{\pm}_{v,L}$ for different $L$ pair-wise commute.
The construction of the isomorphism and verification of its properties is straightforward, following directly from the semi-simplicity of $\MC$ and the kernel/cokernel definition of the reflection functors. We leave the details to the reader. 
\end{proof}

\begin{ex}\label{ex:reflectandunfoldS3}
We return to Example \ref{ex:reflectS3}. We already noted that
\[ \left( \; \begin{tikzcd}
	\CC \oplus V_2 & \CC \oplus V_2
	\arrow["V_2", from=1-1, to=1-2]
\end{tikzcd} \; \right) \quad \xmapsto{R^+_v} \quad \left( \; \begin{tikzcd}
	\CC \oplus V_2 & \Sign \oplus V_2
	\arrow["V_2", from=1-2, to=1-1]
\end{tikzcd} \; \right) \]
under the reflection functor at the unique sink. See Example \ref{ex:Snunfold} for discussion of unfolding in this example. Applying the unfolding functor $F$ to both sides, we see that
\begin{equation} \begin{tikzcd}
	\CC &&& \CC \\
	& \CC & \CC \\
	0 &&& 0
	\arrow[from=1-1, to=2-2, "\neq 0"]
	\arrow[from=3-1, to=2-2]
	\arrow[from=2-3, to=2-2, "\neq 0"]
	\arrow[from=2-3, to=1-4, "\neq 0"]
	\arrow[from=2-3, to=3-4]
\end{tikzcd} \; \mapsto \; \begin{tikzcd}
	\CC &&& 0 \\
	& \CC & \CC \\
	0 &&& \CC
	\arrow[from=2-2, to=1-1, "\neq 0"]
	\arrow[from=2-2, to=3-1]
	\arrow[from=2-2, to=2-3, "\neq 0"]
	\arrow[from=1-4, to=2-3]
	\arrow[from=3-4, to=2-3, "\neq 0"]
\end{tikzcd} \end{equation}
when one applies reflection functors to all three sinks. \end{ex}

%%%%%%%%%%%%%%%%%%%%%%%%%%%%%%%%%%%%%%%%%%%%%%%%%%%%%%%%%
%========================================================
\section{Two vertices one edge} \label{sec:twoone}
%========================================================
%%%%%%%%%%%%%%%%%%%%%%%%%%%%%%%%%%%%%%%%%%%%%%%%%%%%%%%%%

Our eventual understanding of finite type fusion quivers will come by relating them to Coxeter groups. Relations in a Coxeter group come from studying a pair of connected vertices
in the Coxeter graph, so understanding the quiver with two vertices and one edge is essential.

%========================================================
\subsection{Setup} \label{sec:setupranktwo}
%========================================================
Throughout this chapter, $\FC$ will denote a fusion category a $\MC$ will be a semisimple module category over $\FC$. 

By \cite[Theorem 2.6]{ENO}, any object in a fusion category is canonically isomorphic to its quadruple (left) dual. Moreover, by \cite[Proposition 2.1]{ENO}, any object $\Pi$ is (non-canonically) isomorphic to its double dual. For the rest of this chapter we fix an arbitrary object $\Pi$ in an arbitrary fusion category $\FC$, and fix an isomorphism between $\Pi$ and $\leftdual{(\leftdual{\Pi})}$.

We now closely examine the fusion quiver
\begin{equation} Q := \quad \begin{tikzcd}
	\bluebull & \redbull
	\arrow["\Pi", from=1-1, to=1-2]
\end{tikzcd},\end{equation}
over $\FC$. Our notation is that the edge $e$ goes from vertex $\bluea = \bluebull$ to vertex $\redb = \redbull$. Meanwhile, let
\begin{equation}Q' := \quad \begin{tikzcd}
	\bluebull & \redbull
	\arrow["\leftdual{\Pi}", from=1-2, to=1-1]
\end{tikzcd}.\end{equation}
We use the same vertex names for $Q'$, but now the edge $e'$ goes from $\redb$ to $\bluea$. Having fixed an isomorphism between $\Pi$ and its double dual, we get isomorphisms
\begin{equation} Q' \cong \sigma_{\bluea} Q \cong \sigma_{\redb} Q, \qquad Q \cong \sigma_{\bluea} Q' \cong \sigma_{\redb} Q'. \end{equation}
	
For sake of clarity, we hereby always use the sink reflection $\sigma_{\redb}$ and $R_{\redb}^+$ to go from $\Rep_{\MC} Q$ to $\Rep_{\MC} Q'$, and always use the sink reflection $\sigma_{\bluea}$ and $R_{\bluea}^+$ to go from $\Rep_{\MC} Q'$ to $\Rep_{\MC} Q$. One could use source reflections rather than sink reflections, and the analysis would be similar. 
With this convention in place, we also abusively write $\sigma_{\bluea}$ and $\sigma_{\redb}$ for the linear operators $\sigma_{\bluea}^{Q'}$ and $\sigma_{\redb}^Q$ from Definition \ref{defn:reflectiongroupQ} respectively.

%========================================================
\subsection{Quantum numbers and the Grothendieck group} \label{ss:twocolored}
%========================================================

The two $[\FC]$-linear involutions $\sigma_{\bluea}, \sigma_{\redb}\in \WW(Q) \subseteq \GL_2([\FC])$ are given by the following $2 \times 2$ matrices whose entries live in $[\FC]$:
\begin{equation} 
\sigma_{\bluea} = \mattwo{-1 & \left[\leftdual{\Pi}\right] \\ 0 & 1},
\qquad  \sigma_{\redb} = \mattwo{1 & 0 \\ \left[\Pi\right] & -1}.
\end{equation}
These matrices are written with respect to the ordered basis $\{\alpha_{\bluea}, \alpha_{\redb}\}$.
Our ultimate goal is to determine whether $Q$ has finite or infinite representation type. This ends up being dependent on whether the operator $\sigma_{\bluea} \circ \sigma_{\redb}$ has finite or infinite order, so we need to study this operator in depth.

To this end, let $d$ and $d'$ be formal variables (which do not commute) in the ring $\ZZ\langle d,d'\rangle$. We will eventually specialize to the ring $[\FC]$ via $d \mapsto [\Pi]$ and $d' \mapsto [\leftdual{\Pi}]$. By abuse of notation we use the same symbols to represent matrices over $\ZZ\langle d,d'\rangle$:
\begin{equation}
\sigma_{\bluea} = \mattwo{-1 & d' \\ 0 & 1},
\qquad  \sigma_{\redb} = \mattwo{1 & 0 \\ d & -1}. 
\end{equation}
In \cite[Appendix A]{EDihedral}, a computation of powers of $\sigma_{\bluea} \circ \sigma_{\redb}$ was given in terms of the so-called two-colored quantum numbers. However, this computation was done under the assumption that $d$ and $d'$ commute. We now give a variation on the construction of two-colored quantum numbers in \cite{EDihedral} which suffices for non-commuting variables.

\begin{rem} \label{rmk:theydontcommute}
For an example where $[\Pi]$ and $[\leftdual{\Pi}]$ do not commute, see the elements $A = [\Pi]$ and $B = [\leftdual{\Pi}]$ in \cite[Appendix A]{BMPS}. \end{rem}

% We now give a variation on the construction of two-colored quantum numbers in \cite{EDihedral} which suffices for non-commuting variables, and specializes to the original construction when the variables commute. We ensure that all monomials in sight involve $d$ and $d'$ in alternating fashion. The essence of the arguments is unchanged, but the details are different.

Define \emph{two-colored quantum numbers} as certain elements of $\ZZ\langle d,d'\rangle$ indexed by $k \in \ZZ$,  using the following recursive formulae.
\begin{subequations}
\begin{equation} \label{eq:base} [0]_d = [0]_{d'} = 0, \qquad [1]_d = [1]_{d'} = 1, \end{equation}
\begin{equation} \label{eq:recurse} d \cdot [k]_{d'} = [k+1]_d + [k-1]_d, \qquad d' \cdot [k]_d = [k+1]_{d'} + [k-1]_{d'}. \end{equation}
\end{subequations}
The final equation holds for all $k \in \ZZ$. If one swaps $d$ and $d'$, one also swaps $[k]_d$ and $[k]_{d'}$.

\begin{ex} We have $[2]_d = d$, $[3]_d = d d' -1$, and $[4]_d = d d' d - 2d$.  \end{ex}
	
\begin{rem} Note that $[k]_d \ne [k]_{d'}$ for $k > 1$. In the commutative setting of \cite[\S A.1]{EDihedral}, $[k]_d = [k]_{d'}$ when $k$ is odd. \end{rem}

We apologize that the right subscript $d$ indicates that $[k]_d \in d \cdot \ZZ\langle d,d'\rangle$, i.e. $d$ appears on the left, but we prefer this mismatch to the use of left subscripts.

We record a symmetry of quantum numbers that we use repeatedly.

\begin{lem}\label{lem:upequalnegdown}
If $[m]_d = 0 = [m]_{d'}$ after specialization to some ring, then the following holds for all $k \in \ZZ$.
\begin{equation}\label{eq:upequalnegdown} 
[m+k]_d = -[m-k]_d, \qquad [m+k]_{d'} = -[m-k]_{d'}. 
\end{equation}
In particular, $[k]_d = -[-k]_d$ and $[k]_{d'} = -[-k]_{d'}$ always hold.
\end{lem}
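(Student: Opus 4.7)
The plan is to prove the two identities in \eqref{eq:upequalnegdown} \emph{simultaneously} by induction on $k \geq 0$, using the recursions \eqref{eq:recurse}. By the symmetry $k \leftrightarrow -k$ of the statement, establishing them for $k \geq 0$ gives them for all $k \in \ZZ$, so I only treat the nonnegative case.

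The base case $k = 0$ is the hypothesis itself. For $k = 1$, the recursions \eqref{eq:recurse} at index $m$ give
\begin{equation*}
[m+1]_d + [m-1]_d \;=\; d \cdot [m]_{d'} \;=\; 0, \qquad [m+1]_{d'} + [m-1]_{d'} \;=\; d' \cdot [m]_d \;=\; 0,
\end{equation*}
which are the desired identities at $k=1$.

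For the inductive step from $k$ to $k+1$, rearrange \eqref{eq:recurse} as $[j+1]_d = d\cdot[j]_{d'} - [j-1]_d$ and set $j = m+k$, so
\begin{equation*}
[m+(k+1)]_d \;=\; d \cdot [m+k]_{d'} \;-\; [m+(k-1)]_d.
\end{equation*}
By the inductive hypothesis this equals $-d\cdot[m-k]_{d'} + [m-(k-1)]_d$. Applying \eqref{eq:recurse} once more at $j = m-k$ to expand $d\cdot[m-k]_{d'} = [m-k+1]_d + [m-k-1]_d$, the terms $\pm[m-k+1]_d$ cancel, leaving $-[m-(k+1)]_d$, as required. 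The identity for the $d'$-version follows by the same argument with $d$ and $d'$ swapped; note that this second identity at step $k+1$ genuinely requires the $d$-identity at step $k$ as input, so both must be carried along in the induction.

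For the ``in particular'' claim, observe that $[0]_d = 0 = [0]_{d'}$ holds identically in $\ZZ\langle d, d'\rangle$ by \eqref{eq:base}, hence also in every specialization. Thus the hypothesis of the first part is satisfied with $m = 0$ unconditionally, giving $[k]_d = -[-k]_d$ and $[k]_{d'} = -[-k]_{d'}$. The only subtlety in the whole argument — if one can call it that — is recognizing that the two colors must be inducted on jointly, because each recursion in \eqref{eq:recurse} exchanges $d$ and $d'$; once this is set up, the proof is a purely mechanical application of the defining relations.
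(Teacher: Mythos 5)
Your proof is correct and is precisely the argument the paper has in mind: its proof of Lemma \ref{lem:upequalnegdown} simply states that \eqref{eq:upequalnegdown} ``follows from a straightforward induction on $k$ using the recursive relation \eqref{eq:recurse},'' with $m=0$ giving the final statement, and you have filled in exactly that induction (including the genuinely necessary detail of carrying both colors jointly, since each application of \eqref{eq:recurse} swaps $d$ and $d'$). No gaps; your two base cases $k=0,1$ and the two-step inductive step are exactly what the paper's terse proof relies on.
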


\begin{proof}
The statement \eqref{eq:upequalnegdown} follows from a straightforward induction on $k$ using the recursive relation \eqref{eq:recurse}.
Set $m=0$ for the final statement.
\end{proof}
	
\begin{lem} For any $k \ge 0$ we have
\begin{equation} \label{powermatrix} (\sigma_{\bluea} \circ \sigma_{\redb})^k = \mattwo{ \left[2k+1\right]_{d'} & -\left[2k\right]_{d'} \\ \left[2k\right]_d & -\left[2k-1\right]_d}. \end{equation}
\end{lem}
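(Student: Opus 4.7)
The plan is to prove the formula by induction on $k$, being careful that the two-colored quantum number recursions $d \cdot [n]_{d'} = [n+1]_d + [n-1]_d$ and $d' \cdot [n]_d = [n+1]_{d'} + [n-1]_{d'}$ apply only to left multiplication by $d$ or $d'$, since $d$ and $d'$ do not commute.

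For the base case $k = 0$, the claimed RHS is
\[
\mattwo{[1]_{d'} & -[0]_{d'} \\ [0]_d & -[-1]_d} = \mattwo{1 & 0 \\ 0 & 1},
\]
where the lower-right entry uses $[-1]_d = -[1]_d = -1$ from Lemma \ref{lem:upequalnegdown}, so this is the identity matrix, as required.

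For the inductive step, I first compute
\[
A := \sigma_{\bluea} \sigma_{\redb} = \mattwo{-1 & d' \\ 0 & 1}\mattwo{1 & 0 \\ d & -1} = \mattwo{d'd - 1 & -d' \\ d & -1},
\]
and then write $(\sigma_{\bluea}\sigma_{\redb})^{k+1} = A \cdot (\sigma_{\bluea}\sigma_{\redb})^k$, placing $A$ on the \emph{left} so that the non-commutative variables in $A$'s entries multiply the two-colored quantum numbers in the inductive hypothesis matrix from the left, which is where the recursion applies. Each of the four entries of the product is then a sum of terms of the form $d \cdot [2k+1]_{d'}$, $d'd \cdot [2k+1]_{d'}$, etc. I expand $d'd \cdot [n]_{d'}$ in two steps, first applying $d \cdot [n]_{d'} = [n+1]_d + [n-1]_d$ and then applying $d'$ to each summand via $d' \cdot [m]_d = [m+1]_{d'} + [m-1]_{d'}$. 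Collecting terms, the cross contributions telescope; for instance, the top-left entry becomes
\[
(d'd-1)[2k+1]_{d'} - d'[2k]_d = [2k+3]_{d'} + 2[2k+1]_{d'} + [2k-1]_{d'} - [2k+1]_{d'} - [2k+1]_{d'} - [2k-1]_{d'} = [2k+3]_{d'},
\]
and the other three entries simplify analogously to $-[2k+2]_{d'}$, $[2k+2]_d$, and $-[2k+1]_d$.

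The main subtlety — and really the only obstacle — is keeping track of non-commutativity: one must multiply by $A$ on the left rather than the right, since the recursions absorb $d$'s and $d'$'s only on the left of quantum numbers. Once this convention is fixed, the induction is a routine (if slightly tedious) calculation using \eqref{eq:base}, \eqref{eq:recurse}, and Lemma \ref{lem:upequalnegdown}.
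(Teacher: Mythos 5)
Your proof is correct and is exactly the ``straightforward inductive computation'' that the paper's proof gestures at: induction on $k$ with the base case via $[-1]_d = -[1]_d$, and the inductive step multiplying by $\sigma_{\bluea}\sigma_{\redb}$ on the left so that the recursions \eqref{eq:recurse} (which only absorb $d$, $d'$ from the left) apply; all four entry computations check out. Nothing to add.
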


\begin{proof} This is a straightforward inductive computation. \end{proof}

If we think of $\sigma_{\bluea}$ and $\sigma_{\redb}$ as acting on a free $\ZZ\langle d,d'\rangle$-module with basis $\{\alpha_{\bluea}, \alpha_{\redb}\}$, then \eqref{powermatrix}
implies that $(\sigma_{\bluea} \circ \sigma_{\redb})^k \cdot \alpha_{\bluea} = [2k+1]_{d'} \alpha_{\bluea} + [2k]_d \alpha_{\redb}$.
	
\begin{lem} \label{lem:whenpoweriszero} We have $(\sigma_{\bluea} \circ \sigma_{\redb})^m = 1$ as matrices over $\ZZ\langle d,d'\rangle$ if and only if $[m]_d = [m]_{d'} = 0$. The same is true after specialization to any ring without 2-torsion, such as $[\FC]$ or $\End_{\ZZ}([\MC])$. \end{lem}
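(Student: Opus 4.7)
The plan is to read off the matrix equation $(\sigma_{\bluea} \sigma_{\redb})^m = 1$ entry-by-entry using the formula \eqref{powermatrix}, converting it to the four scalar equations
\begin{equation*}
[2m]_d = 0, \quad [2m]_{d'} = 0, \quad [2m-1]_d = -1, \quad [2m+1]_{d'} = 1.
\end{equation*}
The task then reduces to showing that, in any ring without $2$-torsion (including the free ring $\ZZ\langle d, d'\rangle$), this system of four equations is equivalent to the pair $[m]_d = 0 = [m]_{d'}$.

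The backward implication is immediate from Lemma \ref{lem:upequalnegdown}: assuming $[m]_d = [m]_{d'} = 0$, specializing the identity $[m+k]_{(\cdot)} = -[m-k]_{(\cdot)}$ at $k = 0, m-1, m, m+1$ and using $[0]_{(\cdot)} = 0$, $[1]_{(\cdot)} = 1$, $[-1]_{(\cdot)} = -1$ produces all four equations simultaneously.

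For the forward direction, I would introduce the symmetrized sequences
\begin{equation*}
a_k := [m+k]_d + [m-k]_d, \qquad b_k := [m+k]_{d'} + [m-k]_{d'}.
\end{equation*}
Summing the recursion \eqref{eq:recurse} applied at indices $m+k$ and $m-k$ yields the coupled system
\begin{equation*}
a_{k+1} + a_{k-1} = d \cdot b_k, \qquad b_{k+1} + b_{k-1} = d' \cdot a_k.
\end{equation*}
The four scalar equations above, together with $[-1]_{d'} = -1$, become the initial data $a_m = a_{m-1} = 0$ and $b_{m+1} = b_m = 0$. A downward induction then propagates vanishing: rewriting the recursions as $b_{k-1} = d' a_k - b_{k+1}$ and $a_{k-1} = d b_k - a_{k+1}$, we obtain successively $b_{m-1} = 0$, then $a_{m-2} = 0$ and $b_{m-2} = 0$, and so on, concluding that $a_k = b_k = 0$ for all $0 \leq k \leq m$. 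Setting $k = 0$ yields $2[m]_d = 0 = 2[m]_{d'}$, and the $2$-torsion-free hypothesis finishes the argument.

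The main (indeed only) subtlety is in spotting the correct auxiliary sequences $a_k, b_k$ and recognizing that precisely two initial conditions on each are enough to force everything to zero through the coupled recursion. The $2$-torsion-free hypothesis plays a role only at the very last step — the induction actually produces $2[m]_d = 2[m]_{d'} = 0$, not $[m]_d = [m]_{d'} = 0$ — which explains both why the hypothesis is needed and why it appears in precisely this form.
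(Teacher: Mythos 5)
Your proof is correct and follows essentially the same route as the paper: both reduce the matrix equation via \eqref{powermatrix} to the same four scalar conditions, handle the backward direction with Lemma \ref{lem:upequalnegdown}, and in the forward direction propagate an anti-symmetry around $m$ by the recursion \eqref{eq:recurse} to land on $2[m]_d = 2[m]_{d'} = 0$, invoking $2$-torsion-freeness at the end. Your symmetrized sequences $a_k, b_k$ are just a repackaging of the paper's relation $[2m-k]_d = -[k]_d$ and $[2m-k]_{d'} = -[k]_{d'}$ (namely $a_{m-k} = b_{m-k} = 0$), so the inductions are the same argument in different notation.
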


\begin{proof} By \eqref{powermatrix}, $(\sigma_{\bluea} \circ \sigma_{\redb})^m = 1$ if and only if
\begin{equation} \label{eq:whatyouneedforkism} [2m]_{d} = [2m]_{d'} = 0, \qquad [2m-1]_d = -1, \qquad [2m+1]_{d'} = 1. \end{equation}
Using \eqref{eq:recurse} with $k=2m$, one also deduces that
\begin{equation} [2m-1]_{d'} = -1, \qquad [2m+1]_d = 1. \end{equation} 
	
Suppose that $[m]_d = [m]_{d'} = 0$. 
Applying $k=m$ or $k = m \pm 1$ to \eqref{eq:upequalnegdown}, we obtain \eqref{eq:whatyouneedforkism}.

Now suppose \eqref{eq:whatyouneedforkism}. Then the $k=0$ and $k=1$ cases of the following equation hold:
\begin{equation} [2m-k]_d = -[k]_d, \qquad [2m-k]_{d'} = -[k]_{d'}. \end{equation}
Now we apply \eqref{eq:recurse} both forwards and backwards to deduce this equation for all $k \ge 0$. When $k=m$, we conclude that $2[m]_d = 2[m]_{d'} = 0$. After specialization to any ring without 2-torsion, we deduce that $[m]_d = [m]_{d'} = 0$. \end{proof}

Let us warn the reader of what is to come. In order to prove results about the representation type of $Q$, we need to prove the equality of several different numbers (possibly
infinity) related to the operator $\sigma_{\bluea} \circ \sigma_{\redb}$: its order when acting on $[\FC] \oplus [\FC]$, its order when acting on $[\MC] \oplus [\MC]$, its order
when acting on $[S(\bluea,L)]$ for any $L \in \Irr \MC$, its order when acting on $[S(\redb,L)]$, etcetera. 
These are all statements about various Grothendieck groups, but we unite them with statements involving Frobenius-Perron dimension. 
However, Frobenius-Perron dimension is an adequate measure of an element of a Grothendieck group only when that element is positive (or negative), so we need to carefully ensure positivity of quantum numbers in various settings.
This requires some careful analysis, performed over the next few sections. 

%========================================================
\subsection{Positivity of quantum numbers} \label{subsec:cruciallemma}
%========================================================

In this subsection, we focus on proving a technical result about quantum numbers in their specialization to $[\FC]$.
Let $[\FC]_{\geq 0}$ be the set of elements of $[\FC]$ realizable as the class of an object in $\FC$ (including zero). Such elements are called \emph{non-negative}. When expressing an element of $[\FC]$ as a vector with respect to the basis of simple objects, non-negative elements are vectors of integers with non-negative entries.
Let $[\FC]_{>0} := [\FC]_{\geq 0} \setminus \{0\}$ (i.e. we exclude the zero vector). 
Note that the only object of $\FC$ that is zero in the Grothendieck group is the zero object.
 
The following Positivity Lemma will be upgraded to a sign-coherence statement in Theorem \ref{thm:signcoherence}. Both are crucial technical tools.
 
\begin{lem}\label{lem:quantnumpostonon-neg}
Specialize $\ZZ \langle d,d'\rangle$ to $[\FC]$ via $d \mapsto [\Pi]$ and $d' \mapsto [\leftdual{\Pi}]$. Fix $m \ge 2$. Suppose that $[k]_d, [k]_{d'} \in [\FC]_{> 0}$ for all $0 < k < m$. Then $[m]_d, [m]_{d'} \in [\FC]_{\ge 0}$. \end{lem}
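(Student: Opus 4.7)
The plan is to realize $[m]_{d'}$ and $[m]_d$ as classes of actual objects of $\FC$, obtained as vertex components of indecomposable fusion representations built via iterated sink reflection functors. Specializing to $\MC = \FC$, I would start with the simple $P_0 := S(\bluea, \one) \in \Rep_\FC Q$ and alternately apply $R^+_{\redb} \co \Rep_\FC Q \to \Rep_\FC Q'$ and $R^+_{\bluea} \co \Rep_\FC Q' \to \Rep_\FC Q$ to produce a sequence $P_0, P'_0, P_1, P'_1, P_2, \ldots$. By Proposition \ref{prop:reflectiondim} together with the power formula \eqref{powermatrix}, these representations would have dimension vectors
\[
\udim(P_k) = [2k+1]_{d'}\alpha_{\bluea} + [2k]_d\alpha_{\redb}, \qquad \udim(P'_k) = [2k+1]_{d'}\alpha_{\bluea} + [2k+2]_d\alpha_{\redb}.
\]

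Next, I would show by induction on $k$ that each $P_k$ and $P'_k$ is a nonzero indecomposable representation, using Corollary \ref{cor:preservesindecomposable}: the reflection $R^+_v$ carries an indecomposable input to a nonzero indecomposable provided the input is not of the form $S(v, L)$. For $P_k$ this amounts to requiring the $\bluea$-component $[2k+1]_{d'}$ to be nonzero, and for $P'_k$ the $\redb$-component $[2k+2]_d$ to be nonzero. The positivity hypothesis on $[j]_d, [j]_{d'}$ for $0 < j < m$ guarantees this at every intermediate step, so the chain of reflections continues without degenerating.

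Finally, I would apply one more reflection at the end of the chain to expose the $m$-th quantum number. If $m = 2r+1$ is odd, applying $R^+_{\bluea}$ to $P'_{r-1}$ (valid since $[2r]_d = [m-1]_d > 0$) yields a nonzero indecomposable $P_r$ whose $\bluea$-component has class $[m]_{d'}$; hence $[m]_{d'} \in [\FC]_{\geq 0}$. If $m = 2r$ is even, applying $R^+_{\redb}$ to $P_{r-1}$ (valid since $[m-1]_{d'} > 0$) yields $P'_{r-1}$ whose $\redb$-component has class $[m]_d$, giving $[m]_d \in [\FC]_{\geq 0}$. The complementary statement follows by replacing $\Pi$ with $\leftdual{\Pi}$ throughout (equivalently, running the argument on $Q'$ in place of $Q$): this amounts to the swap $d \leftrightarrow d'$, under which the hypothesis is invariant and the conclusion is interchanged.

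The main obstacle is verifying that the chain of reflection functors never degenerates prematurely—that no intermediate $P_k$ or $P'_k$ happens to coincide with a simple at the next reflection vertex. This is cleanly controlled by positivity: such a collapse would force some quantum number $[j]_d$ or $[j]_{d'}$ with $0 < j < m$ to vanish, contradicting the hypothesis.
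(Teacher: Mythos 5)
Your proposal is correct and follows essentially the same route as the paper's own proof: your sequence $P_k, P'_k$ is precisely the paper's $X^{(2k+1)}, X^{(2k+2)}$ from Definition \ref{defn:Xkdef}, with indecomposability sustained by Corollary \ref{cor:preservesindecomposable}, dimension vectors tracked via Proposition \ref{prop:reflectiondim} and \eqref{powermatrix}, and $[m]_{d'}$ (resp.\ $[m]_d$) read off as the class of the source-vertex object of the final indecomposable. Your symmetry step swapping $d \leftrightarrow d'$ is exactly the paper's concluding move of running the argument from $S(\redb,\one) \in \Rep_{\FC}(Q')$ instead.
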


The Positivity Lemma \ref{lem:quantnumpostonon-neg} is relatively easy to prove with the technology of fusion quivers and reflection functors. However, it is an elementary statement
about fusion categories, and one might expect a proof which does not use the theory of fusion quivers. 
Unfortunately, we were unable to find a reference so we shall provide a proof using our setup. 
After giving the proof, we discuss the situation further.

The gist of our argument goes as follows. We continue to use the notation $Q$, $\bluea$, $\redb$ as in \ref{sec:setupranktwo}, and consider representations of $Q$ valued in $\FC$. If $(X,\phi) \in
\Rep_{\FC}(Q)$ and $v \in \{\bluea, \redb\}$, then $X_v$ is an object of $\FC$ so $[X_v] \in [\FC]_{\geq 0}$. This is the coefficient of $\alpha_v$ in $\udim(X)$. Reflection
functors $R_{\bluea}^+$ and $R_{\redb}^+$ send representations to representations, but they do not always act on dimension vectors by the operators $\sigma_{\bluea}$ and $\sigma_{\redb}$;
this is how the total derived functors act on dimension vectors, but the total derived functors do not send representations to representations and do not preserve non-negativity.
Thankfully, Proposition \ref{prop:reflectiondim} proves that reflection functors act on most indecomposable objects according to $\sigma_{\bluea}$ and $\sigma_{\redb}$. So, as in \eqref{powermatrix}, we can produce representations for which $[X_v]$ is a quantum number.

\begin{defn} \label{defn:Xkdef} 
Let $Q^{(\ell)}$ be the fusion quiver $Q$ when $\ell$ is odd, and $Q'$ when $\ell$ is even. Let $L$ be an object in $\MC$. 
Define $(X^{(1)}(L),\phi^{(1)}(L)) := S(\bluea,L) \in \Rep_{\MC}(Q^{(1)})$. For all $\ell \ge 2$, define $(X^{(\ell)}(L),\phi^{(\ell)}(L)) \in \Rep_{\MC}(Q^{(\ell)})$ recursively via
\begin{equation} (X^{(\ell)}(L),\phi^{(\ell)}(L)) := \begin{cases} R_{\redb}^+ (X^{(\ell-1)}(L),\phi^{(\ell-1)}(L)) & \text{ if $\ell$ is even,} \\ R_{\bluea}^+ (X^{(\ell-1)}(L),\phi^{(\ell-1)}(L)) & \text{ if $\ell$ is odd.} \end{cases} \end{equation}
We sometimes write $X^{(\ell)}(L)$ as shorthand for $(X^{(\ell)}(L),\phi^{(\ell)}(L))$. We also omit $L$ from the notation when $L = \one \in \FC$, that is, we write $X^{(\ell)}$ or $(X^{(\ell)},\phi^{(\ell)})$ as shorthand for $X^{(\ell)}(\one)$ living in $\Rep_{\FC}(Q^{(\ell)})$.
\end{defn}

Note that $X^{(1)}(L)$ has dimension vector $[1]_{d'} \cdot [L] \alpha_{\bluea} + [0]_d \cdot [L] \alpha_{\redb}$. 
In the remainder of this section we focus on the case $\MC = \FC$ and $L = \one$.

\begin{lem} With the same assumptions as Lemma \ref{lem:quantnumpostonon-neg}, $(X^{(\ell)}, \phi^{(\ell)})$ is indecomposable for all $1 \le \ell \le m$. Moreover, for $1 \le \ell \le m$ we have
\begin{equation} \label{eq:dimofXl} \udim X^{(\ell)} = \begin{cases} [\ell]_{d'} \alpha_{\bluea} + [\ell-1]_d \alpha_{\redb} & \text{ if $\ell$ is odd,} \\ 
[\ell-1]_{d'} \alpha_{\bluea} + [\ell]_d \alpha_{\redb} & \text{ if $\ell$ is even.}  \end{cases} \end{equation}
\end{lem}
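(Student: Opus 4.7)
The plan is to prove both assertions simultaneously by induction on $\ell$, combining Proposition \ref{prop:ifnotSvL}, Corollary \ref{cor:preservesindecomposable}, Proposition \ref{prop:reflectiondim}, and the defining recursion \eqref{eq:recurse}. The base case $\ell = 1$ is immediate: $X^{(1)} = S(\bluea, \one)$ is simple, hence indecomposable, and its dimension vector is $\alpha_\bluea = [1]_{d'}\alpha_\bluea + [0]_d \alpha_\redb$, which is the asserted formula for odd $\ell$.

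For the inductive step, suppose $2 \le \ell \le m$ and the claim holds for $\ell - 1$. Let $v_\ell$ denote the sink of $Q^{(\ell-1)}$ (so $v_\ell = \redb$ when $\ell$ is even and $v_\ell = \bluea$ when $\ell$ is odd), and let $v'_\ell$ denote the source. The inductive formula shows that the coefficient of $\alpha_{v'_\ell}$ in $\udim X^{(\ell-1)}$ equals $[\ell-1]_{d'}$ if $\ell$ is even, and $[\ell-1]_d$ if $\ell$ is odd. Since $1 \le \ell - 1 < m$, the hypothesis of Lemma \ref{lem:quantnumpostonon-neg} places this coefficient in $[\FC]_{>0}$, so in particular it is nonzero. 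Consequently $(X^{(\ell-1)}, \phi^{(\ell-1)})$ is not supported at the single vertex $v_\ell$, and therefore cannot be isomorphic to $S(v_\ell, L)$ for any simple $L \in \Irr(\FC)$.

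Proposition \ref{prop:ifnotSvL} then gives that $\xi_{(X^{(\ell-1)}, \phi^{(\ell-1)})}$ is surjective, Corollary \ref{cor:preservesindecomposable} gives that $(X^{(\ell)}, \phi^{(\ell)}) = R^+_{v_\ell}(X^{(\ell-1)}, \phi^{(\ell-1)})$ is indecomposable, and Proposition \ref{prop:reflectiondim} then yields $\udim X^{(\ell)} = \sigma^Q_{v_\ell}(\udim X^{(\ell-1)})$. Multiplying out the $2 \times 2$ matrix for $\sigma^Q_{v_\ell}$ displayed in \S\ref{ss:twocolored} and simplifying via $d \cdot [\ell-1]_{d'} = [\ell]_d + [\ell-2]_d$ (for $\ell$ even) or $d' \cdot [\ell-1]_d = [\ell]_{d'} + [\ell-2]_{d'}$ (for $\ell$ odd) from \eqref{eq:recurse} immediately produces \eqref{eq:dimofXl} at stage $\ell$.

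The only substantive obstacle is ensuring at each step that $X^{(\ell-1)}$ is not supported on the reflection vertex $v_\ell$, since otherwise the reflection functor could fail to preserve indecomposability and the dimension vector could fail to transform via $\sigma^Q_{v_\ell}$. This non-support condition is exactly where the positivity hypothesis enters: as soon as the pertinent quantum number $[\ell-1]_{*}$ first vanishes, $X^{(\ell-1)}$ could collapse onto $v_\ell$ and the inductive chain would break. Because the positivity hypothesis is assumed for all $0 < k < m$, the induction is valid precisely on the range $1 \le \ell \le m$.
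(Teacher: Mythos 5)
Your proof is correct and takes essentially the same approach as the paper: the same induction on $\ell$, using positivity of the quantum numbers $[\ell-1]_d,[\ell-1]_{d'}$ to rule out support at the sink, then invoking Proposition \ref{prop:ifnotSvL}, Corollary \ref{cor:preservesindecomposable}, and Proposition \ref{prop:reflectiondim}, and finishing with the recursion \eqref{eq:recurse}. The only cosmetic difference is that you carry out the one-step matrix computation explicitly where the paper simply calls it a straightforward calculation and points to \eqref{powermatrix}.
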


\begin{proof} The result holds for $\ell = 1$. Now assume that $1 \le \ell \le m-1$ and the result holds for $\ell$. Since $[\ell]_d \ne 0$ and $[\ell]_{d'} \ne 0$, \eqref{eq:dimofXl} implies that $X^{(\ell)}$ is not supported at the sink of $Q^{(\ell)}$. Thus by Corollary \ref{cor:preservesindecomposable}, $X^{(\ell+1)}$ is indecomposable. By Proposition \ref{prop:ifnotSvL}(1), $\phi^{(\ell)}$ is surjective, and thus by Proposition \ref{prop:reflectiondim}, $X^{(\ell+1)}$ has dimension vector $\sigma_v(\udim X^{(\ell)})$, where $v \in \{\bluea,\redb\}$ depends on parity. By a straightforward calculation, this implies that $\udim X^{(\ell+1)}$ matches \eqref{eq:dimofXl} (e.g. see \eqref{powermatrix} when $\ell$ is odd). \end{proof}

We pause to note a consequence that we use in \S\ref{subsec:criteriondihedral}.

\begin{cor} \label{cor:Xkdistinct} With the same assumptions as Lemma \ref{lem:quantnumpostonon-neg}, $X^{(k)}$ and $X^{(\ell)}$ have distinct dimension vectors (and thus are non-isomorphic) for $1 \le k \ne \ell \le m$. \end{cor}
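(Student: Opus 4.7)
The plan is to reduce the claim to a statement about Chebyshev-type numbers by applying Frobenius--Perron dimension. Set $a_j := \FPdim([j]_d)$; since $\Pi$ and $\leftdual\Pi$ share their FPdim, this also equals $\FPdim([j]_{d'})$. These numbers satisfy $a_0 = 0$, $a_1 = 1$, and $a_{j+1} = \delta a_j - a_{j-1}$ with $\delta := \FPdim(\Pi)$; the hypothesis forces $a_j > 0$ for all $0 < j < m$.

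The case where $k$ and $\ell$ have the same parity is the quickest. Applying $\FPdim$ coordinate-wise to \eqref{eq:dimofXl}, the putative equality $\udim X^{(k)} = \udim X^{(\ell)}$ becomes the pair $a_{k-1} = a_{\ell-1}$ and $a_k = a_\ell$. Two matching consecutive values of a second-order linear recurrence propagate in both directions, so $a_{k+j} = a_{\ell+j}$ for all $j \in \ZZ$; equivalently, $(a_j)_{j \in \ZZ}$ is periodic with period dividing $\ell - k$. In particular $a_{\ell - k} = a_0 = 0$. But the same-parity condition combined with $k < \ell \le m$ gives $0 < \ell - k < m$, contradicting $a_{\ell - k} > 0$.

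The case of differing parities (say $k$ odd, $\ell$ even) is the main obstacle. Here the analogous FPdim computation uses $a_k = a_{\ell-1}$ and $a_{k-1} = a_\ell$, and a parallel propagation yields only the palindromic symmetry $a_{k+j} = a_{\ell-1-j}$, which gives $a_{k+\ell-1} = 0$. This merely forces $k + \ell - 1 \geq m$ and remains consistent with the hypothesis once $k + \ell \in [m+1,\,2m-1]$, so FPdim alone is insufficient. To close the argument I would lift to the Grothendieck ring $[\FC]$: via Proposition~\ref{prop:reflectiondim} one has $\udim X^{(i+1)} = \sigma_{v_i}(\udim X^{(i)})$, so $\udim X^{(k)} = \udim X^{(\ell)}$ amounts to a specific element of the dihedral subgroup $\langle \sigma_\bluea, \sigma_\redb\rangle \subseteq \WW(Q)$ stabilizing $\alpha_\bluea$ inside $[\FC]^V$. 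Unpacking this stabilization condition via the power-matrix formula \eqref{powermatrix} yields explicit Grothendieck-group identities (for example of the shape $[k+\ell-2]_{d'} = 1$ together with $[k+\ell-3]_d = [\Pi]$), which one rules out using the Positivity Lemma \ref{lem:quantnumpostonon-neg} and the non-vanishing hypothesis; the Grothendieck-level step, rather than the FPdim reduction, is the substantive part of the argument.
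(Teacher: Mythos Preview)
Your same-parity argument is essentially the paper's own proof: the paper works directly in $[\FC]$ rather than via $\FPdim$, observing that two matching consecutive quantum numbers force periodicity by \eqref{eq:recurse}, whence some $[b-a]_? = 0$ with $0 < b-a < m$, contradicting the hypothesis. Your $\FPdim$ reduction is harmless here since the hypothesis already places $[j]_d,[j]_{d'}$ in $[\FC]_{>0}$, but it is also unnecessary.

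You are right that the mixed-parity case is more delicate; the paper is equally terse here, writing only that ``subscripts and parity considerations'' are left to the reader. However, your proposed resolution is incomplete. The palindromic relation does yield identities such as $[k+\ell-1]_d = 0$ and $[k+\ell-2]_{d'} = 1$ in $[\FC]$, but you have not shown how the Positivity Lemma and the non-vanishing hypothesis alone exclude these when $k+\ell-1 \ge m$. The identity $[k+\ell-2]_{d'} = [\one]$ asserts that a particular object of $\FC$ is isomorphic to the monoidal unit; nothing in Lemma~\ref{lem:quantnumpostonon-neg} forbids this, and on the $\FPdim$ level it is even forced once $[k+\ell-1]_d = 0$. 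Completing the argument seems to require more---for instance, first bootstrapping the Positivity Lemma past $m$ (iterating it to show $[j]_d,[j]_{d'} \in [\FC]_{>0}$ for all $0 < j < k+\ell-1$) and then supplying some further structural input to exclude the palindrome, or else a different idea entirely. Your sketch points in a reasonable direction, but the step you label ``substantive'' is not actually carried out.
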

	
\begin{proof} If $b>a$ and $[a] = [b]$ and $[a+1] = [b+1]$, then by \eqref{eq:recurse}, quantum numbers must be
periodic with period $b-a$. Consequently, $[b-a] = 0$. We leave the subscripts and parity considerations to the reader. \end{proof}

\begin{proof}[Proof of Lemma \ref{lem:quantnumpostonon-neg}] By the previous lemma, $(X^{(m)},\phi^{(m)})$ is an indecomposable quiver representation. If $v \in \{ \bluea,\redb \}$ is the source of $Q_m$, then $\udim X^{(m)}$ has coefficient of $\alpha_v$ equal to either $[m]_d$ or $[m]_{d'}$, depending on parity.
Therefore $[m]_{d'} \in [\FC]_{\ge 0}$ if $m$ is odd, and $[m]_d \in [\FC]_{\ge 0}$ if $m$ is even.

The argument above proceeded by applying reflection functors to $S(\bluea,\one) \in \Rep_{\FC}(Q)$. To obtain the non-negativity of the other quantum number (e.g. $[m]_d$ when $m$ is
odd), instead apply reflection functors to $S(\redb,\one) \in \Rep_{\FC}(Q')$.\end{proof}

%========================================================
\subsection{Digression on positivity} \label{subsec:digression}
%========================================================

This section is a digression on the meaning of the Positivity Lemma \ref{lem:quantnumpostonon-neg} within the context of $\FC$. It is helpful but disrupts the flow, and can be skipped.

In a semisimple category, if $X, Y, Z$ are objects such that $[X]-[Y] = [Z]$, then $Y$ is a direct summand of $X$, and $Z$ is isomorphic to the complementary summand. By Schur's lemma, $Y$ is a summand of $X$ if and only if there is a surjective map $X \to Y$. To conclude, $[X] - [Y]$ is nonnegative if and only if there is a surjective map $X \to Y$.

If $\Pi$ categorifies $d$ and $\leftdual{\Pi}$ categorifies $d'$, then $\leftdual{\Pi} \ot \Pi$ categorifies $d' d$. We have
$[3]_{d'} = d' d - 1$; this is non-negative and hence categorified by an object $Y'_3$ of $\FC$ if and only if the monoidal identity $\one$ is a direct summand of $\leftdual{\Pi} \ot
\Pi$. In this case $Y'_3$ is the complementary summand. Note that $Y'_3 = 0$ is possible, when $\Pi$ is invertible.

For pedagogical reasons, let us rehash the argument that $[3]_{d'}$ is nonnegative when $[2]_d$ and $[2]_{d'}$ are positive.

\begin{lem} \label{lem:oneinsidepipi} The object $\one$ is a summand of $\leftdual{\Pi} \ot \Pi$ if and only if $\Pi$ is nonzero.\end{lem}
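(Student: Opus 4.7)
The plan is to invoke the duality adjunction together with semisimplicity, which reduces the lemma to an essentially trivial observation. The forward direction (contrapositive) is immediate: if $\Pi = 0$, then $\leftdual{\Pi} \ot \Pi = 0$, which certainly does not contain the nonzero simple object $\one$ as a summand.

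For the interesting direction, suppose $\Pi$ is nonzero. The duality adjunction between $\leftdual{\Pi} \ot -$ and $\Pi \ot -$ (equivalently, tensor-hom adjunction using the evaluation and coevaluation for the left dual) yields a natural isomorphism
\begin{equation}
\Hom_{\FC}(\one, \leftdual{\Pi} \ot \Pi) \cong \Hom_{\FC}(\Pi, \Pi).
\end{equation}
Since $\Pi \ne 0$, the identity morphism $\id_\Pi$ is a nonzero element of the right-hand side, so there is a nonzero morphism $\eta \colon \one \to \leftdual{\Pi} \ot \Pi$ (concretely, $\eta$ is the coevaluation for $\Pi$). Because $\one$ is simple, $\eta$ has zero kernel, i.e.\ it is a monomorphism. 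By semisimplicity of $\FC$, every monomorphism splits, so $\one$ is a direct summand of $\leftdual{\Pi} \ot \Pi$.

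There is essentially no obstacle here; the only thing to remember is that ``summand'' is the correct notion in the semisimple setting, and Schur's lemma combined with semisimplicity converts ``nonzero morphism involving a simple object'' into ``direct summand.'' One could equivalently phrase the proof in terms of the evaluation map: if $\Eval_\Pi \colon \leftdual{\Pi} \ot \Pi \to \one$ were zero, then the snake identity $(\id_\Pi \ot \Eval_\Pi) \circ (\mathrm{coev}_\Pi \ot \id_\Pi) = \id_\Pi$ would force $\id_\Pi = 0$ and hence $\Pi = 0$; then a nonzero map to the simple object $\one$ is an epimorphism, which splits by semisimplicity.
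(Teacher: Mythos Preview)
Your proof is correct and essentially the same as the paper's. The paper argues via the counit (evaluation) $\leftdual{\Pi}\ot\Pi \to \one$, observing it is nonzero iff $\id_\Pi \ne 0$ by the snake identity, hence surjective onto the simple $\one$; you lead instead with the unit (coevaluation) and the adjunction isomorphism $\Hom(\one,\leftdual{\Pi}\ot\Pi)\cong\Hom(\Pi,\Pi)$ to get a split monomorphism, but you also record the paper's exact argument as your alternative phrasing, so the two are interchangeable.
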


\begin{proof} The counit of adjunction is a map $\leftdual{\Pi} \ot \Pi \to \one$. If this counit map is nonzero then it is surjective, since $\one$
is a simple object. The adjunction axiom implies that the counit is zero if and only if the identity map of $\Pi$ is zero, if and only if $\Pi$ is zero. \end{proof}

Continuing when $\Pi$ is nonzero, $[4]_d = d \cdot [3]_{d'} - [2]_d$. This is non-negative and hence categorified by an object $Y_4$ if and only if $\Pi$ (which categorifies $[2]_d$) is a direct summand of
$\Pi \ot Y_3'$ (which categorifies $d \cdot [3]_{d'}$). The Positivity Lemma when $m=4$ is equivalent to the existence of a surjective map $\Pi \ot Y_3' \to \Pi$ whenever $Y_3'$
is nonzero. Through an application of adjunction, the inclusion map $Y_3' \to \leftdual{\Pi} \ot \Pi$ gives rise to a map $\phi \co \Pi \ot Y_3' \to \Pi$, which is nonzero if and only
if $Y_3'$ is nonzero. However, it is not obvious that $\phi$ should be surjective (as $\Pi$ need not be simple). This is the subtlety, and it continues as one iterates this process. 
For example, when proving that $[5]_{d'}$ is nonnegative, one hopes for a surjective map to $Y_3'$; even when $\Pi$ is simple, $Y_3'$ need not be.

\begin{rem} Our proof of Lemma \ref{lem:quantnumpostonon-neg} constructs the desired surjective map as the map $\phi^{(\ell)}$ appearing within the fusion representation $(X^{(\ell)},\phi^{(\ell)})$, and surjectivity follows from Proposition \ref{prop:ifnotSvL}. \end{rem}

\begin{rem} The positivity of quantum numbers is obvious given the existence of Jones-Wenzl projectors. To state this without introducing orientations and shadings (the expert will
know what we mean), let us assume that $\Pi$ is self-dual,
so that the Temperley-Lieb category acts on tensor powers of $\Pi$. Then the
image of the Jones-Wenzl projector on $k-1$ strands, if it exists, will categorify $[k] = [k]_d$.

However, the Positivity Lemma holds even when Jones-Wenzl projectors are not well-defined (under the assumption that $\FC$ is semisimple), as evidenced by the next example. \end{rem}

\begin{ex} Let $\FC = \Vect$ over a field $\Bbbk$ of characteristic $2$, and let $\Pi$ be a two-dimensional vector space. The categorical dimension of $\Pi$ is zero, and thus the Jones-Wenzl projector on two strands does not exist. Nonetheless, $\one$ is a direct summand of $\Pi \otimes \Pi$, and $[3]$ is positive. This example illustrates that categorical dimension is not a good indicator of which quantum numbers will vanish, and we will use a different measurement of dimension below. \end{ex}

%========================================================
\subsection{Frobenius-Perron dimension and sign-coherence of quantum numbers} \label{ss:FPdimsigncoh}
%========================================================
Here we recall a notion of dimension in the context of fusion categories and study its relation to quantum numbers (in $[\FC]$).
Let $\{ S_j \}_{j=0}^n$ denote the simples of $\FC$, with $S_0 = \one$.
For each object $X \in \FC$, the multiplication on $[\FC]$ defined by the tensor product $[X]\cdot[S_i] := [X \otimes S_i]$ satisfies
\begin{equation} \label{eqn:simpledecomposition}
[X] \cdot [S_i] = \sum_{i=0}^n {}_X r_{i,j} [S_j], \quad {}_X r_{i,j} \in \mathbb{N}_{\geq 0}.
\end{equation}
We define $\FPdim(X)$ to be the Frobenius--Perron eigenvalue of the non-negative matrix $({}_X r_{i,j})_{i,j \in \{0,1,...,n\}}$ and we call $\FPdim(X)$ the \emph{Frobenius--Perron dimension} of $X$.
The most important facts about $\FPdim$ are the following. By the first property, we can make sense of $\FPdim(x)$ for any $x \in [\FC]$ (not necessarily the class of an object).
\begin{prop} \label{prop:allfactsFP} Let $\FC$ be a fusion category and $X \in \FC$.
	\begin{enumerate} \item The map $\FPdim \colon \FC \to \RR$ descends to a ring homomorphism $[\FC] \to \RR$.
	\item $\FPdim(X) = \FPdim(\leftdual{X})$.
	\item $X = 0$ if and only if $\FPdim(X) = 0$, and otherwise $\FPdim(X) \ge 1$. In particular, if $x \in [\FC]_{\ge 0}$ then $\FPdim(x) = 0$ if and only if $x = 0$.
	\item If $0 < \FPdim(X) < 2$ then $\FPdim(X) = 2 \cos(\frac{\pi}{m})$ for some $m \ge 3$, and $X$ is simple. \end{enumerate} \end{prop}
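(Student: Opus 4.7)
The plan is to follow the standard development of Frobenius--Perron theory for fusion rings, essentially \cite[Chapter 3]{EGNO}, and I will sketch only the high-level strategy for each clause.

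For (1), enumerate the simples as $S_0 = \one, S_1, \ldots, S_n$ and, for each object $X$, let $L_X \in M_{n+1}(\ZZ_{\geq 0})$ denote the matrix of left multiplication by $[X]$ in this basis. The plan is to exhibit a simultaneous positive eigenvector $v \in \RR^{n+1}_{>0}$ with $L_X v = \FPdim(X) v$ for every $X$; granting this, additivity and multiplicativity of $\FPdim$ are forced by the matrix identities $L_{X \oplus Y} = L_X + L_Y$ and $L_{X \otimes Y} = L_X L_Y$. I would set $v_i := \FPdim(S_i)$, and show that the matrix $L_R$ for $R := \sum_i [S_i]$ is irreducible (using rigidity and simplicity of $\one$, which together ensure strong connectivity of the ``multiplication graph''), so that $L_R$ admits $v$ as its unique strictly positive Perron eigenvector up to scalar. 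A commutation argument based on associativity of $\otimes$ then forces $v$ to be a simultaneous eigenvector of all $L_X$.

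For (2), the adjunction isomorphism $\Hom(S_i, X \otimes S_j) \cong \Hom(\leftdual{X} \otimes S_i, S_j)$ identifies $L_{\leftdual{X}}$ with the transpose of $L_X$, up to relabeling by the duality permutation on simples, and a matrix shares a Perron eigenvalue with its transpose. For (3), positivity of $v$ forces $L_X = 0 \iff X = 0$, proving the first clause; the bound $\FPdim(X) \geq 1$ for $X \neq 0$ follows because $\one$ is a summand of $X \otimes \leftdual{X}$ via the counit of adjunction (cf.\ Lemma \ref{lem:oneinsidepipi}), so $\FPdim(X)^2 = \FPdim(X \otimes \leftdual{X}) \geq 1$. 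The ``in particular'' clause then follows by choosing any object representative for $x$.

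Item (4) is the substantive step and the expected obstacle. Simplicity of $X$ when $0 < \FPdim(X) < 2$ is immediate: any proper decomposition $X \cong X' \oplus X''$ with both summands nonzero would give $\FPdim(X) \geq 2$ by (3). For the cosine form, $\FPdim(X)$ is the Perron root of the characteristic polynomial of the integer matrix $L_X$, hence an algebraic integer, and each of its Galois conjugates is also an eigenvalue of $L_X$, hence bounded in modulus by $\FPdim(X) < 2$. A classical result of Kronecker then identifies such algebraic integers with numbers of the form $2\cos(\pi k/m)$, and positivity together with the bound $< 2$ force $k = 1$ and $m \geq 3$. This last classification step -- rather than any calculation internal to $\FC$ -- is the genuine content of (4), and I would invoke the corresponding statements from \cite[Chapter 3]{EGNO} rather than reprove them.
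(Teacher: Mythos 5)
The paper's own proof of this proposition is nothing more than four citations: (1) is \cite[Prop.~3.3.6(1)]{EGNO}, (2) is \cite[Prop.~3.3.9]{EGNO}, (3) is \cite[Prop.~3.3.4(2)]{EGNO} together with semisimplicity, and (4) is \cite[Prop.~3.3.16]{EGNO}. So your decision to invoke \cite{EGNO} for the classification step matches the paper exactly, and your sketches of (2), (3), and the simplicity claim in (4) are correct and standard. However, two of the mechanisms you sketch are wrong as stated, and both failures are tied to structure this paper is explicitly careful about, so they are worth naming.

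First, in (1): associativity of $\otimes$ gives that \emph{left} multiplication matrices commute with \emph{right} multiplication matrices, not with each other. Since $[\FC]$ is noncommutative in general --- this paper stresses that $[\Pi]$ and $[\leftdual{\Pi}]$ may fail to commute, see Remark \ref{rmk:theydontcommute} --- there is no reason for $L_X$ to commute with your $L_R$, as $R = \sum_i [S_i]$ is not central in general. The repair (and EGNO's actual argument) is to take $v$ to be the Perron eigenvector of the \emph{right}-multiplication matrix of $\sum_i [S_i]$, which is strictly positive by the same rigidity/transitivity argument; each $L_X$ commutes with it by associativity, hence preserves its one-dimensional Perron eigenspace, so $L_X v = \lambda_X v$ with $\lambda_X \geq 0$, and a nonnegative matrix admitting a strictly positive eigenvector has that eigenvalue equal to its spectral radius, whence $\lambda_X = \FPdim(X)$. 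Note also that your order of logic is circular: you \emph{define} $v_i := \FPdim(S_i)$ and assert it is the Perron vector of $L_R$, but that assertion already encodes the homomorphism property you are proving; the eigenvector must be produced first and its eigenvalues identified with $\FPdim(S_i)$ afterwards.

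Second, in (4): ``all Galois conjugates have modulus $<2$'' plus Kronecker does \emph{not} yield the cosine form --- Kronecker's theorem requires the conjugates to be \emph{real} in $[-2,2]$. A concrete counterexample to your stated implication: the plastic number $\rho \approx 1.3247$, the real root of $x^3 - x - 1$, is the Frobenius--Perron eigenvalue of the nonnegative integer matrix with rows $(0,1,0)$, $(0,0,1)$, $(1,1,0)$; its two other conjugates have modulus $\rho^{-1/2} < 2$, yet $\rho$ is not of the form $2\cos(\pi k/m)$, since $\QQ(\rho)$ is not totally real. The fusion structure must be used to force total realness, and this is exactly where your transpose identity from (2) earns its keep: up to bookkeeping conventions the multiplication matrix of $X \otimes \leftdual{X}$ is a product of the form $M M^{\mathsf{T}}$, which is symmetric positive semidefinite, so $\FPdim(X)^2 = \FPdim(X \otimes \leftdual{X})$ has all conjugates real in $[0, 4)$, and Kronecker applied to $\FPdim(X)^2 - 2$ gives the claim. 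Since you explicitly defer this step to \cite{EGNO} (as the paper itself does), your proposal does not collapse; but the route you sketch in its place would fail if carried out literally.
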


\begin{proof} %
These are all well-known results in the theory of tensor (fusion) categories, so we shall just provide the appropriate references for them.
\begin{enumerate}
\item is \cite[Prop. 3.3.6 (1)]{EGNO}.
\item is \cite[Prop. 3.3.9]{EGNO}.
\item is \cite[Prop. 3.3.4 (2)]{EGNO}, together with the semisimplicity of $\FC$.
\item is \cite[Prop. 3.3.16]{EGNO}.
\end{enumerate}
A nonzero object has $\FPdim \ge 1$, so a decomposable object has $\FPdim \ge 2$.
\end{proof}
	
For the rest of this section we specialize $\ZZ \langle d,d'\rangle$ to $[\FC]$ via $d \mapsto [\Pi]$ and $d' \mapsto [\leftdual{\Pi}]$, and interpret two-colored quantum numbers in $[\FC]$. A consequence of the first three properties is the following.

\begin{cor} For any $k \ge 0$ we have $\FPdim([k]_d) = \FPdim([k]_{d'})$. If $[k]_d, [k]_{d'} \in [\FC]_{\ge 0}$ then $[k]_d = 0$ if and only if $[k]_{d'} = 0$. \end{cor}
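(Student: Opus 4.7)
The plan is to derive both claims directly from Proposition \ref{prop:allfactsFP}, with the recursive definitions of two-colored quantum numbers doing the combinatorial work.

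First I would handle the equality $\FPdim([k]_d) = \FPdim([k]_{d'})$ by induction on $k$. The base cases $k=0, 1$ are immediate from \eqref{eq:base}, since both $[0]_{d}, [0]_{d'}$ and $[1]_d, [1]_{d'}$ are literally equal in $[\FC]$. For the inductive step, the key observation is that $\FPdim(d) = \FPdim([\Pi]) = \FPdim([\leftdual{\Pi}]) = \FPdim(d')$ by Proposition \ref{prop:allfactsFP}(2). Applying the ring homomorphism $\FPdim$ (Proposition \ref{prop:allfactsFP}(1)) to the two recursions in \eqref{eq:recurse} gives
\begin{equation}
\FPdim(d)\,\FPdim([k]_{d'}) = \FPdim([k+1]_d) + \FPdim([k-1]_d),
\end{equation}
\begin{equation}
\FPdim(d')\,\FPdim([k]_d) = \FPdim([k+1]_{d'}) + \FPdim([k-1]_{d'}).
\end{equation}
Assuming the equality at indices $\le k$ (so in particular $\FPdim([k]_d) = \FPdim([k]_{d'})$ and $\FPdim([k-1]_d) = \FPdim([k-1]_{d'})$), the two left-hand sides agree, and subtracting gives $\FPdim([k+1]_d) = \FPdim([k+1]_{d'})$.

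For the second claim, suppose $[k]_d, [k]_{d'} \in [\FC]_{\ge 0}$. By Proposition \ref{prop:allfactsFP}(3), a non-negative element of $[\FC]$ vanishes if and only if its Frobenius--Perron dimension is $0$. Combined with the first claim, $[k]_d = 0 \iff \FPdim([k]_d) = 0 \iff \FPdim([k]_{d'}) = 0 \iff [k]_{d'} = 0$.

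No step presents a real obstacle; the entire argument is two short invocations of Proposition \ref{prop:allfactsFP} plus a one-line induction, and the non-commutativity of $d$ and $d'$ is invisible to $\FPdim$ since it factors through $\RR$.
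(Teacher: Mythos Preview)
Your proof is correct and takes essentially the same approach as the paper's: both use Proposition~\ref{prop:allfactsFP}(2) to get $\FPdim(d) = \FPdim(d')$, then Proposition~\ref{prop:allfactsFP}(1) (the ring homomorphism property) together with the recursion to conclude $\FPdim([k]_d) = \FPdim([k]_{d'})$, and finally Proposition~\ref{prop:allfactsFP}(3) for the second claim. The only difference is that you spell out the induction explicitly, whereas the paper compresses it into a single sentence.
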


\begin{proof} That $\FPdim([2]_d) = \FPdim([2]_{d'})$ follows from part (2) of Proposition \ref{prop:allfactsFP}, and the result for general $k$ follows from part (1). If $[k]_d$ is represented by an object $K$ and $[k]_{d'}$ by an object $K'$, then
\begin{align} \nonumber [k]_d &= 0 \iff K = 0 \iff \FPdim([k]_d) = 0 \iff \\ & \FPdim([k]_{d'}) = 0 \iff K' = 0 \iff [k]_{d'} = 0. \qedhere \end{align}
\end{proof}

Now we use the Positivity Lemma \ref{lem:quantnumpostonon-neg} to relate certain conditions on non-vanishing of quantum numbers.

\begin{lem} \label{lem:minimalnumbersthesame} Consider the following conditions on a positive integer $m$.
 \begin{itemize} \noindent
	\begin{minipage}{.3\textwidth}
	\item $[m]_d = [m]_{d'} = 0$.
	\end{minipage} \begin{minipage}{.35\textwidth}
	\item $[m]_d = 0$ or $[m]_{d'} = 0$.
	\end{minipage} \begin{minipage}{.3\textwidth}
	\item $\FPdim([m]_d) = 0$.
	\end{minipage} 	
\end{itemize}
The smallest positive integer satisfying any one of these properties will satisfy all of these properties. In particular, if no positive integer satisfies one of these properties, then no positive integer satisfies any of these properties. \end{lem}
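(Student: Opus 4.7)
The plan is to prove the chain of implications $(A) \Rightarrow (B) \Rightarrow (C)$ trivially, then upgrade this to equality of the smallest integers satisfying each property by showing $(C) \Rightarrow (A)$ for the minimal $m$, using the Positivity Lemma \ref{lem:quantnumpostonon-neg} in an essential way.

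First I would record the easy implications. Condition $(A)$ obviously implies $(B)$. For $(B) \Rightarrow (C)$: if $[m]_d = 0$ then $\FPdim([m]_d) = 0$ directly; if instead $[m]_{d'} = 0$, then the Corollary immediately preceding the lemma (which says $\FPdim([m]_d) = \FPdim([m]_{d'})$ via Proposition \ref{prop:allfactsFP}(2)) gives $\FPdim([m]_d) = 0$. Consequently, writing $m_A, m_B, m_C$ for the smallest positive integers satisfying the respective properties (with the value $\infty$ if no such integer exists), we get $m_C \le m_B \le m_A$.

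The substance is in proving $m_A \le m_C$. I would let $m := m_C$ be finite (otherwise there is nothing to prove) and first observe $m \ge 2$, since $[1]_d = 1$ has $\FPdim$ equal to $1$. Then I would show by induction on $k$ that $[k]_d, [k]_{d'} \in [\FC]_{>0}$ for all $1 \le k < m$. The base case $k=1$ is clear. For the inductive step with $k < m$, the inductive hypothesis combined with the Positivity Lemma \ref{lem:quantnumpostonon-neg} yields $[k]_d, [k]_{d'} \in [\FC]_{\ge 0}$; by minimality of $m_C$ we have $\FPdim([k]_d) \ne 0$, and Proposition \ref{prop:allfactsFP}(3) then forces $[k]_d \ne 0$ (hence $[k]_d \in [\FC]_{>0}$), and similarly for $[k]_{d'}$ using the Corollary.

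Having established the positivity hypothesis of the Positivity Lemma for all $k < m$, I would apply Lemma \ref{lem:quantnumpostonon-neg} one more time, at the index $m$, to deduce $[m]_d, [m]_{d'} \in [\FC]_{\ge 0}$. Since $\FPdim([m]_d) = 0$ by definition of $m_C$, Proposition \ref{prop:allfactsFP}(3) yields $[m]_d = 0$, and the same proposition applied to $[m]_{d'}$ (whose $\FPdim$ also vanishes by the Corollary) gives $[m]_{d'} = 0$. Thus $m$ satisfies $(A)$, proving $m_A \le m_C$ and completing the equality $m_A = m_B = m_C$. The only mild obstacle is the bootstrapping induction, but this is immediate once one notices that the Positivity Lemma is exactly the tool that converts non-vanishing of Frobenius--Perron dimensions below $m$ into honest positivity of the quantum number symbols in $[\FC]$, which is precisely the input one needs to apply the lemma again.
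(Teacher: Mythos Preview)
Your proof is correct and follows essentially the same approach as the paper's: both establish the trivial implications $(A)\Rightarrow(B)\Rightarrow(C)$, then bootstrap the Positivity Lemma inductively to force $[k]_d,[k]_{d'}\in[\FC]_{>0}$ for $0<k<m_C$ and $[m_C]_d,[m_C]_{d'}\in[\FC]_{\ge 0}$, and finally invoke Proposition~\ref{prop:allfactsFP}(3) to conclude. Your write-up is slightly more explicit about the base case and the structure of the induction, but the argument is the same.
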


\begin{rem}
Note that for $x \in [\FC]$ not necessarily a class of an object, $\FPdim(x)=0$ need not imply $x = 0$. Consider a fusion category $\FC$ with a non-self dual object $X$ and let $x = [X] - [\leftdual{X}] \neq 0$. Then $\FPdim(X) = \FPdim(\leftdual{X})$ and so $\FPdim(x)=0$.
\end{rem}
\begin{proof}[Proof of Lemma \ref{lem:minimalnumbersthesame}]
It is obvious that each property implies the property to its right.
 We need only prove that if $m$ is the smallest positive integer satisfying $\FPdim([m]_d) = 0$, then $[m]_d = [m]_{d'} = 0$.

The previous corollary shows that $\FPdim([k]_d) = \FPdim([k]_{d'})$ for all $k \geq 0$.
As such, our assumption on $m$ says that $\FPdim([k]_d)$ and $\FPdim([k]_{d'})$ are both non-zero for all $0<k<m$.
This in turn says that both $[k]_d$ and $[k]_{d'}$ can not be zero ($\FPdim$ is a ring homomorphism by Proposition \ref{prop:allfactsFP}).
Using Lemma \ref{lem:quantnumpostonon-neg} and induction, $[k]_d, [k]_{d'} \in [\FC]_{> 0}$ for all $0 < k < m$, and $[m]_d, [m]_{d'} \in [\FC]_{\ge 0}$. Now $\FPdim([m]_d) = \FPdim([m]_{d'}) = 0$ implies, by Proposition \ref{prop:allfactsFP} part (3), that $[m]_d = [m]_{d'} = 0$.\end{proof}

We now derive a relationship between $\FPdim(\Pi)$ and the order of $\sigma_{\bluea} \circ \sigma_{\redb} \in \WW(Q)$.

\begin{thm} \label{thm:whenfiniteorder} 
The operator $\sigma_{\bluea} \circ \sigma_{\redb} \in \WW(Q)$ has finite order if and only if $\FPdim(\Pi) < 2$.
Moreover, the order is $m$ if and only if $\FPdim(\Pi)= 2\cos(\pi/m)$. 
\end{thm}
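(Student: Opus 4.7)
The plan is to reduce the theorem to a calculation involving the classical (single-variable) quantum numbers by applying the ring homomorphism $\FPdim \co [\FC] \to \RR$ to the non-commutative recursion \eqref{eq:recurse}, and then to combine this with the two key lemmas of the preceding subsections to detect the order of $\sigma_{\bluea} \circ \sigma_{\redb}$.

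First I would note that by Proposition \ref{prop:allfactsFP}(2), $\FPdim(\Pi) = \FPdim(\leftdual{\Pi})$; set $x := \FPdim(\Pi) \in \RR_{\ge 0}$. Applying the ring homomorphism $\FPdim$ to \eqref{eq:base} and \eqref{eq:recurse} collapses the two recurrences to one, namely $f_0 = 0$, $f_1 = 1$, and $x \cdot f_k = f_{k+1} + f_{k-1}$, and yields $\FPdim([k]_d) = \FPdim([k]_{d'}) = f_k(x)$ for all $k \ge 0$. Thus the problem of detecting the vanishing of two non-commuting two-colored quantum numbers in $[\FC]$ reduces to the vanishing of the classical polynomials $f_k$ evaluated at the real number $x$.

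Next, the chain of equivalences connecting the order of $\sigma_{\bluea} \circ \sigma_{\redb}$ to the vanishing of $f_m(x)$ is already available: Lemma \ref{lem:whenpoweriszero} says that the order is the smallest positive $m$ with $[m]_d = [m]_{d'} = 0$, and Lemma \ref{lem:minimalnumbersthesame} says that such smallest $m$ coincides with the smallest positive $m$ for which $\FPdim([m]_d) = 0$, i.e.\ with the smallest positive zero of $f_m(x)$ (or $\infty$ if no such $m$ exists). So everything boils down to determining the smallest positive $m$ with $f_m(x) = 0$ as a function of $x \ge 0$.

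Finally, I would solve the recursion for $f_k$ using a trigonometric/hyperbolic substitution. For $x \ge 2$, writing $x = 2\cosh(\theta)$ with $\theta \ge 0$ gives $f_k(x) = \sinh(k\theta)/\sinh(\theta)$ (or $f_k(2) = k$), which is strictly positive for all $k \ge 1$; hence no $m$ works and $\sigma_{\bluea} \circ \sigma_{\redb}$ has infinite order. For $0 \le x < 2$, Proposition \ref{prop:allfactsFP}(3)(4) forces $x = 2\cos(\pi/m)$ for some integer $m \ge 2$ (with $m = 2$ corresponding to $\Pi = 0$), and writing $\theta = \pi/m$ gives $f_k(x) = \sin(k\pi/m)/\sin(\pi/m)$, whose smallest positive zero is $k = m$. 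Combining the two cases matches exactly the claim.

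The main obstacle, conceptually, is the subtle fact that $\FPdim$ is only an injective test of vanishing on $[\FC]_{\ge 0}$ rather than on all of $[\FC]$, so one cannot directly conclude from $f_m(x) = 0$ that $[m]_d$ and $[m]_{d'}$ themselves vanish. This is precisely the content of Lemma \ref{lem:minimalnumbersthesame}, which rests on the Positivity Lemma \ref{lem:quantnumpostonon-neg}; once that lemma is invoked, the remaining work is the elementary trigonometric analysis of $f_k(x)$.
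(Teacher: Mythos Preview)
Your proposal is correct and follows essentially the same route as the paper's own proof: both invoke Lemma~\ref{lem:whenpoweriszero} and Lemma~\ref{lem:minimalnumbersthesame} to reduce to the vanishing of the classical one-variable quantum numbers $\FPdim([k]_d)$, and both then analyze this recursion using the known values of $\FPdim(\Pi)$ from Proposition~\ref{prop:allfactsFP}. The only cosmetic difference is that the paper writes $\FPdim(\Pi) = q + q^{-1}$ and identifies $\FPdim([k]_d)$ with the ordinary quantum number $[k]_q$, whereas you use the equivalent trigonometric/hyperbolic parametrization $f_k(x) = \sin(k\theta)/\sin(\theta)$ or $\sinh(k\theta)/\sinh(\theta)$.
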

\begin{proof} 
By Lemma \ref{lem:whenpoweriszero}, we see that $\sigma_{\bluea} \circ \sigma_{\redb}$ has finite order $m$ as an operator on $[\FC]\oplus [\FC]$ if and only if $m$ is the minimal positive integer such that $[m]_d = [m]_{d'} = 0 \in [\FC]$.
As such, applying Lemma \ref{lem:minimalnumbersthesame} we get that $\sigma_{\bluea} \circ \sigma_{\redb}$ has finite order $m$ if and only if $\FPdim([m]_d) = 0$ for the smallest positive integer $m$.

We know that $\FPdim(\Pi)$ is a non-negative real number. There is always a complex number $q$ such that $\FPdim(\Pi) = q+q^{-1}$; we can even pick $q$ to be on the unit circle when $\FPdim(\Pi) \le 2$, and pick $q$ to be real when $\FPdim(\Pi) \ge 2$. 
Recall the ordinary quantum number
$[k]_q = q^{k-1} + q^{k-3} + \ldots + q^{1-k}$, which is always a real number under either of the two assumptions above. Ordinary quantum numbers satisfy a recursion analogous to \eqref{eq:recurse}.
Consequently, one proves by induction that
\begin{equation} \label{eq:ordquantnum=FPdim}
\FPdim([k]_d) = \FPdim([k]_{d'}) = [k]_q
\end{equation}
for all $k \in \ZZ$.

%We proceed by analyzing the different cases of $\FPdim(\Pi)$.
If $\FPdim(\Pi) = q+q^{-1} \ge 2$, then no ordinary quantum number $[k]_q$ vanishes. 
In other words, there is no $m>0$ such that $\FPdim([m]_d) = 0$, and therefore $\sigma_{\bluea} \circ \sigma_{\redb}$ has infinite order as required.

Conversely, suppose $0 < \FPdim(\Pi) < 2$.
Then by Proposition \ref{prop:allfactsFP} part (4), $\FPdim(\Pi) = 2\cos(\pi/n) = q+q^{-1}$ for some $n \ge 3$.
As such, $q$ has to be a primitive $2n$-th root of unity. In this case, $[n]_q = 0$ and $[k]_q \ne 0$ for $0 < k < n$. The minimality assumption on $m$ ensures that $m = n$, as desired.

The case $\FPdim(\Pi) = 0 = 2 \cos(\frac{\pi}{2})$ is trivial, since we must have $\Pi = 0$ and one quickly deduces that $\sigma_{\bluea} \circ \sigma_{\redb}$ has order $2$.
%the order of $\sigma_{\bluea} \circ \sigma_{\redb}$ (when finite) is equal to the minimal $m$ satisfying any of the properties of Lemma \ref{lem:minimalnumbersthesame} (when finite). Set $m = \infty$ if the order is infinite. For any $k < m$, Lemma \ref{lem:quantnumpostonon-neg} and induction implies that $[k]_d, [k]_{d'} \in [\FC]_{> 0}$ and $\FPdim([k]_d) = \FPdim([k]_{d'}) > 0$. Meanwhile, if $m$ is finite then $[m]_d = 0$ and $\FPdim([m]_d) = 0$.
\end{proof}

Lastly, we prove the following sign-coherence result about two-color quantum numbers in $[\FC]$.
\begin{thm}\label{thm:signcoherence}
Specialize $\ZZ \langle d,d'\rangle$ to $[\FC]$ via $d \mapsto [\Pi]$ and $d' \mapsto [\leftdual{\Pi}]$.
Then there are two possibilities.
\begin{enumerate}
\item There exists $k \neq 0 \in \ZZ$ such that $[k]_d = 0$ or $[k]_{d'}=0$. Then there is some smallest positive integer $m = k$ satisfying this condition, such that
\begin{itemize}
\item for all $j \in \ZZ$, $[j]_d = 0$ $\iff$ $j$ is an integer multiple of $m$ $\iff$ $[j]_{d'} = 0$; and 
\item for $0 < j < m$, we have $[j+cm]_d, [j+cm]_{d'} \in (-1)^{c} \cdot [\FC]_{>0}$ for all $c \in \ZZ$.
\end{itemize}
\item Otherwise, $[k]_d \neq 0$ and $[k]_{d'} \neq 0$ for all $k \in \ZZ \setminus \{0\}$. In this case, $[k]_d, [k]_{d'} \in [\FC]_{>0}$ for all $k \geq 1$.
\end{enumerate}
In particular, in both cases we have $[k]_d, [k]_{d'} \in [\FC]_{>0} \cup \{0\} \cup -[\FC]_{>0}$ for all $k \in \ZZ$.
\end{thm}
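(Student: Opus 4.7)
The plan is to split into the two cases indicated by the statement, and in each case to bootstrap from the Positivity Lemma \ref{lem:quantnumpostonon-neg}, Lemma \ref{lem:minimalnumbersthesame}, and the symmetry Lemma \ref{lem:upequalnegdown}.

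First I would dispatch Case (2). Here the hypothesis is that $[k]_d \ne 0$ and $[k]_{d'} \ne 0$ for every $k \ne 0$. I would show by induction on $k \ge 1$ that $[k]_d, [k]_{d'} \in [\FC]_{>0}$. The base case $k = 1$ is trivial since $[1]_d = [1]_{d'} = 1$. For the inductive step, assuming positivity holds strictly below $k$, the Positivity Lemma \ref{lem:quantnumpostonon-neg} gives $[k]_d, [k]_{d'} \in [\FC]_{\ge 0}$; non-vanishing by hypothesis upgrades this to $[\FC]_{>0}$.

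Now I turn to Case (1) and let $m$ be the smallest positive integer with $[m]_d = 0$ or $[m]_{d'} = 0$. By Lemma \ref{lem:minimalnumbersthesame}, in fact both $[m]_d$ and $[m]_{d'}$ vanish. Running the same induction as in Case (2) but only up to $m-1$, the Positivity Lemma yields $[j]_d, [j]_{d'} \in [\FC]_{>0}$ for $0 < j < m$. At this point I invoke Lemma \ref{lem:upequalnegdown}: for every $k \in \ZZ$,
\begin{equation}
[m+k]_d = -[m-k]_d, \qquad [m+k]_{d'} = -[m-k]_{d'}.
\end{equation}
Taking $k = m$ gives $[2m]_d = [2m]_{d'} = 0$, and then applying Lemma \ref{lem:upequalnegdown} at $2m$ yields the $2m$-periodicity relation $[k+2m]_d = [k]_d$ and $[k+2m]_{d'} = [k]_{d'}$ for all $k \in \ZZ$, by chasing through $[k+2m]_d = -[2m - (k+2m - 2m)]_d \cdot \ldots$ and using the final clause of Lemma \ref{lem:upequalnegdown} ($[-k]_d = -[k]_d$).

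With these two symmetries in hand, I can read off all the claims for free. For the first bullet in Case (1), the $2m$-periodicity reduces any $[j]_d$ to some $[r]_d$ with $0 \le r < 2m$, and the reflection $[m+k]_d = -[m-k]_d$ reduces further to the range $0 \le r \le m$; since $[r]_d$ is nonzero for $0 < r < m$ and zero for $r \in \{0,m\}$, the set of vanishing indices in $[0, 2m)$ is exactly $\{0, m\}$, so by periodicity $[j]_d = 0 \iff m \mid j$ (and identically for $d'$). For the second bullet, fix $0 < j < m$; by $2m$-periodicity the sign of $[j + cm]_d$ only depends on the parity of $c$. The two representatives are $[j]_d \in [\FC]_{>0}$ (matching $(-1)^0$) and $[j + m]_d = -[m - j]_d \in -[\FC]_{>0}$ using that $0 < m-j < m$ (matching $(-1)^1$). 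The same argument applies verbatim with $d$ replaced by $d'$.

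The last assertion, $[k]_d, [k]_{d'} \in [\FC]_{>0} \cup \{0\} \cup -[\FC]_{>0}$ for all $k \in \ZZ$, then follows by combining the two cases with the identity $[-k]_d = -[k]_d$. The only real subtlety I anticipate is the bookkeeping in deriving $2m$-periodicity cleanly from two iterated applications of Lemma \ref{lem:upequalnegdown}; the rest is straightforward assembly of facts already in hand.
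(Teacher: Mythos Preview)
Your proposal is correct and follows essentially the same route as the paper: both proofs split into the two cases, invoke Lemma~\ref{lem:minimalnumbersthesame} to get $[m]_d=[m]_{d'}=0$ at the minimal $m$, run the inductive application of the Positivity Lemma~\ref{lem:quantnumpostonon-neg} to secure strict positivity on $0<j<m$ (resp.\ on all $k\ge 1$), and then appeal to the symmetries of Lemma~\ref{lem:upequalnegdown}. In fact you write out more than the paper does, which simply says ``the rest of the properties follow from the symmetries of quantum numbers in Lemma~\ref{lem:upequalnegdown}''; your derivation of $2m$-periodicity and the vanishing/sign pattern is exactly the bookkeeping the paper leaves implicit (and can be streamlined to the one-liner $[2m+k]_d=-[m-(m+k)]_d=-[-k]_d=[k]_d$).
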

\begin{proof}
Suppose that there is some $k\neq 0 \in \ZZ$ such that $[k]_d=0$ or $[k]_{d'}=0$.
Without lost of generality we can assume $k$ is positive since $[k]_\bullet = -[-k]_\bullet$ from Lemma \ref{lem:upequalnegdown}.
Then for $m = k$ the smallest such positive integer, Lemma \ref{lem:minimalnumbersthesame} says that 
\[
[m]_d=[m]_{d'}=0.
\]
%Suppose that there is some $k\neq 0 \in \ZZ$ such that $[k]_d=0$ or $[k]_{d'}=0$; then Lemma \ref{lem:minimalnumbersthesame} says that 
%\[
%[k]_d=[k]_{d'}=0.
%\]
%Let $m = k$ be the smallest positive integer satisfying the equation above; we can always pick $m$ positive since $[k]_\bullet = -[-k]_\bullet$ from \ref{lem:upequalnegdown}.
For any $0 < j < m$, Lemma \ref{lem:quantnumpostonon-neg} and induction implies that $[j]_d, [j]_{d'} \in [\FC]_{> 0}$.
The rest of the properties follow from the symmetries of quantum numbers in Lemma \ref{lem:upequalnegdown}.

Otherwise we have $k=0$ being the only integer satisfying $[k]_d=[k]_{d'}=0$.
Then once again Lemma \ref{lem:quantnumpostonon-neg} and induction implies that $[k]_d, [k]_{d'} \in [\FC]_{> 0}$, this time for all $k \geq 1$. By Lemma \ref{lem:upequalnegdown}, $[k]_d, [k]_{d'} \in -[\FC]_{>0}$ for $k < 0$.
\end{proof}

%========================================================
\subsection{Action of quantum numbers on semisimple module categories \texorpdfstring{$\MC$}{M}} \label{subsec:posaction}
%========================================================

Throughout this section, let $\MC$ be a semisimple module category over $\FC$.
We define $[\MC]_{\ge 0}$ to be the set of elements realizable as the class of an object in $\MC$, and $[\MC]_{> 0}$ to exclude zero.

The previous chapter discusses the sign coherence of quantum numbers as elements within $[\FC]$. 
How does this affect their action on $[\MC]$, for a given $\FC$-module category $\MC$? 
Readers new to fusion categories might be surprised by the following fact, which is unintuitive from the perspective of ordinary module theory.

\begin{prop} \label{prop:notzero} 
Let $x \in [\FC]_{\ge 0} \cup -[\FC]_{\geq 0}$ and $u \in [\MC]_{> 0}$. If $x \cdot u = 0$ then $x = 0$.
\end{prop}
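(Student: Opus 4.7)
The plan is to reduce the proposition to the following elementary rigidity fact: if $X \in \FC$ and $M \in \MC$ satisfy $X \ot M = 0$ with $M$ a nonzero object, then $X = 0$. Granting this, the proposition follows quickly. Write $x = \ep[X]$ with $\ep \in \{+1, -1\}$ and $X$ an object of $\FC$, and $u = [M]$ with $M$ nonzero in $\MC$. Then $x \cdot u = \ep \cdot [X \ot M] = 0$ in $[\MC]$, so $[X \ot M] = 0$. Because $\MC$ is semisimple, a class in $[\MC]$ vanishes iff the underlying object is zero, so $X \ot M = 0$. The rigidity fact then gives $X = 0$, hence $x = 0$.

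For the rigidity fact I would argue by contradiction, assuming $X \ne 0$. The key input is (the argument of) Lemma \ref{lem:oneinsidepipi}, applied with $X$ in place of $\Pi$: the counit $\Eval_X \co \leftdual{X} \ot X \to \one$ is nonzero, because the triangle identities would otherwise force $\id_X = 0$ and hence $X = 0$. Since $\one$ is simple and $\FC$ is semisimple, $\Eval_X$ is surjective, so $\one$ is a direct summand of $\leftdual{X} \ot X$ in $\FC$. Decompose $\leftdual{X} \ot X \cong \one \oplus K$ for some $K \in \FC$. Tensoring on the right with $M$ via the $\FC$-action on $\MC$ yields a decomposition
\begin{equation}
\leftdual{X} \ot X \ot M \;\cong\; M \oplus (K \ot M)
\end{equation}
in $\MC$. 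On the other hand, associativity of the module structure gives $\leftdual{X} \ot X \ot M = \leftdual{X} \ot (X \ot M) = \leftdual{X} \ot 0 = 0$. Comparing the two, $M = 0$, contradicting our choice of $M$.

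There is essentially no obstacle: the proof uses only rigidity of $\FC$, semisimplicity of $\FC$ and $\MC$, and exactness of the $\FC$-action on $\MC$ (recorded just after Definition \ref{defn:modcat}). No Frobenius--Perron machinery is needed. The only minor point to flag is that the elementwise manipulation of $x$ and $u$ as $\pm[X]$ and $[M]$ is legitimate precisely because both categories are semisimple, which makes the passage between objects and classes in the Grothendieck group harmless.
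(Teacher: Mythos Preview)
Your argument is correct and is essentially the same as the paper's: both reduce to showing that $\one$ is a summand of $\leftdual{X}\ot X$ when $X\ne 0$ (via Lemma \ref{lem:oneinsidepipi}), then tensor with the nonzero object of $\MC$ to force a contradiction. The paper phrases it as the contrapositive (if $X\ne 0$ then $X\ot U\ne 0$) while you phrase it as ($X\ot M=0$ and $M\ne 0$ implies $X=0$), but the content is identical.
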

\begin{proof} 
The conclusion is invariant under replacing $x$ with $-x$, so we may assume $x \in [\FC]_{\geq 0}$.
Choose $X\in \FC$ and $U \in \MC$ such that $[X]=x$ and $[U]=u$. Because of semisimplicity, $X$ and $U$ are unique up to isomorphism, and $U$ is nonzero. Suppose that $x$, therefore $X$, is nonzero. By Lemma \ref{lem:oneinsidepipi}, $\one$ is a summand of $\leftdual{X} \ot X$. Tensoring with $U$, we see that $U$ is a summand of
$\leftdual{X} \ot X \ot U$, which must be nonzero. Therefore $X \ot U$ must be nonzero. 
\end{proof}

\begin{cor} \label{cor:zeroiffactionzero}
For all $k \in \ZZ$ and all $u \in [\MC]_{>0}$,
\[
[k]_d \cdot u = 0 \iff [k]_d = 0 \iff [k]_{d'} = 0 \iff [k]_{d'} \cdot u = 0.
\]
Thus $[k]_d$ and $[k]_{d'}$ act as the zero operator on $[\MC]$ (or indeed on $[L]$ for any $L \in \Irr(\MC)$) if and only if they are zero to begin with.
\end{cor}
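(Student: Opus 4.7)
The plan is to combine the sign coherence established in Theorem \ref{thm:signcoherence} with Proposition \ref{prop:notzero}, which rules out ``unexpected'' annihilation when the acting element is sign-coherent.

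First, I would note that Theorem \ref{thm:signcoherence} places each $[k]_d$ and each $[k]_{d'}$ (for $k \in \ZZ$) inside the set $[\FC]_{>0} \cup \{0\} \cup -[\FC]_{>0}$, which is exactly $[\FC]_{\geq 0} \cup -[\FC]_{\geq 0}$. This is precisely the hypothesis needed to apply Proposition \ref{prop:notzero}.

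Next, I would argue the outer two equivalences. Fix $k \in \ZZ$ and $u \in [\MC]_{>0}$. Since $[k]_d$ is sign-coherent, Proposition \ref{prop:notzero} (applied to $x = [k]_d$ or $x = -[k]_d$, whichever lies in $[\FC]_{\geq 0}$) gives $[k]_d \cdot u = 0 \implies [k]_d = 0$; the reverse implication is trivial. The identical argument with $[k]_{d'}$ in place of $[k]_d$ handles the other outer equivalence.

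For the middle equivalence $[k]_d = 0 \iff [k]_{d'} = 0$, I would invoke Theorem \ref{thm:signcoherence} directly. In case (1) of that theorem, both $[k]_d$ and $[k]_{d'}$ vanish precisely when $k$ is an integer multiple of $m$; in case (2), neither vanishes for $k \neq 0$, while both vanish for $k=0$ (by the base case $[0]_d = [0]_{d'} = 0$). Either way the two vanishing conditions coincide. Finally, the parenthetical ``indeed on $[L]$ for any $L \in \Irr(\MC)$'' follows because any simple $L \in \MC$ satisfies $[L] \in [\MC]_{>0}$, so the same argument applies verbatim. I do not anticipate any real obstacle here: the corollary is essentially a packaging of Proposition \ref{prop:notzero} and Theorem \ref{thm:signcoherence}.
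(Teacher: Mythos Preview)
Your proposal is correct and follows essentially the same approach as the paper: the outer equivalences come from Proposition \ref{prop:notzero} combined with the sign-coherence of quantum numbers from Theorem \ref{thm:signcoherence}, and the middle equivalence follows directly from Theorem \ref{thm:signcoherence}. Your elaboration of the middle equivalence via the two cases of Theorem \ref{thm:signcoherence} is a bit more explicit than the paper's one-line citation, but the argument is the same.
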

\begin{proof}
The middle equivalence follows from Theorem \ref{thm:signcoherence}. 
The other two equivalences follow directly from Proposition \ref{prop:notzero} and the sign coherence property of quantum numbers, also shown in Theorem \ref{thm:signcoherence}.
The final statement is an obvious consequence.
\end{proof}

\begin{cor} \label{cor:orderagreeCM}
We have $(\sigma_{\bluea} \circ \sigma_{\redb})^m = 1$ as operator on $[\FC]\oplus[\FC]$ if and only if $(\sigma_{\bluea} \circ \sigma_{\redb})^m = 1$ as operator on $[\MC]\oplus[\MC]$.
\end{cor}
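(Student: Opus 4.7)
The plan is to reduce both sides of the claimed equivalence to the single condition $[m]_d = [m]_{d'} = 0$ in $[\FC]$, via two applications of Lemma \ref{lem:whenpoweriszero} bridged by Corollary \ref{cor:zeroiffactionzero}.

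First, I would apply Lemma \ref{lem:whenpoweriszero} to the specialization $d \mapsto [\Pi]|_{\MC}$, $d' \mapsto [\leftdual{\Pi}]|_{\MC}$ inside the ring $\End_\ZZ([\MC])$. Since $[\MC]$ is a free abelian group (with basis $\Irr(\MC)$), this endomorphism ring has no $2$-torsion, so the lemma applies: the matrix $(\sigma_{\bluea}\circ \sigma_{\redb})^m$ acts as the identity on $[\MC]\oplus [\MC]$ if and only if both $[m]_d|_\MC$ and $[m]_{d'}|_\MC$ are the zero endomorphism of $[\MC]$, i.e.\ $[m]_d$ and $[m]_{d'}$ act as zero on $[\MC]$.

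Next, Corollary \ref{cor:zeroiffactionzero} converts ``acts as zero on $[\MC]$'' into ``equal to zero in $[\FC]$'': that corollary directly states that $[k]_d$ and $[k]_{d'}$ act as the zero operator on $[\MC]$ if and only if they already vanish in $[\FC]$. Applying this to $k = m$ transforms our condition into $[m]_d = [m]_{d'} = 0 \in [\FC]$.

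Finally, a second application of Lemma \ref{lem:whenpoweriszero}, this time to the $2$-torsion free ring $[\FC]$ itself, tells us that $[m]_d = [m]_{d'} = 0$ in $[\FC]$ is equivalent to $(\sigma_{\bluea}\circ\sigma_{\redb})^m = 1$ on $[\FC]\oplus[\FC]$. Chaining these three equivalences completes the proof. There is no real obstacle, since the technical content has already been assembled; the only point to be careful about is the $2$-torsion-freeness of $\End_\ZZ([\MC])$ (immediate from $[\MC]$ being free abelian), and the fact that we are implicitly using $\MC$ to be nonzero so that $[\MC]_{>0}$ is nonempty when invoking Corollary \ref{cor:zeroiffactionzero}.
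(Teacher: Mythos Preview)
Your proof is correct and follows exactly the approach the paper intends: combining Lemma~\ref{lem:whenpoweriszero} (applied once in $\End_\ZZ([\MC])$ and once in $[\FC]$) with Corollary~\ref{cor:zeroiffactionzero} to bridge the two. The paper's own proof is the one-line ``This follows from combining Lemma~\ref{lem:whenpoweriszero} and Corollary~\ref{cor:zeroiffactionzero},'' so you have simply unpacked that sentence, including the careful observation about $2$-torsion-freeness and the implicit nonzeroness of $\MC$.
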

\begin{proof}
This follows from combining Lemma \ref{lem:whenpoweriszero} and Corollary \ref{cor:zeroiffactionzero}.
\end{proof}

\begin{cor} \label{cor:quantnumposact}
Specialize $\ZZ \langle d,d'\rangle$ to $[\FC]$ via $d \mapsto [\Pi]$ and $d' \mapsto [\leftdual{\Pi}]$. Fix a nonzero object $L$ in $\MC$. Fix $m \ge 2$ or $m = \infty$. Suppose that $[k]_d, [k]_{d'} \in [\FC]_{> 0}$ for all $0 < k < m$. 

%Then 
%\[ [k]_d \cdot [L], [k]_{d'} \cdot [L] \in [\MC]_{> 0} \text{ for } 0 < k < m, \qquad [m]_d \cdot [L], [m]_{d'} \cdot [L] \in [\MC]_{\ge 0} \text{ when } m < \infty.\] 
Then for $0 < k < m$,
\[ [k]_d \cdot [L], [k]_{d'} \cdot [L] \in [\MC]_{> 0}. \]
If furthermore $m < \infty$, then
\[ [m]_d \cdot [L], [m]_{d'} \cdot [L] \in [\MC]_{\ge 0}.\] 
Moreover, if $L$ is simple then $X^{(\ell)}(L)$ is indecomposable for all $1 \le \ell \le m$, and we have
\begin{equation} \label{eq:dimofXlL} \udim X^{(\ell)}(L) = \begin{cases} [\ell]_{d'}\cdot[L] \alpha_{\bluea} + [\ell-1]_d\cdot[L] \alpha_{\redb} & \text{ if $\ell$ is odd,} \\ 
[\ell-1]_{d'}\cdot[L] \alpha_{\bluea} + [\ell]_d\cdot[L] \alpha_{\redb} & \text{ if $\ell$ is even.}  \end{cases} \end{equation}
Thus $X^{(k)}(L)$ and $X^{(\ell)}(L)$ have distinct dimension vectors (and are non-isomorphic) for $1 \le k \ne \ell \le m$. \end{cor}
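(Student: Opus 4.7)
The plan is to upgrade the arguments already established for the case $L = \one$ in \S\ref{subsec:cruciallemma} by carrying the action on $[L] \in [\MC]$ throughout. The new ingredient is Proposition \ref{prop:notzero} together with Corollary \ref{cor:zeroiffactionzero}, which say respectively that tensoring a nonzero object of $\FC$ by a nonzero object of $\MC$ remains nonzero, and that a quantum number $[k]_d$ (or $[k]_{d'}$) annihilates $[L]$ in $[\MC]$ if and only if it is already zero in $[\FC]$.

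For parts 1 and 2, I first apply Lemma \ref{lem:quantnumpostonon-neg} to conclude that $[k]_d, [k]_{d'} \in [\FC]_{>0}$ for $0 < k < m$, and $[m]_d, [m]_{d'} \in [\FC]_{\ge 0}$ when $m < \infty$. Representing $[k]_d = [K]$ for an object $K \in \FC$ (nonzero for $k < m$), the identification $[k]_d \cdot [L] = [K \otimes L]$ combined with Proposition \ref{prop:notzero} proves that $K \otimes L$ is nonzero, giving part 1. Part 2 is identical, except that $K$ is allowed to be the zero object, in which case $[m]_d \cdot [L] = 0 \in [\MC]_{\ge 0}$ trivially.

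For part 3, I induct on $\ell$, paralleling the proof given for $L = \one$ preceding Lemma \ref{lem:quantnumpostonon-neg}. The base case $X^{(1)}(L) = S(\bluea, L)$ is simple by Proposition \ref{prop:classifysimple}, with dim vector $[L]\alpha_\bluea$ matching \eqref{eq:dimofXlL} (since $[1]_{d'} = 1$ and $[0]_d = 0$). For the inductive step with $1 \le \ell < m$, part 1 implies the $\alpha$-coefficient at the non-sink vertex of $Q^{(\ell)}$ is nonzero in $[\MC]$, so $X^{(\ell)}(L)$ is not of the form $S(\text{sink}, L')$; Corollary \ref{cor:preservesindecomposable} then gives indecomposability of $X^{(\ell+1)}(L) = R_v^+ X^{(\ell)}(L)$, while Proposition \ref{prop:ifnotSvL}(1) together with Proposition \ref{prop:reflectiondim} computes the new dim vector as $\sigma_v^Q (\udim X^{(\ell)}(L))$. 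A direct manipulation using the recurrences \eqref{eq:recurse} matches this against \eqref{eq:dimofXlL} at step $\ell + 1$.

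Part 4 reuses the strategy of Corollary \ref{cor:Xkdistinct} inside $[\MC]^V$. If $\udim X^{(k)}(L) = \udim X^{(\ell)}(L)$ with $1 \le k < \ell \le m$, then reading off the $\alpha_\bluea$- and $\alpha_\redb$-coefficients and applying the recurrences \eqref{eq:recurse} iteratively (multiplying by $d$ or $d'$ and substituting the known identities into $[\MC]$) propagates the equalities to eventually force $[\ell-k]_d \cdot [L] = 0$ or $[\ell-k]_{d'} \cdot [L] = 0$. By Corollary \ref{cor:zeroiffactionzero}, the corresponding quantum number vanishes already in $[\FC]$, contradicting the hypothesis $[\ell-k]_d, [\ell-k]_{d'} \in [\FC]_{>0}$ since $0 < \ell - k < m$. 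The main obstacle lies in the bookkeeping for this propagation, particularly when $k$ and $\ell$ have different parities so that the initial coefficient equations mix the two colors $d$ and $d'$: several applications of the interlocking recurrences are needed to chain the equalities together and decouple the colors before Corollary \ref{cor:zeroiffactionzero} can be invoked to close the argument.
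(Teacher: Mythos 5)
Your parts 1--3 follow the paper's proof essentially verbatim: the paper's entire argument is ``repeat the proofs of \S\ref{subsec:cruciallemma} with $X^{(\ell)}(L)$ replacing $X^{(\ell)}(\one)$, the essential new ingredient being Proposition~\ref{prop:notzero}, which gives $[k]_d \cdot [L] \ne 0$ and $[k]_{d'}\cdot[L] \ne 0$ for $0<k<m$, so that $X^{(\ell)}(L)$ is never supported at the sink of $Q^{(\ell)}$.'' Your explicit appeal to Corollary~\ref{cor:zeroiffactionzero} in part 4, to descend a vanishing in $[\MC]$ to a vanishing in $[\FC]$, is precisely the mechanism the paper's ``verbatim'' leaves implicit, and your argument is correct when $k \equiv \ell \pmod 2$: there the coefficient equalities carry the \emph{same} index shift, e.g.\ $[k]_{d'}[L]=[\ell]_{d'}[L]$ and $[k-1]_d[L]=[\ell-1]_d[L]$, the recursion \eqref{eq:recurse} propagates a translation symmetry of period $\ell-k$ on $[L]$, whence $[\ell-k]_\bullet\cdot[L]=0$ and the desired contradiction.

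The gap is in the mixed-parity case, where the step you describe fails. If $k$ is odd and $\ell$ is even, \eqref{eq:dimofXlL} gives $[k]_{d'}[L]=[\ell-1]_{d'}[L]$ and $[k-1]_d[L]=[\ell]_d[L]$: the two shifts differ, and chaining \eqref{eq:recurse} now produces a \emph{reflection} symmetry $[k-1-j]_\bullet[L]=[\ell+j]_\bullet[L]$, terminating at $j=k-1$ in $[k+\ell-1]_\bullet\cdot[L]=0$, not $[\ell-k]_\bullet\cdot[L]=0$. Via Corollary~\ref{cor:zeroiffactionzero} and Theorem~\ref{thm:signcoherence} this only forces $m \mid k+\ell-1$, i.e.\ $k+\ell=m+1$, which is \emph{not} a contradiction --- and this case genuinely occurs. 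In the Ising category (simples $\one,\epsilon,\sigma$ with $\sigma\otimes\sigma\cong\one\oplus\epsilon$), take $\Pi=\sigma=\leftdual{\Pi}$, so $m=4$, and $L=\sigma$: then $[3]_{d'}\cdot[\sigma]=[\epsilon]\cdot[\sigma]=[\sigma]=[1]_{d'}\cdot[\sigma]$, so $X^{(2)}(\sigma)$ and $X^{(3)}(\sigma)$ share the dimension vector $[\sigma]\alpha_{\bluea}+([\one]+[\epsilon])\alpha_{\redb}$ under the common identification with $[\MC]^V$. So no amount of bookkeeping will ``decouple the colors'' in the cross-parity case. In fairness, the paper elides the identical point (its proof says ``verbatim,'' and the template Corollary~\ref{cor:Xkdistinct} ``leaves parity considerations to the reader''); moreover cross-parity $X^{(k)}(L)$ and $X^{(\ell)}(L)$ live over different quivers $Q$ versus $Q'$, and the only downstream use, Lemma~\ref{lem:orderequalonbasis}, compares same-parity indices only --- so your part 4 restricted to $k\equiv\ell\pmod 2$ suffices for everything the corollary is used for, but as written your mixed-parity argument asserts a propagation that does not hold.
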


\begin{proof} We can repeat the proofs of \S\ref{subsec:cruciallemma} verbatim with $X^{(\ell)}(L)$ replacing $X^{(\ell)}(\one)$. The essential new ingredient is that $[k]_d \cdot [L] \ne 0$ and $[k]_{d'}[L] \ne 0$ for $0 < k < m$, by Proposition \ref{prop:notzero}. Consequently, $X^{(\ell)}(L)$ is not supported at the sink of $Q^{(\ell)}$ for $1 \le \ell \le m-1$. 
\end{proof}

Finally, we conclude by adding one more equivalent condition to Corollary \ref{cor:orderagreeCM}. 
\begin{lem}\label{lem:orderequalonbasis}
Suppose that the orbit of $\sigma_{\bluea} \circ \sigma_{\redb}$ acting on $[L]\alpha_v$ is finite for some $v \in \{\bluea,\redb\}$. Then the size of this orbit is equal to the order of $\sigma_{\bluea} \circ \sigma_{\redb}$ acting on $[\FC] \oplus [\FC]$. Conversely, if the orbit of $[L] \alpha_v$ is infinite, then $\sigma_{\bluea} \circ \sigma_{\redb}$ has infinite order. \end{lem}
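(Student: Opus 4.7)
The second statement is the contrapositive of the first in the case $N = \infty$, so the main task is to show that if the orbit of $[L]\alpha_v$ has finite size $n$, then the order $N$ of $\sigma_{\bluea} \circ \sigma_{\redb}$ on $[\FC]\oplus[\FC]$ equals $n$. Since $(\sigma_{\bluea}\sigma_{\redb})^N$ acts as the identity whenever $N$ is finite, the orbit size $n$ automatically divides $N$. The plan is to sandwich $N$ between $n$ and $2n$ using the matrix formula \eqref{powermatrix}, and then eliminate the possibility $N = 2n$ via sign coherence.

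By \eqref{powermatrix}, the equation $(\sigma_{\bluea}\sigma_{\redb})^n \cdot [L]\alpha_{\bluea} = [L]\alpha_{\bluea}$ in $[\MC]\oplus[\MC]$ is equivalent to the pair of identities $[2n+1]_{d'} \cdot [L] = [L]$ and $[2n]_d \cdot [L] = 0$; for $v = \redb$ one gets the analogous pair $[2n]_{d'} \cdot [L] = 0$ and $(1 + [2n-1]_d) \cdot [L] = 0$. In either case Corollary \ref{cor:zeroiffactionzero} forces $[2n]_d = 0$ in $[\FC]$, and then Lemma \ref{lem:minimalnumbersthesame} yields $[2n]_{d'} = 0$ as well. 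Lemma \ref{lem:whenpoweriszero} therefore implies $(\sigma_{\bluea}\sigma_{\redb})^{2n} = 1$ on $[\FC]\oplus[\FC]$, so $N \mid 2n$; combined with $n \mid N$ this forces $N \in \{n, 2n\}$.

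The hard step is to rule out $N = 2n$, and this is where positivity intervenes. Assume for contradiction that $N = 2n$, so that $2n$ is the smallest positive integer at which both two-colored quantum numbers vanish in $[\FC]$. Theorem \ref{thm:signcoherence} (case~1, $m = 2n$) then gives $[j]_d, [j]_{d'} \in [\FC]_{>0}$ for every $0 < j < 2n$, and Lemma \ref{lem:upequalnegdown} (with $m = 2n$, $k = 1$) yields $[2n+1]_{d'} = -[2n-1]_{d'}$. In the case $v = \bluea$, the remaining orbit identity $([2n+1]_{d'} - 1)\cdot[L] = 0$ thus rewrites as $([2n-1]_{d'} + [\one]) \cdot [L] = 0$, where the bracketed class sits in $[\FC]_{>0}$. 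Since $[L] \in [\MC]_{>0}$, Proposition \ref{prop:notzero} forces $[2n-1]_{d'} + [\one] = 0$ in $[\FC]$, contradicting its positivity. The case $v = \redb$ is completely parallel, using $(1 + [2n-1]_d) \cdot [L] = 0$ together with $[2n-1]_d \in [\FC]_{>0}$. Hence $N = n$, completing the forward direction; the converse is the elementary observation that $(\sigma_{\bluea}\sigma_{\redb})^N = 1$ forces every orbit to be finite. The real obstacle is transferring a fixed-point condition living in $[\MC]\oplus[\MC]$ back to an equality of orders on the larger space $[\FC]\oplus[\FC]$: both sign coherence and the non-vanishing Proposition \ref{prop:notzero} are needed for this transfer, and neither alone appears to suffice.
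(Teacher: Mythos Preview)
Your proof is correct and takes a somewhat different route from the paper's. The paper tracks the orbit directly: assuming $\sigma_{\bluea}\sigma_{\redb}$ has finite order $m$, it shows that $(\sigma_{\bluea}\sigma_{\redb})^k \cdot [L]\alpha_{\bluea}$ never equals $[L]\alpha_{\bluea}$ for $0 < k < m$, by splitting into a positive half ($2k+1 \le m$, where the image is $\udim X^{(2k+1)}(L)$ and these are pairwise distinct by Corollary \ref{cor:quantnumposact}) and a negative half ($m < 2k+1 < 2m$, where sign coherence forces the coefficients into $[\MC]_{<0}$). Your argument is more algebraic: from the fixed-point equation at time $n$ you extract $[2n]_d = 0$, deduce $N \mid 2n$ and $n \mid N$, and then use the \emph{other} coordinate equation $([2n-1]_{d'} + 1)\cdot[L] = 0$ together with sign coherence and Proposition \ref{prop:notzero} to rule out $N = 2n$. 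This sidesteps both the positive/negative case split and the appeal to Corollary \ref{cor:quantnumposact}, making the argument a bit more self-contained.

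One small citation remark: the implication $[2n]_d = 0 \Rightarrow [2n]_{d'} = 0$ is already part of the chain of equivalences in Corollary \ref{cor:zeroiffactionzero} (which you cite), or follows from Theorem \ref{thm:signcoherence}; Lemma \ref{lem:minimalnumbersthesame} concerns the \emph{minimal} vanishing index and does not directly give the equivalence for an arbitrary $k$.
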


\begin{proof}
First we claim that the orbit is finite if and only if $\sigma_{\bluea} \circ \sigma_{\redb}$ has finite order. One direction is obvious. If $\sigma_{\bluea} \circ \sigma_{\redb}$ has infinite order, then Lemma \ref{lem:whenpoweriszero} and Theorem \ref{thm:signcoherence} imply that $[2k]_d, [2k]_{d'} \in [\FC]_{> 0}$ for all $k > 0$. From \eqref{powermatrix}, we see that
\begin{equation} 
	\begin{cases}
	(\sigma_a \circ \sigma_b)^k \cdot [L]\alpha_{\bluea} = [L] \alpha_{\bluea}  &\implies [2k]_d \cdot [L] = 0, \\
	(\sigma_a \circ \sigma_b)^k \cdot [L]\alpha_{\redb} = [L] \alpha_{\redb}  &\implies [2k]_{d'} \cdot [L] = 0.
	\end{cases}
\end{equation}
Since $[L] \in \MC_{> 0}$, this would imply $[2k]_{d} = 0$ or $[2k]_{d'} = 0$ by Corollary \ref{cor:zeroiffactionzero}, which is a contradiction. Thus the orbit is infinite.

It remains to show that the size of the orbit equals the order $m$ of $\sigma_{\bluea} \circ \sigma_{\redb}$, when both are finite. When $2k+1 \le m$ we have $(\sigma_{\bluea} \circ \sigma_{\redb})^k \cdot [L] \alpha_{\bluea} = \udim X^{(2k+1)}(L)$.  By Corollary \ref{cor:quantnumposact}, these dimensions are all distinct. When $m < 2k+1 \le 2m$, Theorem \ref{thm:signcoherence} implies that $(\sigma_{\bluea} \circ \sigma_{\redb})^k \cdot [L] \alpha_{\bluea}$ has coefficients in $[\MC]_{< 0}$. Thus the first value of $k$ for which $(\sigma_{\bluea} \circ \sigma_{\redb})^k \cdot [L] \alpha_{\bluea} = [L] \alpha_{\bluea}$ is possible is $k = m$. We leave the reader to treat the similar (but sign reversed) case of $[L] \alpha_{\redb}$. \end{proof}

%========================================================
\subsection{Criterion for finite type} \label{subsec:criteriondihedral}
%========================================================

We are now in position to classify for which $\Pi \in \FC$ the quiver $Q$ with two vertices is finite type.

\begin{thm}\label{thm:rank2classification}
Let $Q := \begin{tikzcd} \bluebull \ar[r, "\Pi"] & \redbull \end{tikzcd}$ be a rank two fusion quiver over $\FC$ and let $\MC$ be a semisimple module category over $\FC$.
Then the following are equivalent:
\begin{enumerate}
\item $\Rep_{\MC}(Q)$ has finitely many indecomposable objects up to isomorphism;
\item as an operator on $[\MC] \oplus [\MC] \cong [\Rep_{\MC}(Q)]$, $\sigma_{\bluea} \circ \sigma_{\redb}$ has finite order; 
\item as an operator on $[\FC] \oplus [\FC]$, $\sigma_{\bluea} \circ \sigma_{\redb} \in \WW(Q)$ has finite order; and
\item there exists $m \ge 2$ such that $\FPdim(\Pi) = 2\cos(\pi/m) <2$.
\end{enumerate}
Moreover, the order of $\sigma_{\bluea} \circ \sigma_{\redb}$ is exactly $m$.

In particular, the property that $\Rep_{\MC}(Q)$ has finitely many indecomposable objects only depends on the rank two quiver $Q$ and is independent of the choice of module category $\MC$.
\end{thm}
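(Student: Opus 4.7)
The plan is to assemble the four-way equivalence from earlier results, with only one step requiring essentially new work. Theorem \ref{thm:whenfiniteorder} already provides (3) $\iff$ (4) together with the identification of the common order as $m$, and Corollary \ref{cor:orderagreeCM} gives (2) $\iff$ (3). Since conditions (3) and (4) make no reference to $\MC$, the independence-of-$\MC$ statement is automatic once the equivalence is established. The substantive content is therefore (1) $\iff$ (2).

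For (1) $\Rightarrow$ (2) I would argue by contrapositive. Suppose $\sigma_{\bluea}\circ\sigma_{\redb}$ has infinite order on $[\MC]\oplus[\MC]$. Then by Corollary \ref{cor:orderagreeCM} it also has infinite order on $[\FC]\oplus[\FC]$, so by Lemma \ref{lem:whenpoweriszero} no $k>0$ satisfies $[k]_d=[k]_{d'}=0$, and invoking Lemma \ref{lem:minimalnumbersthesame} together with Theorem \ref{thm:signcoherence} yields $[k]_d,[k]_{d'}\in [\FC]_{>0}$ for every $k\geq 1$. Fixing any simple $L\in\Irr(\MC)$ and applying Corollary \ref{cor:quantnumposact} with $m=\infty$ then produces indecomposable representations $X^{(\ell)}(L)$, $\ell\geq 1$, with pairwise distinct dimension vectors \eqref{eq:dimofXlL}, giving infinitely many non-isomorphic indecomposables in $\Rep_{\MC}(Q)$.

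For (2) $\Rightarrow$ (1) I would pass to the unfolded quiver. Theorem \ref{thm:unfolding} supplies an equivalence $\Rep_{\MC}(Q)\cong\Rep_{\Vect}(\check{Q}_{\MC})$, so it suffices to count indecomposables of the latter. Since $Q$ has only two vertices, $\check{Q}_{\MC}$ is bipartite: its vertex set partitions as $\{(\bluea,L)\}\sqcup\{(\redb,L')\}$ with every edge crossing between the two parts, so reflections of the same colour pairwise commute inside $\WW(\check{Q}_{\MC})$. By Proposition \ref{prop:WQaction=unfoldaction}, the operator $\sigma_{\bluea}^Q\circ\sigma_{\redb}^Q$ on $[\MC]^V$ is identified with the bipartite Coxeter element
\[
c \;=\; \Bigl(\prod_{L\in\Irr(\MC)}\sigma^{\check{Q}_{\MC}}_{\bluea,L}\Bigr)\Bigl(\prod_{L'\in\Irr(\MC)}\sigma^{\check{Q}_{\MC}}_{\redb,L'}\Bigr)
\]
of $\WW(\check{Q}_{\MC})$ acting on $\ZZ^{\check V}$, and hypothesis (2) forces $c$ to be of finite order.

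To close, I would invoke the classical Coxeter-theoretic fact that a Coxeter element of a Coxeter group has finite order if and only if the group itself is finite: applied to $\WW(\check{Q}_{\MC})$ this forces the underlying graph of $\check{Q}_{\MC}$ to be a disjoint union of ADE Dynkin diagrams, since any multiple edge between two vertices would produce an infinite dihedral subgroup incompatible with $c$ being torsion. Gabriel's theorem then yields finitely many indecomposables in $\Rep_{\Vect}(\check{Q}_{\MC})$, and the unfolding equivalence transports this conclusion back to $\Rep_{\MC}(Q)$. The main delicate point is verifying that the element $c$ above is a genuine \emph{Coxeter} element of $\WW(\check{Q}_{\MC})$ (each simple reflection appearing exactly once, in a legal bipartite order), after which the finite-order-implies-finite-group result does the remaining work.
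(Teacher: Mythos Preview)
Your proposal is correct and follows essentially the same route as the paper's proof: (2)$\iff$(3) via Corollary~\ref{cor:orderagreeCM}, (3)$\iff$(4) via Theorem~\ref{thm:whenfiniteorder}, the contrapositive of (1)$\Rightarrow$(2) via the infinite family $X^{(\ell)}(L)$, and (2)$\Rightarrow$(1) by unfolding and recognizing the bipartite Coxeter element. One small slip: the representations $X^{(\ell)}(L)$ live in $\Rep_{\MC}(Q)$ only for $\ell$ odd (for even $\ell$ they lie in $\Rep_{\MC}(Q')$), so your infinite family should be restricted to odd $\ell$ as in the paper, but this does not affect the argument.
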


\begin{proof}
The equivalence $(2) \iff (3)$ is Corollary \ref{cor:orderagreeCM}, and $(3) \iff (4)$ is Theorem \ref{thm:whenfiniteorder}. Conditions (3) and (4) are evidently independent of the choice of $\MC$. The order being $m$ also follows from the results cited. We need only prove that $(1)$ is equivalent to the other statements.
	
Suppose that $\sigma_{\bluea} \circ \sigma_{\redb}$ has infinite order (on either $[\MC] \oplus [\MC]$ or $[\FC] \oplus [\FC]$). By Lemma \ref{lem:whenpoweriszero}, all quantum numbers $[k]_d$ and $[k]_{d'}$ are nonzero for $k > 0$, and by Theorem \ref{thm:signcoherence}, $[k]_d$ and $[k]_{d'}$ live in $[\FC]_{> 0}$. By Corollary \ref{cor:quantnumposact}, $X^{(k)}(L)$ for $k > 0$ odd is an infinite family of non-isomorphic indecomposable objects of $\Rep_{\MC}(Q)$.

Now suppose that $\sigma_{\bluea} \circ \sigma_{\redb}$ has finite order. We will prove that the unfolded quiver $\check{Q}_{\MC}$ has finitely many indecomposable objects, which suffices since $\Rep_{\MC}(Q) \cong \Rep_{\Vect}(\check{Q}_{\MC})$ by Theorem \ref{thm:unfolding}. Let us be somewhat brief here, as we will elaborate on this argument in the next chapter.

By Proposition \ref{prop:WQaction=unfoldaction}, the action of $\sigma_{\bluea} \circ \sigma_{\redb}$ on $[\Rep_{\MC}(Q)]$ agrees with the action of an operator $\check{c}$ on
$[\Rep_{\Vect}(\check{Q}_{\MC})]$. Here, $\check{c}$ is a product of reflections associated to the vertices of $\check{Q}_{\MC}$, each appearing once in a particular order. In
particular, $\check{c}$ has finite order. It is well-known that when $\check{Q}_{\MC}$ is not simply-laced (i.e. has decomposable edges) then $\check{c}$ has infinite order, whereas when $\check{Q}_{\MC}$ is simply
laced, then $\check{c}$ is a Coxeter element of the Coxeter group whose Coxeter graph is (the underlying unoriented graph of) $\check{Q}_{\MC}$. For a Coxeter element of a simply-laced
graph to have finite order, the Coxeter graph must be a disjoint union of finite type ADE Dynkin diagrams. By Gabriel's theorem, this implies that $\Rep_{\Vect}(\check{Q}_{\MC})$ has
finitely many indecomposable objects up to isomorphism. 
 \end{proof}

We can say more about the shape of the unfolded quiver $\check{Q}_{\MC}$.

\begin{thm}\label{thm:finitetypecoxeternumber}
Suppose the equivalent conditions in Theorem \ref{thm:rank2classification} hold.
Then the unfolded (ordinary) quiver $\check{Q}_{\MC}$ is a disjoint union of (bipartite) $ADE$ quivers whose connected components all have the same Coxeter number $m$, equal to the order of $\sigma_{\bluea} \circ \sigma_{\redb} \in \WW(Q)$.
Conversely, if none of the equivalent conditions in Theorem \ref{thm:rank2classification} hold, then $\check{Q}_{\MC}$ is a disjoint union of quivers whose connected components are all of infinite type.
\end{thm}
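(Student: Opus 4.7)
The overall strategy is to lift the short sketch given inside the proof of Theorem \ref{thm:rank2classification} into a full structural statement, using Lemma \ref{lem:orderequalonbasis} as the key new input that forces the Coxeter number to be uniformly equal to $m$ on every component.

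First I would set up the bipartite Coxeter element. Every vertex of $\check{Q}_\MC$ has the form $(v, L)$ with $v \in V = \{\bluea, \redb\}$ and $L \in \Irr(\MC)$, so $\check{Q}_\MC$ is bipartite with color classes $\{(\bluea, L)\}_L$ and $\{(\redb, L')\}_{L'}$. By Proposition \ref{prop:WQaction=unfoldaction}, under the identification $[\MC]^V \cong \ZZ^{\check{V}}$ the operator $c := \sigma_\bluea \circ \sigma_\redb \in \WW(Q)$ corresponds to
\[
\check{c} := \Bigl(\prod_{L \in \Irr(\MC)} \sigma^{\check{Q}_\MC}_{(\bluea, L)}\Bigr) \circ \Bigl(\prod_{L' \in \Irr(\MC)} \sigma^{\check{Q}_\MC}_{(\redb, L')}\Bigr),
\]
which is the standard bipartite Coxeter element of $\check{Q}_\MC$. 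Writing $\check{Q}_\MC = \bigsqcup_i \check{Q}_i$ for the decomposition into connected components, $\check{c}$ decomposes as a coordinate-wise product $\prod_i \check{c}_i$, with $\check{c}_i$ the bipartite Coxeter element of $\check{Q}_i$ acting on its own block.

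For the finite-order direction, suppose $c$ has order $m$. I would first rule out multi-edges in $\check{Q}_\MC$: an edge of multiplicity $n \geq 2$ creates a rank-two sub-block of $\check{c}_i$ with trace $n^2 - 2 \geq 2$ and determinant $1$, hence real eigenvalues $\lambda, \lambda^{-1}$ with $\lambda \geq 1$ which never return to the identity, contradicting the finite order of $\check{c}$. Hence $\check{Q}_\MC$ is simply-laced, each $\check{c}_i$ is a genuine Coxeter element of the Coxeter group whose Coxeter graph is the underlying graph of $\check{Q}_i$, and finite order of a simply-laced Coxeter element forces that graph to be a finite-type ADE Dynkin diagram. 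For uniformity of the Coxeter number: every component $\check{Q}_i$ contains some vertex $(v, L)$, and Lemma \ref{lem:orderequalonbasis} says the orbit of $[L]\alpha_v$ under $c$ has size exactly $m$; under the isomorphism $[\MC]^V \cong \ZZ^{\check{V}}$ this is the $\check{c}_i$-orbit of the simple root $\alpha_{(v, L)}$, and for ADE Coxeter elements any simple root orbit has size equal to the Coxeter number. Hence every $\check{Q}_i$ has Coxeter number $m$.

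Conversely, assume $c$ has infinite order. Then $\Pi \neq 0$ (else $c$ has order $2$), so $\Pi \otimes L \neq 0$ for every simple $L$ by Proposition \ref{prop:notzero}, so no vertex of $\check{Q}_\MC$ is isolated and every component $\check{Q}_i$ contains some vertex $(v, L)$. If some $\check{Q}_i$ were of finite type, Gabriel's theorem would force it to be simply-laced ADE, so $\check{c}_i$ would have finite order; the orbit of $\alpha_{(v,L)}$ under $\check{c}_i$ would then be finite, hence the orbit of $[L]\alpha_v$ under $c$ would be finite, contradicting Lemma \ref{lem:orderequalonbasis}. Thus every $\check{Q}_i$ has infinite type. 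The main obstacle is the clean invocation of the two classical reflection-group inputs (multi-edges yielding infinite-order bipartite Coxeter elements, and the orbit-equals-Coxeter-number property for ADE Coxeter elements acting on simple roots); modulo these standard facts, the proof is essentially a translation of $c$-orbits on $[\MC]^V$ into $\check{c}_i$-orbits on the individual root lattices $\ZZ^{\check{V}_i}$.
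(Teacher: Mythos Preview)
Your proposal is correct and follows essentially the same approach as the paper: identify $\sigma_{\bluea}\sigma_{\redb}$ with the bipartite Coxeter element $\check{c}$ via Proposition \ref{prop:WQaction=unfoldaction}, and then use Lemma \ref{lem:orderequalonbasis} to pin the orbit size of each simple root $\alpha_{(v,L)}$ to $m$, thereby forcing every connected component to have Coxeter number exactly $m$ (or infinite type in the infinite case). The paper's argument is slightly more streamlined in that it does not split into finite and infinite cases but simply observes that the order of $\check{c}$ restricted to each component equals $m$ and then invokes the standard dichotomy; you unpack a few of the ``well-known facts'' (the explicit trace computation ruling out multi-edges, and the exclusion of isolated vertices in the infinite case) that the paper leaves implicit, but the logical skeleton is the same.
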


\begin{proof}
We continue with the notation of the previous proof.  The isomorphism $[\Rep_{\MC}(Q)] \to [\Rep_{\Vect}(\check{Q}_{\MC})]$ sends $[L] \cdot \alpha_v \mapsto \alpha_{v,L}$, for $v \in \{\bluea, \redb\}$ and $L \in \Irr \MC$. Note that $\check{c}$ preserves the span of $\alpha_{v,L}$ for vertices $(v,L) \in \check{Q}_{\MC}$ living in any given connected component $\Gamma$. By well-known facts about ordinary quivers, if the order of $\check{c}$ on this span is $k < \infty$, then $\Gamma$ is a finite ADE quiver with Coxeter number $k$. If the order of $\check{c}$ on this span is infinite, then $\Gamma$ has infinite type. So it remains to prove that the order of $\check{c}$ on the span of vertices in each individual component $\Gamma$ agrees with the order of $\sigma_{\bluea} \sigma_{\redb}$ on $[\FC] \oplus [\FC]$ (whether finite or infinite).

The order of $\sigma_{\bluea} \sigma_{\redb}$ acting on $[L] \cdot \alpha_v$ is the order of $\check{c}$ on $\alpha_{v,L}$, for any given $v$ and $L$. That this order agrees with the order of $\sigma_{\bluea} \sigma_{\redb}$ is precisely Lemma \ref{lem:orderequalonbasis}. \end{proof}

%==================================
\subsection{A generalized quantum McKay correspondence} \label{sec:McKaycorrespondence}
%==================================

Inspired by \cite{AR_mckay}, we prove a generalized version of the McKay correspondence for fusion categories.
Let us first recall the definition of McKay quivers (in the context of semisimple module categories over fusion categories) and more importantly for us, the separated variant.
\begin{defn}\label{defn:McKayquiver}
Let $\MC$ be a semisimple module category over $\FC$.
The \emph{McKay quiver} of $\MC$ with respect to $\Pi \in \FC$ is the (ordinary) quiver defined as follows:
\begin{itemize}
\item its set of vertices is given by $\Irr(\MC)$; and
\item for each distinct $L \neq L' \in \Irr(\MC)$, we have $m = \dim(\Hom_{\MC}(\Pi \otimes L, L'))$ arrows from $L$ to $L'$.
\end{itemize}
The \emph{separated McKay quiver} of $\MC$ with respect to $\Pi$ is the bipartite quiver obtained from the McKay quiver by doubling the number of vertices, and keeping the same number of edges, so that all directed edges now go from the left copy of its source to the right copy of its target.
\end{defn}

The following is immediate from the definitions.
\begin{lem}\label{lem:sepMcKayandunfld}
Let $\MC$ be a semisimple module category over $\FC$.
With $Q$ the fusion quiver 
$
\begin{tikzcd}
	\bullet & \bullet
	\arrow["\Pi", from=1-1, to=1-2]
\end{tikzcd}
$, the unfolded quiver $\check{Q}_{\MC}$ agrees with the separated McKay quiver of $\MC$ with respect to $\Pi \in \FC$.
\end{lem}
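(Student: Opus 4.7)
The plan is simply to unpack both constructions and match them vertex by vertex and edge by edge. Specializing Definition \ref{defn:unfolding} to the quiver $Q$ with $V = \{s,t\}$ and a single edge from $s$ to $t$ labeled by $\Pi$, the unfolded quiver $\check{Q}_{\MC}$ has vertex set $\{s,t\} \times \Irr(\MC)$, together with, for each ordered pair $L, L' \in \Irr(\MC)$, a single edge $(s,L) \to (t,L')$ labeled by the vector space $\Hom_{\MC}(L', \Pi \otimes L)$ whenever $L'$ appears as a direct summand of $\Pi \otimes L$. Converting this fusion quiver over $\Vect$ to an ordinary quiver via Lemma \ref{lem:ordinaryquivermultipleedge} and Convention \ref{conv:ordinaryquiver} replaces each such edge by $\dim \Hom_{\MC}(L', \Pi \otimes L)$ parallel arrows from $(s,L)$ to $(t,L')$, with no arrow at all when the $\Hom$-space vanishes.

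Now compare this with the separated McKay quiver of Definition \ref{defn:McKayquiver}. Identify $(s,L)$ with the left copy of $L$ and $(t,L')$ with the right copy of $L'$; this is a bijection of vertex sets, and every edge of $\check{Q}_{\MC}$ automatically runs from a ``left'' vertex to a ``right'' vertex, matching the bipartite structure on the nose. It only remains to check that the edge multiplicities agree, i.e., that
\[
\dim \Hom_{\MC}(L', \Pi \otimes L) \;=\; \dim \Hom_{\MC}(\Pi \otimes L, L').
\]
Since $\MC$ is semisimple and $\Bbbk$ is algebraically closed, Schur's lemma gives $\End_{\MC}(L') \cong \Bbbk$, so both dimensions compute the multiplicity of $L'$ as a direct summand of $\Pi \otimes L$ and therefore coincide. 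This identifies $\check{Q}_{\MC}$ with the separated McKay quiver. The only substantive point is this last $\Hom$-space comparison, which is a standard consequence of semisimplicity; there is no real obstacle in this proof, matching the paper's assertion that the result is immediate from the definitions.
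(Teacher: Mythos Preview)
Your proof is correct and is exactly the unpacking of definitions that the paper has in mind when it writes ``immediate from the definitions.'' The only extra care you take beyond the paper is making explicit the equality $\dim \Hom_{\MC}(L', \Pi \otimes L) = \dim \Hom_{\MC}(\Pi \otimes L, L')$, which is indeed the one point where the two definitions are phrased differently and which follows from semisimplicity as you say.
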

\begin{rem}
The (non-separated) McKay quiver of $\MC$ with respect to $\Pi$ is the unfolded quiver $\check{Q}_\MC$ of the fusion quiver $Q$ that has a single vertex and a single loop edge on that vertex labeled by $\Pi$.
\end{rem}

While Auslander--Reiten's generalized McKay correspondence \cite[Theorem 1]{AR_mckay} generalizes the McKay correspondence for finite subgroups of $SU(2)$ to \emph{arbitrary} finite groups, the following result generalizes the quantum McKay correspondence for quantum $SU(2)$ \cite{KO_qmckay, ostrik_weakhopf} to \emph{arbitrary} fusion categories.

\begin{cor}[Generalized quantum McKay correspondence]
Let $\FC$ be a fusion category and $\MC$ be a semisimple module category over $\FC$.
The separated Mckay quiver of $\MC$ with respect to $\Pi \in \FC$ is a disjoint union of ADE quivers with the same Coxeter number $m$ if and only if $\FPdim(\Pi) = 2\cos(\pi/m) <2$.
\end{cor}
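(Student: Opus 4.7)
The plan is to directly combine the three immediately preceding results. Set $Q$ to be the rank two fusion quiver $\begin{tikzcd} \bluebull \ar[r,"\Pi"] & \redbull \end{tikzcd}$ over $\FC$. By Lemma \ref{lem:sepMcKayandunfld}, the separated McKay quiver of $\MC$ with respect to $\Pi$ coincides with the unfolded quiver $\check{Q}_\MC$, so I only need to characterize when $\check{Q}_\MC$ is a disjoint union of ADE quivers of Coxeter number $m$.

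For the forward direction, suppose $\check{Q}_\MC$ is a disjoint union of ADE quivers of common Coxeter number $m$. Then by Gabriel's theorem $\Rep_{\Vect}\check{Q}_\MC$ has finitely many indecomposables, hence so does $\Rep_\MC Q$ via the equivalence of Theorem \ref{thm:unfolding}. Condition (1) of Theorem \ref{thm:rank2classification} holds, so by the moreover statement there, $\sigma_\bluea \circ \sigma_\redb$ has order $m'$ equal to some integer with $\FPdim(\Pi) = 2\cos(\pi/m') < 2$. Theorem \ref{thm:finitetypecoxeternumber} then forces the Coxeter numbers of the components of $\check{Q}_\MC$ all to equal $m'$, so $m' = m$.

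For the reverse direction, suppose $\FPdim(\Pi) = 2\cos(\pi/m) < 2$. Then condition (4) of Theorem \ref{thm:rank2classification} holds, so $\sigma_\bluea \circ \sigma_\redb$ has order $m$ and the equivalent conditions are all satisfied. By Theorem \ref{thm:finitetypecoxeternumber}, $\check{Q}_\MC$ is a disjoint union of ADE quivers whose connected components all have Coxeter number equal to $m$.

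Since both directions are immediate corollaries of results already proven, there is no real obstacle; the statement is essentially a repackaging of Lemma \ref{lem:sepMcKayandunfld} together with Theorems \ref{thm:rank2classification} and \ref{thm:finitetypecoxeternumber} in the language of (separated) McKay quivers.
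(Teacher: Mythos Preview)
Your proof is correct and follows exactly the same approach as the paper: the paper's proof is the single sentence ``This follows directly from Lemma \ref{lem:sepMcKayandunfld} together with Theorem \ref{thm:rank2classification} and \ref{thm:finitetypecoxeternumber},'' and you have simply unpacked that sentence by spelling out how the two directions are obtained from those results.
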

\begin{proof}
This follows directly from Lemma \ref{lem:sepMcKayandunfld} together with Theorem \ref{thm:rank2classification} and \ref{thm:finitetypecoxeternumber}.
\end{proof}

%%%%%%%%%%%%%%%%%%%%%%%%%%%%%%%%%%%%%%%%%%%%%%%%%%%%%%%%%
%========================================================
\section{The classification theorem} \label{sec:generalindecomp}
%========================================================
%%%%%%%%%%%%%%%%%%%%%%%%%%%%%%%%%%%%%%%%%%%%%%%%%%%%%%%%%

Throughout this section, $Q$ will be a fusion quiver over $\FC$ and $\MC$ will always be a semisimple left module category over $\FC$.

In this section we prove our main result, that a fusion quiver $Q$ being Coxeter--Dynkin type (which is a statement about both the underlying quiver and the Frobenius-Perron
dimensions of the edge labels, see Definition \ref{defn:coxdyntype}) is a sufficient and necessary condition for $\Rep_{\MC}(Q)$ to have only finitely many indecomposable objects
up to isomorphism. This finite type property is therefore independent of the choice of $\MC$.

The outline of the proof is as follows. To a fusion quiver we have already constructed an $[\FC]$-bilinear pairing on the Grothendieck group $[\Rep_{\MC}(Q)]$, and reflection functors act on most objects by ``$[\FC]$-linear reflections'' relative to this pairing. The homomorphism $\FPdim : \FC \to \RR$ makes $\RR$ into an $[\FC]$-module. By base
change we obtain an $\RR$-bilinear pairing on the real vector space $[\Rep_{\MC}(Q)] \ot_{[\FC]} \RR$, and the $[\FC]$-linear reflections become genuine reflections. The group generated by these real reflections will be a Coxeter group. Using results like Proposition \ref{prop:notzero}, one can argue that this Coxeter group is independent of the choice of $\MC$.

Meanwhile, the unfolding equivalence $\Rep_{\MC}(Q) \cong \Rep_{\Vect}(\check{Q}_{\MC})$ is compatible with the bilinear pairings and gives an inclusion of Coxeter groups. This
inclusion sends Coxeter elements to Coxeter elements. Using Gabriel's theorem for $\check{Q}_{\MC}$, we know that $\Rep_{\MC}(Q)$ has finite type only if the Coxeter group associated
to $\check{Q}_{\MC}$ is finite. Since a Coxeter group is finite if and only if each Coxeter element has finite order, we deduce that $\Rep_{\MC}(Q)$ has finite type if and only if the
Coxeter group associated to $Q$ is finite.

There are several technicalities which arise, of the sort that we have already seen. For example, $[\FC]$ is not commutative, and the form we construct on $[\Rep_{\FC}(Q)]$ is only bilinear for the right action of $[\FC]$. Since an edge label $\Pi_e$ need not be self-dual, the bilinear form is not symmetric. What follows is a careful elaboration of the outline above.

\subsection{Reflection groups associated to fusion quivers}

Recall the ($[\FC]$-)reflection group $\WW(Q) \subseteq \GL_{|V|}([\FC])$ from Definition \ref{defn:reflectiongroupQ}.
Instead of evaluating each entry by $[\FC] \rightarrow \End([\MC])$, we can instead apply the Frobenius--Perron map $\FPdim: [\FC] \rightarrow \RR$.
This gives us a group homomorphism $\WW(Q) \rightarrow \GL_{|V|}(\RR)$.
We shall denote the image of this group homomorphism is by $\WW(|Q|) \subset \GL_{|V|}(\RR)$ and the image of the matrix $\sigma^Q_v \in \GL_{|V|}([\FC])$ is denoted by $\sigma^{|Q|}_v \in \GL_{|V|}(\RR)$.

An equivalent interpretation of $\WW(|Q|)$ is as follows.
We could view $\RR$ as a left $[\FC]$-module and obtain $[\FC]^V \otimes_{[\FC]} \RR \cong \RR^V$ by applying $\FPdim$ component wise.
The $[\FC]$-bilinear form $\langle -,- \rangle_Q$ on $[\FC]^V$ valued in $[\FC]^V$ then naturally extends to a $\RR$-bilinear form 
\[
\langle -,- \rangle_{|Q|} : \RR^V \otimes \RR^V \rightarrow \RR,
\]
which is obtained by applying $\FPdim$.
If we abuse notation and also use $\alpha_v$ to denote the standard basis elements of $\RR^V$, the $\RR$-bilinear form is defined by
\begin{equation}
\langle \alpha_v, \alpha_w \rangle_{|Q|} := 
	\begin{cases} 
		2, & \text{ if } v = w, \\
		- \FPdim(\rightdual{\Pi_e}), & \text{ if  $v\neq w$ and $v \xrightarrow{\Pi_e} w$}, \\
		- \FPdim(\Pi_e), & \text{ if $v \neq w$ and $w \xrightarrow{\Pi_e} v$}, \\
		0 & \text{ if $v \ne w$ and no edge abuts both $v$ and $w$}. 
	\end{cases}
\end{equation}
It is clear that the reflection group generated by the reflections
\[
m \mapsto m - \langle \alpha_v, m \rangle_{|Q|} \alpha_v
\]
is exactly the group $\WW(|Q|)$.
Note that the $\RR$-bilinear form $\langle -,- \rangle_{|Q|}$ is always symmetric, as by Proposition \ref{prop:allfactsFP}(2) we have
\[
\FPdim(\Pi_e) = \FPdim(\rightdual{\Pi_e}) \quad (= \FPdim(\leftdual{\Pi_e})).
\]
As such, the $\RR$-bilinear form $\langle -,- \rangle_{|Q|}$ and the group $\WW(|Q|)$ only depend on the underlying labeled graph of $Q$, as we now define it.

\begin{defn}\label{defn:graphofQ}
A \emph{labeled graph} will be an unoriented graph whose edges are labeled by non-negative real numbers. (By convention, if an edge is labeled by zero, one can remove it to get an equivalent labeled graph.)

Let $Q$ be a fusion quiver.  The \emph{underlying labeled graph} of $Q$, denoted $|Q|$, is obtained from $Q$ by forgetting the orientation, and labeling each edge $e$ with $\FPdim(\Pi_e)$.
\end{defn}

If $v$ is a sink or source of $Q$, then the underlying labeled graphs of $Q$ and $\sigma_v Q$ agree. Recall from Proposition \ref{prop:allfactsFP}(4) that if an edge label in $|Q|$ is less than $2$, then it is equal to $2 \cos(\pi/m)$ for some $m \ge 2$.

\begin{defn}
Let $Q$ be a fusion quiver with underlying labeled graph $|Q|$. Let $\Gamma_Q$ be the Coxeter graph obtained from $Q$ by replacing edge labels in $\RR$ with edge labels in $\{2,3,\ldots\} \cup \{\infty\}$. Namely, any edge label $2 \cos(\pi/m)$ is replaced by the label $m$ for $m \ge 2$, and any edge label which is at least $2$ is replaced by $\infty$. We use the standard conventions for Coxeter graphs (omitting edges labeled with $2$, and omitting the label on edges labeled with $3$). Let $\WW(\Gamma_Q)$ be the corresponding Coxeter group, with simple reflections denoted $\sigma_v$ for $v \in V$. \end{defn}

\begin{defn}
\label{defn:coxdyntype}
We say that $Q$ is of \emph{Coxeter--Dynkin type} if $\WW(\Gamma_Q)$ is a finite Coxeter group.
\end{defn}

We now have three reflection groups associated to $Q$: $\WW(Q)$, $\WW(|Q|)$, and $\WW(\Gamma_Q)$, with generators $\sigma_v^Q$, $\sigma_v^{|Q|}$, and $\sigma_v$ respectively. Let us
argue that these groups are all isomorphic.

\begin{lem} The map $\sigma_v \to \sigma_v^Q$ induces a surjective homomorphism $\WW(\Gamma_Q) \to \WW(Q)$. Composition with the natural map $\WW(Q) \to \WW(|Q|)$ gives a surjective homomorphism $\WW(\Gamma_Q) \to \WW(|Q|)$. \end{lem}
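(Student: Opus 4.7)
The plan is to verify that the assignment $\sigma_v \mapsto \sigma_v^Q$ respects the defining relations of the Coxeter group $\WW(\Gamma_Q)$. The involution relation $(\sigma_v^Q)^2 = 1$ is immediate from $\langle \alpha_v, \alpha_v\rangle_Q = 2$. The substantive content is the braid relation $(\sigma_v^Q \sigma_w^Q)^{m_{vw}} = 1$ for each pair of distinct vertices $v, w$, where $m_{vw}$ is the Coxeter label (with $m_{vw} = 2$ if no edge of $Q$ abuts both $v$ and $w$, and $m_{vw} = \infty$ meaning no relation is imposed and nothing to check).

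The key structural observation I would use is that the right $[\FC]$-submodule $L_{vw} := \alpha_v \cdot [\FC] + \alpha_w \cdot [\FC]$ of $[\FC]^V$ is preserved by both $\sigma_v^Q$ and $\sigma_w^Q$, and their restrictions to $L_{vw}$ coincide with the reflections of the rank two sub--fusion quiver on $\{v,w\}$. Theorem \ref{thm:rank2classification} applied to that subquiver then gives that $c := \sigma_v^Q \sigma_w^Q$ has order exactly $m_{vw}$ on $L_{vw}$.

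The hard part will be propagating $c^{m_{vw}} = 1$ from $L_{vw}$ to all of $[\FC]^V$. For any third vertex $u$, each of $\sigma_v^Q$ and $\sigma_w^Q$ changes $\alpha_u$ by an element of $L_{vw}$ and preserves $L_{vw}$; hence $z := c(\alpha_u) - \alpha_u \in L_{vw}$, and a straightforward induction yields
\[
c^n(\alpha_u) \;=\; \alpha_u + (1 + c + c^2 + \cdots + c^{n-1})(z).
\]
It therefore suffices to show that the geometric sum $1 + c + \cdots + c^{m_{vw}-1}$ vanishes as an operator on $L_{vw}$. Using the explicit formula \eqref{powermatrix} from \S\ref{ss:twocolored} for $c^k$, this reduces to the scalar identities $\sum_{k=0}^{m_{vw}-1} [2k+1]_{d'} = 0$ and $\sum_{k=0}^{m_{vw}-1} [2k]_d = 0$ in $[\FC]$, where $d = [\Pi]$ and $d' = [\leftdual{\Pi}]$ for the edge label $\Pi$. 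Since $[m_{vw}]_d = [m_{vw}]_{d'} = 0$ by Lemma \ref{lem:whenpoweriszero}, the symmetry $[m_{vw}+k]_\bullet = -[m_{vw}-k]_\bullet$ from Lemma \ref{lem:upequalnegdown} lets me pair index $j$ with $2m_{vw}-j$; each pair cancels, and any self-paired term equals $[m_{vw}]_\bullet = 0$.

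With the braid relations in hand, $\sigma_v \mapsto \sigma_v^Q$ extends to a group homomorphism $\WW(\Gamma_Q) \to \WW(Q)$, and surjectivity is automatic since $\WW(Q)$ is generated by the $\sigma_v^Q$ by definition. For the second statement, I would apply the ring homomorphism $\FPdim : [\FC] \to \RR$ entry-by-entry, obtaining a group homomorphism $\GL_{|V|}([\FC]) \to \GL_{|V|}(\RR)$ that carries $\sigma_v^Q$ to $\sigma_v^{|Q|}$; the composition $\WW(\Gamma_Q) \to \WW(Q) \to \WW(|Q|)$ is then surjective by the same generator argument.
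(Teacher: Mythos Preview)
Your proof is correct and follows the same overall architecture as the paper's: verify the braid relation $(\sigma_v^Q\sigma_w^Q)^{m_{vw}}=1$ by restricting to the rank-two span $L_{vw}$ (handled by the results of \S\ref{sec:twoone}) and then checking separately that every other basis vector $\alpha_u$ is fixed.

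The difference lies in how the third-vertex step is carried out. The paper simply cites \cite[equation (A.2)]{EDihedral}, where an explicit closed formula for $(\sigma_v^Q\sigma_w^Q)^k\alpha_u$ in terms of two-colored quantum numbers is recorded, and notes that the vanishing of $[m]_d,[m]_{d'}$ forces the formula to return $\alpha_u$ at $k=m$. You instead derive the clean cocycle identity $c^n(\alpha_u)=\alpha_u+(1+c+\cdots+c^{n-1})(z)$ and then prove the geometric sum vanishes on $L_{vw}$ by pairing $[j]_\bullet$ with $[2m-j]_\bullet$ via Lemma~\ref{lem:upequalnegdown}. Your route is self-contained within the paper and avoids the external reference; the paper's route is shorter on the page but exports the computation. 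Both are ultimately the same quantum-number cancellation.

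One small phrasing issue: applying $\FPdim$ entrywise does not in general give a group homomorphism $\GL_{|V|}([\FC])\to\GL_{|V|}(\RR)$, since invertibility need not be preserved. What you actually use (and what suffices) is that entrywise $\FPdim$ is a \emph{monoid} homomorphism on $|V|\times|V|$ matrices, carrying each $\sigma_v^Q$ to the genuine reflection $\sigma_v^{|Q|}$; this already yields the surjective group homomorphism $\WW(Q)\to\WW(|Q|)$ on the subgroup generated by these elements.
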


\begin{proof} The relations in $\WW(\Gamma_Q)$ come from edges $u \to v$ labeled by $m < \infty$ in $|Q|$, so one need only argue that $\sigma_u^Q \sigma_v^Q$ (resp. $\sigma_u^Q
\sigma_v^Q$) has the expected order. The operator $\sigma_u^Q \sigma_v^Q$ preserves the span of $\{\alpha_u, \alpha_v\}$, and the action on this plane is the same as the one
analyzed in \S\ref{sec:twoone}. For any other vertex $x$, $\sigma_u^Q \sigma_v^Q$ preserves the span of $\{\alpha_u, \alpha_v, \alpha_x\}$, and the formula for $(\sigma_u^Q
\sigma_v^Q)^k \alpha_x$ is also controlled by two-colored quantum numbers, see \cite[equation (A.2)]{EDihedral}. In particular, it is proven there that the vanishing of $[m]_d$ and
$[m]_{d'}$ imply that $(\sigma_u^Q \sigma_v^Q)^m \alpha_x = \alpha_x$, as desired.  
Thus the homomorphism $\WW(\Gamma_Q) \to \WW(Q)$ is well-defined.
The homomorphisms above send generators to generators, so they are surjective. \end{proof}

\begin{lem} The homomorphism $\WW(\Gamma_Q) \to \WW(|Q|)$ is an isomorphism. \end{lem}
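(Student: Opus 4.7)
The plan is to deduce injectivity (surjectivity already being established in the previous lemma) from the classical faithfulness of the Tits geometric representation of an abstract Coxeter group.

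First I would verify that the bilinear form $\langle -, - \rangle_{|Q|}$ on $\RR^V$ is precisely the standard Tits form associated with the Coxeter system $(\WW(\Gamma_Q),\{\sigma_v\}_{v \in V})$. Indeed, $\langle \alpha_v,\alpha_v\rangle_{|Q|}=2$, and for an edge $e$ between $v \ne w$ labeled by $\Pi_e$, one has $\langle \alpha_v,\alpha_w\rangle_{|Q|} = -\FPdim(\Pi_e)$. By Proposition \ref{prop:allfactsFP}(4), this equals $-2\cos(\pi/m_{vw})$ whenever $m_{vw}<\infty$; and when $m_{vw}=\infty$ we have by construction $\FPdim(\Pi_e)\ge 2$, so the entry is $\le -2$, which is exactly how edges labeled $\infty$ are handled in the Tits form. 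Vertices not joined by any edge yield the entry $0$. Thus the Gram matrix of $\langle-,-\rangle_{|Q|}$ in the basis $\{\alpha_v\}$ coincides with the Tits matrix of $\Gamma_Q$.

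The generators $\sigma_v^{|Q|}$ of $\WW(|Q|)$ are by definition the orthogonal reflections along $\alpha_v$ with respect to this form, so the surjection $\WW(\Gamma_Q) \twoheadrightarrow \WW(|Q|)$ constructed in the previous lemma is (up to our sign conventions) exactly the Tits geometric representation of $\WW(\Gamma_Q)$. By the standard faithfulness theorem for the geometric representation of an arbitrary Coxeter group (see e.g.\ Humphreys, \emph{Reflection Groups and Coxeter Groups}, Corollary~5.4, or Bourbaki, \emph{Groupes et algèbres de Lie}, Ch.~V, \S 4), this representation is injective. Combined with surjectivity, we conclude that $\WW(\Gamma_Q)\to\WW(|Q|)$ is an isomorphism.

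There is no real obstacle here; the work is bookkeeping. The one subtlety worth flagging is that we must invoke the \emph{general} Tits faithfulness result, valid when the form is indefinite and the Coxeter group is infinite, rather than its easier positive-definite version (which would only handle the finite type case). Our form $\langle-,-\rangle_{|Q|}$ may well be indefinite—e.g.\ when some $m_e=\infty$—but this poses no difficulty for Tits' theorem.
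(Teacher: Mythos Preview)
Your approach is essentially the same as the paper's: identify the representation of $\WW(\Gamma_Q)$ on $\RR^V$ with the geometric (Tits) representation and invoke its faithfulness. There is, however, one inaccuracy in your bookkeeping that the paper handles more carefully.

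You assert that the Gram matrix of $\langle-,-\rangle_{|Q|}$ ``coincides with the Tits matrix of $\Gamma_Q$,'' and that entries $\le -2$ are ``exactly how edges labeled $\infty$ are handled in the Tits form.'' This is not quite right. In the standard geometric representation (Humphreys \S5.3, Bourbaki Ch.~V), an edge labeled $\infty$ contributes exactly $-\cos(0) = -1$ (i.e.\ $-2$ in the present normalization), not an arbitrary value $\le -2$. When $\FPdim(\Pi_e) > 2$, the form $\langle-,-\rangle_{|Q|}$ is therefore \emph{not} the standard Tits form, and Humphreys' Corollary~5.4 does not literally apply. The subtlety is not indefiniteness of the form (Tits' theorem already handles that) but rather that the form is a \emph{generalized} geometric representation with nonstandard $\infty$-edge parameters.

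The paper deals with this by splitting into cases: when all edge labels in $|Q|$ are at most $2$ the form is the standard one and Humphreys suffices; for the general case it cites a result (\cite[Theorem 4.2.7]{Comb_Coxeter_book}) asserting faithfulness of these generalized geometric representations. Your argument goes through once you replace the appeal to the standard Tits theorem with this more general faithfulness statement.
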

	
\begin{proof}
Suppose that all edge labels in $|Q|$ are less than or equal to $2$. Readers familiar with Coxeter theory may recognize that the action of $\WW(|Q|)$ on $\RR^V$ is
precisely the (standard) geometric representation of $\WW(\Gamma_Q)$, see e.g.\ \cite[\S 5.3]{HumphCox}.  The geometric representation is faithful, see e.g. \cite[\S 5.4]{HumphCox}, so $\WW(\Gamma_Q) \to \WW(|Q|)$ is injective. A generalization which suffices for all edge labels can be found in \cite[Theorem 4.2.7]{Comb_Coxeter_book}. \end{proof}

\begin{thm} \label{thm:coxeter=reflectiongroup} The groups $\WW(\Gamma_Q)$, $\WW(Q)$, and $\WW(|Q|)$ are all isomorphic. \end{thm}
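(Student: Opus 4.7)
The plan is to combine the two preceding lemmas. The first lemma gives a surjective homomorphism $\WW(\Gamma_Q) \twoheadrightarrow \WW(Q)$ defined by $\sigma_v \mapsto \sigma_v^Q$, together with the natural surjective homomorphism $\WW(Q) \twoheadrightarrow \WW(|Q|)$ obtained by applying $\FPdim$ entry-wise to matrices, and the composition
\[
\WW(\Gamma_Q) \twoheadrightarrow \WW(Q) \twoheadrightarrow \WW(|Q|)
\]
sends $\sigma_v$ to $\sigma_v^{|Q|}$. The second lemma identifies this composition as an isomorphism.

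The argument is then purely formal: in any factorization $f = g \circ h$ of an isomorphism $f$ as a composition of two surjections $h$ and $g$, both $h$ and $g$ must themselves be isomorphisms. Indeed, $h$ must be injective (since $f$ is), and $h$ is already surjective by assumption, so $h$ is an iso; then $g = f \circ h^{-1}$ is an iso as the composition of two isomorphisms. Applying this with $h \colon \WW(\Gamma_Q) \twoheadrightarrow \WW(Q)$ and $g \colon \WW(Q) \twoheadrightarrow \WW(|Q|)$ yields that both maps are isomorphisms, which is the claimed result.

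There is no real obstacle here: all substantive content has been established in the two previous lemmas. The new theorem is a diagram-chase observation packaging those results. In the write-up I would spend perhaps one sentence naming the maps and one sentence executing the elementary group-theoretic factorization argument.
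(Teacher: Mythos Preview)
Your proposal is correct and matches the paper's proof essentially verbatim: the paper also composes the two surjections from the preceding lemmas, notes the composition is injective (hence an isomorphism), and concludes that both factors are isomorphisms.
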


\begin{proof} By the previous lemmas we have surjective homomorphisms $\WW(\Gamma_Q) \to \WW(Q) \to \WW(|Q|)$, whose composition is injective. Thus $\WW(\Gamma_Q) \to \WW(Q)$ is also injective, and they are all isomorphisms. \end{proof} 

\subsection{Reflection groups and unfolding}

We now relate the two groups $\WW(Q)$ and $\WW(\check{Q}_{\MC})$ for any semisimple module category $\MC$.

\begin{cor} \label{cor:faithfulactionunfld}
We have an injective homomorphism $\Psi: \WW(\Gamma_Q) \rightarrow \WW(\Gamma_{\check{Q}_{\MC}})$ defined on the generators by
\begin{equation} \label{eq:unfoldinggroupmorphism}
\sigma_v \mapsto \check{\sigma}_v := \prod_{L \in \Irr(\MC)} \sigma_{(v,L)}.
\end{equation}
In particular, the $\ZZ$-linear action of $\WW(Q)$ on $[\MC]^V$ is faithful.
\end{cor}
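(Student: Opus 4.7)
The plan is to construct $\Psi$ essentially tautologically from Proposition \ref{prop:WQaction=unfoldaction}, and then to deduce its injectivity (equivalently, the faithfulness of the $\WW(Q)$-action on $[\MC]^V$) by comparing with the already-established faithful action of $\WW(\Gamma_Q) \cong \WW(|Q|)$ on $\RR^V$.

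For the construction, the action of $\WW(Q) \subseteq \GL_{|V|}([\FC])$ on $[\MC]^V$, obtained by evaluating each matrix entry through the left $[\FC]$-module structure on $[\MC]$, defines a group homomorphism $\WW(Q) \to \GL_{\ZZ}([\MC]^V)$. Under the identification $[\MC]^V \cong \ZZ^{\check V}$, Proposition \ref{prop:WQaction=unfoldaction} guarantees that this homomorphism sends $\sigma_v^Q$ to $\check\sigma_v = \prod_{L \in \Irr(\MC)} \sigma_{(v,L)}^{\check Q_\MC}$ and hence lands in $\WW(\check Q_\MC)$. Precomposing with the isomorphism $\WW(\Gamma_Q) \cong \WW(Q)$ from Theorem \ref{thm:coxeter=reflectiongroup} (and identifying $\WW(\check Q_\MC) \cong \WW(\Gamma_{\check Q_\MC})$ via the same theorem applied to $\check Q_\MC$) yields the homomorphism $\Psi: \WW(\Gamma_Q) \to \WW(\Gamma_{\check Q_\MC})$ sending $\sigma_v \mapsto \check \sigma_v$.

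For injectivity, I would introduce the $\ZZ$-linear map $\phi: [\MC]^V \to \RR^V$ defined on basis elements by $[L]\alpha_v \mapsto \FPdim(L)\alpha_v$. The identity $\FPdim(\langle \alpha_v, \alpha_w\rangle_Q) = \langle \alpha_v, \alpha_w\rangle_{|Q|}$ together with the ring-homomorphism property $\FPdim(x\cdot[L]) = \FPdim(x)\FPdim(L)$ yields, by a direct unwinding of \eqref{eqn:fusionreflection}, that $\phi$ intertwines the action of $\sigma_v^Q$ with that of $\sigma_v^{|Q|}$, and hence $\phi$ is equivariant for the full actions of $\WW(Q)$ and $\WW(|Q|)$. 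Now suppose $w \in \WW(Q)$ acts trivially on $[\MC]^V$; equivariance forces the image $\bar w \in \WW(|Q|)$ to fix every element of $\phi([\MC]^V)$. Since any nonzero simple $L$ of $\MC$ has $\FPdim(L) \geq 1$ by Proposition \ref{prop:allfactsFP}(3), the set $\phi([\MC]^V)$ contains $\FPdim(L)\alpha_v$ for every $v \in V$, and these vectors $\RR$-span $\RR^V$. Therefore $\bar w$ fixes all of $\RR^V$, so $\bar w = 1$; by Theorem \ref{thm:coxeter=reflectiongroup} the surjection $\WW(Q) \to \WW(|Q|)$ is an isomorphism, so $w = 1$. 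This shows the $\WW(Q)$-action on $[\MC]^V$ is faithful, and $\Psi$ is injective by construction.

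The argument is largely formal, and I would not expect a substantive obstacle. The one computation to be done is the equivariance of $\phi$, which reduces to the two identities about $\FPdim$ listed above. The essential categorical input is the strict positivity of Frobenius--Perron dimensions on nonzero simples (Proposition \ref{prop:allfactsFP}(3)), which guarantees that $\phi$ has $\RR$-spanning image and so permits importing faithfulness from $\RR^V$ back to $[\MC]^V$.
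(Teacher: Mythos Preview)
Your construction of $\Psi$ matches the paper's. Your injectivity argument, however, is genuinely different and in some ways more elementary.

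The paper proves injectivity by invoking an external Coxeter-theoretic result from \cite{EHCoxEmb}: it verifies that the assignment $(v,L) \mapsto v$ is a \emph{Lusztig partition} (the $\sigma_{(v,L)}$ for fixed $v$ pairwise commute, and for each edge of $\Gamma_Q$ the rank-two unfolded piece has the correct Coxeter number by Theorem~\ref{thm:finitetypecoxeternumber}), and then cites \cite[Theorem~1.4]{EHCoxEmb} for injectivity of such folding maps. You instead bypass this entirely by producing an equivariant map $\phi:[\MC]^V \to \RR^V$ with $\RR$-spanning image, so that faithfulness on $[\MC]^V$ follows at once from the already-known faithfulness on $\RR^V$ (Theorem~\ref{thm:coxeter=reflectiongroup}). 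This is a cleaner route to the corollary itself and avoids the dependence on \cite{EHCoxEmb}.

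Two technical points you should patch. First, you use $\FPdim$ on objects of $\MC$ and the multiplicativity $\FPdim(x\cdot[L]) = \FPdim(x)\FPdim(L)$, but the paper only defines $\FPdim$ on $\FC$; Proposition~\ref{prop:allfactsFP}(3) as stated does not apply to simples of $\MC$. What you need is the standard Frobenius--Perron dimension for (indecomposable) semisimple module categories over a fusion category (see e.g.\ \cite[\S3.4]{EGNO}), which gives a linear functional on $[\MC]$, positive on simples, satisfying exactly your multiplicativity. You should cite that directly. Second, for module categories this dimension is only well-defined up to a positive scalar on each indecomposable summand of $\MC$, so the inequality $\FPdim(L)\ge 1$ is not meaningful; fortunately $\FPdim(L)>0$ is all you use.

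One trade-off worth noting: the paper's proof establishes the Lusztig-partition structure as a by-product, and this is reused later (in the proof of Theorem~\ref{thm:CoxeterDynkinifffinite}) to deduce that all components of $\Gamma_{\check Q_\MC}$ share the Coxeter number of $\Gamma_Q$. Your argument does not yield that intermediate fact, so if you adopt your approach here you would need to supply it separately where it is used downstream.
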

\begin{proof}
By Theorem \ref{thm:coxeter=reflectiongroup} we know that the groups $\WW(Q)$ and $\WW(\check{Q}_{\MC})$ are isomorphic to the Coxeter groups $\WW(\Gamma_Q)$ and $\WW(\Gamma_{\check{Q}_{\MC}})$ respectively.
Moreover, the relation between $\WW(Q)$ and $\WW(\check{Q}_{\MC})$ in Proposition \ref{prop:WQaction=unfoldaction} translated to their corresponding Coxeter groups is described exactly by the group homomorphism $\Psi$ defined in the statement of this proposition.
It follows that the action of $\WW(Q)$ on $[\MC]^V$ is faithful if and only if $\Psi$ is an injective homomorphism.

Homomorphisms like $\Psi$, where a simple reflection is sent to a product of commuting simple reflections (under some constraints), were studied by Lusztig, and proven to be injective when the Coxeter groups involved were finite or affine. The main result of recent work \cite{EHCoxEmb} is a proof that such homomorphisms are injective in additional generality that is sufficient for our purposes. See \cite{EHCoxEmb} for additional discussion.

By \cite[Theorem 1.4, Lemma 2.2]{EHCoxEmb}, it suffices to prove that the map $(v,L) \mapsto v$ is a Lusztig's partition on the simple reflections of $\WW(\check{Q}_{\MC})$ (see \cite[Definition 2.1]{EHCoxEmb}). For each fixed $v$, two distinct vertices $(v,L)$ and $(v,L')$ are not connected by an edge, so the elements of
\[
\ufd(v):= \left\{ \sigma_{(v,L)} \in \WW(\Gamma_{\check{Q}_{\MC}}) \mid L\in \Irr(\MC) \right\} \subseteq \WW(\Gamma_{\check{Q}_{\MC}})
\]
are pairwise commutative. This verifies condition (1) of a Lusztig's partition.
For each pair of distinct vertices $v$ and $w$, Theorem \ref{thm:finitetypecoxeternumber} shows that the irreducible components of the parabolic subgroup generated by $\ufd(v) \sqcup \ufd(w)$ in $\WW(\Gamma_{\check{Q}_{\MC}})$ all have the same Coxeter number $m$ (possibly $=\infty$) with $m$ being the label of the edge connecting $v$ and $w$ in $\Gamma_Q$.
This verifies condition (2) of a Lusztig's partition, thereby concluding the proof.
\end{proof}

\subsection{Finite type if and only if Coxeter-Dynkin}\label{sec:finiteiffCD}
Now that we have completed our analysis of the different reflection (and Coxeter) groups associated to a fusion quiver $Q$, we are ready to prove the precise condition for $\Rep_{\MC}(Q)$ to have only finitely many indecomposable objects. 

As usual, let $Q$ be a fusion quiver over $\FC$, and let $\MC$ be a semisimple left module over $\FC$. Let $n$ denote the number of vertices of $Q$.
We choose an order on the vertices, relabeling them by the set $\{1,2,...,n\}$

\begin{defn}
Let $\Gamma_Q$ be the Coxeter--Dynkin diagram associated to $Q$.
The element 
\[
c:= \sigma_n \cdots \sigma_2\sigma_1 \in \WW(\Gamma_Q) \cong \WW(Q)
\]
is called the \emph{Coxeter element} (associated to the ordering).
\end{defn}

Now let us choose an arbitrary ordering on the finite set $\Irr(\MC)$. This gives rise to an induced ordering on the vertices of $\check{Q}_{\MC}$: for $i, j \in \{1, \ldots, n\}$ we have
\begin{itemize}
	\item $(i,L) < (j,L')$ if $i < j$, for any $L, L' \in \Irr(\MC)$,
	\item $(i,L) < (i,L')$ if $L < L'$ in $\Irr(\MC)$.
\end{itemize}
Using the same definition as above but now for $\check{Q}_{\MC}$ and its ordered set of vertices, we obtain a Coxeter element
\[
\check{c} :=  
	\prod_{L \in \Irr(\MC)} \sigma_{n, L} 
	\cdots  
	\prod_{L \in \Irr(\MC)} \sigma_{2, L} 
	\prod_{L \in \Irr(\MC)} \sigma_{1, L} 
\in \WW(\Gamma_{\check{Q}_{\MC}}) \cong \WW(\check{Q}_{\MC}).
\]
The vertices within each subset $\check{i}:= \{(i, L) \in \check{V} \mid L \in \Irr(\MC)\}$ are non-adjacent and thus the reflections $\sigma_{i,L}$ for $i$ fixed are pairwise commutative. Hence the order on $\Irr(\MC)$ was irrelevant, for the purposes of defining the Coxeter element $\check{c}$.

For the following result, we use the convention that an irreducible infinite type Coxeter graph has Coxeter number $\infty$.
\begin{thm}\label{thm:CoxeterDynkinifffinite}
Let $Q$ be a fusion quiver for which $\Gamma_Q$ is irreducible. For any semisimple module category $\MC$ over $\FC$, the quiver $\check{Q}_{\MC}$ is a disjoint union of quivers, each with an irreducible Coxeter graph having Coxeter number equal to the Coxeter number of $\Gamma_Q$. The category  $\Rep_{\MC}(Q)$ has only finitely many indecomposable objects (up to isomorphism) if and only if $\Gamma_Q$ is Coxeter-Dynkin type.
\end{thm}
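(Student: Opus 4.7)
The plan is to combine the unfolding equivalence from Theorem \ref{thm:unfolding} with Gabriel's theorem and a component-wise refinement of the Coxeter embedding $\Psi$ of Corollary \ref{cor:faithfulactionunfld}. Under the equivalence $\Rep_{\MC}(Q) \cong \Rep_{\Vect}(\check{Q}_{\MC})$, Gabriel's theorem says that finiteness of isomorphism classes of indecomposables is equivalent to every connected component of $\check{Q}_{\MC}$ being a simply-laced ADE quiver with no multiple edges. So the task reduces to analyzing the connected components of $\Gamma_{\check{Q}_{\MC}}$, and in particular to computing their Coxeter numbers.

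The key observation is that under $\Psi$ the Coxeter element $c = \sigma_n \cdots \sigma_1$ of $\WW(\Gamma_Q)$ is sent to the Coxeter element $\check{c}$ of $\WW(\Gamma_{\check{Q}_{\MC}})$, since $\Psi(\sigma_v) = \prod_L \sigma_{(v,L)}$ is a product over mutually commuting reflections. I would then strengthen Corollary \ref{cor:faithfulactionunfld} by showing that for each connected component $\check{\Gamma}$ of $\Gamma_{\check{Q}_{\MC}}$, the composition
\[
\Psi_{\check{\Gamma}} \colon \WW(\Gamma_Q) \xrightarrow{\Psi} \WW(\Gamma_{\check{Q}_{\MC}}) \twoheadrightarrow \WW(\check{\Gamma})
\]
is itself an injective Coxeter embedding. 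By the criterion of \cite{EHCoxEmb}, this reduces to verifying that $(v,L) \mapsto v$ restricted to vertices of $\check{\Gamma}$ defines a Lusztig partition on the simple reflections of $\WW(\check{\Gamma})$. The commutativity axiom for reflections over a fixed $v$ is immediate. The uniform-Coxeter-number axiom for a pair of vertices $v,w$ of $\Gamma_Q$ follows from Theorem \ref{thm:finitetypecoxeternumber} applied to the rank-two subquiver of $Q$ on $\{v,w\}$: the full rank-two unfolded quiver is a disjoint union of ADE (or infinite-type) pieces of uniform Coxeter number $m_{vw}$, and the restriction to $\check{\Gamma}$ is by construction a sub-collection of these pieces.

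Once $\Psi_{\check{\Gamma}}$ is injective, one gets $\ord(c) = \ord(\check{c}|_{\check{\Gamma}}) = h(\check{\Gamma})$ for every component $\check{\Gamma}$. If $\Gamma_Q$ is Coxeter-Dynkin, then $c$ has finite order $h(\Gamma_Q)$, so every $\check{\Gamma}$ has finite Coxeter number $h(\Gamma_Q)$ and is in particular simply-laced ADE (multi-edges in $\check{Q}_{\MC}$ would force some $m_e = \infty$ by the rank-two classification); Gabriel's theorem then delivers finite representation type. Conversely, if $\Gamma_Q$ is not Coxeter-Dynkin, $c$ has infinite order, forcing some $\check{\Gamma}$ to be infinite type, whence Gabriel's theorem produces infinitely many indecomposables. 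I expect the main obstacle to be the refined injectivity of each $\Psi_{\check{\Gamma}}$: while Corollary \ref{cor:faithfulactionunfld} already handles the embedding into the full $\WW(\Gamma_{\check{Q}_{\MC}})$, descending to an individual component hinges on the careful Lusztig-partition verification above, which in turn relies critically on the uniform-Coxeter-number content of Theorem \ref{thm:finitetypecoxeternumber}.
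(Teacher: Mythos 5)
Your proposal follows the same global strategy as the paper's proof: unfold via Theorem \ref{thm:unfolding}, invoke Gabriel's theorem, observe that the embedding $\Psi$ of Corollary \ref{cor:faithfulactionunfld} sends the Coxeter element $c$ to $\check{c}$, and conclude using the fact (the paper cites \cite{howlett_mmatrices}) that a Coxeter group is finite if and only if a Coxeter element has finite order. Where you genuinely diverge is the component-level Coxeter-number statement. The paper gets this step essentially for free: having already verified in the proof of Corollary \ref{cor:faithfulactionunfld} that $(v,L) \mapsto v$ is a Lusztig's partition, it simply cites \cite[Proposition 2.5]{EHCoxEmb}, which states directly that all irreducible components of $\Gamma_{\check{Q}_{\MC}}$ have the same Coxeter number as $\Gamma_Q$. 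You instead reprove this in the present setting by upgrading $\Psi$ to a per-component embedding $\Psi_{\check{\Gamma}}$ and deducing $h(\check{\Gamma}) = \ord(\check{c}|_{\check{\Gamma}}) = \ord(c) = h(\Gamma_Q)$. Your verification of the restricted Lusztig-partition axioms is sound: the key observation---that each connected piece of the rank-two unfolding over a pair $\{v,w\}$ is connected in $\Gamma_{\check{Q}_{\MC}}$, hence lies entirely inside or entirely outside $\check{\Gamma}$, so that condition (2) restricts correctly and Theorem \ref{thm:finitetypecoxeternumber} applies---is correct, as is the projection $\WW(\Gamma_{\check{Q}_{\MC}}) \twoheadrightarrow \WW(\check{\Gamma})$, since the Coxeter group of a disconnected graph is the direct product over its components. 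What each route buys: the paper's is shorter but treats \cite[Proposition 2.5]{EHCoxEmb} as a black box; yours makes the Coxeter-number claim self-contained modulo the injectivity criterion \cite[Theorem 1.4]{EHCoxEmb}, at the cost of redoing part of what that proposition packages.

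One point you must add for $\Psi_{\check{\Gamma}}$ to make sense and be injective: every fiber must meet every component, i.e.\ for each $v \in V$ and each component $\check{\Gamma}$ there exists $L$ with $(v,L) \in \check{\Gamma}$. If some fiber $\ufd(v)$ missed $\check{\Gamma}$, the composite $\Psi_{\check{\Gamma}}$ would kill $\sigma_v$ outright and could not be injective, and the restricted assignment would not even be a partition of the simple reflections of $\WW(\check{\Gamma})$ indexed by $V$. This is exactly where the hypothesis that $\Gamma_Q$ is irreducible enters: irreducibility means the subgraph of $Q$ on edges with nonzero labels is connected (edges with $\Pi_e = 0$ carry label $m_e = 2$ and are deleted from $\Gamma_Q$), and by the argument of Proposition \ref{prop:notzero} one has $\Pi_e \otimes L \neq 0$ for nonzero $\Pi_e$ and $L$, so any vertex $(v,L)$ of $\check{Q}_{\MC}$ has a neighbor over every vertex of $Q$ adjacent to $v$; hence each component of $\check{Q}_{\MC}$ surjects onto $V$. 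This is a short verification, not a fatal flaw, but without it the injectivity of $\Psi_{\check{\Gamma}}$---the crux of your refinement---is unsupported.
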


\begin{proof}
It is known that a Coxeter group is finite if and only if a Coxeter element has finite order \cite{howlett_mmatrices}.
The homomorphism of \eqref{eq:unfoldinggroupmorphism} sends $c \mapsto \check{c}$, and is injective by Corollary \ref{cor:faithfulactionunfld}.
Since an injective homomorphism preserves the order of group elements, $\WW(\Gamma_Q)$ is finite if and only if $\WW(\Gamma_{\check{Q}_{\MC}})$ is. By Gabriel's theorem, $\WW(\Gamma_{\check{Q}_{\MC}})$ is finite if and only if $\Rep_{\Vect}(\check{Q})$ has only finitely many indecomposable objects (up to isomorphism). By Theorem \ref{thm:unfolding}, we have that $\Rep_{\Vect}(\check{Q}) \cong \Rep_{\MC}(Q)$ as abelian categories. Thus $\WW(\Gamma_Q)$ is finite if and only if $\Rep_{\MC}(Q)$ has finite type.

We have shown in the proof of Corollary \ref{cor:faithfulactionunfld} that the map $(v, L) \in \check{V} \mapsto v \in V$ is a Lusztig's partition on the simple reflections of $\WW(\Gamma_{\check{Q}_\MC})$.
It follows from \cite[Proposition 2.5]{EHCoxEmb} that the irreducible components of the Coxeter graph $\Gamma_{\check{Q}_\MC}$ have the same Coxeter number as $\Gamma_Q$.
\end{proof}

\section{The classification of indecomposables in the case \texorpdfstring{$\MC = \FC$}{M=C}} \label{sec:M=C}
In this section, we focus on the special case where our representation is built in the fusion category itself, i.e.\ we study $\Rep_{\FC}(Q)$.
In this case, we will prove a stronger statement to the finite type classification: if $Q$ is of Coxeter--Dynkin type, the classes of indecomposable representations are in bijection with the \emph{positive extended roots} of the root system associated $Q$ (see \ref{defn:fusionrootsystem}).
Moreover, every indecomposable can be obtained from the indecomposables corresponding to the positive roots by an action of some appropriate simple object of $\FC$.

Throughout this section, we assume that $Q$ is of Coxeter--Dynkin type.

%%%%%%%%%%%%%%%%%%%%%%%%%%%%%%%%%%%%%%%%%%%%%%%%%%%%%%%%%%%%%%%%%%%%%
%%%%%%%%%%%%%%%%%%%%%%%%%%%%%%%%%%%%%%%%%%%%%%%%%%%%%%%%%%%%%%%%%%%%%

\subsection{Root systems associated to fusion quivers}
\begin{defn}\label{defn:fusionrootsystem}
To each fusion quiver $Q$ over $\FC$, we associate a (fusion) \emph{root system} over the fusion ring $[\FC]$ as follows.
\begin{itemize}
\item The elements $\alpha_v \in [\FC]^V$ for each $v \in V$ are called the \emph{simple roots}.
\item The \emph{set of roots} is given by $\Phi:= \WW(Q)\cdot \{ \alpha_v \mid v \in V \}$.
\item The \emph{set of extended roots} is given by $\Phi^{ext} := \{ r\cdot [L] \mid r \in \Phi, L \in \Irr(\FC) \}$.
\item A non-zero element $m$ in $[\FC]^V$ is said to be \emph{positive} if it can be expressed as 
\[
m = \sum_{v \in V} \alpha_v \cdot [A_v]
\]
for $A_v \in \FC$ (not necessarily simple).
\item We denote the set of \emph{positive roots} by $\Phi_+$ and the set of \emph{positive extended roots} by $\Phi^{ext}_+$. Note that by definition we have $\Phi_+ \subseteq \Phi^{ext}_+$.
\end{itemize}
\end{defn}
\begin{rem}
We warn the reader that the set of roots $\Phi$ is \emph{not} the same as the set of elements $m \in [\FC]^V$ satisfying $\langle m,m \rangle_Q = 2$.
Nonetheless, $\Phi$ is contained in such set; every root $r$ satisfies $\langle r,r \rangle_Q = 2$.
\end{rem}
\begin{rem}
By applying $\FPdim$ to $[\FC]^V$, we obtain the root system of $\Gamma_Q$ (over $\RR$) in the sense of Deodhar \cite{deodhar_root}, where roots are mapped to roots, and the notion of positivity coincides. Note however, different extended (positive) roots may be mapped to the same (positive rescaling of a positive) root -- this happens when two simples have the same $\FPdim$.
\end{rem}

If we view $\check{Q}:= \check{Q}_{\FC}$ as a fusion quiver over $\Vect$, there is no difference between extended roots and roots -- there is only one simple in $[\Vect]$.
Moreover, the notions ``roots'' and ``positive roots'' agree with their similarly-named-notions in the usual root system associated to $\check{Q}$ viewed as an ordinary quiver.
In other words, the fusion root system of $\check{Q}$ over $[\Vect] \cong \ZZ$ really is just the usual root system.
We can relate the root system of $Q$ and the root system of $\check{Q}$ as follows.
\begin{prop}\label{prop:extrootbijection} 
Let $\check{\Phi}$ be the set of roots associated to the root system of $\check{Q}$.
Then the set of extended positive roots $\Phi^{ext}$ of $Q$ is in bijection with $\check{\Phi}$, where the identification is given by forgetting the right module structure of $[\FC]^V$ over $[\FC]$ so that $[\FC]^V$ is naturally identified with $[\Vect]^{\check{V}}\cong \ZZ^{\check{V}}$ as $\ZZ$-modules.
Explicitly, the bijection is given by
\[
w(\alpha_v) \cdot [L] = w(\alpha_v \cdot L) \mapsto \check{w}(\alpha_{v, L}).
\]
\end{prop}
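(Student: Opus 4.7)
The plan is to observe that the displayed formula $w(\alpha_v) \cdot [L] \mapsto \check w(\alpha_{v,L})$ (where $\check w := \Psi(w)$ for $\Psi$ the injective group homomorphism of Corollary \ref{cor:faithfulactionunfld}) is nothing other than the restriction of the natural $\ZZ$-module isomorphism $[\FC]^V \cong \ZZ^{\check V}$ (sending the basis element $\alpha_v \cdot [L] \mapsto \alpha_{v,L}$ for $L \in \Irr(\FC)$) to the subset of extended roots. The substance of the proposition then reduces to the equality of subsets $\Phi^{ext} = \check\Phi$ inside $[\FC]^V \cong \ZZ^{\check V}$.

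For well-definedness, Proposition \ref{prop:WQaction=unfoldaction} states that under the identification $[\FC]^V \cong \ZZ^{\check V}$, the $\ZZ$-linear action of $\sigma_v^Q$ agrees with the action of $\check\sigma_v = \prod_{L \in \Irr(\FC)} \sigma_{(v,L)}^{\check Q} = \Psi(\sigma_v)$. Propagating this through the groups, the action of any $w \in \WW(Q)$ on $[\FC]^V$ matches the action of $\Psi(w) \in \WW(\check Q)$ on $\ZZ^{\check V}$. Since the $\WW(Q)$-action is right $[\FC]$-linear, $w(\alpha_v) \cdot [L] = w(\alpha_v \cdot [L])$, which is carried to $\check w(\alpha_{v,L})$ under the identification. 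The stated formula is therefore a restriction of a $\ZZ$-module isomorphism, and in particular is automatically well-defined and injective.

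The inclusion $\Phi^{ext} \subseteq \check\Phi$ is then immediate, since every extended root maps to $\check w(\alpha_{v,L})$, a $\WW(\check Q)$-translate of a simple root, hence a root of $\check Q$. For the reverse inclusion $\check\Phi \subseteq \Phi^{ext}$, I would invoke Kostant's theorem on Coxeter elements. By Theorem \ref{thm:CoxeterDynkinifffinite}, $\check Q$ is a disjoint union of irreducible finite ADE Dynkin quivers, all sharing the same Coxeter number $h$ as $\Gamma_Q$. Fix the Coxeter element $c = \sigma_n \cdots \sigma_1 \in \WW(Q)$; then $\check c := \Psi(c) = \left(\prod_L \sigma_{(n,L)}\right) \cdots \left(\prod_L \sigma_{(1,L)}\right)$ is a product in which each simple reflection of $\WW(\check Q)$ appears exactly once, so $\check c$ is a Coxeter element of $\WW(\check Q)$ and restricts to a Coxeter element on each irreducible component. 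Kostant's theorem now tells us that on each irreducible component, a Coxeter element acts on the roots via orbits of size $h$, with the simple roots forming a complete set of orbit representatives. Consequently every root of $\check Q$ has the form $\check c^k(\alpha_{v,L})$ for some $k \ge 0$ and $(v,L) \in \check V$, and so in particular lies in the $\Psi(\WW(Q))$-orbit of a simple root, i.e.\ in the image of $\Phi^{ext}$.

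The main obstacle is precisely this reverse inclusion, since $\Psi(\WW(Q))$ is typically a proper subgroup of $\WW(\check Q)$, so it is not a priori obvious that $\Psi(\WW(Q))$-orbits of simple roots already exhaust $\check\Phi$. What saves the argument is the equality of Coxeter numbers across the irreducible components of $\check Q$ (Theorem \ref{thm:CoxeterDynkinifffinite}): a single Coxeter element $\check c$, which lies in $\Psi(\WW(Q))$, has enough powers to reach every root from a simple root on every component, thanks to Kostant's theorem.
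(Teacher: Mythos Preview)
Your proof is correct and follows essentially the same approach as the paper: both reduce well-definedness and injectivity to Proposition \ref{prop:WQaction=unfoldaction} (and the injectivity of $\Psi$), and both establish surjectivity by invoking the fact that in a finite Coxeter group every root lies in the Coxeter-element orbit of a simple root (the paper attributes this to Steinberg, you to Kostant). Your explicit appeal to Theorem \ref{thm:CoxeterDynkinifffinite} to justify that $\check c$ restricts to a Coxeter element on each irreducible component is a helpful elaboration that the paper leaves implicit.
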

\begin{proof}
It follows from Proposition \ref{prop:WQaction=unfoldaction} and Corollary \ref{cor:faithfulactionunfld} that the map above is well-defined and injective. 
Moreover, it is clear that it defines a bijection between the extended simple roots of $Q$ and the simple roots of $\check{Q}$. 
It remains to show that all the roots in $\check{\Phi}$ are obtainable from the simple roots of $\check{Q}$ when the group action is restricted to the subgroup $\WW(\Gamma_Q) \subseteq \WW(\Gamma_{\check{Q}})$ (viewed as subgroup through \eqref{eq:unfoldinggroupmorphism}). A well-known fact about finite Coxeter groups, due to Steinberg \cite{Stein_finitereflect}, is that for any Coxeter element $c$, every root is in the $c$-orbit of some simple root. Recall that $\Gamma_Q$ and $\Gamma_{\check{Q}}$ are both finite Coxeter groups, and that a Coxeter element $c$ of $\Gamma_Q$ is sent to a Coxeter element of $\Gamma_{\check{Q}}$. All roots in $\check{\Phi}$ can be obtained from a simple root using a power of $\check{c}$, and therefore using an element of $\WW(\Gamma_Q)$.
\end{proof}
\begin{cor}\label{cor:fusionrootdichotomy}
We have $\Phi = \Phi_+ \coprod - \Phi_+$ and similarly $\Phi^{ext} = \Phi^{ext}_+ \coprod - \Phi^{ext}_+$, where $\coprod$ denotes the disjoint union.
\end{cor}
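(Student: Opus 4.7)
The plan is to deduce both dichotomies from the classical sign-coherence of roots in an ADE root system, transported via the bijection of Proposition \ref{prop:extrootbijection}. Concretely, since $Q$ is assumed to be of Coxeter--Dynkin type, Theorem \ref{thm:CoxeterDynkinifffinite} tells us that $\check{Q} := \check{Q}_{\FC}$ is a disjoint union of finite type ADE quivers. For the ordinary root system $\check{\Phi}$ associated to such a $\check{Q}$, it is a classical fact (e.g.\ following from Gabriel's theorem, or directly from the theory of finite Coxeter groups) that $\check{\Phi} = \check{\Phi}_+ \coprod -\check{\Phi}_+$, where positivity is defined by having non-negative coefficients in the standard basis $\{\alpha_{(v,L)}\}$ of $\ZZ^{\check{V}}$.

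First I would check that positivity is preserved and reflected by the identification of Proposition \ref{prop:extrootbijection}. Under the $\ZZ$-linear isomorphism $[\FC]^V \cong \ZZ^{\check{V}}$, an element $m \in [\FC]^V$ written as $m = \sum_{v \in V} \alpha_v \cdot [A_v]$ corresponds to $\sum_{v,L} n_{v,L} \alpha_{(v,L)}$, where $[A_v] = \sum_{L \in \Irr(\FC)} n_{v,L}[L]$. Because $\FC$ is semisimple, $A_v$ is (isomorphic to) an object of $\FC$ precisely when each $n_{v,L}$ is a non-negative integer. Hence $m \in \Phi^{ext}_+$ if and only if its image in $\ZZ^{\check{V}}$ lies in the positive cone with at least one positive coefficient, which is exactly the condition for the image to lie in $\check{\Phi}_+$.

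Next, combining this observation with Proposition \ref{prop:extrootbijection} gives a positivity-preserving bijection $\Phi^{ext} \leftrightarrow \check{\Phi}$. The classical dichotomy $\check{\Phi} = \check{\Phi}_+ \coprod -\check{\Phi}_+$ then transfers directly to $\Phi^{ext} = \Phi^{ext}_+ \coprod -\Phi^{ext}_+$. For the non-extended statement, I would observe that $\Phi \subseteq \Phi^{ext}$ since taking $L = \one$ in the definition of $\Phi^{ext}$ recovers $\Phi$. If $r \in \Phi$, then by the dichotomy for $\Phi^{ext}$ either $r$ or $-r$ lies in $\Phi^{ext}_+$; but by the matching of positivity above, being in $\Phi^{ext}_+$ means the expression $r = \sum_v \alpha_v \cdot [A_v]$ has each $A_v$ an actual object of $\FC$, which places $r$ in $\Phi_+$ (the same definition of positive applies, as we did not enforce that the $A_v$ were simple). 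Thus $\Phi = \Phi_+ \coprod -\Phi_+$.

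There is no real obstacle beyond carefully unwinding the definitions of positivity on both sides of the bijection; the substance of the corollary is entirely carried by the classical ADE dichotomy together with Proposition \ref{prop:extrootbijection}. The only mild subtlety to watch is that elements of $\Phi$ correspond under the bijection to $\check{w}(\alpha_{(v,\one)})$, which is generally a proper subset of $\check{\Phi}$; nevertheless, positivity restricts correctly, so the dichotomy for $\Phi$ is immediate from that for $\Phi^{ext}$.
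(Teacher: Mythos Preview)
Your proposal is correct and follows essentially the same approach as the paper: both transport the classical ADE dichotomy $\check{\Phi} = \check{\Phi}_+ \coprod -\check{\Phi}_+$ through the bijection of Proposition~\ref{prop:extrootbijection}, after noting that positivity coincides on the two sides. The paper's proof is a one-line reference to this observation, while you have helpfully unpacked the details (verifying the positivity match via semisimplicity of $\FC$, and deducing the $\Phi$ dichotomy from the $\Phi^{ext}$ one via the inclusion $\Phi \subseteq \Phi^{ext}$).
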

\begin{proof}
This follows directly from the observation that through the identification of $\Phi^{ext}$ and $\check{\Phi}$ as in Proposition \ref{prop:extrootbijection}, the notion of positivity for both sets coincide.
\end{proof}

\subsection{Bijection between indecomposables and positive roots}

\begin{defn}
Let $Q$ be a (fusion) quiver. An \emph{admissible sink ordering} is a labeling of the vertices by $\{1, 2, \ldots, n\}$ such that
\begin{itemize} \item $1$ is a sink in $Q$,
	\item for each $i \in \{2,\ldots,n\}$, $i$ is a sink in the quiver 	$\sigma_{i-1}\cdots \sigma_2\sigma_1 Q$.
	\end{itemize}
Note that it is impossible for $Q$ to contain any loops. Moreover, $\sigma_n \cdots \sigma_1 Q = Q$, as each edge has its orientation reversed exactly twice.
\end{defn}
	
It is well-known that an oriented quiver of Dynkin type ADE has an admissible sink ordering, and it is relatively easy to deduce that any fusion quiver of Coxeter--Dynkin type has an admissible sink ordering.

\begin{defn}
Let $|Q|$ be a Coxeter--Dynkin diagram and suppose $Q$ is equipped with an admissible sink ordering.
The element $c:= \sigma_n \cdots \sigma_2\sigma_1 \in \WW(Q)$ is called the \emph{Coxeter element} associated to the admissible sink ordering.
Similarly, the composition of reflection functors $C^+:= R^+_n \cdots R^+_1 : \Rep_{\FC}(Q) \rightarrow \Rep_{\FC}(Q)$ is called the \emph{Coxeter functor} associated to the ordering.
\end{defn}

The following slight generalization of a well-known lemma in Coxeter theory will be useful to us.

\begin{lem} \label{lem:coxeterpowernon-positive}
Suppose $Q$ is of Coxeter--Dynkin type equipped with an admissible sink ordering.
If $m := (m_i)_{i \in V} \in [\FC]^V$ is a positive element (cf. \ref{defn:fusionrootsystem}), then there exists $k \in \mathbb{N}$ such that $c^k(m)$ is no longer a positive element.
\end{lem}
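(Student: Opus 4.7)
The plan is to transfer the statement to the unfolded quiver $\check{Q} := \check{Q}_{\FC}$, whose connected components are ADE Dynkin diagrams (since $Q$ is of Coxeter--Dynkin type), reducing the claim to a classical fact about Coxeter elements of finite Coxeter groups.

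First I would verify that the natural $\ZZ$-linear isomorphism $[\FC]^V \xrightarrow{\sim} \ZZ^{\check{V}}$ from Proposition \ref{prop:extrootbijection} (sending $[L]\alpha_v \mapsto \alpha_{v,L}$) carries positive elements of $[\FC]^V$ exactly onto $\ZZ_{\geq 0}^{\check{V}}\setminus\{0\}$. This is routine: given a positive $m = \sum_v [A_v]\alpha_v$ with $A_v \in \FC$, expanding $[A_v] = \sum_{L\in \Irr(\FC)} \dim\Hom_\FC(L,A_v)\,[L]$ shows the image has non-negative coefficients, and not all zero since some $A_v \neq 0$; conversely, any non-zero $\sum n_{v,L}\alpha_{v,L} \in \ZZ_{\geq 0}^{\check V}$ comes from $A_v := \bigoplus_L L^{\oplus n_{v,L}}$. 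Next, by Proposition \ref{prop:WQaction=unfoldaction} and the injection $\WW(Q) \hookrightarrow \WW(\check{Q})$ of Corollary \ref{cor:faithfulactionunfld}, the Coxeter element $c = \sigma_n \cdots \sigma_1 \in \WW(Q)$ maps to $\check{c} := \check{\sigma}_n \cdots \check{\sigma}_1 = \prod_{i=1}^{n}\prod_{L \in \Irr(\FC)} \sigma_{(i,L)}$. Since the reflections $\sigma_{(i,L)}$ for fixed $i$ pairwise commute (the vertices $(i,L)$ and $(i,L')$ are non-adjacent in $\check{Q}$), $\check{c}$ is a product of all simple reflections of $\WW(\check{Q})$, each occurring exactly once, in some linear order -- that is, a Coxeter element of $\WW(\check{Q})$.

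Writing $\bar m \in \ZZ^{\check V}$ for the image of $m$, suppose for contradiction that $c^k(m)$ is positive for every $k \in \NN$; equivalently, $\check{c}^k(\bar m) \in \RR_{\geq 0}^{\check{V}}\setminus\{0\}$ for every $k$. Because each connected component of $\check{Q}$ is an ADE Dynkin diagram, $\WW(\check Q) \cong \WW(\Gamma_{\check Q})$ is a finite Coxeter group, so $\check c$ has finite order $h$. Form
\[
S := \sum_{k=0}^{h-1} \check{c}^k(\bar m) \;\in\; \RR_{\geq 0}^{\check V}.
\]
Then $S$ is non-zero (a sum of non-zero non-negative vectors in $\ZZ_{\geq 0}^{\check V}$) and $\check{c}$-fixed. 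But a Coxeter element of a finite Coxeter group has no eigenvalue $1$ on its reflection representation (its eigenvalues are the $e^{2\pi i m_j/h}$ with exponents $0 < m_j < h$), whence $S=0$, a contradiction. Applying the identification in reverse, some $c^k(m)$ fails to be positive.

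The main obstacle is really just the bookkeeping in the first two points: confirming that positivity transports correctly under the unfolding identification, and that $c$ maps to a Coxeter element (rather than some other product of reflections) of $\WW(\check Q)$. Once those are in hand, the spectral property of Coxeter elements in finite Coxeter groups finishes the argument immediately. A direct approach via the Coxeter functor $C^+ = R_n^+ \cdots R_1^+$ is less convenient here, because reflection functors act by $\sigma_v$ on dimension vectors only for certain indecomposables, whereas the statement concerns arbitrary positive elements of the Grothendieck group.
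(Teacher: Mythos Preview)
Your proof is correct and follows essentially the same route as the paper: transfer the problem to the unfolded quiver $\check{Q}_{\FC}$ via the $\ZZ$-linear identification $[\FC]^V \cong \ZZ^{\check V}$, check that positivity matches and that $c$ is sent to a Coxeter element $\check{c}$, and then invoke the classical statement for ADE quivers. The only difference is that the paper cites the classical lemma (e.g.\ \cite[Lemma 6.7.2]{EGHLSVY_introrep}, \cite[Lemma 4.4.4]{krause2010representations}) whereas you supply a self-contained proof of it via the averaging trick and the fact that a Coxeter element of a finite Coxeter group has no eigenvalue $1$ on the reflection representation.
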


\begin{proof}
This follows from the identification of $\Psi^{ext}$ and $\check{\Psi}$ given in \ref{prop:extrootbijection}, together with the fact that the notion of positivity for both sets agree. We implicitly used that the lemma holds for ordinary ADE quivers, see e.g.\ \cite[Lemma 6.7.2]{EGHLSVY_introrep} and \cite[Lemma 4.4.4]{krause2010representations}.
\end{proof}

We are now ready to prove the main theorem of this section.
\begin{thm}\label{thm:gabrielroot}
Suppose $Q$ is a fusion quiver over $\FC$ which is of Coxeter--Dynkin type.
The dimension vector map $\udim$ induces a bijection between the set of indecomposable representations of $Q$ (up to isomorphism) and the set of extended positive roots $\Phi_+^{ext}$ of $[\FC]^V$.
Moreover, every indecomposable representation $M \in \Rep_{\FC}(Q)$ is of the form
\[
M \cong M' \otimes L
\]
for some $M' \in \Rep_{\FC}(Q)$ that corresponds to a positive root (i.e. $\udim(V') \in \Phi_+$) and some simple object $L \in \FC$.
\end{thm}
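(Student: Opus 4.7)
The plan is to prove the two assertions of the theorem separately: the bijection between isomorphism classes of indecomposables and $\Phi^{ext}_+$ via unfolding and the classical Gabriel theorem, and the ``moreover'' clause via the Coxeter functor together with the commutativity between reflection functors and right-tensoring by objects of $\FC$.

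For the bijection, I would use the equivalence $\Rep_{\FC}(Q) \cong \Rep_{\Vect}(\check{Q}_{\FC})$ from Theorem \ref{thm:unfolding}, which preserves indecomposability. Since $Q$ is of Coxeter--Dynkin type, the proof of Theorem \ref{thm:CoxeterDynkinifffinite} together with Theorem \ref{thm:finitetypecoxeternumber} shows that $\check{Q}_{\FC}$ is a disjoint union of $ADE$ quivers. Classical Gabriel theory then provides a dimension-vector bijection between indecomposables of $\Rep_{\Vect}(\check{Q}_{\FC})$ and the positive roots $\check{\Phi}_+$. Proposition \ref{prop:extrootbijection}, combined with Corollary \ref{cor:fusionrootdichotomy} to restrict to the positive cone, identifies $\Phi^{ext}_+ \leftrightarrow \check{\Phi}_+$. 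The key compatibility check is that the $[\FC]$-valued dimension vector of $M \in \Rep_{\FC}(Q)$ corresponds to the $\ZZ$-valued dimension vector of its image under unfolding, via the identification $[\FC]^V \cong \ZZ^{\check{V}}$ sending $[L]\alpha_v \mapsto \alpha_{(v,L)}$; this is immediate from the isotypic-component definition of the unfolding functor $F$ in Theorem \ref{thm:unfolding}.

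For the ``moreover'' clause, fix an indecomposable $M \in \Rep_{\FC}(Q)$, an admissible sink ordering, and the associated Coxeter functor $C^+$. By Lemma \ref{lem:coxeterpowernon-positive}, some iterate $c^{\ell}(\udim M)$ fails to be positive; since dimension vectors of nonzero representations are positive, $(C^+)^{\ell}(M) = 0$. Tracing through the decomposition of $(C^+)^{\ell}$ as a product of individual reflection functors, let $\underline{R}^+ = R^+_{u_p} \cdots R^+_{u_1}$ be the \emph{minimal} partial composition whose image is zero, so that $R^+_{u_p}$ annihilates the (still indecomposable) intermediate $\underline{R}^+_{<p}(M)$. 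By Proposition \ref{prop:ifnotSvL} (or a direct inspection of the kernel-at-sink construction) this forces $\underline{R}^+_{<p}(M) \cong S(u_p, L')$ for some $L' \in \Irr(\FC)$, and minimality together with Proposition \ref{prop:doublereflect} then gives $M \cong \underline{R}^-(S(u_p, L'))$ where $\underline{R}^- := R^-_{u_1} \cdots R^-_{u_{p-1}}$. The crucial observation is that each reflection functor commutes with the right-tensor functor $- \otimes L'$, since $- \otimes L'$ is exact and the constructions of $R_u^\pm$ via kernels and cokernels involve only left-action by the edge labels $\Pi_e$ and their duals. Combined with $S(u_p, L') \cong S(u_p, \one) \otimes L'$, this yields $M \cong M' \otimes L'$ with $M' := \underline{R}^-(S(u_p, \one))$. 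Indecomposability of $M'$ is forced by that of $M$ (as $- \otimes L'$ preserves direct sums); Proposition \ref{prop:reflectiondim} shows $\udim(M') = w(\alpha_{u_p}) \in \Phi$ for the appropriate $w \in \WW(Q)$; and the identity $\udim(M') \cdot [L'] = \udim(M) \in \Phi^{ext}_+$ together with Corollary \ref{cor:fusionrootdichotomy} forces $\udim(M') \in \Phi_+$. The main technical point is the minimality of $\underline{R}^+$, which is needed to guarantee both that the reverse chain $\underline{R}^-$ actually reconstructs $M$ and that the dimension vectors of the intermediate representations track the Weyl group action as required.
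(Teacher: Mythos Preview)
Your proof is correct, but the bijection argument takes a genuinely different route from the paper's.

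\textbf{Comparison for the bijection.} The paper gives a direct BGP-style proof: fix an admissible sink ordering, show that for any indecomposable $M$ there is a minimal word $\sigma_{v+1}\cdots\sigma_1 c^k$ making $\udim(M)$ non-positive, deduce that the corresponding composition of reflection functors sends $M$ to some $S(v+1,L)$, and read off $\udim(M)$ as an extended positive root. Injectivity comes from reversing the reflection functors, and surjectivity is proved constructively by running the same algorithm starting from a root. Your approach instead transports the entire problem to $\check{Q}_\FC$ via Theorem~\ref{thm:unfolding}, invokes the classical Gabriel theorem there, and then matches up $\Phi^{ext}_+$ with $\check{\Phi}_+$ using Proposition~\ref{prop:extrootbijection} and Corollary~\ref{cor:fusionrootdichotomy}. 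This is cleaner and shorter, but note that Proposition~\ref{prop:extrootbijection} already encodes a nontrivial fact (Steinberg's theorem on Coxeter-element orbits), so you have not avoided the Coxeter-theoretic input, only repackaged it. The paper's direct argument has the advantage of being constructive and of yielding the ``moreover'' clause for free as a byproduct of the surjectivity proof, whereas you must argue it separately.

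\textbf{The ``moreover'' clause.} Your argument here is essentially the same as the paper's, just organised from the opposite end: the paper starts from a root and builds $M'$, then tensors with $L$; you start from $M$, reflect down to $S(u_p,L')$, and peel off the $L'$. One small point: your sentence ``since dimension vectors of nonzero representations are positive, $(C^+)^\ell(M)=0$'' compresses a step. The reasoning should be that if $(C^+)^\ell(M)$ were nonzero, then by Corollary~\ref{cor:preservesindecomposable} every intermediate stage would be indecomposable and not simple at the relevant sink, so by Proposition~\ref{prop:reflectiondim} the dimension vector would track $c^\ell(\udim M)$, contradicting non-positivity. This is implicit in what you wrote but worth making explicit.
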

\begin{proof}
Start by fixing some admissible sink ordering on the set of vertices $V$ so that $V$ is now given by $\{1,2,..., n\}$.

We first show that $\udim$ maps indecomposables to positive extended roots.
Let $M$ be an indecomposable object in $\Rep_{\FC}(Q)$.
Let $\tau:= \sigma_{v+1} \cdots \sigma_1 c^k \in \WW(Q)$ be the shortest expression such that $\tau(\udim(M))$ is not positive, whose existence is guaranteed by Lemma \ref{lem:coxeterpowernon-positive}.
The fact that dimension vectors can not be negative together with Proposition \ref{prop:reflectiondim} imply that
\begin{equation}\label{eqn:fromsimples}
R^+_v \cdots R^+_1 (C^+)^k (M) \cong S(v+1,L)
\end{equation}
for some $L \in \Irr(\FC)$.
Using Proposition \ref{prop:reflectiondim} again, we get 
\[
\sigma_v \cdots \sigma_1 c^k (\udim(M)) = [S(v+1,L)] = \alpha_{v+1} \cdot [L]
\]
and so $\udim(M) = c^{-k}\sigma_1 \cdots \sigma_v(\alpha_{v+1})\cdot [L]$ is an extended root.
The fact that $M$ is an object in $\Rep_{\FC}(Q)$ implies that $\udim(M)$ is moreover a positive element.

Now let us show that $\udim$ is injective.
Suppose $\udim(M) = \udim(M')$ for two indecomposable representations $M$ and $M'$.
Then $\tau$ as defined above is again the shortest expression where $\tau(\udim(M'))$ is negative, so the same arguments show that $R^+_v \cdots R^+_1 (C^+)^k (M') \cong S(v+1,L')$ for some $L' \in \Irr(\FC)$.
In fact, we do have $L \cong L'$, since 
\begin{align*}
\alpha_{v+1} \cdot [L] 
	&= \sigma_v \cdots \sigma_1 c^k \left( \udim(M) \right) \\
	&=\sigma_v \cdots \sigma_1 c^k \left( \udim(M') \right) \\
	&= \alpha_{v+1} \cdot [L'],
\end{align*}
so $L \cong L'$ follows from freeness of $[\FC]^V$ over $[\FC]$.
In particular, we have
\[
R^+_v \cdots R^+_1 (C^+)^k (M) \cong S(v+1,L) \cong R^+_v \cdots R^+_1 (C^+)^k (M'),
\]
and thus applying the ``inverse'' reflection functors yields $M \cong M'$.

For surjectivity of $\udim$, let us start with $m \in \Phi^{ext}_+$ an arbitrary positive (non-extended) root.
Once again by Lemma \ref{lem:coxeterpowernon-positive}, there exist some shortest expression $\tau:= \sigma_{v+1} \cdots \sigma_1 c^k$ such that $\tau(m)$ is no longer positive.
By Corollary \ref{cor:fusionrootdichotomy}, we know that $\tau(m)$ is actually negative, i.e. $\tau(m) \in -\Phi^{ext}_+$.
Since $\sigma_{v+1}$ only changes the coefficient of $\alpha_{v+1}$, we must have that $\sigma_v \cdots \sigma_1 c^k(m)$ is a $[\FC]$-multiple of $\alpha_{v+1}$.
Knowing that $\sigma_v \cdots \sigma_1 c^k(m)$ is a root guarantees that
\[
\sigma_v \cdots \sigma_1 c^k(m) = \alpha_{v+1}.
\]
Denote $C^- := R^-_1\cdots R^-_n$ and define 
\begin{equation} \label{eq:M'construction}
M':= (C^-)^k R^-_1 \cdots R^-_v(S(v+1,\one)).
\end{equation}
By \eqref{eqn:fromsimples} and Corollary \ref{cor:preservesindecomposable}, $M'$ is an indecomposable object with $\udim(M') = m$.

Now we show that $\udim$ surjects onto positive extended roots as well. Every positive extended root is of the form $m\cdot [L]$ for $m$ a positive root and $L$ a simple object in
$\FC$. 
With $M'$ as defined in \eqref{eq:M'construction} we see that the object $M' \otimes L$ is again indecomposable (since it is obtained from applying the same chain of reflection functors in \eqref{eq:M'construction} applied to $S(v+1,L)$ instead) with $\udim(M'\otimes L) = m\cdot [L]$, which shows surjectivity of $\udim$. Note
that this argument also proves the final statement of the theorem. \end{proof}

\printbibliography

\end{document}